\newcommand{\bbC}{\ensuremath{\mathbb{C}}}
\newcommand{\bbR}{\ensuremath{\mathbb{R}}}
\newcommand{\bbQ}{\ensuremath{\mathbb{Q}}}
\newcommand{\bbZ}{\ensuremath{\mathbb{Z}}}
\newcommand{\bbT}{\ensuremath{\mathbb{T}}}
\newcommand{\bbG}{\ensuremath{\mathbb{G}}}
\newcommand{\bbN}{\ensuremath{\mathbb{N}}}
\newcommand{\calO}{\ensuremath{\mathcal{O}}}
\newcommand{\calE}{\ensuremath{\mathcal{E}}}
\newcommand{\calS}{\ensuremath{\mathcal{S}}}
\newcommand{\calC}{\ensuremath{\mathcal{C}}}
\newcommand{\calD}{\ensuremath{\mathcal{D}}}
\newcommand{\inv}{^{-1}}
\newcommand{\sslash}{\mathbin{\mkern-2mu /\mkern-6mu/ \mkern-2mu}}
\DeclareMathOperator{\Id}{Id}
\DeclareMathOperator{\diam}{diam}
\DeclareMathOperator{\FSL}{FSL}
\DeclareMathOperator{\Hom}{Hom}
\DeclareMathOperator{\Aut}{Aut}
\DeclareMathOperator{\Out}{Out}
\DeclareMathOperator{\SL}{SL}
\DeclareMathOperator{\SO}{SO}
\DeclareMathOperator{\SU}{SU}
\DeclareMathOperator{\CGM}{\mathscr{C}(\Gamma\curvearrowright M)}
\DeclareMathOperator{\GGM}{\mathscr{G}(\Gamma\curvearrowright M)}
\DeclareMathOperator{\CLN}{\mathscr{C}(\Lambda\curvearrowright N)}
\DeclareMathOperator{\GLN}{\mathscr{G}(\Lambda\curvearrowright N)}
\DeclareMathOperator{\GGMtr}{\mathscr{G}_{t_0}(\Gamma\curvearrowright M)}
\DeclareMathOperator{\append}{append}
\DeclareMathOperator{\Isom}{Isom}
\DeclareMathOperator{\gr}{graph}
\DeclareMathOperator{\diag}{diag}
\numberwithin{equation}{section}
\newtheorem{thm}{Theorem}[section]
\newtheorem{prop}[thm]{Proposition}
\newtheorem{lem}[thm]{Lemma}
\newtheorem{cor}[thm]{Corollary}
\theoremstyle{definition}
\newtheorem{dfn}[thm]{Definition}
\newtheorem{rmk}[thm]{Remark}
\newtheorem{constr}[thm]{Construction}
\newtheorem{claim}[thm]{Claim}
\newtheorem{warning}[thm]{Warning}
\begin{document}

\title{Rigidity of warped cones and coarse geometry of expanders}

\author{David Fisher}
\author{Thang Nguyen}
\author{Wouter van Limbeek}

\date{\today}

\begin{abstract} We study the geometry of warped cones over free, minimal isometric group actions and related constructions of expander graphs. We prove a rigidity theorem for the coarse geometry of such warped cones: Namely, if a group has no abelian factors, then two such warped cones are quasi-isometric if and only if the actions are finite covers of conjugate actions. As a consequence, we produce continuous families of non-quasi-isometric expanders and superexpanders. The proof relies on the use of coarse topology for warped cones, such as a computation of their coarse fundamental groups.
\end{abstract}

\maketitle

\tableofcontents

\section{Introduction}
\label{sec:intro}

\subsection{Brief overview:} This paper can be viewed from two distinct points of view.  From one point of view, our primary contribution is proving the existence of wide classes of expanders and superexpanders which are geometrically different.  These results are discussed in Subsection \ref{sec:intro_expanders} of this introduction.  From another point of view, we prove a dramatic rigidity result for group actions.  Namely, we show that a certain geometric space associated to a group action, the warped cone of John Roe, is a complete conjugacy invariant for a large class of group actions. Our results on rigidity of group actions can be viewed as a dynamical analogue of {\em quasi-isometric rigidity} in geometric group theory and have other applications in the context of the coarse Baum-Connes conjecture. These results are discussed in Section \ref{sec:intro_cones}.  All the results on expanders and superexpanders are deduced from the rigidity results for group actions.  The reader can read the next two subsections of this paper in either order, depending on their interests.

\subsection{Warped Cones} \label{sec:intro_cones}

Let $\Gamma$ be a finitely generated group acting isometrically on a compact metric space $M$. For $t>0$, define the metric space $M_t :=(t M\times\Gamma)\slash\Gamma$, where $t M$ is a copy of $M$ scaled by a factor $t$ and $\Gamma$ is equipped with a fixed word metric. Define the \emph{geometric cone} $\GGM$ to be a metric space that assembles all $M_t$ as $\GGM=( M\times(0,\infty)\times\Gamma)\slash\Gamma$ where the metric on $M\times (0,\infty)$ is given by $t g + dt^2$ for some fixed Riemannian metric $g$ on $M$. For isometric $\Gamma$ actions  this definition coincides with Roe's warped cone over a group action \cite{MR2115671}, see Remark \ref{rmk:roe_cone}. At times it is better for applications to consider instead the metric space $\CGM$ that is simply the disjoint union of the $M_t$. We will study warped cones up to quasi-isometry.

\begin{dfn}
\label{dfn:qi}
Let $X$ and $Y$ be metric spaces. A map $f:X \rightarrow Y$ is an  $(L,C)$-quasi-isometry if there
exist $L>1$ and $C>0$ such that:
\begin{enumerate}
\item for every $a,b \in X$ we have $\frac{1}{L}d_X(a,b)-C \leq d_Y(f(a), f(b)) \leq Ld_X(a,b)+C$
\item for every $y \in Y$, there is $x \in X$ such that $d_Y(f(x),y)\leq C$.
\end{enumerate}
\end{dfn}

To study quasi-isometries of disjoint unions of level sets, we need to extend this notion slightly.
Given a collection of spaces $\{X_t\}_{t \in T}$ and $\{Y_s\}_{s \in S}$ we call them {\em $(L,C)$-quasi-isometric}
if there is a bijection $\phi: T \rightarrow S$ and $(L,C)$-quasi-isometries $f_t : X_t \rightarrow Y_{\phi(t)}$. As usual spaces or collections of spaces are {\em quasi-isometric} if there exist some $L>1$ and $C>0$ for which they
are $(L,C)$-quasi-isometric.

To state a version of our results we need to define what it means for group actions to be commensurable.  While this is similar to, and motivated by, the definition of commensurability for finitely generated groups, we warn the reader that actions of commensurable groups are not commensurable in this sense and in fact need not give rise to quasi-isometric warped cones!  The notion in this context is slightly more complicated than in the world of finitely generated groups.

%
%

\begin{dfn}
\label{dfn:commensurable}
Two group actions $\Gamma_0 \curvearrowright M_0$ and $\Gamma_1 \curvearrowright M_1$ are commensurable
if
\begin{itemize}
\item $M_i$ is a finite cover of $M_{i-1}$ for one of $i=0,1$  and
\item for the same index $i$ we have that $\Gamma_i$ is the group of lifts of $\Gamma_{i-1}$ to $M_i$.
\end{itemize}
\end{dfn}

\noindent In the definition the indices should be interpreted modulo $2$.  This is slightly awkward
but in the actual theorems it will be finite quotients of the original actions that are related,
not finite covers. This last notion corresponds most closely to the idea of two subgroups of a larger group having
intersection which has finite index in both, i.e. the extrinsic definition of commensurability. For the intrinisic definition of commensurablity, i.e. the analogue of having isomorphic finite index subgroups, it is useful to make explicit the notion of when two actions are isomorphic up to commensuration.  In particular, the regularity of the
commensurating map plays a role in applications.   

\begin{dfn}
\label{dfn:commensuration}
A {\em commensuration} of the actions $\Gamma\curvearrowright M$ and $\Lambda\curvearrowright N$ consists
of the following data:
	\begin{itemize}
		\item an action $\Gamma' \curvearrowright M'$ commensurable to $\Gamma \curvearrowright M$ and an action $\Lambda'\curvearrowright N'$ commensurable to $\Lambda\curvearrowright N$,
		\item a bi-Lipschitz map $f:M'\to N'$,%
        \item an isomorphism $\varphi:\Gamma'\to\Lambda'$,
    \end{itemize}   such that $f$ is $\varphi$-equivariant.
    If $M'$ and $N'$ are homogeneous spaces and $f$ is affine, then we call this data an {\em affine commensuration}.
\end{dfn}

\noindent Here a map between homogeneous spaces is \emph{affine} if it is compatible with the homogeneous structures, i.e. if it is the composition of a translation and a group automorphism (see Definition \ref{definition:affine}).

We state our main result first for geometric warped cones. Under an additional technical condition, we will obtain a similar result for non-geometric warped cones below.

\begin{thm}
\label{thm:main} Let $\Gamma$ and $\Lambda$ be finitely presented groups acting isometrically, freely and minimally on compact manifolds $M$ and $N$ (respectively). Assume that neither of $\Gamma$ and $\Lambda$ is commensurable to a group with a nontrivial free abelian factor.  Suppose that $\GGM$ is quasi-isometric to $\GLN$.  Then there is an affine commensuration of the two actions.
\end{thm}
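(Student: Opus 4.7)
The plan is to recover the affine commensuration from the quasi-isometry in three main stages: detect the level-set structure of the warped cones using coarse topology, extract a bi-Lipschitz map between $M$ and $N$ by taking a limit of restrictions at high levels, and use coarse fundamental group information to obtain a compatible group isomorphism.

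First I would analyze the coarse topology of a single level $M_t=(tM\times\Gamma)/\Gamma$. For $t$ large compared to the diameter of a fundamental domain, $M_t$ at scale one is modeled on a Cayley-graph-type construction from $\Gamma$ acting on $M$, and a coarse fundamental group computation (using Rips-type complexes at an appropriate scale) should identify $\pi_1^{\mathrm{coarse}}(M_t)$ with an extension involving $\Gamma$ and $\pi_1(M)$. This makes $\Gamma$ visible as an intrinsic coarse invariant of sufficiently high levels, and ultimately is what allows the group $\Gamma$ to be reconstructed from the geometry of the cone.

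Next, I would show that a quasi-isometry $F:\GGM\to\GLN$ must be asymptotically level-preserving. The levels $M_t$ should be distinguishable inside $\GGM$ via how the local coarse topology varies with $t$: the germ of coarse fundamental groups of balls of fixed radius depends on the level, and $F$ must match these germs. The output is a Lipschitz bijection $\phi:(0,\infty)\to(0,\infty)$ and restricted quasi-isometries $F_t:M_t\to N_{\phi(t)}$ with uniform constants. Rescaling by $1/t$, each level $M_t$ converges in the Gromov--Hausdorff sense to $M$ (the $\Gamma$-directions become negligible), and an ultralimit of the $F_t$ produces a bi-Lipschitz map $f:M\to N$. The coarse fundamental group computation simultaneously yields an isomorphism $\varphi:\Gamma'\to\Lambda'$ between commensurable groups, and equivariance of $f$ under $\varphi$ follows by tracking how $\Gamma$-orbit points in $M_t$ get matched by $F_t$. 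Since $M$ and $N$ are homogeneous and $f$ intertwines two transitive isometric actions, $f$ is forced to be affine, giving the sought affine commensuration.

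The main obstacle is the second stage: proving that the quasi-isometry preserves the foliation by levels up to bounded error. A warped cone is a single connected metric space with no global product structure, so the level decomposition is encoded only in subtle, scale-dependent variations of local coarse topology. The hypothesis that neither group is commensurable to one with a nontrivial free abelian factor is essential at this point. Were $\Gamma$ to have a $\bbZ^k$ quotient, the associated warped cone would contain quasi-flat directions arising from the torus, allowing quasi-isometries that shear across levels and admit automorphisms of the flat factor not coming from commensurations of the original actions. Ruling out such quasi-flats using the hypothesis, and then leveraging this absence to upgrade level-detection to a coarse bijection of level sets across the whole cone, is the technical crux on which the rigidity hinges.
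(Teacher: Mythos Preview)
Your overall architecture is reasonable, but two of the three stages contain genuine gaps, and you have misidentified both the main difficulty and the role of the no-abelian-factor hypothesis.

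First, level preservation for a quasi-isometry of \emph{geometric} cones is elementary and has nothing to do with coarse topology or the group-theoretic hypothesis: the height function $p_2:\GGM\to(0,\infty)$ is $1$-Lipschitz and satisfies $d(x,x_0)\le |p_2(x)-p_2(x_0)|+\diam(M)$, so any quasi-isometry coarsely preserves height (this is Lemma~\ref{lem:global_to_local}). The real obstacle lies elsewhere. Your rescaling claim is incorrect: $\frac{1}{t}M_t$ does \emph{not} Gromov--Hausdorff converge to $M$. Since $\frac{1}{t}d_t^\Gamma(x,y)=\inf_\gamma\bigl(d_M(x,\gamma y)+\|\gamma\|/t\bigr)$ and $\Gamma$-orbits are dense, this tends to $0$ for every pair $x,y$, so the rescaled levels collapse to a point. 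An ultralimit of the $F_t$ after rescaling therefore produces nothing. The paper instead keeps the levels at their natural scale and corrects $f_t$ by a \emph{monodromy adjustment}: one sets $\widetilde f_t(p):=Q_\Lambda(f_t\circ\alpha)^{-1}f_t(p)$ for a spatial path $\alpha$ from $p_0$ to $p$, which strips off the orbital jumps that $f_t$ introduces along $\alpha$. This adjusted map is $(L(C+1),\frac{C+1}{t})$-coarsely Lipschitz as a map $M\to N$ in the \emph{original} metrics, and Arzel\`a--Ascoli gives a Lipschitz limit $\widehat f$. Even then one only obtains a Lipschitz semiconjugacy, not a bi-Lipschitz map; invertibility and affinity require a separate homogeneous-dynamics argument using the map built from the coarse inverse.

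Second, the no-abelian-factor hypothesis enters precisely to make the monodromy correction well-defined: one needs $Q_\Lambda(f_t\circ\alpha)$ to depend only on the endpoints of $\alpha$, equivalently that the composite $\pi_1(M)\hookrightarrow\widetilde\Gamma\cong\widetilde\Lambda\twoheadrightarrow\Lambda$ is trivial. The image of this map is a normal subgroup of $\Lambda$ generated by a virtually central abelian subgroup (coming from $\pi_1(M)$ for $M$ homogeneous), and the hypothesis forces it to be finite, hence trivial after passing to the commensurable quotient by the maximal finite normal subgroup. This is where the counterexamples of Kim and Kielak--Sawicki for abelian $\Gamma$ live: the obstruction map fails to vanish, the quasi-isometry genuinely mixes manifold and group directions, and no monodromy correction exists. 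Your ``quasi-flat'' heuristic does not capture this mechanism.
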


\noindent For a minimal isometric $\Gamma$--action on $M$, the closure $G$ of $\Gamma$ in $\Isom(M)$ acts transitively on the compact manifold $M$ and minimality is preserved by commensurability. So our hypotheses imply that $M$ and $N$, as well as $M'$ and $N'$, are homogeneous and so the claim that the commensuration is affine is sensible.

We can also prove a result forcing an affine commensuration of actions even given only sequences $(t_n)$ and $(s_n)$ and quasi-isometries between the $t_n$--level set of $\CGM$ and the $s_n$--level set $\CLN$, but this requires some additional technical assumptions on acting groups.  The main advantage from our perspective of the geometric warped cone is that the maps on different level sets are in a natural sense {\em coarsely homotopic}.  The more general context is interesting because it is the avenue to our results about expanders and superexpanders.  For this more general result, we require an additional assumption on one of the acting groups, which we call {\em Property $\FSL$} and define in Subsection \ref{sec:stablenorm} below.  Groups with property $\FSL$ have only finitely many automorphisms satisfying any fixed constraint on distortion with respect to the stable length of group elements. Lattices in semisimple Lie groups have property $\FSL$. 

\begin{thm}Let $\Gamma$ and $\Lambda$ be finitely presented groups acting isometrically, freely and minimally on compact manifolds $M$ and $N$ (respectively). 	Assume that
	\begin{itemize}
		\item Neither of $\Gamma$ and $\Lambda$ is commensurable to a group with a nontrivial free abelian factor, and $\Gamma$ has Property $\FSL$, and
		\item There exist uniform quasi-isometries $f_n: M_{t_n}\to N_{s_n}$, where $t_n,s_n\to\infty$.
	\end{itemize}
Then there is an affine commensuration of the action of $\Gamma$ on $M$ and the action of $\Lambda$ on $N$.
	\label{thm:main_FSL}
	\end{thm}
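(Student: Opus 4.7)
The plan is to reduce Theorem \ref{thm:main_FSL} to the machinery behind Theorem \ref{thm:main}. The key contrast is that a quasi-isometry of full geometric cones automatically links the different level sets by coarse homotopies in the $t$-direction, so the coarse-topological data extracted at each level are forced to agree across $t$. Here the individual $f_n$ are a priori unrelated as $n$ varies, and the role of Property $\FSL$ is precisely to supply a finiteness principle which recovers this coherence from an otherwise arbitrary sequence.

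First I would run the coarse-topological argument underlying Theorem \ref{thm:main} on each $f_n$ separately. One expects that for $t_n,s_n$ sufficiently large a single uniform quasi-isometry $f_n: M_{t_n}\to N_{s_n}$ already detects, via induced maps on coarse fundamental groups, an isomorphism $\varphi_n: \Gamma'_n\to\Lambda'_n$ of groups commensurable to $\Gamma$ and $\Lambda$, together with a $\varphi_n$-equivariant bi-Lipschitz map $h_n: M'_n\to N'_n$ of homogeneous covers. The uniformity of the constants $(L,C)$ should translate into a uniform bound on the bi-Lipschitz constants of the $h_n$ and on the distortion of the $\varphi_n$ with respect to the stable length on $\Gamma$.

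Second, I would apply Property $\FSL$ to the sequence $(\varphi_n)$. By definition, only finitely many automorphisms of a given group commensurable to $\Gamma$ satisfy a fixed stable-length distortion bound, so after passing to a subsequence we may arrange that $\Gamma'_n=\Gamma'$, $\Lambda'_n=\Lambda'$, and $\varphi_n=\varphi$ are independent of $n$. The remaining maps $h_n: M'\to N'$ are then $\varphi$-equivariant, uniformly bi-Lipschitz maps between fixed compact spaces, so Arzel\`a--Ascoli furnishes a subsequential uniform limit $h: M'\to N'$. The limit $h$ inherits the bi-Lipschitz bounds, the $\varphi$-equivariance, and, because $M'$ and $N'$ are homogeneous and $h$ is determined by the coarse-topological data on them, the affine structure, yielding the desired affine commensuration.

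The main obstacle I expect is the first step. For the full geometric cone one can move in the $t$-direction to read off coarse fundamental groups and the induced homomorphism on the acting group in a coherent way across scales. At a single level $M_{t_n}$ there is no such transverse coordinate, so one must work at a scale $R_n$ chosen as a function of $t_n$ and show that the relevant coarse topology is already visible at that scale when $t_n$ is large; this is where the hypothesis $t_n,s_n\to\infty$ is essential. Once this is achieved, the second step --- combining Property $\FSL$ with Arzel\`a--Ascoli to promote a sequence of isolated rigidity statements into a single affine commensuration --- is relatively formal, with the ``no nontrivial free abelian factor'' hypothesis playing the same role as in Theorem \ref{thm:main}, namely ruling out the $\bbZ^k$-obstructions to affineness of $h$.
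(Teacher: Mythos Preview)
Your high-level architecture is right --- extract group-theoretic data $\varphi_n$ from each $f_n$ via coarse fundamental groups, use Property $\FSL$ to stabilize the $\varphi_n$ along a subsequence, then pass to a limit with Arzel\`a--Ascoli --- and this matches the paper's route through Theorem \ref{thm:main_tech}, Proposition \ref{prop:stable_bound}, and Section \ref{sec:semiconj}. But there is a genuine gap in your first step: a single $f_n$ does \emph{not} produce a $\varphi_n$-equivariant bi-Lipschitz map $h_n:M'\to N'$. What the paper actually extracts from one $f_n$ is only the isomorphism $\varphi_n$ (after the obstruction map of Section \ref{sec:no_obstruct} vanishes, which is where the no-abelian-factor hypothesis is used --- earlier than you place it), together with a \emph{monodromy-corrected} map $\widehat{f}_n$ that is merely $(L(C+1),\tfrac{C+1}{t_n})$-coarsely Lipschitz as a map $M\to N$. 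These $\widehat{f}_n$ are not Lipschitz, let alone bi-Lipschitz; only their Arzel\`a--Ascoli limit $\widehat{f}$ is Lipschitz, and even that is not bi-Lipschitz --- one runs the same construction on the coarse inverses $g_n$ to get $\widehat{g}:N\to M$, and then a separate homogeneous-dynamics argument (the graph-closure Lemma \ref{lemma:isaffine} and Corollary \ref{cor:affineinverse}) promotes the pair to affine isomorphisms. So both your claim that the $h_n$ exist and your claim that the limit ``inherits the affine structure'' from coarse-topological data are unsubstantiated.

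You are also missing a step that the paper needs and which does not appear in your outline: before one can even define the corrected maps $\widehat{f}_n$ with the right coarse-Lipschitz constants, one must show $t_n\asymp s_n$ (Section \ref{sec:force_linear}). This requires the stabilization of $\varphi_n$ already to have taken place, because the argument compares lengths of loops representing $\gamma$ and $\varphi_n(\gamma)$ and needs the minimal displacement of $\varphi_n(\gamma)$ to be bounded below independently of $n$. So the correct order is: coarse $\pi_1$ gives $\varphi_n$; FSL stabilizes $\varphi_n$ to $\widehat{\varphi}$; stabilization forces $t_n\asymp s_n$; then build $\widehat{f}_n$ and take the limit; then promote to affine via homogeneous dynamics.
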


\noindent We next state, somewhat informally, a corollary of Theorem \ref{thm:main_FSL}, our results for building many non-quasi-isometric expanders/group actions in Section \ref{sec:app_exp},  and the main results of Sawicki's recent paper on the coarse Baum-Connes conjecture for warped cones over group actions with spectral gap \cite{SawickiBC}. Sawicki's proof is based on ghost projections constructed in an earlier paper of Drutu and Nowak \cite{DrutuNowak}. We note that we use Theorem \ref{thm:main_FSL}, not Theorem \ref{thm:main}, since Sawicki is forced to pass to discrete versions of the cone for his proofs.

\begin{cor}
\label{cor:Sawicki}
There are continua of non-quasi-isometric spaces where the coarse Baum-Connes assembly map is not surjective.
\end{cor}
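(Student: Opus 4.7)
The plan is to combine three inputs: the construction (carried out in Section \ref{sec:app_exp}) of a continuum of pairwise non-commensurable isometric actions satisfying the hypotheses of Theorem \ref{thm:main_FSL} and additionally having spectral gap, the rigidity theorem itself, and Sawicki's non-surjectivity result \cite{SawickiBC} for the coarse Baum-Connes assembly map of warped cones over spectral gap actions. All three are invoked essentially as black boxes, and the corollary is then bookkeeping.

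Concretely, I would first produce a family $\{\Gamma_\alpha\curvearrowright M_\alpha\}_{\alpha\in I}$ indexed by an uncountable $I$, consisting of free, minimal, isometric actions of finitely presented groups on compact manifolds, such that each $\Gamma_\alpha$ has Property $\FSL$ and is not commensurable to a group with a nontrivial free abelian factor, each action has spectral gap on $L^2_0(M_\alpha)$, and for $\alpha\neq\beta$ the actions $\Gamma_\alpha\curvearrowright M_\alpha$ and $\Gamma_\beta\curvearrowright M_\beta$ admit no affine commensuration in the sense of Definition \ref{dfn:commensuration}. The natural source is actions of higher-rank arithmetic lattices (which have Property $\FSL$ and property (T), hence spectral gap) on compact homogeneous spaces, varied by an algebraic or arithmetic parameter chosen to preclude affine commensuration between distinct members.

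Next, I would apply Theorem \ref{thm:main_FSL} contrapositively: if for some $\alpha\neq\beta$ the discrete warped cones $\mathscr{C}(\Gamma_\alpha\curvearrowright M_\alpha)$ and $\mathscr{C}(\Gamma_\beta\curvearrowright M_\beta)$ were quasi-isometric as disjoint unions of level sets, one could extract uniform quasi-isometries between level sets at rescaling parameters tending to infinity, forcing an affine commensuration and contradicting the construction. Hence the family is pairwise non-quasi-isometric. By Sawicki's theorem \cite{SawickiBC}, which relies on the ghost projections of Drutu-Nowak \cite{DrutuNowak}, each such warped cone over a spectral gap action has non-surjective coarse Baum-Connes assembly map, so the uncountable family consists of pairwise non-quasi-isometric spaces all exhibiting the desired failure.

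The main obstacle is in the first step: producing a continuum of actions pairwise distinguishable in the strong dynamical sense of Definition \ref{dfn:commensuration}, while simultaneously retaining Property $\FSL$, spectral gap, freeness, minimality, and the no-abelian-factor condition. This is a purely dynamical-algebraic construction carried out in Section \ref{sec:app_exp}; once such a family is in hand, the deduction of the corollary is the immediate combination of Theorem \ref{thm:main_FSL} with Sawicki's theorem described above.
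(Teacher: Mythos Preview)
Your proposal is correct and matches the paper's own treatment: the corollary is stated informally as the combination of Theorem \ref{thm:main_FSL}, the constructions of continua of non-commensurable actions with spectral gap in Section \ref{sec:app_exp}, and Sawicki's non-surjectivity result \cite{SawickiBC} (via Drutu--Nowak ghost projections). One small inaccuracy: your speculative description of the source as higher-rank arithmetic lattices alone is not quite what Section \ref{sec:app_exp} does---the actual constructions use free groups acting on compact Lie groups (for expanders) or products $\Delta\times F_n$ with $\Delta$ a higher-rank lattice (for superexpanders)---but since you correctly defer to Section \ref{sec:app_exp} for the construction, this does not affect the validity of your argument.
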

	
%

In the next subsection we discuss applications of our results to producing families of expanders that are not quasi-isometric.  In the few subsections following that one, we discuss the necessity of the hypotheses of these two theorems, some history of the work motivating our interest in this problem, and finally an outline of both our proofs and the paper.

\subsection{Expanders}\label{sec:intro_expanders}
Expander graphs play an important role in fields ranging from computer science to geometric topology to operator algebras.  An expander graph is a sequence of finite graphs of increasing size and bounded degree with strong spectral properties
that, for example, force embeddings of the graphs into Hilbert spaces to be highly distorted (see Section \ref{sec:gap-exp} for precise definitions). Recently, attention has
also been drawn to {\em superexpanders} where the spectral properties prevent good embeddings into broader classes of metric spaces. The first constructions of expanders were probabilistic, but so far there is no probabilistic construction of superexpanders.  More recently, attention has focused on constructions of (super)expanders with a wide range of geometric behavior. 
This began in  \cite[Theorem 9.1]{MR3210176} and
\cite[Theorem 5.76]{MR3114782} and has led to a rush  of recent results by Khukhro--Valette, Hume, Delabie--Khukro, De Laat--Vigolo and Sawicki \cite{MR3658731, MR3640615, DelabieKhukhro, deLaatVigolo, SawickiGap}.  Most of these papers
study the problem from the point of view of its intrinsic interest, but there are also intriguing connections to the
theory of computation in \cite{MR3352040}.

Explicit examples of expanders were first constructed by Margulis using congruence quotients of groups with property (T) \cite{MR0484767}. Using a strengthening of Property (T), Lafforgue showed that the same graphs are superexpanders \cite{MR2423763}. Group actions on homogeneous spaces are a geometric source of expanders: Indeed, Margulis also constructed expanders from the action of $\SL(n,\bbZ)$ on $\bbT^n$ for any $n\geq 2$ \cite{MR0484767}.  Margulis' construction is mostly carried out in the dual setting of $\SL(n,\bbZ)$ actions on $\bbZ^n$, while the explicit translation to actions on homogeneous spaces was first made by Gabber and Galil \cite{MR633542}.


 To see that group actions yield a robust source of expanders and superexpanders, we recall a discretiziation procedure that converts a group action into a sequence of graphs.  Given a group action on a metric space $(M, d)$, we can consider rescalings $M_t$ defined as $(M, td)$.  For a sequence $t_n\to\infty$, Vigolo discretizes the rescalings $M_{t_n}$ to get a sequence of approximating graphs $X_n$, and gives a criterion for $\{X_n\}_n$ to be an expander sequence \cite{Vigolo}. The approximating graph is obtained by partitioning each $M_t$ into sets of roughly equal diameter which correspond to vertices of the graph and then adding an edge between two elements in the partition if the translate of one by a generator intersects the other.
 Vigolo shows his discretizations are expanders if and only if the action of $\Gamma$ on $M$ has a spectral gap, a result extended to superexpanders by Sawicki \cite{SawickiGap}.  In fact, for the proofs of our theorem, we will need to pass from this sequence of graphs to one more closely tied to the geometry of $M$ as describe in \cite[Section 6]{Vigolo}.

When the underlying space is a manifold, we will refer to these as \emph{geometric expanders} or \emph{geometric superexpanders}.

There is a large literature producing actions with spectral gaps, so this construction provides a wealth of methods to produce expanders.  Since we will focus on minimal isometric group actions, we will need a compact group $K$ and a dense subgroup $\Delta <K$ with a spectral gap on $L^2(K)$.   For particular choices of $K$, many of these are constructed as lattices in a Lie group $G$ which have dense embeddings in a compact form $K$ of $G$, namely by Margulis and Sullivan using groups with property $(T)$ and by Drin'feld using more elaborate representation theory \cite{margulisspectral,sullivanspectral, drinfeldspectral}. In the late '90s, examples were found by more elementary methods by Gamburd--Jakobson--Sarnak in \cite{spectralgapconj} where the authors conjecture that a generic dense free subgroup of a compact simple group $K$ will have a spectral gap. In recent work towards that conjecture, Bourgain--Gamburd and Benoist--De Saxce show that any dense free subgroup of $K$ with algebraic entries has a spectral gap  \cite{MR2358056, MR2966656, MR3529116}.



Our contribution to the geometry of expanders will be in providing many examples of expanders that are not quasi-isometric.  While much of the work mentioned above is of this kind, so far applications to computer science require stronger results, namely that these expanders are, in an appropriate sense, an expander for random graphs \cite{MR3352040}.  It is possible that the geometric expanders considered here also satisfy this property, but a proof of this will require new ideas.  


We say two sequences of graphs $\{X_n\}$ and $\{Y_n\}$ are $(L,C)$--\emph{quasi-isometric} if there are maps $f_n:X_n \rightarrow Y_n$  that are $(L,C)$--quasi-isometries. (See Definition \ref{dfn:qi}.) Interested in the possible geometry of expanders, we define
two sequence of graphs to be $\{X_n\}$ and $\{Y_n\}$ to \emph{$(L,C)$-quasi-isometrically disjoint} if there are no $(L,C)$-quasi-isometric subsequences. (When discussing quasi-isometry of subsequences, we assume the obvious re-indexing.)  When the sequences are $(L,C)$-quasi-isometrically disjoint for all $L$ and $C$, we call them simply \emph{quasi-isometrically disjoint}. We call a collection $\Omega$ of sequences of graphs \emph{quasi-isometrically disjoint} if any pair of sequences in $\Omega$ is quasi-isometrically disjoint. From our results on group actions, we can produce many explicit families of quasi-isometrically disjoint expanders:

\begin{thm}
\label{thm:expanders}
Let $K$ be a compact Lie group and fix $r \geq 2$. Then free, rank $r$ subgroups of $K$ with spectral gap yield quasi-isometrically disjoint expanders unless they are commensurable. In particular, there exist explicit quasi-isometrically disjoint continuous families of expanders.
\end{thm}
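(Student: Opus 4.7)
The plan is to derive Theorem \ref{thm:expanders} as a consequence of Theorem \ref{thm:main_FSL} applied to the left-translation actions of the given free subgroups of $K$ on $K$ itself. Fix a bi-invariant Riemannian metric on $K$. For a free subgroup $\Gamma\subset K$ of rank $r\geq 2$ with spectral gap, the action $\Gamma\curvearrowright K$ by left translations is isometric, free, and minimal (spectral gap forces $\Gamma$ to be dense in $K$, and translation by a dense subgroup has every orbit dense). Vigolo's discretization associates to this action, at scales $t_n\to\infty$, a sequence of graphs $X_n$ which, by the spectral gap and \cite{Vigolo}, is an expander, and which is uniformly quasi-isometric to the corresponding level set $K_{t_n}$ of $\CGM$.

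Suppose now $\Gamma,\Lambda\subset K$ are two free rank $r$ subgroups with spectral gap, and assume for contradiction that the associated expander sequences are $(L,C)$-quasi-isometric along some subsequence. Pulling the quasi-isometries back through the Vigolo discretizations, we obtain uniform quasi-isometries $f_n:K_{t_n}\to K_{s_n}$ with $t_n,s_n\to\infty$. To invoke Theorem \ref{thm:main_FSL} I need to verify the group-theoretic hypotheses on $\Gamma$ and $\Lambda$. For $r\geq 2$, the free group $F_r$ is centerless, as is every finite-index subgroup (which is again free of rank $\geq 2$); any group of the form $H\times \bbZ^k$ with $k\geq 1$, and any finite-index subgroup thereof, has nontrivial center; hence $F_r$ is not commensurable to a group with a nontrivial free abelian factor. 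Property $\FSL$ for $F_r$ will need to be checked directly from the definition in Subsection \ref{sec:stablenorm}; the stable length on $F_r$ is the cyclically reduced word length, which I expect to rigidify $\Aut(F_r)$ under any fixed bi-Lipschitz constraint.

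Theorem \ref{thm:main_FSL} then produces an affine commensuration of $\Gamma\curvearrowright K$ and $\Lambda\curvearrowright K$. The next step is to show that such an affine commensuration forces $\Gamma$ and $\Lambda$ themselves to be commensurable as subgroups of $K$: a finite cover $K'$ of $K$ is still a homogeneous space for a finite extension of $K$, the isomorphism $\varphi:\Gamma'\to\Lambda'$ is realized by an affine map of $K'$, and after conjugating by the translation part and applying the group automorphism part we identify finite-index subgroups of $\Gamma$ and $\Lambda$ inside $K$ (up to inner automorphism). This contradicts the assumed non-commensurability and establishes the main assertion.

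For the second statement, I would produce a continuous family of pairwise non-commensurable free rank $r$ subgroups with spectral gap by perturbation inside the representation variety $\Hom(F_r,K)$. Starting from a single free rank $r$ subgroup with algebraic entries having spectral gap (as provided by Bourgain--Gamburd / Benoist--de Saxc\'e), one gets a real-analytic family of nearby dense free subgroups, all with spectral gap (either by restricting to algebraic parameters dense in an interval, or by appealing to quantitative stability results). Since each subgroup has only countably many commensurability partners in $K$, a standard cardinality argument produces a continuum of non-commensurable examples, and the first part of the theorem then gives a continuum of pairwise quasi-isometrically disjoint expanders.

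The main obstacles I anticipate are (i) verifying Property $\FSL$ for $F_r$ cleanly, and (ii) extracting genuine commensurability of the subgroups $\Gamma,\Lambda<K$ from a purely dynamical affine commensuration of the actions; the expander part itself is essentially a direct translation from Theorem \ref{thm:main_FSL} via Vigolo's discretization, so the substance of the argument lives in the rigidity theorem.
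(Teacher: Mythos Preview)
Your approach to the first assertion is correct and matches the paper's: apply Theorem \ref{thm:main_FSL} to the left-translation actions, using that $F_r$ has Property $\FSL$ (the paper proves this as Proposition \ref{prop:stab_free}, via exactly the cyclically-reduced-word observation you mention) and no virtual abelian factor. Your obstacle (ii) dissolves once you note that $F_r$ with $r\geq 2$ has no nontrivial finite normal subgroups, so the only action commensurable to $\Gamma\curvearrowright K$ in the sense of Definition \ref{dfn:commensurable} is the action itself; affine commensuration is therefore already an affine conjugacy of the two translation actions, which is what the theorem means by ``commensurable''.

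For the continuous family, however, your construction has a genuine gap. Restricting to algebraic parameters yields only countably many subgroups, and no cardinality argument can then extract a continuum; on the other hand, openness of spectral gap under real-analytic perturbation is not known and is essentially the Gamburd--Jakobson--Sarnak conjecture, so your ``quantitative stability'' alternative is speculative. The paper sidesteps this entirely by a different mechanism: fix once and for all a free rank $r-1$ subgroup $\Delta\subset K$ with spectral gap, and set $\Gamma_k:=\langle\Delta,k\rangle$ for varying $k\in K$. Every $\Gamma_k$ inherits the spectral gap from its subgroup $\Delta$ (Proposition \ref{prop:goingup}), so spectral gap holds for \emph{all} $k$ with no stability argument needed. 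Lemma \ref{lemma:charactervariety} then shows that generic $k$ gives $\Gamma_k\cong F_r$ acting freely, and since $\Out(F_r)$-orbits in the character variety are countable, one obtains a genuine continuum of pairwise non-conjugate free actions with spectral gap.
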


To obtain families as in the theorem, we just note that from the existing literature we can produce many free groups with spectral gap: For example, set $K=\SU(n)$ with $n\geq 2$ and fix a free dense rank subgroup $\Lambda\subseteq \SU(n)$ of rank $r-1$ and with spectral gap. Then consider the family $\{\langle \Lambda, k\rangle\}_{k\in K}$. For generic $k$, we have $\Gamma:=\langle \Lambda, k\rangle \cong F_r$, and the spectral gap for $\Lambda$ implies that $\Gamma$ has spectral gap. For more details and other constructions, see Section \ref{sec:app_exp}.

We remark here that before the above result, the only known construction of uncountably many quasi-isometrically disjoint expanders is obtained by a fixing a single expander $X:=\{X_n\}_n$ with some geometric invariant, such as diameter, growing very rapidly, and considering the set of all subsequences of $X$. To show this produces uncountably many quasi-isometrically disjoint expanders, one uses that there are uncountably many sequences of natural numbers that pairwise have finite intersection, a fact that was pointed out to us by D. Sawicki.  As already pointed out by Vigolo, constructions from warped cones can lead to large families which cannot easily be distinguished by a condition on rapidly growing geometric invariants \cite{Vigolo}.  For example, in Theorem \ref{thm:expanders} and Theorem \ref{thm:superexpanders} below we can always produce examples where either number of vertices or the diameter of the $n$th graph is $n$.

%

From the point of view of coarse geometry a key property of expanders is that they do not embed
uniformly in any Hilbert space. By analogy, one defines superexpanders to be graphs that satisfy a certain Poincar\'{e} inequality that obstructs uniform embedding in some class $\calC$ of Banach spaces.  Here we will always consider the class $\calC$ of Banach spaces with nontrivial Rademacher type, which includes the class of uniformly convex Banach spaces. Superexpanders were first shown to exist by Lafforgue using strong property (T) in \cite{MR2423763}.  More examples were constructed by Mendel and Naor in \cite{MR3210176} and by Liao in \cite{MR3190138}. It is already clear from Lafforgue's first paper that his results, combined with Margulis' construction in \cite{margulisspectral}, produce geometric expanders. While there are in some sense many fewer examples of group actions with the stronger spectral properties required to produce geometric superexpanders, we are still able to construct {families} of these.
For the following theorem we let $G$ be a higher rank $p$-adic Lie group, $\Lambda <G$ a cocompact lattice with
a dense image homomorphism $\rho: \Lambda \to K$ where $K$ is a compact connected Lie group.  Given a free group $F_k$ with $k>2$, we let $\Gamma = \Lambda \times F_k$ which acts on $K$ by letting $\rho(\Lambda)$ act on the left, picking any inclusion $i: F_k \rightarrow K$  and letting $i(F_k)$ act on the right.

\begin{thm}
\label{thm:superexpanders}
Varying the inclusion $i$ above, the actions of $\Gamma$ on $K$ produce explicit quasi-isometrically disjoint families of superexpanders.
\end{thm}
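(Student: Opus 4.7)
The plan is to combine Lafforgue's strong Banach Property $(T)$ for $\Lambda$, which upgrades the Vigolo/Sawicki expanders to superexpanders, with the rigidity Theorem \ref{thm:main_FSL}, which converts any putative quasi-isometric identification of two such sequences into an affine commensuration of the underlying $\Gamma$-actions and thereby forces the inclusions $i: F_k \to K$ to lie in a countable set of orbits.

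First I would check that the $\Gamma$-action on $K$ and the Vigolo/Sawicki discretization of its warped cone produce superexpanders. The combined action of $\Gamma = \Lambda \times F_k$ is isometric, free (because $i(F_k)$ acts freely by right translations), and minimal (because $\rho(\Lambda)$ is dense). Lafforgue's theorem grants $\Lambda$ strong Banach $(T)$ with respect to the class $\calC$ of Banach spaces of nontrivial Rademacher type; by Sawicki's extension of Vigolo's discretization theorem to Banach targets \cite{SawickiGap}, this yields the Poincar\'e inequality characterizing superexpansion with respect to $\calC$.

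Next I would verify the hypotheses of Theorem \ref{thm:main_FSL} for $\Gamma = \Lambda \times F_k$: finite presentability is immediate; the action is isometric, free, and minimal; no commensurable group has a nontrivial free abelian factor since Property $(T)$ of $\Lambda$ forbids any virtual $\bbZ$-quotient and $F_k$ has no nontrivial abelian quotients for $k \geq 2$; and $\Gamma$ has Property $\FSL$, since the paper establishes $\FSL$ for lattices in semisimple Lie groups (covering $\Lambda$) and the product with $F_k$ preserves it (stable length on $F_k$ is a norm, so only finitely many $\FSL$-controlled automorphisms remain once the rigid $\Lambda$-factor is pinned down). Given two inclusions $i_1, i_2: F_k \to K$ whose discretizations are uniformly quasi-isometric along some $t_n \to \infty$, I would use \cite[Section 6]{Vigolo} to pass to quasi-isometries between level sets of the non-geometric warped cones and invoke Theorem \ref{thm:main_FSL} to obtain an affine commensuration of the two $\Gamma$-actions.

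Finally, I would unpack the affine commensuration. Any affine bi-Lipschitz equivariant identification of two $\Gamma$-homogeneous spaces covering $K$ decomposes as a translation in $K$ composed with an automorphism in $\Aut(K)$, and equivariance forces the isomorphism $\varphi: \Gamma' \to \Gamma''$ to respect the product decomposition $\Lambda \times F_k$ (because $\Lambda$ is rigid and $F_k$ admits no nontrivial homomorphisms into any commensurator of $\Lambda$). Thus $i_2$ agrees with $i_1$ up to post-composition by an element of the finite-dimensional Lie group $\Aut(K) \ltimes K$ and a finite cover ambiguity; since the orbits of $\Aut(K) \ltimes K$ in $\Hom(F_k, K)$ have bounded dimension, any positive-dimensional continuous family of $i$ meets continuum many distinct orbits, yielding the claimed families. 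The main obstacle is this last step: extracting the concrete constraint on $i$ from the abstract affine commensuration hinges on showing that $\varphi$ preserves the splitting $\Gamma = \Lambda \times F_k$, a rigidity statement for the acting groups that should follow from the very different structure of $\Lambda$ and $F_k$ but requires care within the commensuration framework of Definition \ref{dfn:commensurable}.
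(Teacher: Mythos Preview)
Your overall strategy matches the paper's: establish the superexpander property via Lafforgue's strong Property~$(T)$ and Sawicki's extension of Vigolo's discretization theorem, then invoke Theorem~\ref{thm:main_FSL} to reduce quasi-isometry to affine commensuration, and finally count. However, two steps contain genuine gaps.

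First, your freeness argument is incorrect. That $i(F_k)$ acts freely by right translation shows only that elements of the form $(e,\gamma)$ act freely; it says nothing about a mixed element $(\lambda,\gamma)\in\Lambda\times F_k$, which fixes $c\in K$ precisely when $\rho(\lambda)=c\,i(\gamma)\,c^{-1}$. For a given inclusion $i$ this may well happen. The paper handles this via Lemma~\ref{lemma:charactervariety}(2): using Borel's theorem that word maps are dominant, one shows that the set of $i\in\Hom(F_k,K)$ for which the left-right action fails to be free is a countable union of proper subvarieties, so freeness holds generically. You need this input (or something equivalent) before Theorem~\ref{thm:main_FSL} applies.

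Second, your final counting step is incomplete. Even granting that the affine commensuration respects the product $\Lambda\times F_k$, you only record post-composition by $\Aut(K)\ltimes K$ and ignore pre-composition by the automorphism $\varphi|_{F_k}\in\Aut(F_k)$. The dimension bound on $\Aut(K)\ltimes K$-orbits alone does not give continuum many distinct classes once you allow an arbitrary (and infinite) group of precompositions. The paper avoids this entirely by a cleaner countability argument: an affine commensuration forces the two points of $\Hom(F_k,K)\sslash K$ to lie in the same $\Aut(\Gamma)$-orbit, and since $\Aut(\Gamma)$ is countable each orbit is countable, so the complement of a meager set in the character variety contains a continuum of pairwise non-commensurable actions. (A minor slip along the way: $F_k$ certainly has nontrivial abelian quotients; what you need is that it has no free abelian direct \emph{factor}.)
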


\noindent In the proof of this theorem the $\Lambda$--action provides the spectral gap required to make the graphs superexpanders, while the free group provides us with the ability to vary the actions and geometry of the graphs.
Both Theorem \ref{thm:expanders} and \ref{thm:superexpanders} are consequences of Theorem \ref{thm:main_FSL} and some results on character varieties used to guarantee that the actions are free and minimal.

\subsection{History and motivations}
\label{sec:history}

Given a group $\Gamma$ and a sequence $\Gamma_n$ of finite index subgroups, the {\em box space} is the disjoint union of metric graphs $X_n = \Gamma/\Gamma_n$.  The key observation that begins the paper of Khukhro--Valette is that the Gromov--Hausdorff limit of $\{X_n\}_n$ is in fact $\Gamma$, so that if there is a quasi-isometry between box spaces of $\Gamma$ and $\Lambda$, then $\Gamma$ and $\Lambda$ are quasi-isometric and one has access to the broad literature on quasi-isometries of groups.  The proof of De Laat--Vigolo follows a similar line but replaces box spaces by warped cones.  They study warped cones over actions on manifolds and show that any sequence of level sets for $\CGM$ has Gromov--Hausdorff limit $\Gamma \times \bbR^{\dim(M)}$, a result also proven by Sawicki in \cite{Sawicki}.  In both cases these results allow one to distinguish large classes of expanders by referring to the literature on quasi-isometric rigidity of groups.

More recently, Delabie--Khukhro have shown that quasi-isometries of box spaces are in fact much more restricted than just quasi-isometries of the ambient group \cite{DelabieKhukhro}.  Essentially Delabie--Khukhro use that the quasi-isometry makes the box spaces coarsely homotopic and show that this forces the ambient groups to be commensurable and the sequences of subgroups to be isomorphic using the notion of a coarse fundamental group.  These results are the natural analogue of ours in the setting of box spaces, but do not require any assumptions on the groups. The fact that there is some obstruction to a result like ours in terms of the acting group is already noticed explicitly by Sawicki, who proves a conditional result about quasi-isometries of warped cones induced by a single good map $f: M \rightarrow N$ \cite{Sawicki}. While we do not use this result of Sawicki's here, from this perspective, a key technical difficulty in our work is producing any such map.

We are also motivated by the natural analogy between classifying finitely generated groups up to quasi-isometry and classifying the warped cones of group actions up to quasi-isometry.  The new classification questions that arise seems to connect to numerous other fields of mathematics including computer science, dynamics, geometric group theory and operator algebras.  We remark here that much as it is for finitely generated groups, {\em quasi-isometry} is the right notion of equivalence in the category of warped cones.  First, changing the generating set of the acting group changes the warped cone metric but the metrics remain quasi-isometric.  Second, choosing a different but bi-Lipschitz metric on $M$ also changes the warped cone metric, but again only up to quasi-isometry.  This second observation implies that all Riemannian metrics on a fixed manifold will give rise to quasi-isometric warped cones.  In the context of actions on totally disconnected spaces, Sawicki shows that non-bi-Lipschitz metrics can give rise to non-quasi-isometric warped cones for the same group action.  One can probably also produce examples of this in the setting of actions on manifolds simply by taking a Riemannian metric and {\em snowflaking} it.

\subsection{Hypothesis on group actions}
\label{sec:hypo}

In both Theorems \ref{thm:main} and \ref{thm:main_FSL}, the assumption on absence of abelian factors of $\Gamma$ and $\Lambda$ is necessary: In \cite{MR2252891}, Kim shows that circle rotations that are related by integral fractional linear transformations give rise to quasi-isometric geometric warped cones.  Since rotation number is a conjugacy invariant of rotations, this provides many non-conjugate $\bbZ$--actions on $S^1$ that have quasi-isometric geometric warped cones and the rotation numbers can be sufficiently different that the problem is not resolved by commensuration of the actions.  In an appendix to a recent paper by Sawicki, Kielak--Sawicki give an even more remarkable construction showing that there are rotations on $S^1$ whose warped cone is quasi-isometric to the warped cone of the trivial action on $\bbT^2$ \cite{Sawicki} and also produce a variety of other examples of this flavor, including ones between minimal isometric actions. The key insight there is to see that one can shift geometry of the acting group to geometry of the ambient space, provided one takes sequences of level sets that grow at different rates and the geometry of the space and group factors are sufficiently similar.  These more exotic examples do not give rise to quasi-isometries of geometric warped cones.

We do not currently know if Property $\FSL$ is necessary in Theorem \ref{thm:main_FSL} or if all finitely generated subgroups of compact Lie groups have Property $\FSL$.  We have also not explored the degree to which variants of the theorems might be true for actions that are not isometric, which is an intriguing direction for future research.

\subsection{Basic intuition and outline of the proofs and the paper}

The basic intuition behind our theorem is roughly that if the quasi-isometries between the cones respected a distinction between directions related to paths in the manifold and directions related to paths in the group, then the quasi-isometry would appear to need to recognize the orbit structure of the action. This is because points $x, y$ in $M$ are in the same $\Gamma$--orbit if and only if the distance between $x$ and $y$ in level sets of $\CGM$ remains bounded.  In addition if the quasi-isometry respects the distinction between directions related to paths in the manifold and directions related to paths in the group, one should be able to rescale the quasi-isometries along the manifold direction to have them converge to a bi-Lipschitz map.  Together these two intuitions would produce an orbit equivalence between actions, which can be seen to be a conjugacy by results of the first author and Whyte \cite{MR2033836}.  The actual proof does not follow exactly this line and as mentioned above, the examples of Kim and Kielak--Sawicki show that the basic intuition is wrong for at least some examples. Their intriguing examples show that one cannot, in general, hope that quasi-isometries of warped cones respect the distinction between directions related to paths in the manifold and paths in the group.  Our theorems and proofs do give strong evidence that in the case
of minimal isometric actions, their examples capture more or less all the ways in which this mixing of space and orbit can occur for quasi-isometries of warped cones.

Before beginning our proof, Section \ref{sec:prelim} recalls definitions of warped cones and coarse fundamental groups.   The first step of our proof is to show that the coarse fundamental group of a warped cone $\CGM$ is the group $\widetilde{\Gamma}$ of lifts of the $\Gamma$--action to the universal cover $\widetilde{M}$ of $M$. A similar computation was obtained independently and simultaneously by Vigolo \cite{vigolo_coarsepi1} (see Remark \ref{rmk:vigolo_coarsepi1} for further discussion). As quasi-isometries necessarily induce isomorphisms of coarse
fundamental groups, we get an isomorphism between $\widetilde{\Gamma}$ and $\widetilde{\Lambda}$ induced by the quasi-isometry of warped cones.  In Section \ref{sec:no_obstruct}, we use this to define an {\em obstruction map} that captures when a quasi-isometry of warped cones mixes group directions with manifold directions.  The obstruction map is a composition

$$\pi_1(M) \rightarrow \widetilde{\Gamma} \overset{\cong}{\longrightarrow} \widetilde{\Lambda} \rightarrow \Lambda$$
\noindent where the first and last arrow come from the structure of groups of lifts and the isomorphism in the middle arises from our quasi-isometry.  We show that after passing to commensurable actions, our hypothesis that $\Gamma$ and $\Lambda$ have no abelian direct factors forces the obstruction map to vanish. At this point we have isomorphisms $\pi_1(M) \cong \pi_1(N)$ and $\widehat{\varphi}:\Gamma \rightarrow \Lambda$.  The existence of $\widehat{\varphi}$ is already a major development, since we now have an identification of acting groups and so it is sensible to ask if the actions are conjugate.

The outline is actually a bit more complicated at this step.  If we have a quasi-isometry of geometric warped cones, it is not hard to see purely from coarse topology that $\widehat{\varphi}$ actually exists and is independent of $t$ (see Section \ref{sec:geomcone_stab}).  In the context of Theorem \ref{thm:main_FSL}, we actually have a sequence $\widehat{\varphi}_n$ of isomorphisms and a key step is to argue that this sequence of quasi-isometries stabilize along a subsequence.  We actually need only that they stabilize up to conjugation, i.e. that $\widehat{\varphi}_n$ stabilize in $\Out(\Gamma)$ rather than in $\Aut(\Gamma)$. This is trivial if say $\Out(\Gamma)$ is finite, but in general requires an argument.  In Section \ref{sec:stab}, we show that the group automorphisms that arise in our context are {\em Lipschitz for the stable length}, and we introduce Property $\FSL$ which essentially says that given a constant $D$ there are only finitely many elements of $\Out(\Gamma)$ represented by automorphisms that are $D$--Lipschitz for stable length.

The next step after this is also a matter that only arises in the setting of Theorem \ref{thm:main_FSL}.  The reader
only interested in Theorem \ref{thm:main} can skip all but the first and last subsections of Section \ref{sec:stab} and all of Section \ref{sec:force_linear}.    In Section \ref{sec:force_linear}, we show that vanishing of the obstruction map in the context of Theorem \ref{thm:main_FSL} forces $t_n$ and $s_n$ to be linearly related.  Once one knows this,
it is easy to see that one can assume $t_n=s_n$.  We note here that it is easy to compute the obstruction map
for the examples of Kim and Kielak--Sawicki and see that it does not vanish.

The main step of our proof, completed in Section \ref{sec:semiconj}, is to show that one can now in fact construct a semiconjugacy between the actions.   To do so, we modify the map of levels sets given by the quasi-isometry by a monodromy correction, which measures the extent to which the quasi-isometry mixes manifold and orbit directions. We show these modified maps are {\em coarsely Lipschitz} maps between the manifolds.  The estimate on the constants is such that the resulting maps converge to a Lipschitz map as $t \rightarrow \infty$. The stabilization up to conjugacy of the isomorphisms $\varphi_n$, allows us to further adjust the quasi-isometries of level sets before passing to the limit, so that in the limit we produce a Lipschitz semiconjugacy.

The final issue is to show that the semi-conjugacy we construct is actually an affine conjugacy, which is proved using a variant of a standard argument from homogeneous dynamics in Section \ref{sec:conj}.

Lastly in Section \ref{sec:app_exp}, we prove Theorems \ref{thm:expanders} and \ref{thm:superexpanders} from Theorem \ref{thm:main_FSL}.  The key idea is to produce continua of group actions that are not affinely conjugate but that have the necessary spectral gaps.  To produce expanders, it suffices to consider actions of  free group $F_k$ where we fix the action on $F_{k-1}$ to be one with a spectral gap and vary the last generator as described after Theorem \ref{thm:expanders}.  To produce superexpanders, where we need a spectral gap with more general Banach coefficients, we consider products $\Delta \times F_n$ acting on a compact Lie group $K$ where $\Delta$ acts by left-translations and $F_n$ acts by right-translations. We guarantee spectral gap for this action by choosing the $\Delta$--action so that it already has spectral gap, and this will be inherited by the product.  We remark that to think of this as an affine action requires
thinking of $K$ as $(K \times K)/ \diag(K)$ where $\diag(K)$ is the image of the diagonal embedding of $K$.  The existence of continua of non-conjugate free minimal actions follows from results on character varieties proved using Borel's theorem that word maps are dominant, see Subsection \ref{sec:free_generic}.

\subsection*{Acknowledgments} The authors thank Michael Larsen and Nick Miller for useful conversations on
representation varieties and word maps. DF thanks Piotr Nowak for introducing him to this circle of ideas and thanks Assaf Naor and Mikeal de la Salle for interesting conversations about superexpanders and
strong property $(T)$. WvL is grateful to Federico Vigolo for explaining the connection between expanders and warped cones over group actions at the Newton Institute in May 2017, which aroused his interest in this topic.  We thank Emmanuel Breuillard for his explanation of the proof of Lemma \ref{lemma:charactervariety}. We also thank Vigolo and Damian Sawicki for helpful comments on a preliminary version of this paper. DF was partially supported by NSF Grants DMS--1308291 and DMS--1607041.   WvL and TN thank the Indiana University Math Department for support during a visit there and TN thanks the Michigan Math Department and NSF Grant DMS--1607260 for support during a visit there.

\section{Preliminary remarks}
\label{sec:prelim}

\subsection{Generalities for warped cones} \label{sec:warp_cone}

Let a finitely generated group $\Gamma$ act isometrically on a closed Riemannian manifold $(M,g)$. Let $S$ be a symmetric generating set for $\Gamma$, so that $\Gamma$ is equipped with a word metric. The \emph{geometric cone} of $\Gamma\curvearrowright M$ is the metric space
	$$\GGM:=(M\times (0,\infty)\times\Gamma)\slash\Gamma,$$
where $M\times(0,\infty)$ is equipped with the warped product metric $tg+dt^2$. We let $tM$ denote the manifold $M$ with the metric $tg$, and write $M_t:=(tM\times\Gamma)\slash\Gamma$ for the \emph{level set of the geometric cone at time} $t$. We write $d_t^\Gamma$ for the metric on $M_t$. It is immediate that $M_t$ can also be described as the metric space with underlying set $M$ and $d_t^\Gamma$ defined as follows:
\begin{lem} Let $x,y\in M$. Then
	$$d_t^\Gamma(x,y)=\inf_{\gamma\in\Gamma} (t \, d_M(x,\gamma y) + \|\gamma\|).$$
\label{lem:warp_dist}
\end{lem}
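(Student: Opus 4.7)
The plan is to observe that the displayed formula is nothing more than the standard quotient-metric construction on $(tM \times \Gamma)/\Gamma$, unpacked. Equip $tM \times \Gamma$ with the $\ell^1$-sum of the warped-product metric on $tM$ and the word metric on $\Gamma$; then $\Gamma$ acts diagonally by isometries. Identifying the orbit space set-theoretically with $M$ via $x \mapsto [(x,e)]$, the quotient distance is the infimum of $d_{tM \times \Gamma}((x,e), \gamma \cdot (y,e))$ over $\gamma \in \Gamma$. Since $\gamma \cdot (y,e) = (\gamma y, \gamma)$, this is precisely $\inf_{\gamma \in \Gamma}\bigl(t\, d_M(x,\gamma y) + \|\gamma\|\bigr)$.

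Let $\ell(x,y)$ denote the right-hand side. The only axiom requiring a genuine argument is the triangle inequality, and this is where the isometry hypothesis enters. Given $\gamma_1, \gamma_2 \in \Gamma$, rewrite $d_M(y,\gamma_2 z) = d_M(\gamma_1 y, \gamma_1\gamma_2 z)$ using that $\gamma_1$ is an isometry; then
\[
t\, d_M(x,\gamma_1 y) + \|\gamma_1\| + t\, d_M(y,\gamma_2 z) + \|\gamma_2\| \;\geq\; t\, d_M(x, \gamma_1\gamma_2 z) + \|\gamma_1\gamma_2\|
\]
by the manifold triangle inequality and subadditivity of word length; this is $\geq \ell(x,z)$, and infimizing over $\gamma_1,\gamma_2$ gives $\ell(x,y) + \ell(y,z) \geq \ell(x,z)$.

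To reconcile with the characterization of $d_t^\Gamma$ as the largest pseudo-metric satisfying $d(x,y) \leq t\, d_M(x,y)$ and $d(x, \gamma x) \leq \|\gamma\|$ (i.e. Roe's warped-cone metric, cf.\ Remark \ref{rmk:roe_cone}), note first that $\ell$ obeys both bounds---take $\gamma = e$ for the former and replace $\gamma$ by $\gamma^{-1}$ in the infimum for the latter---so by maximality $\ell \leq d_t^\Gamma$. Conversely, two applications of the triangle inequality together with the two defining bounds give, for every $\gamma \in \Gamma$,
\[
d_t^\Gamma(x,y) \;\leq\; d_t^\Gamma(x,\gamma y) + d_t^\Gamma(\gamma y, y) \;\leq\; t\, d_M(x,\gamma y) + \|\gamma\|,
\]
so $d_t^\Gamma(x,y) \leq \ell(x,y)$ after taking the infimum. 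The main (and only) conceptual input is that $\Gamma$ acts by isometries on $M$, which allows group translations to be commuted past manifold steps at no cost; everything else is bookkeeping, which is why the authors call the statement immediate.
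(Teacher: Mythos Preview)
Your proof is correct. The paper does not actually supply a proof here---it simply asserts the formula is immediate from the definition of $M_t$ as the quotient $(tM\times\Gamma)/\Gamma$---so your argument is a faithful unpacking of that claim, and in fact goes further: your second paragraph (reconciling $\ell$ with the maximal-metric characterization) is really a proof of Remark~\ref{rmk:roe_cone} rather than of the lemma itself, which only asks for the quotient-metric identification you handle in the first paragraph.
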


\begin{rmk} For clarity, we mention that our definition of the metric on the (geometric) warped cone differs from
Roe's.  For isometric actions, it is well known that Roe's definition is equivalent to the one given by the
formula in the lemma above and so to ours.
\label{rmk:roe_cone}
\end{rmk}

From the above expression, it is clear the large-scale geometry of these level sets is invariant under taking finite quotients or extensions of the action:
\begin{lem} Let $F\subseteq \Gamma$ be a finite normal subgroup, and write $\Gamma':=\Gamma\slash F$ and $M':=M\slash F$. Write $M_t$ (resp. $M_t'$) for the level set at time $t$ of the cone over $\Gamma\curvearrowright M$ (resp. $\Gamma'\curvearrowright M'$). Then $M_t$ is $(1,B)$--quasi-isometric to $M_t'$, where $B$ is the maximal word length in $\Gamma$ of an element of $F$.
\label{lem:cone_fin_quot}
\end{lem}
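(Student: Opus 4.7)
The natural candidate quasi-isometry is the quotient map $\pi:M\to M'$ itself. Since $\pi$ is surjective, condition (2) of Definition \ref{dfn:qi} is automatic, so the whole task reduces to establishing the two-sided estimate
$$d_t^{\Gamma'}(\pi(x),\pi(y))\;\le\; d_t^{\Gamma}(x,y)\;\le\; d_t^{\Gamma'}(\pi(x),\pi(y))+B.$$
I will equip $\Gamma'$ with the word metric induced by the image $\bar S\subseteq\Gamma'$ of the fixed generating set $S$; with this choice the projection $\Gamma\to\Gamma'$ is $1$-Lipschitz and Lemma \ref{lem:warp_dist} supplies explicit infimum formulas on both sides, so the problem is reduced to comparing these two infima.

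The upper bound is the easy direction. For any $\gamma\in\Gamma$, its image $\gamma':=\gamma F\in\Gamma'$ satisfies $\|\gamma'\|\le\|\gamma\|$, while $d_{M'}(\pi(x),\gamma'\pi(y))\le d_M(x,\gamma y)$ because $M\to M'$ is $1$-Lipschitz. Substituting into the formula for $d_t^{\Gamma'}(\pi(x),\pi(y))$ and taking the infimum over $\gamma$ gives the left-hand inequality.

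For the lower bound, I would start from an arbitrary $\gamma'\in\Gamma'$ and lift a geodesic word in $\bar S$ representing it, letter-by-letter, to a word in $S$; this produces $\tilde\gamma\in\Gamma$ with $\bar{\tilde\gamma}=\gamma'$ and $\|\tilde\gamma\|\le\|\gamma'\|$. The key point is that because $F$ is finite, the quotient-distance infimum is attained: there exists $f_0\in F$ with $d_M(x,f_0\tilde\gamma\,y)=d_{M'}(\pi(x),\gamma'\pi(y))$. Setting $\gamma:=f_0\tilde\gamma$, we still have $\bar\gamma=\gamma'$, we have the exact identity $d_M(x,\gamma y)=d_{M'}(\pi(x),\gamma'\pi(y))$, and the triangle inequality in $\Gamma$ gives $\|\gamma\|\le\|\gamma'\|+B$. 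Plugging into the formula for $d_t^{\Gamma}(x,y)$ and taking infimum over $\gamma'$ yields the right-hand inequality.

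The argument is essentially bookkeeping and I do not anticipate any real obstacle; the only mildly delicate point is the need to realize the $M'$-distance by a specific lift inside the coset $\tilde\gamma F$, which is exactly what forces the additive loss $B$ (equal to the $\Gamma$-diameter of $F$) and explains why the constants $(1,B)$ are the natural ones. Normality of $F$ is used only at the outset, to ensure that $\Gamma/F$ is a group acting on $M/F$, and plays no role in the metric estimates themselves.
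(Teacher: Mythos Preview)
Your proof is correct and follows essentially the same approach as the paper: both use the quotient map $\pi$, observe it is $1$-Lipschitz for the warped metrics, and for the reverse inequality exploit that any $\gamma\in\Gamma$ satisfies $\|\gamma\|\le\|\gamma'\|+B$. Your version is simply a more explicit unpacking of this last step---you construct a short lift $\tilde\gamma$ and then adjust by an element of $F$, whereas the paper states the word-length comparison directly---but the underlying argument is identical.
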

\begin{proof} Write $\pi:M_t\to M_t'$ for the natural quotient map. It is easy to see that $\pi$ is distance non-increasing, i.e. $1$--Lipschitz. For the lower bound, let $C$ be the maximal word length of an element of the finite group $F$. Further for any $\gamma\in\Gamma$, let $\gamma'$ denote its image in $\Gamma'$. Then we have $\|\gamma\|\leq \|\gamma'\| + C$, which easily implies the other bound.\end{proof}
We also define the \emph{full warped cone} $\CGM:=\sqcup_t \, M_t$ and more generally, for any sequence $t_n\to\infty$, the associated warped cone $\mathscr{C}_{(t_n)}(\Gamma\curvearrowright M):=\sqcup_n \, M_{t_n}$. For most of the paper, and for applications to expander graphs, we need to consider quasi-isometries of warped cones associated to a sequence $(t_n)$, i.e. a sequence of uniform quasi-isometries $f_n: M_{t_n}\to N_{s_n}$. Note that in general it need not be true that $t_n\asymp s_n$ -- indeed, we already alluded to an example of Kielak--Sawicki that produces a quasi-isometry between the warped cone over a suitable rotation on $S^1$ for a quadratically growing sequence, and the warped cone over the trivial action on $T^2$ for a linear sequence. However, see Section \ref{sec:force_linear}, where we show that $t_n\asymp s_n$ under suitable hypotheses.

A map $f:X\to Y$ of metric spaces is called $(L,C)$--\emph{coarsely Lipschitz} if for any $x_1,x_2\in X$, we have
	$$d_Y(f(x_1),f(x_2))\leq L d_X(x_1,x_2)+C.$$
In general, a quasi-isometry of geometric cones yields uniformly coarsely Lipschitz maps of levels.
\begin{lem} Let $f:\GGM\to \GLN$ be a quasi-isometry. For any $t>0$, define $f_t:M_t\to N_t$ as the composition
	$$f_t: M_t\hookrightarrow \GGM \overset{f}{\longrightarrow} \GLN \twoheadrightarrow N_t$$
where the first map is the inclusion of a level set and the last map is the projection to a level set. Then there exists $t_0>0$ such that for $t>t_0$, the maps $f_t$ are uniformly coarsely Lipschitz.
	\label{lem:global_to_local}
\end{lem}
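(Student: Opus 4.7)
Plan:

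The approach is to unwind the warped cone metric on $\GLN$ and transfer the quasi-isometry bound from $\GLN$ down to the level set $N_t$.

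For $x_1, x_2 \in M_t$, write $f(x_i, t) = (y_i, s_i) \in \GLN$ under the identification $\GLN \cong N \times (0, \infty)$. The inclusion $M_t \hookrightarrow \GGM$ is $1$-Lipschitz (cone distances can only shorten by taking shortcuts through other time slices), so the quasi-isometry property gives $d_{\GLN}((y_1, s_1), (y_2, s_2)) \leq L\, d_t^\Gamma(x_1, x_2) + C$. Writing $d_R$ for the Riemannian distance on $N \times (0, \infty)$ under the warped metric $sg + ds^2$, the warped cone distance decomposes as
\[ d_{\GLN}((y_1, s_1), (y_2, s_2)) = \inf_{\lambda \in \Lambda} \bigl[ d_R((y_1, s_1), (\lambda y_2, s_2)) + \|\lambda\| \bigr], \]
so there exists $\lambda \in \Lambda$ with $\|\lambda\| \leq Ld_t^\Gamma(x_1, x_2) + C$ and a path $(y(u), s(u))$ from $(y_1, s_1)$ to $(\lambda y_2, s_2)$ of Riemannian length at most $Ld_t^\Gamma(x_1, x_2) + C$. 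The elementary bound $\int s(u) |\dot y(u)|\, du \geq s_{\min} \cdot d_N(y_1, \lambda y_2)$, where $s_{\min} = \min_u s(u)$, then gives
\[ d_t^\Lambda(y_1, y_2) \leq t\, d_N(y_1, \lambda y_2) + \|\lambda\| \leq \Bigl(\tfrac{t}{s_{\min}} + 1\Bigr)\bigl(Ld_t^\Gamma(x_1, x_2) + C\bigr). \]

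The main task is then to show $t/s_{\min}$ is uniformly bounded for $t \geq t_0$. This combines a basepoint argument (using both $f$ and $f^{-1}$ together with the $1$-Lipschitz time functions on each cone to prove that $s(f(x, t)) \in [t/L - C_1, Lt + C_1]$ uniformly in $x$) with a length-balance observation (a Riemannian path of length $\ell$ cannot dip below $\min(s_1, s_2)$ by more than $\ell/2$). For $d_t^\Gamma(x_1, x_2)$ small relative to $t$ these combine to give $s_{\min} \geq ct$ uniformly once $t \geq t_0$; the complementary regime (large $d_t^\Gamma$) is reduced to it by subdividing the $\GLN$-path into short segments and iterating the per-segment bound.

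The main obstacle is the joint control of time distortion by $f$ and dipping of near-geodesic paths in $\GLN$: both would degrade the bound if uncontrolled. The threshold $t_0$ appears precisely so that the basepoint estimate gives an effective linear-in-$t$ lower bound on $s_{\min}$.
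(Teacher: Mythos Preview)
Your approach shares the paper's key step: the basepoint argument showing $f$ coarsely preserves the time coordinate (this is exactly the paper's Equation~(2.1), proved there via the observation that $p_2, q_2$ are distance non-increasing and $d(x,x_0)\leq |p_2(x)-p_2(x_0)|+\mathrm{diam}(M)$). The paper then simply asserts that the projection of $f(M_t)$ to $N_t$ increases distances by at most a factor $t/(K^{-1}t-B)$, which is bounded for $t\gg 1$; your path-based treatment with $s_{\min}$ and the length-balance observation is a more explicit unpacking of that same projection estimate.

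One point needs care. Your per-segment bound controls $d_t^\Lambda(f_t(x_1),f_t(x_2))$ for $x_1,x_2\in M_t$ with $d_t^\Gamma(x_1,x_2)$ small relative to $t$, so iterating it requires intermediate points \emph{in $M_t$}. Subdividing a geodesic in $\GLN$, as your wording suggests, would give intermediate points whose time coordinate you have not controlled---if the whole $\GLN$-geodesic dips (which is exactly the regime you are trying to handle), so do its segments---and the bound does not apply to them. The fix is either to subdivide a $d_t^\Gamma$-geodesic in $M_t$ and apply the short-distance bound to consecutive subdivision points, or, more simply, to observe that when $d_t^\Gamma(x_1,x_2)\gtrsim t$ the trivial estimate $d_t^\Lambda(y_1,y_2)\leq t\,\mathrm{diam}(N)$ already yields the coarse Lipschitz bound, so no subdivision is needed at all.
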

\begin{proof} It is clear the inclusion of a level set is $1$-Lipschitz and $f$ is $(L,C)$-coarsely Lipschitz. Therefore to show the composition is uniformly coarsely Lipschitz, we need to bound the distortion coming from the projection to a level set. More precisely, let $p_2: \GGM \to (0,+\infty)$ denote the projection of the warped cone on the second factor, and likewise define $q_2:\GLN\to(0,\infty)$. We claim that $f$ coarsely preserves the second factor, namely that there exists $K\geq 1$ and $B>0$ such that for all $x\in\GGM$, we have
	\begin{equation} K\inv p_2(x)-B\leq q_2(f(x))\leq K p_2(x)+B. \label{eq:ray_dir} \end{equation}
Indeed, consider a basepoint $x_0\in\GGM$ with $p_2(x_0)=1$. It is easy to obtain the estimates of Equation \eqref{eq:ray_dir} from the observation that $p_2$ and $q_2$ are distance non-increasing, and further
	$$d(x,x_0)\leq |p_2(x)-p_2(x_0)|+\diam(M),$$
and likewise for $d(f(x),f(x_0))$.

From Equation \eqref{eq:ray_dir}, it is easy to see that the projection of $f(M_t)$ to the level set $N_t$ increases distances by a factor of at most $(K\inv t-B)\inv t$, which is bounded above by $(2K)\inv$ for $t\gg 1$. \end{proof}
%



\subsection{Coarse homotopy and fundamental group} \label{sec:coarse_htpy}
We briefly recall the definition of and basic facts about coarse fundamental groups. For more information, see e.g. \cite{MR3262192, DelabieKhukhro}.

\begin{dfn} Let $X$ be a metric space and $r>0$. An \emph{$r$-path} is a sequence of points $\alpha=(p_0,\dots,p_\ell)$ where $d(p_i,p_{i+1})<r$. An \emph{$r$-loop} is an $r$-path with $p_0=p_\ell$. The \emph{endpoint append} of an $r$-path $\alpha$ is the $r$-path
	$$\append(\alpha)=(p_0,\dots,p_\ell,p_\ell).$$
If $\beta=\append(\alpha)$, we say $\alpha$ is the \emph{endpoint deletion} of $\beta$. Two $r$-paths $\alpha=(p_0,\dots,p_\ell)$ and $\beta=(q_0,\dots,q_n)$ are \emph{$r$-close} if either
	\begin{enumerate}[(a)]
		\item $\beta$ is the endpoint append or deletion of $\alpha$, or
		\item $\alpha$ and $\beta$ have the same length (i.e. $\ell=n$) and $d(p_i,q_i)<r$ for all $i$.
	\end{enumerate}
An \emph{$r$-homotopy} based at $p_0$ between two $r$-loops $\alpha$ and $\beta$ based at $p_0$ is a sequence of $r$-loops based at $p_0$
	$$\alpha=c_0, c_1,\dots, c_{k-1}, c_k=\beta$$
where $c_i, c_{i+1}$ are $r$-close for all $i$.
\label{dfn:rpath}
\end{dfn}

We define the coarse fundamental group:
\begin{dfn} Let $X$ be a metric space and $r>0$ and $p_0\in X$. The \emph{coarse fundamental group of $X$ based at $x_0$ and of scale $r$} is the group $\pi_1(X, x_0; r)$ whose elements are $r$-homotopy classes of $r$-loops based at $x_0$, with multiplication given by concatenation.
\label{dfn:coarse_pi1}
\end{dfn}
Let us first discuss dependence on the basepoint $x_0$. The situation is completely parallel to the one for (continuous) fundamental groups. Namely, we say a metric space is \emph{connected on the scale} $r$ if there is an $r$-path between any two of its points. If a space is connected on the scale $r$, then the isomorphism type of $\pi_1(X, x_0; r)$ is independent of $x_0$. In this case we refer to this group as $\pi_1(X; r)$. Any two possible identifications differ by an inner automorphism.

Let us now discuss basic functoriality properties of coarse fundamental group of a metric space $X$. First, we can compare the coarse fundamental group at different scales $r_1<r_2$. Namely, since any $r_1$-path is an $r_2$-path, we have an obvious \emph{change of scale}--map
	$$\pi_1(X, x_0; r_1)\to \pi_1(X, x_0; r_2).$$
If $c$ is a (continuous) path in $X$, we say an $r$-path $\alpha=(p_0,\dots,p_\ell)$ is a \emph{discretization of} $c$ \emph{on the scale} $r$ if there is a partition
	$$0=t_0<t_1<\dots<t_\ell=1$$
of $[0,1]$ such that $p_i=c(t_i)$ and $c([t_i,t_{i+1}])\subseteq B_X(p_i; r)$ for every $i$. The $r$-homotopy class of a discretization is invariant under refinement under the partition and hence does not depend on the partition chosen. It is not hard to check that (continuous) homotopies of loops based at $x_0$ give $r$--homotopic discretizations. Therefore we have a well-defined discretization map
	$$\pi_1(X)\to \pi_1(X;r).$$
An $(L,C)$--\emph{coarsely Lipschitz} map defines an obvious morphism on coarse fundamental groups
	$$f_\ast:\pi_1(X,x_0;r)\to \pi_1(Y, f(x_0);Lr+C).$$
These behave well with respect to composition, namely if $f:X\to Y$ is $(L_f,C_f)$--coarsely Lipschitz and $g:Y\to Z$ is $(L_g,C_g)$--coarsely Lipschitz, then $g\circ f$ is $(L_g L_f, L_g C_f + C_g)$--coarsely Lipschitz and $(g\circ f)_\ast = g_\ast\circ f_\ast$.
	
\subsection{Technical coarse homotopy lemma} Aside from endpoint appends and deletions, we can perform any coarse homotopy by making only two changes at the time, and these are in consecutive positions:

\begin{lem} Suppose $\alpha, \beta$ are $r$-close $r$-loops based at $p_0$ of the same length. Then there is an $r$-homotopy (possibly through curves of longer length) from $\alpha$ to $\beta$ such that consecutive paths differ in at most two points, and these are consecutive (we allow endpoint appends and deletions).
\label{lem:baby_step_htpy}
\end{lem}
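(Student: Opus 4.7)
The strategy is to construct an explicit $r$-homotopy by threading through a sequence of \emph{staircase paths} interpolating between $\alpha$ and $\beta$, with successive staircases linked through longer intermediates obtained from endpoint appends. Define
\[
\tau_i := (p_0, p_1, \ldots, p_i, q_i, q_{i+1}, \ldots, q_\ell), \qquad 0 \le i \le \ell.
\]
Each $\tau_i$ is an $r$-loop based at $p_0$: the horizontal edges come from $\alpha$ and $\beta$, while the ``vertical'' edge $p_i q_i$ has length $< r$ by $r$-closeness. Observe that $\tau_\ell$ is the endpoint append of $\alpha$, and $\tau_0 = (p_0, p_0, q_1, \ldots, q_\ell)$ differs from $\beta$ only by a duplicated basepoint in front. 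Thus it suffices to construct an $r$-homotopy
\[
\alpha \;\longrightarrow\; \tau_\ell \;\longrightarrow\; \tau_{\ell-1} \;\longrightarrow\; \cdots \;\longrightarrow\; \tau_0 \;\longrightarrow\; \beta
\]
consisting of the elementary moves allowed by the lemma.

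The main substep is the passage $\tau_i \to \tau_{i-1}$. These two paths differ only at position $i$, where they carry $p_i$ and $q_{i-1}$ respectively, but $d(p_i, q_{i-1})$ could be as large as $2r$, so a direct one-position change is not admissible and one must detour through a longer path. The key tool is the following \emph{duplicate transport} observation: in any $r$-path containing two equal consecutive entries $(x, x)$, this duplicate can be slid one position left or right by a single one-position change---for instance a configuration $\ldots, y, x, x, z, \ldots$ becomes $\ldots, y, y, x, z, \ldots$ by replacing the first $x$ with $y$, which is admissible since $d(x, y) < r$ and the result remains an $r$-path.

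Using duplicate transport, the sub-homotopy $\tau_i \to \tau_{i-1}$ proceeds as follows: endpoint-append $\tau_i$ to create a duplicate $(q_\ell, q_\ell)$ at the end; transport this duplicate leftward through the $q$'s until it sits as $(q_i, q_i)$ at positions $(i+1, i+2)$; perform a one-position change at position $i+1$ replacing $q_i$ by $p_i$ (using $d(p_i, q_i) < r$), which crosses the staircase corner and produces a new duplicate $(p_i, p_i)$ at $(i, i+1)$; transport this duplicate one step further left to $(p_{i-1}, p_{i-1})$ at $(i-1, i)$; now execute the decisive two-consecutive-position change at positions $(i, i+1)$ replacing $(p_{i-1}, p_i)$ by $(q_{i-1}, q_i)$, which is valid because each individual change is $<r$ by $r$-closeness and the resulting path is still an $r$-path (a fresh duplicate $(q_i, q_i)$ appears immediately to the right, keeping all new consecutive distances $<r$); finally transport this duplicate rightward to the end and endpoint-delete to recover $\tau_{i-1}$. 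The closing step $\tau_0 \to \beta$ is handled identically by transporting the leading duplicate $(p_0, p_0)$ to the end before deleting.

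Every elementary move above is either an endpoint append/delete or a change of at most two consecutive positions, so the conclusion follows. I anticipate no conceptual obstacle: each new consecutive distance is either zero (a duplicate), an edge of $\alpha$ or $\beta$, or a vertical $p_j q_j$, so the $r$-path and basepoint conditions are preserved automatically. The only real work in a full write-up is the bookkeeping---tracking which labels sit at which positions through the transports---together with some checking of boundary cases $i = \ell$ and $i = 1$, where some of the transport sub-steps become trivial.
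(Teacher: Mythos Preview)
Your proof is correct and uses essentially the same mechanism as the paper: create a duplicate via endpoint append, transport it to the interface between the $p$'s and $q$'s by single-position changes, then execute a ``synchronized jump'' changing two consecutive entries $(p_{j-1},p_j)\to(q_{j-1},q_j)$. The only differences are organizational: the paper works left-to-right (its intermediates $\eta_k$ carry $q$'s in the first $k$ slots and $p$'s thereafter) and keeps a single floating duplicate throughout, deleting only at the very end, whereas you work via the explicit staircases $\tau_i$ moving the corner right-to-left and perform an append/delete in every iteration. Neither choice affects the argument.
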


\begin{proof} Write $\alpha=(p_0,\dots,p_{\ell+1})$ where $p_0=p_{\ell+1}$ and likewise write $\beta=(p_0,q_1,\dots,q_\ell,p_0)$. Since $\alpha$ and $\beta$ are $r$-close, we know $d(p_i,q_i)<r$ for all $1\leq i\leq l$.

It is easy to see that there is an $r$-homotopy as desired (i.e. making only local changes) between $\alpha$ and the loop that prepends the point $p_0$ to $\alpha$, i.e. the loop
	$$\text{prepend}(\alpha):=(p_0, p_0, p_1, \dots,p_{\ell+1}).$$
This is $r$-close to the loop obtained by replacing the second entry by $q_1$, i.e.
	$$\eta_1:=(p_0, q_1,p_1,\dots,p_{\ell+1}).$$
Now we take a homotopy with only local changes to the loop which is exactly the same except repeating the point $p_2$ twice:
	$$(p_0, q_1, p_1, p_2, p_2, p_3, \dots, p_{\ell+1}).$$
Now we make a `synchronized jump' where we change the third entry from $p_1$ to $q_1$ and the fourth entry from $p_2$ to $q_2$ and end up with the loop
	$$\eta_2:=(p_0,q_1, q_1, q_2, p_2, p_3,\dots,p_{\ell+1}).$$
See Figure \ref{fig:baby-step-htpy} for the idea.
	\begin{figure}[h]
		\includegraphics[height=4 cm]{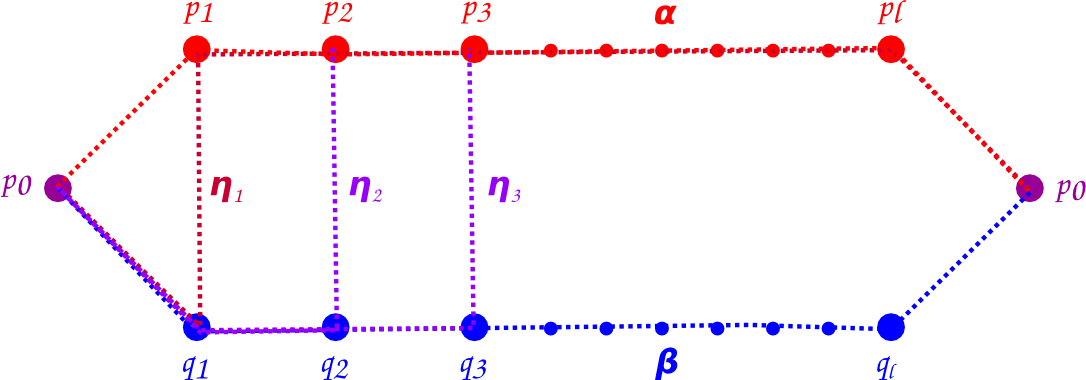}
		\caption{Homotopy from $\alpha$ (path along the top) to $\beta$ (path along the bottom) by only making local changes.}
		\label{fig:baby-step-htpy}
	\end{figure}
Now we move the repeated entry of $q_1$ over so that $p_3$ is repeated twice, and we do another synchronized jump. We keep doing synchronized jumps until we changed all points $p_i$ to points $q_i$. Finally we move any repeated points so that they are at the end, and after endpoint deletions we obtain $\beta$.\end{proof}

\subsection{Additional notation and terminology} Let $\alpha=(p_i)_{0\leq i\leq n}$ be an $r$-path in a compact metric space $X$. The \emph{length} $\ell(\alpha)$ of $\alpha$ is defined by
	$$\ell(\alpha):=\sum_{0\leq i<n} d(p_i,p_{i+1}).$$
We denote by $\overline{\alpha}=(p_{n-i})_{0\leq i\leq n}$ the \emph{reverse} of $\alpha$. If $\beta=(q_i)_{0\leq i\leq m}$ is another $r$-path in $X$ with $d(p_n,q_0)\leq r$, then
	$$\alpha\ast\beta:=(p_0,\dots,p_n,q_0,\dots,q_m)$$
denotes the \emph{concatenation} of $\alpha$ and $\beta$.

%
%
%
%
%

\section{Coarse fundamental group of level sets of warped cones}
\label{sec:coarsepi1}

\subsection{Statement of result} The goal of this section is to compute the coarse fundamental group of the level set $M_t$ of $\CGM$ at time $t$. A heuristic comes from the analogy between coarse algebraic topology on $M_t=(t M\times\Gamma)\slash\Gamma$ and (continuous) algebraic topology on $E:=(M\times\widetilde{X})\slash\Gamma$, whenever $X=\widetilde{X}\slash\Gamma$ is an aspherical manifold with fundamental group $\Gamma$. After lifting the $\Gamma$--action on $M$ to an action of a group $\widetilde{\Gamma}$ on the universal cover $\widetilde{M}$ of $M$, we have $E=(\widetilde{M}\times\widetilde{X})\slash\widetilde{\Gamma}$, so we see that $\pi_1(E)\cong\widetilde{\Gamma}$ is the group of lifts of $\Gamma$ to $\widetilde{M}$. We prove the same result holds in the coarse setting:
	\begin{prop} Let $\Gamma$ be a finitely presented group acting minimally, isometrically, and freely on a manifold $M$. Let $\widetilde{\Gamma}$ be the group of lifts of $\Gamma$ to the universal cover $\widetilde{M}$ of $M$. Then for every $t\gg r\gg 1$ and $p_0\in M$, we have
	\begin{equation*} \pi_1(M_t, p_0; r)\cong \widetilde{\Gamma}.\end{equation*}
	\label{prop:coarsepi1}
	\end{prop}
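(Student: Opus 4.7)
The plan is to construct mutually inverse homomorphisms $\Phi:\widetilde{\Gamma}\to\pi_1(M_t, p_0; r)$ and $\Psi:\pi_1(M_t, p_0; r)\to\widetilde{\Gamma}$ by hand: a coarse path-lifting on one side, and explicit discrete loops on the other. The setup: fix a symmetric generating set $S$ of $\Gamma$; loops $B_1, \ldots, B_k$ generating $\pi_1(M, p_0)$; and for each $s \in S$, a smooth path $\tau_s$ from $p_0$ to $sp_0$ in $M$ whose lift at a basepoint $\tilde{p}_0 \in \widetilde{M}$ distinguishes a specific lift $\tilde{s} \in \widetilde{\Gamma}$. Together, the $\tilde{s}$'s and $B_j$'s generate $\widetilde{\Gamma}$, which is finitely presented because $\Gamma$ is and $\pi_1(M)$ is finitely generated. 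The key quantitative input is the separation constant $\delta_r := \min\{ d_M(q, \gamma q) : q \in M, \, 0 < \|\gamma\| \le r\}$, strictly positive by freeness of the action, finiteness of $B_r \subseteq \Gamma$, and compactness of $M$. I take $r$ large enough to contain a finite presentation of $\widetilde{\Gamma}$, and then $t \gg r$ so that $r/t$ is less than both $\delta_r/3$ and the injectivity radius of $M$.

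To define $\Psi$, consider an $r$-loop $\alpha = (p_0 = q_0, q_1, \ldots, q_n = p_0)$. The separation $\delta_r$ guarantees, for each edge $(q_i, q_{i+1})$, a unique witness $\gamma_i \in \Gamma$ with $\|\gamma_i\| \le r$ and $d_M(q_i, \gamma_i q_{i+1}) < r/t$. Writing $\gamma_i$ as a fixed word in $S$ and composing the preferred lifts $\tilde{s}$ gives a canonical lift $\widetilde{\gamma}_i \in \widetilde{\Gamma}$. Starting from $\tilde{q}_0 := \tilde{p}_0$, the unique short geodesic from $q_i$ to $\gamma_i q_{i+1}$ lifts at $\tilde{q}_i$ to an endpoint $\tilde{y}_i \in \widetilde{M}$, and I set $\tilde{q}_{i+1} := \widetilde{\gamma}_i^{-1}\, \tilde{y}_i$. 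The point $\tilde{q}_n$ is a lift of $p_0$, so equals $\delta\, \tilde{p}_0$ for a unique $\delta \in \pi_1(M)$, and I set $\Psi([\alpha]) := \widetilde{\gamma}_{n-1}^{-1} \cdots \widetilde{\gamma}_0^{-1} \cdot \delta \in \widetilde{\Gamma}$ (an accumulated product from the step-by-step data, with order and inverses chosen to match the deck correspondence in $\widetilde{M}\times\widetilde{X}$). Invariance under $r$-homotopy reduces, via Lemma~\ref{lem:baby_step_htpy}, to endpoint appends/deletes (immediate) and to single-vertex perturbations; in the latter, the separation forces the modified witnesses to satisfy a local cocycle identity that cancels in the accumulated product. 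Concatenation plainly corresponds to multiplication, so $\Psi$ is a well-defined homomorphism.

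For the inverse, let $\alpha_{\tilde{s}}$ be the $r$-loop consisting of the single group step $(p_0, sp_0)$ followed by an $r/(2t)$-discretization of $\tau_s^{-1}$, and $\alpha_{B_j}$ an $r/(2t)$-discretization of $B_j$; extend $\Phi$ to $\widetilde{\Gamma}$ by concatenation. For each relation in a finite presentation of $\widetilde{\Gamma}$ the corresponding concatenation must be $r$-null-homotopic: $\pi_1(M)$-relations come from discretizing a continuous null-homotopy in $M$; lifted $\Gamma$-relations (together with their $\pi_1(M)$-corrections) produce a composite loop whose lift to $\widetilde{M}$ is contractible, and the contraction discretizes to an $r$-null-homotopy in $M_t$; conjugation relations mixing the two follow by combining these. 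Direct calculation gives $\Psi \circ \Phi = \mathrm{id}_{\widetilde{\Gamma}}$ on generators, and $\Phi \circ \Psi = \mathrm{id}$ follows by using Lemma~\ref{lem:baby_step_htpy} to rewrite an arbitrary $r$-loop, edge-by-edge, as a concatenation of the basic blocks $\alpha_{\tilde{s}^{\pm 1}}$ and $\alpha_{B_j^{\pm 1}}$.

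The main obstacle will be verifying the well-definedness of $\Psi$ under all local moves allowed by Lemma~\ref{lem:baby_step_htpy}: this rests entirely on freeness of the action (which supplies $\delta_r > 0$) and on the gap $t \gg r$ (which forces uniqueness of the witnesses and honesty of the short-geodesic lifts). A secondary challenge is translating each defining relation of $\widetilde{\Gamma}$ into a concrete $r$-null-homotopy in $M_t$, especially the mixed relations between $\pi_1(M)$ and the lifted generators $\tilde{s}$; these are where the precise form of the building blocks $\alpha_{\tilde{s}}$ (group-step followed by reverse of $\tau_s$) really matters.
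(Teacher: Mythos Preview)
Your strategy---unique witnesses $\gamma_i$ from freeness, then lifting to $\widetilde{M}$---is exactly the engine the paper uses, but the paper organizes it differently: it first proves the short exact sequence $1\to\pi_1(M)\to\pi_1(M_t;r)\to\Gamma\to1$ (via the surjection $Q$ and the injection $\jmath_M$), introduces \emph{canonical forms} that push all orbital jumps to the front of a coarse path, and only then defines a single map $\Theta:\pi_1(M_t;r)\to\widetilde{\Gamma}$ by lifting just the spatial tail. Crucially, the paper's $\Theta$ turns out to be an \emph{anti}-homomorphism, and the identification with $\widetilde\Gamma$ goes through the extension structure rather than through an explicit inverse. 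Your direct two-sided construction is a reasonable alternative, but it puts more weight on the exact formulas.

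That is where the gap is. Your formula $\Psi([\alpha])=\widetilde{\gamma}_{n-1}^{-1}\cdots\widetilde{\gamma}_0^{-1}\cdot\delta$ is not a homomorphism under concatenation, despite the assertion that ``concatenation plainly corresponds to multiplication.'' Take loops $\alpha_1,\alpha_2$ at $p_0$ with witnesses $(\gamma_i)$, $(\eta_j)$ and deck elements $\delta_1,\delta_2$. Running your step-by-step lift on $\alpha_1\ast\alpha_2$, after the $\alpha_1$ segment you are at $\delta_1\tilde p_0$, and continuing through $\alpha_2$ one finds inductively $\tilde q_{m+j}=(\widetilde\eta_{j-1}^{-1}\cdots\widetilde\eta_0^{-1})\,\delta_1\,(\widetilde\eta_0\cdots\widetilde\eta_{j-1})\,\tilde r_j$, so the terminal deck element is $(\widetilde\eta_{k-1}^{-1}\cdots\widetilde\eta_0^{-1})\delta_1(\widetilde\eta_0\cdots\widetilde\eta_{k-1})\delta_2$, not $\delta_1\delta_2$. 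Plugging into your formula, $\Psi(\alpha_1\ast\alpha_2)$ is neither $\Psi(\alpha_1)\Psi(\alpha_2)$ nor $\Psi(\alpha_2)\Psi(\alpha_1)$ in general. The problem is that your ``canonical lifts'' $\widetilde\gamma_i$ (fixed word in $S$, then preferred $\tilde s$'s) are an arbitrary set-theoretic section $\Gamma\to\widetilde\Gamma$ with no multiplicative compatibility, so mixing them with the geometric deck element $\delta$ does not assemble cleanly. The paper's canonical-form trick avoids this entirely: once all orbital jumps are at the front, the spatial part is a genuine path in $M$, its lift to $\widetilde M$ has a well-defined endpoint, and the element of $\widetilde\Gamma$ is read off from that endpoint alone---no section needed. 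A secondary issue: your argument that lifted $\Gamma$-relations map to $r$-null-homotopic loops (``the contraction in $\widetilde M$ discretizes to an $r$-null-homotopy in $M_t$'') conflates $\widetilde M$ with $M_t$; what is actually needed is to collapse the orbital part using that $r$ exceeds all relator lengths, leaving a spatial loop in $M$ whose $\pi_1(M)$-class is exactly the correction $b$---this is again where canonical forms do the work.
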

	\begin{rmk} A computation of coarse fundamental groups of warped cones was independently and simultaneously obtained by Vigolo \cite{vigolo_coarsepi1}. Note that the setting there is 	much more general than the one we consider here, although the final answer is presented in a form that is not adapted to the relationship between $\Gamma, \pi_1(M)$ and $\pi_1(M;r)$ that we need at present.
	\label{rmk:vigolo_coarsepi1}
	\end{rmk}
The stabilization of the coarse fundamental group $\pi_1(M_t, p_0;r)$ as $t\to\infty$ easily allows for the following computation of the fundamental group of geometric cones. However, the full geometric cone is coarsely contractible (because the manifold has been coned off, at least on the scale $r$). To counteract this, we introduce a truncated geometric cone: For $t_0>0$, we write $\GGMtr:=(M\times[t_0,\infty)\times\Gamma)\slash\Gamma$ for the geometric cone truncated at level $t_0$. It is easy to see the projection $\GGMtr\to M_{t_0}$ is a coarse deformation retract, so we have:
	\begin{cor} Let $\Gamma$ be a finitely presented group acting minimally, isometrically, and freely on a manifold $M$. Then for $t>t_0\gg r\gg 1$, the inclusion $M_{t_0}\hookrightarrow \GGMtr$ induces an isomorphism on coarse fundamental groups at scale $r$. In particular, we have
	$\pi_1(\GGMtr, [p_0,t_0]; r)\cong \widetilde{\Gamma}.$
	\label{cor:coarsepi1_geomcone}
	\end{cor}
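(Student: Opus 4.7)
The plan is to deduce the corollary from Proposition \ref{prop:coarsepi1} by exhibiting an explicit coarse deformation retraction of $\GGMtr$ onto the level set $M_{t_0}$. Concretely, I would show that the inclusion $\iota:M_{t_0}\hookrightarrow\GGMtr$ and the projection
$$\pi:\GGMtr\to M_{t_0},\qquad [(x,t,\gamma)]\mapsto[(x,t_0,\gamma)]$$
are mutually inverse up to $r$-homotopy, so that $\iota_{\ast}$ is an isomorphism $\pi_1(M_{t_0},p_0;r)\xrightarrow{\sim}\pi_1(\GGMtr,[p_0,t_0];r)$, and Proposition \ref{prop:coarsepi1} identifies the source with $\widetilde{\Gamma}$.

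First I would verify that $\iota$ and $\pi$ are both $1$-Lipschitz. For $\iota$ this is automatic from the subspace metric, and for $\pi$ the key point is that pushing the $t$-coordinate downward only shrinks distances in the warped product metric $tg+dt^2$: the $M$-contribution is monotone in $t$ and the $dt^2$-contribution is killed. Thus $\pi\circ\iota=\id_{M_{t_0}}$, and since $\pi$ restricts to the identity on $M_{t_0}$, projecting any $r$-homotopy in $\GGMtr$ between two $r$-loops of $M_{t_0}$ yields such a homotopy in $M_{t_0}$; this already gives injectivity of $\iota_{\ast}$.

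For surjectivity I would use the sliding homotopy
$$H_s:\GGMtr\to\GGMtr,\qquad [(x,t,\gamma)]\mapsto[(x,t_0+(1-s)(t-t_0),\gamma)],\quad s\in[0,1],$$
which descends to the $\Gamma$-quotient by equivariance and interpolates between $H_0=\id$ and $H_1=\iota\circ\pi$. A direct computation shows that each $H_s$ is $1$-Lipschitz: the image of a smooth path $(x(u),t(u))$ has speed squared $(t_0+(1-s)(t-t_0))g(\dot x,\dot x)+(1-s)^2\dot t^2$, and since $s\in[0,1]$ and $t\geq t_0$ both coefficients are dominated by their analogues $tg(\dot x,\dot x)$ and $\dot t^2$ in the original speed squared. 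Given an $r$-loop $\alpha$ in $\GGMtr$ based in $M_{t_0}$ with vertices $p_i=[(x_i,t_i,\gamma_i)]$, I would pick $\epsilon>0$ with $\epsilon\max_i(t_i-t_0)<r$ and consider the discrete sequence $H_0(\alpha),H_\epsilon(\alpha),\ldots,H_1(\alpha)=\iota(\pi(\alpha))$. Each intermediate loop is itself an $r$-loop by the $1$-Lipschitz property, and consecutive loops are $r$-close because the $i$-th vertex moves only in the $t$-direction by $\epsilon(t_i-t_0)<r$; this is the desired $r$-homotopy from $\alpha$ to $\iota(\pi(\alpha))$.

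The main technical point is the $1$-Lipschitz estimate for $H_s$, which relies on the specific monotonicity properties of the warped product metric in the $t$-direction. Once that is in place the remainder is a routine discretization of the continuous radial retraction, and combining with Proposition \ref{prop:coarsepi1} yields $\pi_1(\GGMtr,[p_0,t_0];r)\cong\widetilde{\Gamma}$.
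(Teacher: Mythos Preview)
Your argument is correct and is exactly the approach the paper has in mind: the paper simply asserts in one line that ``the projection $\GGMtr\to M_{t_0}$ is a coarse deformation retract'' and states the corollary, while you supply the details of that retraction (the $1$-Lipschitz estimate for the radial maps $H_s$ via monotonicity of the warped metric in $t$, and the discretization into an $r$-homotopy). This is precisely what the paper leaves to the reader.
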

We provide a brief outline of the proof of Proposition \ref{prop:coarsepi1}: In Section \ref{sec:contrib_act} we associate to every coarse loop an element of $\gamma$, based on the observation that given two sufficiently close points $x,y$ in $M_t$, there is a unique element of $\Gamma$ that translates $x$ to a point near $y$ in $M$. In this way we associate to a coarse loop finitely many elements of $\Gamma$. By taking the product we obtain a map $\pi_1(M, p_0; r)\to\Gamma$. In Section \ref{sec:contrib_mnfd} we show that discretization of continuous loops gives a map $\pi_1(M,p_0)\to \pi_1(M,p_0;r)$ that is an embedding for $t\gg r\gg 1$. In Section \ref{sec:canonical}, we introduce a technical tool that decomposes an arbitrary coarse path (up to coarse homotopy) into an orbital and spatial segment. Next, in Section \ref{sec:mnfd_gp_rel}, we use this to study the relationship between the contributions of the action and the manifold to the coarse fundamental group, and show $\pi_1(M,p_0;r)$ is an extension of $\Gamma$ by $\pi_1(M)$. Finally in Section \ref{sec:ext_lift} we show that this extension is in fact $\widetilde{\Gamma}$.

\subsection{The contribution of the action} \label{sec:contrib_act} We start by associating an element of $\Gamma$ to every coarse loop in $M_t$. For $R>0$, set
	$$\delta_\Gamma(R):=\min_{\|\gamma\|\leq R} \, \min_{x\in M} d(x,\gamma x).$$
\begin{constr} Let $t>\frac{2r}{\delta_\Gamma(2r)}$. Let $\alpha=(p_0,p_1,\dots,p_\ell,p_0)$ be an $r$-path based at $p_0$. Since
	$$d_t^\Gamma(p_i,p_{i+1})=\inf_\gamma \left(\|\gamma\| + t \, d_M(\gamma p_i, p_{i+1})\right)<r,$$
we see there exists some $\gamma\in B_\Gamma(r)$ with $d_M(\gamma p_i, p_{i+1})<\frac{r}{t}$. Further this element $\gamma$ is unique, because if $\eta$ is another element with this property, then $\gamma^{-1}\eta$ is an element of length at most $2r$ that displaces $p_i$ by at most $\frac{2r}{t}$. Since $t>\frac{2r}{\delta_\Gamma(2r)}$, this forces $\gamma=\eta$, as desired.

Hence we can associate to the $r$-path $\alpha$ a sequence of elements $\gamma_i(\alpha)\in B_\Gamma(r)$ such that $d_M(\gamma_i p_i,p_{i+1})<\frac{r}{t}$ for every $i$.
\label{constr:jumps}
\end{constr}

\begin{dfn} Let $t>\frac{2r}{\delta_\Gamma(2r)}$. Write $P(M;r)$ for the space of $r$-paths. Also write $F(B_\Gamma(r))$ for the free group on $B_\Gamma(r)$. Define
	$$\widetilde{Q}:P(M; r) \to F(B_\Gamma(r))$$
by $\widetilde{Q}(\alpha):=\gamma_\ell(\alpha)\cdots \gamma_0(\alpha)$ for $\alpha=(p_0,p_1,\dots,p_\ell)\in P(M;r)$.
\label{dfn:q}
\end{dfn}

\begin{lem} Let $t>\frac{4r}{\delta(4r)}$. If $\alpha$ and $\beta$ are $r$-homotopic $r$-loops based at $p_0$, then as elements of $\Gamma$ we have $\widetilde{Q}(\alpha)=\widetilde{Q}(\beta)$. Hence $\widetilde{Q}$ descends to a map
	$$Q:\pi_1(M,p_0 ; r) \to \Gamma.$$
For $\alpha\in\pi_1(M,p_0;r)$, we say $Q(\alpha)\in\Gamma$ is the \emph{orbital jump} along $\alpha$.
\label{lem:q_descent}
\end{lem}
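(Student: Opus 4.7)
The plan is to use that an $r$-homotopy is by definition a finite chain of pairwise $r$-close $r$-loops (Definition \ref{dfn:rpath}), so by induction it suffices to verify $\widetilde{Q}(\alpha) = \widetilde{Q}(\beta)$ in $\Gamma$ when $\alpha$ and $\beta$ are $r$-close in either of the two senses: (a) $\beta$ is an endpoint append or deletion of $\alpha$, or (b) $\alpha$ and $\beta$ have the same length and their points are pairwise within distance $r$.

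Case (a) is immediate: the extra jump associated to the pair $(p_\ell,p_\ell)$ is constrained by $d_M(\gamma p_\ell,p_\ell)<r/t$; since $\gamma=e$ satisfies this, the uniqueness clause of Construction \ref{constr:jumps} forces $\gamma=e$, leaving the $\Gamma$--product unchanged. For case (b), write $\alpha=(p_0,p_1,\dots,p_{\ell-1},p_0)$ and $\beta=(p_0,q_1,\dots,q_{\ell-1},p_0)$, with associated jumps $\gamma_i$ and $\gamma_i'$. Applying Construction \ref{constr:jumps} to the ``transverse'' pairs $(p_i,q_i)$ produces unique interpolation elements $\eta_i\in B_\Gamma(r)$ with $d_M(\eta_i p_i,q_i)<r/t$, where the uniqueness gives $\eta_0=\eta_\ell=e$ since $p_0=q_0$ and $p_\ell=q_\ell$. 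The crux is the commutation identity $\eta_{i+1}\gamma_i=\gamma_i'\eta_i$ in $\Gamma$, which telescopes to
$$\widetilde{Q}(\beta)=\gamma_{\ell-1}'\cdots\gamma_0'=\eta_\ell\,\gamma_{\ell-1}\cdots\gamma_0\,\eta_0^{-1}=\widetilde{Q}(\alpha).$$

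To establish this commutation identity I will compare $\eta_{i+1}^{-1}\gamma_i'\eta_i$ with $\gamma_i$ using uniqueness. Chaining the three displacement bounds $d_M(\eta_i p_i,q_i)<r/t$, $d_M(\gamma_i' q_i,q_{i+1})<r/t$, $d_M(\eta_{i+1} p_{i+1},q_{i+1})<r/t$ via the triangle inequality (the elements act isometrically, so pre- and post-composition preserve distances) yields $d_M(\eta_{i+1}^{-1}\gamma_i'\eta_i p_i,p_{i+1})<3r/t$, while $d_M(\gamma_i p_i,p_{i+1})<r/t$. Thus $g:=\gamma_i^{-1}\eta_{i+1}^{-1}\gamma_i'\eta_i$ has word length at most $4r$ and displaces $p_i$ by less than $4r/t$; the hypothesis $t>4r/\delta_\Gamma(4r)$ says this displacement is smaller than $\delta_\Gamma(4r)$, so by definition of $\delta_\Gamma$ we must have $g=e$. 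The main obstacle is purely the bookkeeping of accumulated errors, designed precisely so that uniqueness in Construction \ref{constr:jumps} applies at the scale $4r$; conceptually this is saying $\widetilde{Q}$ is a coarse monodromy whose invariance is enforced by freeness and isometricity of the action together with the quantitative bound on $t$.
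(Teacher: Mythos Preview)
Your proof is correct. The approach differs from the paper's in one notable respect: for case (b), the paper first invokes Lemma~\ref{lem:baby_step_htpy} to reduce to the situation where $\alpha$ and $\beta$ differ in at most two \emph{consecutive} positions, and then carries out a local analysis (introducing interpolation elements $\xi_k,\xi_{k+1}$ only at those two spots and verifying the local identity $\gamma_{k+1}\gamma_k\gamma_{k-1}=\eta_{k+1}\eta_k\eta_{k-1}$). You instead introduce interpolation elements $\eta_i$ at \emph{every} position simultaneously and obtain the result in one shot via the telescoping identity $\gamma_i'=\eta_{i+1}\gamma_i\eta_i^{-1}$.

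Your route is more direct and self-contained: it bypasses Lemma~\ref{lem:baby_step_htpy} entirely, and the displacement bookkeeping is essentially identical (a product of four elements of length $\leq r$ displacing a point by $<4r/t$, forced trivial by $t>4r/\delta_\Gamma(4r)$). The paper's route is more modular in that Lemma~\ref{lem:baby_step_htpy} is reused later (e.g.\ in the proof of Lemma~\ref{lem:jm_no_ker}), so isolating it has some economy across the paper; but for the present lemma alone your argument is cleaner.
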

\begin{proof} Clearly $\widetilde{Q}$ is invariant under endpoint appends and deletions. Therefore it suffices to show the statement for a pair of $r$-loops $\alpha, \beta$ based at $p_0$ that are $r$-close and of the same length. By Lemma \ref{lem:baby_step_htpy}, there is an $r$-homotopy from $\alpha$ to $\beta$ of the form
	$$\alpha=\eta_0, \, \eta_1, \, \dots, \, \eta_k =\beta$$
where $\eta_i$ and $\eta_{i+1}$ differ in at most two positions, and these are consecutive. Therefore it suffices to prove the statement assuming $\beta=\eta_1$.

So suppose
	$$\alpha=(p_0,\dots,p_{k-1}, p_k, p_{k+1}, p_{k+2}, \dots, p_\ell)$$
and
	$$\beta=(p_0,\dots,p_{k-1}, q_k, q_{k+1}, p_{k+2}, \dots, p_\ell).$$
Let $\gamma_{k-1}, \gamma_k, \gamma_{k+1}$ be defined as before, i.e. these are the unique elements of length at most $r$ such that
	$$d_M(\gamma_i p_i, p_{i+1})<\frac{r}{t}.$$
Likewise define $\eta_i$ associated to the path $\beta$. Since $\alpha$ and $\beta$ coincide except for the $k$ and $(k+1)$st position, we clearly have $\gamma_i=\eta_i$ for $|i-k|>1$. Therefore to prove the claim it suffices to show
	\begin{equation}
		\gamma_{k+1} \gamma_k \gamma_{k-1} = \eta_{k+1} \eta_k \eta_{k-1}.
	\label{eq:localprod}
	\end{equation}
Let $\xi_k$ and $\xi_{k+1}$ be the unique elements of length at most $r$ such that
	$$d_M(\xi_i q_i, p_i)<\frac{r}{t}.$$
	\begin{figure}[h]
		\includegraphics[height=5cm]{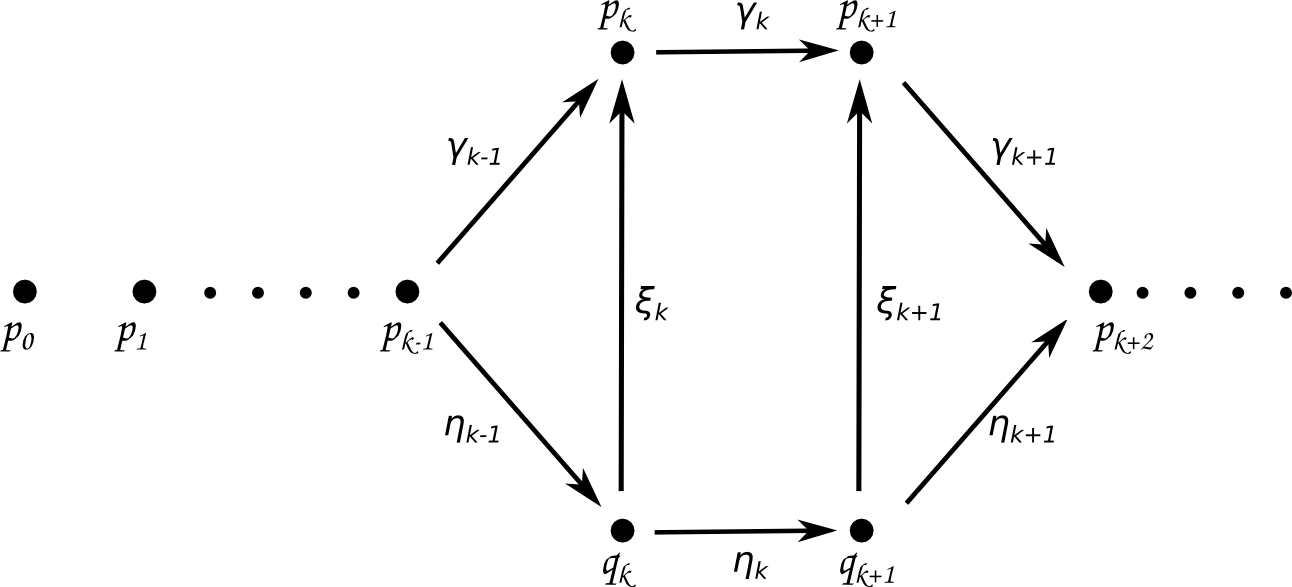}
		\caption{Proof of Lemma \ref{lem:q_descent}. All arrows represent points that are distance at most $r$ apart with respect to $d_t^\Gamma$.}
		\label{fig:localprod}
	\end{figure}
See Figure \ref{fig:localprod} for a schematic depiction of the situation. First observe that
	$$\gamma_{k-1}\eta_{k-1}^{-1}=\xi_k \hspace{1 cm} \text{and} \hspace{1 cm} \gamma_{k+1}^{-1}\eta_{k+1}=\xi_{k+1}.$$
Indeed otherwise one would obtain a nontrivial element of $\Gamma$ of length at most 3r that displaces a point less than $\frac{3r}{t}$, but this is impossible since 			
	$$t>\frac{4r}{\delta_\Gamma(4r)}>\frac{3r}{\delta_\Gamma(3r)}.$$
Hence to prove Equation \eqref{eq:localprod}, it suffices to show $\gamma_k \xi_k = \xi_{k+1}\eta_k.$ Indeed, $(\xi_{k+1}\eta_k)\inv\gamma_k\xi_k$ is an element of length at most $4r$ that displaces $q_k$ by less than $\frac{4r}{t}$. Since $t>\frac{4r}{\delta_\Gamma(4r)}$, we must have $(\xi_{k+1}\eta_k)\inv\gamma_k \xi_k=e$, as desired. \end{proof}

We make a new definition that will be useful for showing $Q$ is surjective. For $\gamma\in\Gamma$, we define an $r$-loop based at $p_0$ that first performs orbital jumps whose product in $\Gamma$ is $\gamma$, and then follows a discretization of a continuous path in $M$ from $\gamma p_0$ to $p_0$.

\begin{dfn}  For each $\gamma\in\Gamma$, make a choice of writing $\gamma=s_{i_k} \cdots s_{i_1}$ as a product of generators, and a choice of path $c_\gamma$ from $\gamma p_0$ to $p_0$. Further choose a discretization $(q_0, q_1, \dots, q_\ell)$ on the scale $\frac{r}{t}$ of $c_\gamma$, and such that $q_0=\gamma p_0$ and $q_\ell=p_0$. Define $\widetilde{\jmath}_\Gamma:\Gamma\to \Omega(M,p_0 ; r)$ by $\widetilde{\jmath}(\gamma)(0):=p_0$ and
	$$\widetilde{\jmath}(\gamma)(j):=\left( \prod_{1\leq l\leq j} s_{i_l} \right)  p_0$$
if $1\leq l\leq k$, and $\widetilde{\jmath}(\gamma)(k+j)=q_j$ if $k<j\leq k+\ell$. Write $\jmath_\Gamma:\Gamma\to\pi_1(M, p_0 ; r)$ for the map that assigns the coarse homotopy class.
\label{dfn:jgamma}
 \end{dfn}

%
\begin{warning} $\jmath_\Gamma$ need not be a homomorphism.
\label{warning:jgamma_nonhom}
\end{warning}
\begin{lem} $Q\circ \jmath_\Gamma =\text{id}_\Gamma$. Hence $\jmath_\Gamma$ is injective and $Q$ is surjective.\end{lem}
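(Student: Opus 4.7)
The plan is to unravel definitions and apply $Q$ step by step to $\jmath_\Gamma(\gamma)$, using crucially the uniqueness of the witnessing element in Construction \ref{constr:jumps} (which rests on the freeness of the $\Gamma$--action and the scale assumption $t > \frac{2r}{\delta_\Gamma(2r)}$). Once $Q\circ\jmath_\Gamma = \mathrm{id}_\Gamma$ is established, the two remaining assertions are formal: any map with a one-sided inverse is injective on one side and surjective on the other.

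I would first verify that $\jmath_\Gamma(\gamma)$ is genuinely an $r$-loop: by Lemma \ref{lem:warp_dist}, consecutive vertices in the orbital piece are at $M_t$--distance at most $\|s_{i_j}\|=1<r$, and consecutive vertices in the spatial piece are at $M_t$--distance strictly less than $t\cdot(r/t)=r$. Next, I split $\jmath_\Gamma(\gamma)$ into its two natural pieces and compute the elements $\gamma_i(\jmath_\Gamma(\gamma))$ from Construction \ref{constr:jumps} on each piece separately. On the orbital piece $p_0, s_{i_1}p_0, s_{i_2}s_{i_1}p_0,\ldots,\gamma p_0$, the transition from step $j-1$ to step $j$ is exactly left-multiplication by $s_{i_j}$, so $s_{i_j}\in B_\Gamma(r)$ realises $d_M(s_{i_j}p_{j-1},p_j)=0<r/t$; uniqueness then forces $\gamma_{j-1}(\jmath_\Gamma(\gamma))=s_{i_j}$. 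On the spatial piece $q_0,q_1,\ldots,q_\ell$ built as a discretization of $c_\gamma$ on scale $r/t$, the identity $e\in B_\Gamma(r)$ trivially witnesses $d_M(e\cdot q_j,q_{j+1})<r/t$, so uniqueness delivers $\gamma_{k+j}(\jmath_\Gamma(\gamma))=e$ throughout.

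Assembling these contributions in the order specified by Definition \ref{dfn:q} (with the latest index appearing leftmost) yields
\begin{equation*}
Q(\jmath_\Gamma(\gamma)) \;=\; \underbrace{e\cdots e}_{\ell \text{ factors}} \cdot s_{i_k}\cdot s_{i_{k-1}}\cdots s_{i_1} \;=\; \gamma,
\end{equation*}
as desired. I do not anticipate a genuine obstacle here: the substantive work was done in Construction \ref{constr:jumps} and Lemma \ref{lem:q_descent}, and what remains is mostly bookkeeping. The one thing to be careful about is the multiplication convention in Definition \ref{dfn:q} and the convention in Definition \ref{dfn:jgamma} for the orbital segment, so that the product on the right truly spells out $\gamma=s_{i_k}\cdots s_{i_1}$ and not its reverse or inverse.
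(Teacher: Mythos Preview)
Your proposal is correct and is exactly the ``straightforward verification'' the paper alludes to; the paper's own proof consists of the single sentence ``This is a straightforward verification,'' and you have carried out precisely that verification by identifying the jump elements on the orbital segment as the generators $s_{i_j}$ and on the spatial segment as the identity, then multiplying them in the order dictated by Definition~\ref{dfn:q}.
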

\begin{proof}This is a straightforward verification.\end{proof}

\subsection{Canonical forms for coarse paths} \label{sec:canonical} We briefly interrupt the computation of the coarse fundamental group to discuss a useful tool for the rest of the computation. Namely, we can reorder the jumps in an $r$-path so that all the orbital jumps occur at the start, and all the spatial ones at the end.
\begin{dfn} Let $\alpha$ be an $r$-path. Define the \emph{orbital component} of $\alpha$
	$$\calO(\alpha):=(p_0,\gamma_0 p_0, \gamma_1 \gamma_0 p_0, \dots, Q(\alpha) p_0)$$
and the \emph{spatial component}
	$$\calS(\alpha):=(Q(\alpha)p_0, \gamma_\ell \cdots \gamma_1 p_1, \dots, p_\ell)$$
of $\alpha$. The \emph{canonical form} of $\alpha$ is $\calC(\alpha):=\calO(\alpha)\ast\calS(\alpha)$.
\label{dfn:canonical}
\end{dfn}
The relevance for the present computation of coarse fundamental groups is that this operation does not change the coarse homotopy class.
\begin{lem} Let $t\gg r\gg 1$. Then any $r$-path $\alpha$ is $r$-homotopic rel endpoints to its canonical form $\calC(\alpha)$. \label{lem:canonical_htpy}\end{lem}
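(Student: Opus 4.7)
The plan is to construct an $r$-homotopy from $\alpha$ to $\calC(\alpha)$ by iteratively sliding each orbital jump leftward past every intervening spatial step, exploiting that $\Gamma$ acts by isometries on $M$. The central local move is the commutation
\[ (x, y, \gamma y) \leadsto (x, \gamma x, \gamma y), \]
which is a length-preserving, $r$-close modification whenever $d_M(x,y)$ is small and $\|\gamma\|$ is strictly less than $r$: indeed, taking $\eta = \gamma^{-1}$ in the infimum defining $d_t^\Gamma$ yields
\[ d_t^\Gamma(y, \gamma x) \le \|\gamma\| + t\, d_M(x, y), \]
and the right-hand side stays below $r$ provided the spatial step is small relative to the gap $r - \|\gamma\|$.

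First I would refine $\alpha$ by inserting the orbital intermediate $\gamma_i p_i$ between each pair $(p_i, p_{i+1})$, turning $\alpha$ into an alternating sequence of purely orbital jumps (of $d_t^\Gamma$-size $\|\gamma_i\| < r$) and purely spatial hops (of $M$-size less than $r/t$). Such interior insertions of close points are realizable within Definition \ref{dfn:rpath} via endpoint appends combined with length-preserving $r$-close swaps, in the manner of Lemma \ref{lem:baby_step_htpy}. Next I would sub-refine each spatial hop by choosing a continuous connecting path in $M$ (which exists because $M$ is a manifold) and discretizing it into sub-hops of $M$-size strictly less than $(r - \max_i \|\gamma_i\|)/t$; this quantity is strictly positive because word lengths are integer-valued and each $\|\gamma_i\| < r$.

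With this sub-refinement in place, the local commutation above becomes a valid $r$-close move for every swap that is needed. Sliding the rightmost orbital jump $\gamma_{\ell-1}$ leftward past every spatial sub-step separating it from its left neighbor, then sliding $\gamma_{\ell-2}$, and so on, produces a path whose orbital prefix is exactly $\calO(\alpha) = (p_0, \gamma_0 p_0, \gamma_1 \gamma_0 p_0, \ldots, Q(\alpha) p_0)$ and whose spatial suffix is a sub-refinement of $\calS(\alpha)$ (every intermediate spatial point having been pre-translated by the product of all orbital jumps that slid past it, which is exactly the rule in the definition of $\calS(\alpha)$). Collapsing the sub-refinement back, again by $r$-close moves, yields $\calO(\alpha) \ast \calS(\alpha) = \calC(\alpha)$.

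The main obstacle is ensuring every local swap remains strictly within scale $r$; a naive commutation can produce an intermediate distance approaching $2r$. The resolution uses two ingredients: word lengths are integer-valued, so $\|\gamma_i\| < r$ forces a definite positive gap between $\|\gamma_i\|$ and $r$, and this gap can be absorbed by sufficiently fine sub-refinement of the spatial hops, which is exactly where the hypothesis $t \gg r$ is used. A secondary subtlety is that Definition \ref{dfn:rpath} only admits endpoint appends/deletions and length-preserving $r$-close modifications, so every interior insertion of a close point (both the $\gamma_i p_i$ and the sub-refinement points) must be realized through the synchronized-jump mechanism of Lemma \ref{lem:baby_step_htpy}.
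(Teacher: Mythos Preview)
Your argument is correct and is essentially the same approach as the paper's: the paper gives only the sentence ``This is straightforward'' together with a picture of a rectangular grid whose left side is $\calO(\alpha)$, top side is $\calS(\alpha)$, and diagonal is $\alpha$, and the homotopy sweeps the diagonal to the L-shape through the grid. Your commutation move $(x,y,\gamma y)\leadsto(x,\gamma x,\gamma y)$ is precisely the traversal of a single cell of that grid, and your iterated sliding is the sweep.

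Where you add genuine value over the paper's sketch is in isolating the scale obstruction and resolving it by sub-refinement: the diagonal of a grid cell has $d_t^\Gamma$-size up to $\|\gamma_j\|+t\,d_M(\gamma_i p_i,p_{i+1})$, and while $\|\gamma_i\|+t\,d_M(\gamma_i p_i,p_{i+1})<r$ holds for matching indices, there is no a priori bound when $\gamma_j$ slides past a spatial hop belonging to a different index $i$. Your fix (sub-refine spatial hops below $(r-\max_k\|\gamma_k\|)/t$, using integrality of word length to guarantee a positive gap) is exactly the right repair, and the paper's figure glosses over this point entirely. One cosmetic remark: your description ``slide the rightmost orbital jump leftward to its neighbor, then slide $\gamma_{\ell-2}$, and so on'' is slightly ambiguous about whether already-accumulated orbital jumps trail along; the cleanest ordering is to slide $\gamma_1$ left past one spatial hop, then $\gamma_2$ left past two, etc., building the orbital prefix from the left, but either order works and yields the same canonical form.
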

\begin{proof}  This is straightforward. For the idea to to construct the homotopy from $\alpha$ to $\calC(\alpha)$, see Figure \ref{fig:canonicalpath}.
	\begin{figure}[h]
		\includegraphics[height=10cm]{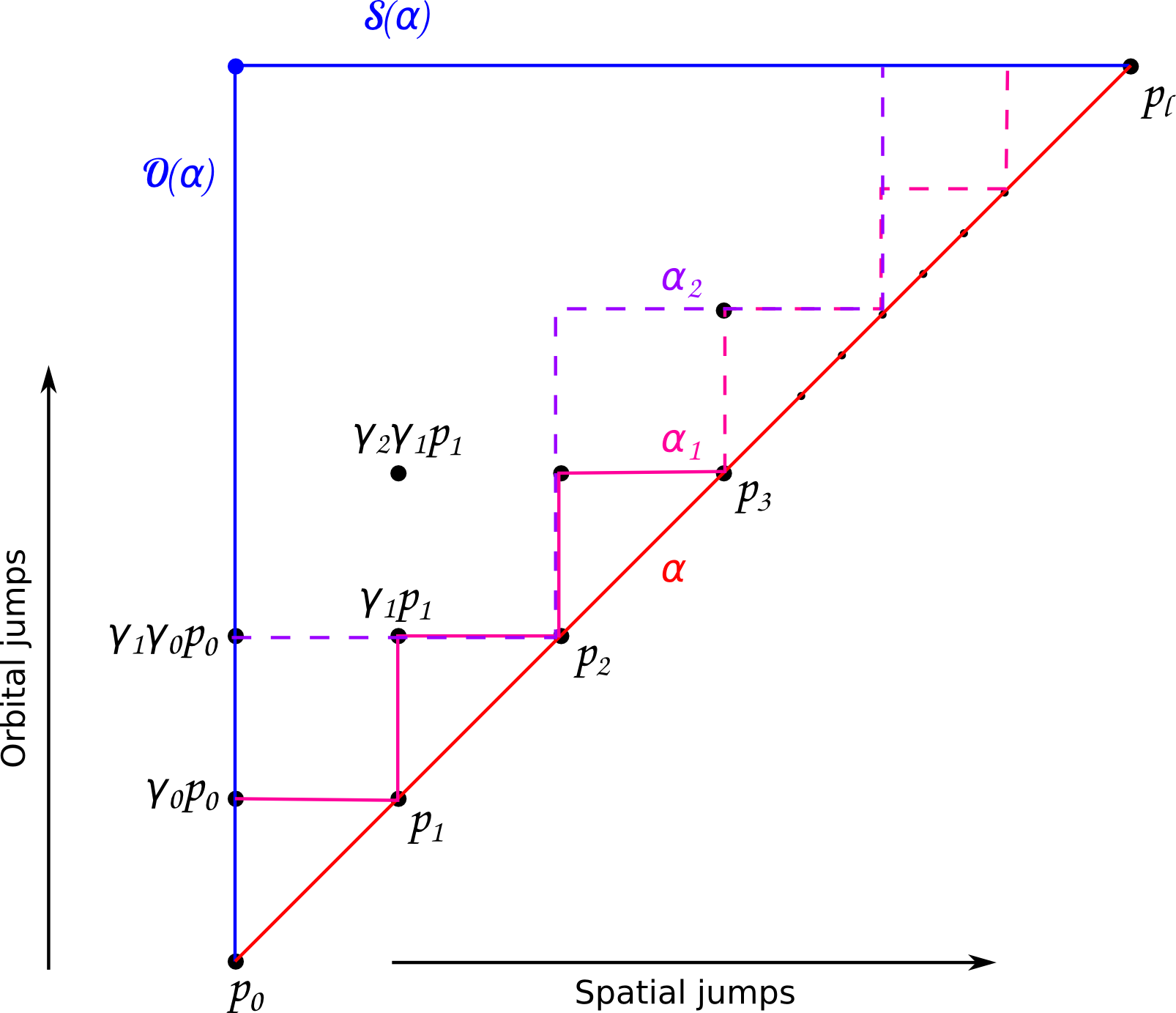}
		\caption{Homotopy of a loop to its canonical form. Horizontal jumps are distance at most $\frac{r}{t}$ with respect to $d_M$; Vertical jumps are distance at most $r$ with respect to word metric on $\Gamma$. Further $\alpha$ is the diagonal curve; $\calO(\alpha)$ is the left side; $\calS(\alpha)$ is the top side.}
		\label{fig:canonicalpath}		
	\end{figure} \end{proof}
For later use, we record the following elementary property of canonical forms:
\begin{lem} Let $t\gg r\gg 1$. Then an $r$-path and its canonical form have the same length.
\label{lem:canonical_length}
\end{lem}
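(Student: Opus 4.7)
The plan is a direct edge-by-edge length computation. The key observation is that for $t \gg r \gg 1$, the infimum defining $d_t^\Gamma(p_i, p_{i+1})$ in Lemma~\ref{lem:warp_dist} is attained precisely at the unique element $\gamma_i$ produced by Construction~\ref{constr:jumps}: any competitor $\eta$ with $\|\eta\| + t\,d_M(\eta p_i, p_{i+1}) < r$ must satisfy $\|\eta\| < r$ and $d_M(\eta p_i, p_{i+1}) < r/t$, so the uniqueness clause of that construction forces $\eta = \gamma_i$; any $\eta$ with $\|\eta\| \geq r$ contributes at least $r$, which already exceeds $d_t^\Gamma(p_i, p_{i+1}) < r$. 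This yields the key identity
\[
d_t^\Gamma(p_i, p_{i+1}) \;=\; \|\gamma_i\| + t\, d_M(\gamma_i p_i, p_{i+1}). \qquad (\ast)
\]

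Since the last point of $\calO(\alpha)$ coincides with the first point of $\calS(\alpha)$, we have $\ell(\calC(\alpha)) = \ell(\calO(\alpha)) + \ell(\calS(\alpha))$, so I would handle the two components separately by the same minimality recipe. For $\calO(\alpha)$, the $i$-th edge runs from $q := \gamma_{i-1}\cdots\gamma_0 p_0$ to $\gamma_i q$. Taking $\eta = \gamma_i\inv$ in the infimum gives $d_t^\Gamma(q, \gamma_i q) \leq \|\gamma_i\|$; any strictly smaller competitor $\eta$ produces an element $\eta\gamma_i \in B_\Gamma(2r)$ displacing $q$ by less than $r/t$, which is impossible for $t > r/\delta_\Gamma(2r)$ by freeness. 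Hence each orbital edge contributes exactly $\|\gamma_i\|$, so $\ell(\calO(\alpha)) = \sum_i \|\gamma_i\|$. For $\calS(\alpha)$, the $i$-th edge is the image of the pair $(\gamma_i p_i, p_{i+1})$ under the isometry $\gamma_{\ell-1}\cdots\gamma_{i+1}$ of $M$, so its two endpoints lie within $r/t$ in $M$; taking $\eta = e$ gives $d_t^\Gamma \leq t\, d_M(\gamma_i p_i, p_{i+1})$, and any strictly smaller competitor $\eta \neq e$ would satisfy $\|\eta\| < r$ and, by the triangle inequality in $M$, displace the endpoint by less than $2r/t$, impossible for $t > 2r/\delta_\Gamma(r)$. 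Thus $\ell(\calS(\alpha)) = \sum_i t\, d_M(\gamma_i p_i, p_{i+1})$.

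Summing the two contributions and invoking $(\ast)$ gives
\[
\ell(\calC(\alpha)) \;=\; \sum_i \bigl(\|\gamma_i\| + t\, d_M(\gamma_i p_i, p_{i+1})\bigr) \;=\; \sum_i d_t^\Gamma(p_i, p_{i+1}) \;=\; \ell(\alpha),
\]
as required. The only delicate point is the repeated appeal to uniqueness of the local jump from Construction~\ref{constr:jumps}: each of the three minimality arguments imposes a lower bound on $t$ of the form $t > Cr/\delta_\Gamma(C'r)$ for small constants $C, C'$. I expect tracking these thresholds across the three uses of the quantifier $t \gg r$, rather than any substantive geometric content, to be the main obstacle.
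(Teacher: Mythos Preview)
Your proof is correct and follows exactly the paper's approach: the paper simply asserts the three identities $\ell(\alpha) = \sum_i (\|\gamma_i\| + t\,d_M(\gamma_i p_i, p_{i+1}))$, $\ell(\calO(\alpha)) = \sum_i \|\gamma_i\|$, and $\ell(\calS(\alpha)) = \sum_i t\,d_M(\gamma_i p_i, p_{i+1})$ as ``easy to see'' without further argument. You have supplied precisely the missing justification---the repeated appeal to uniqueness of the local jump from Construction~\ref{constr:jumps} to identify each infimum in Lemma~\ref{lem:warp_dist}---so there is nothing to add.
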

\begin{proof} Let $\alpha$ be an $r$-path. We keep the notation of Definition \ref{dfn:canonical}. Note that for each $i$, we have
	$$d_t^\Gamma(p_i,p_{i+1})=\|\gamma_i\| + t\, d_M(\gamma_i p_i \,, \gamma_{i+1}),$$
so
	$$\ell(\alpha)=\sum_i (\|\gamma_i\| + t \, d_M(\gamma_i p_i \,,\gamma_{i+1})).$$
On the other hand, it is easy to see that
	$\ell(\calO(\alpha))=\sum_i \|\gamma_i\|$
and
	$\ell(\calS(\alpha))=\sum_i t\, d_M(\gamma_i p_i \,, p_{i+1}).$
\end{proof}
It is easy to verify the following cocycle-type behavior of canonical forms with respect to concatenation: 
\begin{lem} Let $\alpha$ and $\beta$ be $r$-paths such that the endpoint of $\alpha$ is distance at most $r$ from the starting point of $\beta$. Then
	$$\calO(\alpha\ast\beta)=\calO(\alpha)\ast\calO(\beta)$$
and
	$$\calS(\alpha\ast\beta)=(Q(\beta) \, \calS(\alpha))\ast \calS(\beta).$$
\label{lem:canonical_concat}
\end{lem}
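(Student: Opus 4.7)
The proof is a direct unpacking of Definitions \ref{dfn:q} and \ref{dfn:canonical}, so my plan is simply to set up notation and read off both identities from the bookkeeping. Write $\alpha=(a_0,\dots,a_A)$ with orbital jumps $\mu_0,\dots,\mu_{A-1}$ and $\beta=(b_0,\dots,b_B)$ with orbital jumps $\nu_0,\dots,\nu_{B-1}$, so $Q(\alpha)=\mu_{A-1}\cdots\mu_0$ and $Q(\beta)=\nu_{B-1}\cdots\nu_0$. Under the standing convention that the endpoint of $\alpha$ coincides with (or is close enough to) the start of $\beta$ that the connecting jump of $\alpha\ast\beta$ is the identity, the jump sequence of $\alpha\ast\beta$ is just the concatenation $\mu_0,\dots,\mu_{A-1},\nu_0,\dots,\nu_{B-1}$. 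Definition \ref{dfn:q} then gives $Q(\alpha\ast\beta)=Q(\beta)Q(\alpha)$ for free, which is the only algebraic identity used below.

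For the first identity, I read off $\calO(\alpha\ast\beta)$ as the sequence of partial products of the jumps acting on $a_0$. The first $A+1$ of these partial products are precisely $\calO(\alpha)$, terminating at $Q(\alpha)a_0$, which is identified with the base point $b_0$ of $\beta$. The remaining partial products are $\nu_j\cdots\nu_0$ applied to $b_0$, and by definition this is $\calO(\beta)$. Concatenation gives $\calO(\alpha\ast\beta)=\calO(\alpha)\ast\calO(\beta)$.

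For the second identity, recall that the $k$-th point of $\calS$ of an $r$-path is obtained by applying the product of all jumps lying strictly past position $k$ to the $k$-th point of the path. For $k\leq A$, the jumps past position $k$ in $\alpha\ast\beta$ are $\mu_{A-1}\cdots\mu_k$ followed by all of $\beta$'s jumps, so the image is $Q(\beta)\cdot\mu_{A-1}\cdots\mu_k a_k$, i.e.\ $Q(\beta)$ applied to the $k$-th point of $\calS(\alpha)$. For $k=A+1+j$ with $0\leq j\leq B$, the remaining jumps are only $\nu_{B-1}\cdots\nu_j$, applied to $b_j$, giving the $j$-th point of $\calS(\beta)$. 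Gluing the two halves yields $\calS(\alpha\ast\beta)=(Q(\beta)\calS(\alpha))\ast\calS(\beta)$, interpreting $Q(\beta)\calS(\alpha)$ as the pointwise $\Gamma$-action on the path $\calS(\alpha)$.

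I expect no serious obstacle: the whole argument is pure bookkeeping. The only subtle points are (i) correctly interpreting $Q(\beta)\calS(\alpha)$ as the pointwise action of $Q(\beta)$ on each point of the spatial path, and (ii) confirming that the connecting jump between the endpoint of $\alpha$ and the start of $\beta$ is the identity, which holds under the convention used in Section \ref{sec:coarse_htpy} for concatenation of $r$-paths at (near-)matching endpoints.
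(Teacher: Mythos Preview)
The paper offers no proof beyond ``easy to verify,'' and your direct bookkeeping from Definitions~\ref{dfn:q} and~\ref{dfn:canonical} is exactly what is intended; your verification of the spatial identity is correct, and your caveat about the connecting jump being the identity is apt (it holds in the only application, where $\alpha$ and $\beta$ are loops at a common basepoint, so $a_A=b_0$ and $\sigma=e$).

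One wrinkle in your treatment of the orbital identity: the ``identification'' of $Q(\alpha)a_0$ with $b_0$ is not literal. Since $\calO$ applies all partial products to the \emph{starting point} of the path, the second half of $\calO(\alpha\ast\beta)$ consists of the points $\nu_j\cdots\nu_0\,Q(\alpha)a_0$, not $\nu_j\cdots\nu_0\,b_0$. So as a sequence of points the correct statement is that $\calO(\alpha\ast\beta)$ equals $\calO(\alpha)$ followed by the orbital component of $\beta$ rebased at $Q(\alpha)a_0$, not at $b_0$. This is really a cosmetic imprecision in the lemma's formulation rather than an error in your reasoning: the substantive content is that the jump sequence of $\alpha\ast\beta$ is the concatenation of the two jump sequences (hence $Q(\alpha\ast\beta)=Q(\beta)Q(\alpha)$), and only the spatial identity is used downstream in the proof that $\Theta$ is an antihomomorphism.
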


\subsection{The contribution of the manifold} \label{sec:contrib_mnfd} We will now work to explain the relationship between $\pi_1(M)$ and $\pi_1(M ; r)$. Consider the natural discretization map
	$$\jmath_M:\pi_1(M)\hookrightarrow \pi_1(M_t ; r).$$
See Section \ref{sec:coarse_htpy} for the definition. We aim to show this is an embedding whose image is a normal subgroup.

\begin{lem} For $t\gg r\gg 1$, the map $\jmath_M$ is injective.
\label{lem:jm_no_ker}
\end{lem}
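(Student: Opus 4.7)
The plan is to construct a left inverse $E : \pi_1(M_t, p_0; r) \to \pi_1(M)$ to $\jmath_M$ by tracking lifts of $r$-loops to the universal cover $\widetilde M$; once $E$ is well-defined and satisfies $E \circ \jmath_M = \mathrm{id}$, injectivity is immediate. I will fix a basepoint lift $\tilde p_0 \in \widetilde M$, let $\rho$ denote the injectivity radius of $M$, and work in the regime $t \gg r \gg 1$ so that $r/t < \rho$ and the uniqueness of Construction \ref{constr:jumps} applies. Given an $r$-loop $\alpha = (p_0, \ldots, p_n = p_0)$ with jumps $\gamma_i \in \Gamma$, I would define a lifted sequence $(\tilde p_i)_{0 \leq i \leq n}$ in $\widetilde M$ inductively: picking any lift $\tilde\gamma_i \in \widetilde\Gamma$ of $\gamma_i$, let $\tilde p_{i+1}$ be the unique lift of $p_{i+1}$ within $\rho$ of $\tilde\gamma_i \tilde p_i$ (existence from $d_M(\gamma_i p_i, p_{i+1}) < r/t < \rho$; uniqueness because distinct lifts of any point in $M$ are $2\rho$-separated in $\widetilde M$). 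The endpoint $\tilde p_n$ is a lift of $p_0$, so $\tilde p_n = E(\alpha) \cdot \tilde p_0$ for a unique $E(\alpha) \in \pi_1(M)$. Replacing $\tilde\gamma_i$ by $h\tilde\gamma_i$ for $h \in \pi_1(M)$ rigidly translates every subsequent $\tilde p_j$ by $h$, and this translation cancels when the loop closes at $p_0$, so $E(\alpha)$ is independent of the choices.

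The next step is to verify that $E$ descends to $r$-homotopy classes. By Lemma \ref{lem:baby_step_htpy}, this reduces to invariance under endpoint append/delete (trivial, since appending $p_n$ reproduces $\tilde p_n$) and under the baby-step that alters two consecutive positions $k, k+1$. For the baby-step I would introduce the unique short offset elements $\xi_i \in \Gamma$ with $d_M(\xi_i q_i, p_i) < r/t$ from Construction \ref{constr:jumps}; these satisfy $\xi_i = e$ outside $\{k, k+1\}$, and the argument proving Lemma \ref{lem:q_descent} yields a conjugation identity relating the jumps of $\alpha$ and $\alpha'$ via the $\xi_i$. Setting $\tilde q_j$ to be the lift of $q_j$ close to $\tilde\xi_j^{-1}\tilde p_j$ for suitable lifts $\tilde\xi_j \in \widetilde\Gamma$, the same uniqueness argument lifts this identity exactly to $\widetilde\Gamma$; since $\tilde\xi_0 = \tilde\xi_n = e$, the two lifted sequences reunite at position $k+2$ and coincide thereafter, yielding $\tilde q_n = \tilde p_n$. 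Applied to the discretization $\jmath_M(c)$ of a continuous loop $c$ at $p_0$ (where for a fine enough partition all $\gamma_i = e$), the lifted sequence coincides with the discretization of the continuous universal-cover lift $\tilde c$, whose endpoint is $[c] \cdot \tilde p_0$; hence $E \circ \jmath_M = \mathrm{id}$ and $\jmath_M$ is injective.

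I expect the main obstacle to be verifying that the $\Gamma$-conjugation identity relating the jumps of $\alpha$ and $\alpha'$ lifts exactly---rather than only modulo $\pi_1(M)$---to the analogous identity in $\widetilde\Gamma$. The key point is that the geometric lifting rule (``$\tilde\eta$ is the unique lift with $\tilde\eta \tilde q$ within $\rho$ of $\tilde p$'') together with the uniqueness of short displacers on the scale $r/t$ force the algebraic identity via the equality of actions on the $\tilde p_i$, as the products of the two sides apply equally to $\tilde p_i$ up to error much less than $\rho$. Making this precise imposes the quantitative threshold $t \gg r / \delta_\Gamma(r)$ from Construction \ref{constr:jumps}, together with the injectivity radius condition $r/t < \rho$, which together supply the ``$t \gg r \gg 1$'' hypothesis of the statement.
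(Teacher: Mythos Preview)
There is a genuine gap in the well-definedness of your map $E$. Your claim that ``replacing $\tilde\gamma_i$ by $h\tilde\gamma_i$ for $h\in\pi_1(M)$ rigidly translates every subsequent $\tilde p_j$ by $h$, and this translation cancels when the loop closes'' is false on both counts. If you keep the later lifts $\tilde\gamma_j$ unchanged, then $\tilde p_{i+1}\mapsto h\tilde p_{i+1}$ but $\tilde p_{i+2}\mapsto (\tilde\gamma_{i+1}h\tilde\gamma_{i+1}^{-1})\,\tilde p_{i+2}$, so the translating element is conjugated at each step rather than remaining $h$. More seriously, even if you adjust the later lifts so that every $\tilde p_j$ is translated by $h$, the endpoint becomes $h\tilde p_n = hE(\alpha)\tilde p_0$, whence the new value of $E$ is $hE(\alpha)$, not $E(\alpha)$. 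Since $h\in\pi_1(M)$ is arbitrary, your $E$ is only well-defined modulo left multiplication by $\pi_1(M)$, which kills all information. The obstacle you flag in your third paragraph (lifting the identity of Lemma~\ref{lem:q_descent} from $\Gamma$ to $\widetilde\Gamma$) is a second, separate issue: even with a fixed set-theoretic section $\sigma:B_\Gamma(r)\to\widetilde\Gamma$, the element $[\sigma(\eta_{k+1})\sigma(\eta_k)\sigma(\eta_{k-1})]^{-1}\sigma(\gamma_{k+1})\sigma(\gamma_k)\sigma(\gamma_{k-1})\in\pi_1(M)$ has no a priori reason to have small displacement at $\tilde p_{k-1}$, so your proposed ``uniqueness of short displacers'' argument does not force it to be trivial.

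Your strategy is salvageable and in fact anticipates the paper's Construction~\ref{constr:theta}: what makes that construction work is that one first passes to the canonical form $\calO(\alpha)\ast\calS(\alpha)$, so that the spatial piece $\calS(\alpha)$ has \emph{all} jumps trivial and therefore lifts to $\widetilde M$ unambiguously by ordinary path-lifting. The paper's own proof of Lemma~\ref{lem:jm_no_ker} exploits exactly this: since $Q(\theta)=e$ and $Q$ is an $r$-homotopy invariant (Lemma~\ref{lem:q_descent}), every loop $\alpha$ in a putative null-$r$-homotopy of $\theta$ also has $Q(\alpha)=e$, so $\calS(\alpha)$ is a genuine loop at $p_0$ in $M$; one then shows directly that $\calS(\alpha)$ and $\calS(\beta)$ are $\frac{4r}{t}$-close in $M$ for consecutive baby-steps $\alpha,\beta$, hence the associated continuous loops are homotopic in $M$. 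This tracks the $\pi_1(M)$-class through the homotopy without ever choosing lifts of nontrivial elements of $\Gamma$, and the lower bound on loop length then contradicts the nullhomotopy.
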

\begin{proof} Let $c\in \pi_1(M)$ be a nontrivial (continuous) homotopy class of loops. Let $\theta$ be a discretization on the scale $\frac{r}{t}$ of an element of $c$. Suppose $\theta$ were $r$--nullhomotopic. By Lemma \ref{lem:baby_step_htpy}, we can choose an $r$--nullhomotopy of $\theta$ such that any two consecutive curves in the homotopy differ in at most two positions, and these are consecutive. Let $\alpha$ and $\beta$ be any two consecutive curves in the nullhomotopy. We claim that $\calS(\alpha)$ and $\calS(\beta)$ are $\frac{4r}{t}$--close as coarse paths in $M$.

To see this, retain the notation of the proof of Lemma \ref{lem:canonical_length} (see Figure \ref{fig:localprod}). Note that since $Q(\theta)=e$, and hence also $Q(\alpha)=Q(\beta)=e$, we know that $\calS(\alpha)$ and $\calS(\beta)$ start at $p_0$. Now we have
	$$\calS(\alpha)=(\gamma_\ell \cdots \gamma_0 p_0, \gamma_\ell \cdots \gamma_1 p_1, \gamma_\ell \cdots \gamma_2 p_2, \dots, \gamma_\ell p_\ell)$$
and similarly for $\calS(\beta)$. Now since $\eta_j = \gamma_j$ whenever $|j-k|>1$, we see that $\calS(\alpha)$ and $\calS(\beta)$ agree for the last $\ell-(k+1)$ terms. Further we have $\gamma_{k+1} \gamma_k \gamma_{k-1} = \eta_{k+1}\eta_k \eta_{k-1}$ (see Equation \eqref{eq:localprod}), so $\calS(\alpha)$ and $\calS(\beta)$ also agree for the first $k$ terms. To finish the proof, we will show
	$$d_M(\gamma_\ell \cdots \gamma_{k+2} \gamma_{k+1} p_{k+1} \, , \, \gamma_\ell \cdots \gamma_{k+2} \eta_{k+1} q_{k+1})<\frac{2r}{t}$$
and
	$$d_M(\gamma_\ell \cdots \gamma_{k+2} \gamma_{k+1} \gamma_k p_{k} \, , \, \gamma_\ell \cdots \gamma_{k+2} \eta_{k+1} \eta_k q_{k})<\frac{4r}{t}.$$
The first inequality is equivalent to proving
	$$d_M(\gamma_{k+1} p_{k+1} \, , \, \eta_{k+1} q_{k+1})<\frac{2r}{t},$$
which is true because both are distance at most $\frac{r}{t}$ from $p_{k+2}$. The second is equivalent to proving
	$$d_M(\gamma_{k+1} \gamma_k p_k \, , \, \eta_{k+1} \eta_k q_k)<\frac{4r}{t},$$
which is true because both are distance at most $\frac{2r}{t}$ from $p_{k+1}$.

By connecting the points in $\calS(\alpha)$ (resp. $\calS(\beta)$) by minimizing geodesic segments, we obtain a continuous loop $c_\alpha$ (resp. $c_\beta$) in $M$. Since $c_\alpha$ and $c_\beta$ are uniformly close, we see that for $t\gg r\gg 1$, they are homotopic rel $p_0$.

It follows that for any coarse loop $\alpha$ in the nullhomotopy of $\theta$, we have that $\calS(\alpha)$ is in the image under $\jmath_M$ of the original loop $[c]$. In particular, the length of $\calS(\alpha)$ is uniformly bounded below. This contradicts the fact that we have a coarse nullhomotopy of $\theta$.\end{proof}

%
%
%

\subsection{Relationship between the two contributions} \label{sec:mnfd_gp_rel} Let us now return to the computation of the coarse fundamental group.
\begin{lem} $\jmath_M(\pi_1(M))$ and $\jmath_\Gamma(\Gamma)$ generate $\pi_1(M_t,p_0;r)$ and have trivial intersection.
\label{lem:coarse_gen}
\end{lem}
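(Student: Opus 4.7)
The plan is to establish the two assertions separately using the orbital jump map $Q$ and the canonical form decomposition (Lemma \ref{lem:canonical_htpy}).

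For generation, I would take an arbitrary $r$-loop $\alpha$ based at $p_0$ and use Lemma \ref{lem:canonical_htpy} to replace it by its canonical form $\calO(\alpha) \ast \calS(\alpha)$: an orbital arc from $p_0$ to $Q(\alpha)p_0$ followed by a spatial arc back. The idea is to split this loop into two loops based at $p_0$ by inserting $c \ast \overline{c}$ at the midpoint, where $c$ is a discretization of the auxiliary path $c_{Q(\alpha)}$ used in Definition \ref{dfn:jgamma}. This yields the factorization
\[
[\alpha] \;=\; [\calO(\alpha) \ast c]\cdot[\overline{c}\ast \calS(\alpha)].
\]
The second factor is an entirely spatial loop, hence a discretization of a continuous loop in $M$, hence lies in $\jmath_M(\pi_1(M))$. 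For the first factor, I would argue that it is $r$-homotopic to $\jmath_\Gamma(Q(\alpha))$: both are orbital paths with total product $Q(\alpha)$ followed by the same spatial return $c$, and any two orbital paths with identical product can be interconverted by inserting/deleting defining relators of $\Gamma$ and performing ``synchronized swaps'' as in Lemma \ref{lem:baby_step_htpy}. This is an $r$-homotopy as soon as $r$ exceeds the lengths of a finite set of relators, which is where finite presentability of $\Gamma$ enters.

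For trivial intersection, I would first fix the convention that the auxiliary path $c_e$ in Definition \ref{dfn:jgamma} is the constant path, so that $\jmath_\Gamma(e)$ is the trivial loop. Then I would exploit the orbital jump map $Q:\pi_1(M_t, p_0; r)\to\Gamma$ of Lemma \ref{lem:q_descent}: by construction $Q\circ\jmath_\Gamma = \id_\Gamma$, while $Q\circ\jmath_M$ sends every class to $e$, because each step of a sufficiently fine discretization of a continuous loop in $M$ has displacement $<r/t$ in $d_M$, and the unique element of $B_\Gamma(r)$ effecting this displacement is $e$ itself, by the uniqueness statement in Construction \ref{constr:jumps}. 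Now if $[\jmath_M(c)] = [\jmath_\Gamma(\gamma)]$ in $\pi_1(M_t,p_0;r)$, applying $Q$ forces $\gamma = e$; hence $[\jmath_M(c)] = [\jmath_\Gamma(e)] = e$, and by injectivity of $\jmath_M$ (Lemma \ref{lem:jm_no_ker}) the class $c$ is trivial.

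The main obstacle I anticipate is the generation step, specifically the identification $[\calO(\alpha)\ast c] = [\jmath_\Gamma(Q(\alpha))]$. The subtlety is that $\jmath_\Gamma$ is not a homomorphism (Warning \ref{warning:jgamma_nonhom}) and depends on non-canonical choices: the orbital jumps appearing in $\calO(\alpha)$ are arbitrary elements of $B_\Gamma(r)$ coming from $\alpha$, rather than the specific generator-by-generator expression fixed in Definition \ref{dfn:jgamma}. Relating the two expressions of $Q(\alpha)$ at the level of $r$-paths is precisely the point at which finite presentability is used, in the form of the standard fact that the Cayley $2$-complex built from a finite presentation has trivial $\pi_1$ at scale $r$ once $r$ dominates the relator lengths.
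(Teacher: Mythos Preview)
Your proposal is correct and follows essentially the same approach as the paper. For generation, the paper left-multiplies $\calC(\alpha)$ by $\widetilde{\jmath}_\Gamma(Q(\alpha)^{-1})$ and cancels the resulting trivial orbital word via finite presentability, whereas you insert $c\ast\overline{c}$ to split off $\jmath_\Gamma(Q(\alpha))$ directly---these are mirror images of the same maneuver, both hinging on the fact that two orbital words with the same product in $\Gamma$ are $r$-homotopic once $r$ exceeds the relator lengths; your trivial-intersection argument via $Q$ is identical to the paper's (and your explicit convention $c_e=\mathrm{const}$ is a detail the paper leaves implicit).
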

\begin{proof} Let $\alpha$ be an $r$--loop based at $p_0$. By Lemma \ref{lem:canonical_htpy}, $\alpha$ is $r$-homotopic rel endpoints to its canonical form $\calC(\alpha)=\calO(\alpha)\ast \, \calS(\alpha)$. Consider $\widetilde{\jmath}_\Gamma(Q(\alpha)^{-1})\ast \calC(\alpha)$. Note that $\widetilde{\jmath}_\Gamma(Q(\alpha)^{-1})$ ends in a sequence of orbital jumps $\calO(\widetilde{\jmath}_\Gamma(Q(\alpha)^{-1}))$ whose product in $\Gamma$ is $Q(\alpha)^{-1}$ and hence
	$$\calO(\widetilde{\jmath}_\Gamma(Q(\alpha)))\ast \calO(\alpha)$$
is a sequence of orbital jumps by products of conjugates of relators. Since $r$ is larger than the length of any relator, we can cancel these out and hence $\widetilde{\jmath}_\Gamma(Q(\alpha)^{-1})\ast \calC(\alpha)$ is $r$-homotopic to a spatial loop, i.e. is in the image of $\jmath_M$.

Finally, we show $\jmath_M(\pi_1(M))$ and $\jmath_\Gamma(\Gamma)$ intersect trivially. Indeed, we clearly have $Q\circ \jmath_M=1$. On the other hand, if $\gamma\in\Gamma$ is nontrivial, then $Q(\jmath_\Gamma(\gamma))=\gamma$, so $\jmath_\Gamma(\gamma)\notin \jmath_M(\pi_1(M))$.
\end{proof}

\begin{lem} \label{lem:normal} $\jmath_M(\pi_1(M))$ is normal in $\pi_1(M, p_0; r)$.\end{lem}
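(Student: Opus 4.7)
By Lemma~\ref{lem:coarse_gen}, the subgroups $\jmath_M(\pi_1(M))$ and $\jmath_\Gamma(\Gamma)$ generate $\pi_1(M_t,p_0;r)$, so normality of $\jmath_M(\pi_1(M))$ reduces to checking that $\alpha:=\jmath_\Gamma(\gamma)\ast \jmath_M([c])\ast \jmath_\Gamma(\gamma)^{-1}$ lies in $\jmath_M(\pi_1(M))$ for every $\gamma\in\Gamma$ and $[c]\in\pi_1(M,p_0)$. Heuristically, $\alpha$ traces out a discretization of the continuous loop obtained by transporting $c_\gamma\ast c\ast c_\gamma^{-1}$ (a loop based at $\gamma p_0$) back to $p_0$ via $\gamma^{-1}$, and the task is to make this rigorous using the canonical form machinery.

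Concretely, I would apply Lemma~\ref{lem:canonical_htpy} to rewrite $\alpha\simeq \calO(\alpha)\ast\calS(\alpha)$ and analyze the two pieces separately. The cocycle behavior of Lemma~\ref{lem:canonical_concat}, combined with the identities $Q\circ\jmath_\Gamma=\id_\Gamma$ and $Q\circ\jmath_M=1$ from earlier subsections, forces $Q(\alpha)=\gamma^{-1}\cdot 1\cdot\gamma=1$, so both $\calO(\alpha)$ and $\calS(\alpha)$ are loops based at $p_0$. The spatial component $\calS(\alpha)$ has consecutive points $\gamma_\ell\cdots\gamma_{i+1}p_i$ and $\gamma_\ell\cdots\gamma_{i+2}p_{i+1}$, which differ by applying the isometry $\gamma_\ell\cdots\gamma_{i+2}$ to the pair $(\gamma_{i+1}p_i,p_{i+1})$ already within $r/t$ in $M$; connecting consecutive points by minimizing geodesics then produces a continuous loop $c'\in\pi_1(M,p_0)$ with $[\calS(\alpha)]=\jmath_M([c'])$.

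The main obstacle is the remaining assertion that $\calO(\alpha)$ is $r$-null-homotopic in $M_t$. This is a purely group-theoretic fact about the Cayley graph of $\Gamma$: the orbital jumps $\gamma_0,\ldots,\gamma_n\in B_\Gamma(r)$ of $\calO(\alpha)$ satisfy $\gamma_n\cdots\gamma_0=1$ in $\Gamma$, so the finite presentability of $\Gamma$ (in the hypotheses of Proposition~\ref{prop:coarsepi1}) lets us fill the corresponding Cayley-graph loop by a van~Kampen diagram built from the finitely many relators. Choosing $r$ larger than the maximum relator length (consistent with the standing assumption $r\gg 1$), this filling transports under the orbit map $\gamma\mapsto\gamma p_0$ to an $r$-null-homotopy of $\calO(\alpha)$ in $M_t$. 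Combining the pieces yields $\alpha\simeq\calS(\alpha)=\jmath_M([c'])\in\jmath_M(\pi_1(M))$, as required.
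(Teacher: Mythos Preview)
Your argument is correct and shares its technical core with the paper's proof, but the packaging differs. You verify normality directly by conjugating $\jmath_M(\pi_1(M))$ by the generators $\jmath_\Gamma(\gamma)$, observing that the conjugate $\alpha$ has $Q(\alpha)=e$, and then running the canonical-form argument to land back in $\jmath_M(\pi_1(M))$. The paper instead proves the stronger structural statement $\jmath_M(\pi_1(M))=\ker Q$; normality then drops out because $Q$ is an (anti-)homomorphism. The key step in both is identical: once $Q(\alpha)=e$, the orbital component $\calO(\alpha)$ is a Cayley-graph loop in $\Gamma$, and finite presentability with $r$ exceeding all relator lengths lets you $r$-nullhomotope it, leaving only the spatial component which lies in $\jmath_M(\pi_1(M))$. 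The paper's formulation buys you the short exact sequence $1\to\pi_1(M)\to\pi_1(M_t;r)\to\Gamma\to 1$ immediately afterward without further work, so it is worth stating the stronger fact. One small point in your write-up: since $\jmath_\Gamma$ is not a homomorphism (Warning~\ref{warning:jgamma_nonhom}), $\jmath_\Gamma(\Gamma)$ is only a subset, not a subgroup, so to deduce normality from conjugation by generators you should note that your computation applies verbatim with $\jmath_\Gamma(\gamma)$ replaced by its inverse (it does, since $Q(\jmath_\Gamma(\gamma)^{-1})=\gamma^{-1}$).
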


\begin{proof} We claim that $\jmath_M(\pi_1(M))=\ker(Q)$, which will suffice to show $\jmath_M(\pi_1(M))$ is normal. Indeed, it is clear that $\jmath_M(\pi_1(M))\subseteq \ker(Q)$. To show the other direction, suppose $\alpha$ is an $r$-loop based at $p_0$ with $[\alpha]\in\ker(Q)$. Let $\widetilde{\alpha}$ be the canonical representative, where the initial sequence of jumps is
	$$\gamma_0, \gamma_1,\dots,\gamma_\ell$$
and $\gamma_i \in S$, the generators for $\Gamma$, and after that $\widetilde{\alpha}(i)=p_i$, so $d_M(p_i,p_{i+1})<\frac{r}{t}$. We know that $Q(\alpha)=\prod_i \gamma_i$ is trivial, so as a word in $F(S)$, we have that $\prod_i \gamma_i$ is a product of conjugates of relators. Since $r$ is larger than the maximal size of a relator, and $r>2$ (so that for any generator $s$, a jump sequence of the form $ss\inv$ can be canceled), we see that $\widetilde{\alpha}$ is $r$-homotopic to the loop that omits the initial sequence of jumps and then performs the rest of $\widetilde{\alpha}$, i.e. $\calS(\widetilde{\alpha})$. This loop is in $\jmath_M(\pi_1(M))$, as desired. \end{proof}

\subsection{End of the proof} \label{sec:ext_lift} At this point we know that for $t\gg r \gg 1$, the group $\pi_1(M_t , p_0 ; r)$ is an extension
	\begin{equation}
		1\to \pi_1(M,p_0) \overset{\jmath_M}{\longrightarrow} \pi_1(M_t , p_0 ; r) \overset{Q}{\longrightarrow} \Gamma \to 1.
		\label{coarse_ses}
	\end{equation}
Further the action of $\Gamma$ by conjugation on $\pi_1(M,p_0)$ is induced from $\Gamma\curvearrowright M$. We want to show $\pi_1(M_t; r)$ is given by the group of lifts $\widetilde{\Gamma}$ of $\Gamma$ to the universal cover $\widetilde{M}$ of $M$. Note that $\widetilde{\Gamma}$ is also an extension
	\begin{equation}
	1\to \pi_1(M)\overset{\iota}{\longrightarrow} \widetilde{\Gamma} \overset{\pi}{\longrightarrow} \Gamma\to 1,
	\label{lift_ses}
	\end{equation}
with the action of $\Gamma$ on $\pi_1(M)$ by conjugation induced from $\Gamma$ acting on $M$. We will need the following straightforward fact:

\begin{lem} The action of $\widetilde{\Gamma}$ on $\widetilde{M}$ is free.\end{lem}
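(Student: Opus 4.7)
The plan is to reduce freeness of $\widetilde{\Gamma}\curvearrowright\widetilde{M}$ to two well-known freeness statements: the hypothesis that $\Gamma$ acts freely on $M$, and the fact that the deck transformation group of a connected covering acts freely on the total space. The short exact sequence
\begin{equation*}
1\to\pi_1(M)\overset{\iota}{\longrightarrow}\widetilde{\Gamma}\overset{\pi}{\longrightarrow}\Gamma\to 1
\end{equation*}
from \eqref{lift_ses} will do all the work.

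Concretely, let $p:\widetilde{M}\to M$ be the universal covering, and suppose some $\widetilde{g}\in\widetilde{\Gamma}$ fixes a point $\widetilde{x}\in\widetilde{M}$. First I would project: set $g:=\pi(\widetilde{g})\in\Gamma$ and $x:=p(\widetilde{x})\in M$. By construction of $\widetilde{\Gamma}$ as the group of lifts, $\widetilde{g}$ covers the action of $g$, so $g\cdot x=p(\widetilde{g}\cdot\widetilde{x})=p(\widetilde{x})=x$. Since $\Gamma$ acts freely on $M$ by hypothesis, this forces $g=e$, i.e. $\widetilde{g}\in\ker(\pi)=\iota(\pi_1(M))$.

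Thus $\widetilde{g}$ is a deck transformation of the covering $p:\widetilde{M}\to M$ which has a fixed point. Since $\widetilde{M}$ is connected and the deck group acts freely on a connected covering space, $\widetilde{g}$ must be the identity. This finishes the proof.

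I do not anticipate a genuine obstacle here: the argument is two lines once one unpacks what "group of lifts" means. The only mild point to be careful about is verifying that the image of $\iota$ really is the deck group; this is the standard identification used implicitly in writing down \eqref{lift_ses}, since lifts of the identity of $\Gamma$ are precisely covering transformations.
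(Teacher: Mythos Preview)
Your proof is correct and follows exactly the same approach as the paper: project a hypothetical fixed point to $M$, use freeness of $\Gamma\curvearrowright M$ to conclude the element lies in $\pi_1(M)$, then use freeness of the deck group action. The only difference is notational.
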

\begin{proof} This is immediate from the fact that $\Gamma$ acts freely on $M$, and $\pi_1(M)$ acts freely on $\widetilde{M}$. Indeed, suppose $\widetilde{\gamma}\in\widetilde{\Gamma}$ fixes $\widetilde{p}\in\widetilde{M}$. Let $\gamma=\pi(\widetilde{\gamma})$ be the image of $\widetilde{\gamma}$ in $\Gamma$ and let $p$ be the image of $\widetilde{p}$ in $M$. Then $\gamma$ fixes $p$. Since $\Gamma$ acts freely, it follows $\gamma=e$, and hence $\widetilde{\gamma}\in\pi_1(M)$. But $\pi_1(M)$ acts freely on $\widetilde{M}$, so $\widetilde{\gamma}=e$.\end{proof}

\begin{constr} Fix a basepoint $\widetilde{p}_0\in\widetilde{M}$ that maps to $p_0\in M$. We define a map
	$$\Theta: \pi_1(M, p_0 ; r)\to \widetilde{\Gamma}$$
as follows. Let $\alpha\in \pi_1(M, p_0 ; r)$. We assume $\alpha=\calO(\alpha)\ast\calS(\alpha)$ is in canonical form and write $\gamma:=Q(\alpha)$. Write $c$ for the piecewise geodesic path in $M$ from $p_0$ to $\gamma p_0$ obtained by connecting the points in the spatial path $\calS(\alpha)$ by minimizing geodesic segments (observe that since $\frac{r}{t}<\text{injrad}(M)$, we can do this in a unique way). Now lift $c$ to a path $\widetilde{c}$ in the universal cover $\widetilde{M}$ that starts at $\widetilde{p}_0$. Note that $\widetilde{c}(1)$ maps to $c(1)=\gamma p_0\in M$. Hence there exists an element $\widetilde{\gamma}\in\widetilde{\Gamma}$ with $\widetilde{\gamma} \widetilde{p}_0=\widetilde{c}(1)$. Since $\widetilde{\Gamma}$ acts freely on $\widetilde{M}$ (by the previous claim), $\widetilde{\gamma}$ is unique. Define
	$$\Theta(\alpha):=\widetilde{\gamma}.$$
\label{constr:theta}
\end{constr}
Recall that $\widetilde{\Gamma}$ is an extension
\begin{equation}
	1\to \pi_1(M)\overset{\iota}{\longrightarrow} \widetilde{\Gamma} \overset{\pi}{\longrightarrow} \Gamma\to 1.
	\end{equation}
\begin{lem} $\pi\circ\Theta=Q$ and $\Theta\circ \jmath_M=\iota$.
\label{lem:theta_lifts}
\end{lem}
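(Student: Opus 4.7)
The plan is to verify both identities by unpacking Construction \ref{constr:theta} and chasing the definitions through the exact sequence for $\widetilde{\Gamma}$, using that both $\Gamma \curvearrowright M$ and $\widetilde{\Gamma}\curvearrowright\widetilde{M}$ are free.

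First I would prove $\pi\circ\Theta=Q$. Fix $\alpha\in\pi_1(M,p_0;r)$ in canonical form, set $\gamma:=Q(\alpha)$ and $\widetilde{\gamma}:=\Theta(\alpha)$. The piecewise geodesic path $c$ built from $\calS(\alpha)$ goes from $p_0$ to $\gamma p_0$ in $M$, and its lift $\widetilde{c}$ starting at $\widetilde{p}_0$ ends at $\widetilde{\gamma}\widetilde{p}_0$. Projecting back to $M$, the endpoint must cover $c(1)=\gamma p_0$. Since the covering map $\widetilde{M}\to M$ intertwines the $\widetilde{\Gamma}$-action on $\widetilde{M}$ with the $\Gamma$-action on $M$ via $\pi$, we get $\pi(\widetilde{\gamma})p_0=\gamma p_0$, and freeness of $\Gamma \curvearrowright M$ forces $\pi(\widetilde{\gamma})=\gamma$, as required.

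Second, for $\Theta\circ\jmath_M=\iota$, let $[c]\in\pi_1(M,p_0)$ and let $\alpha:=\jmath_M([c])$ be a discretization of a representative $c$ on the scale $r/t$. Since $\alpha$ consists entirely of spatial jumps, $\alpha$ is already in canonical form with $Q(\alpha)=e$, so the piecewise geodesic path $c'$ in the construction of $\Theta$ is an honest loop at $p_0$. For $t\gg r\gg 1$, we have $r/t$ below the injectivity radius of $M$, so each minimizing geodesic segment of $c'$ is canonically homotopic to the corresponding subarc of $c$; concatenating these homotopies gives $[c']=[c]$ in $\pi_1(M,p_0)$. Lifting $c'$ starting at $\widetilde{p}_0$ therefore yields a path whose endpoint is, by the very definition of the identification of $\pi_1(M,p_0)$ with the deck transformation group acting on $\widetilde{M}$, the point $\iota([c])\widetilde{p}_0$. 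Hence $\Theta(\alpha)\widetilde{p}_0=\iota([c])\widetilde{p}_0$, and freeness of $\widetilde{\Gamma}\curvearrowright\widetilde{M}$ (the lemma just established) gives $\Theta(\alpha)=\iota([c])$.

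The proof is essentially a matter of unwinding the definitions; the only nontrivial input is the homotopy $c'\simeq c$ rel endpoints in the second part, which relies on the hypothesis $t\gg r\gg 1$ to guarantee each segment lies in a normal neighborhood. Well-definedness of $\Theta$ on $r$-homotopy classes is a separate issue not asked for by this lemma, but would follow from Lemmas \ref{lem:q_descent} and \ref{lem:canonical_htpy} together with the observation that the piecewise geodesic reconstruction from $\calS(\alpha)$ is stable under the basic moves of $r$-homotopy.
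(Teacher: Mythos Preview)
Your proof is correct and follows the same approach as the paper's. The paper's argument is terser---for the first claim it simply notes that $\Theta(\alpha)\widetilde{p}_0=\widetilde{c}(1)$ projects to $\gamma p_0$ and invokes freeness of $\Gamma\curvearrowright M$, and for the second it says only that a lift of $c$ starting at $\widetilde{p}_0$ has endpoint $\iota(c)\widetilde{p}_0$---whereas you spell out the intermediate step that the piecewise geodesic reconstruction $c'$ is homotopic rel endpoints to the original $c$ when $r/t$ is below the injectivity radius. That extra care is a genuine (if minor) clarification the paper leaves implicit.
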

\begin{proof} Retain the notation of Construction \ref{constr:theta}. For the first claim: Since $\Gamma$ acts freely, it suffices to show that the image of $\Theta(\alpha)\widetilde{p}_0$ in $M$ is exactly $\gamma p_0$. But $\Theta(\alpha)\widetilde{p}_0=\widetilde{c}(1)$ projects to $c(1)=\gamma p_0$ in $M$, as desired.

The second claim is clear because a lift of $c\in \pi_1(M,p_0)$ starting at $\widetilde{p}_0$ has endpoint given by $\iota(c)\widetilde{p}_0$.\end{proof}

\begin{lem} $\Theta$ is an antihomomorphism.\end{lem}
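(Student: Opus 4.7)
The plan is to reduce to loops in canonical form and then exploit the concatenation formula of Lemma \ref{lem:canonical_concat} together with the equivariance between the actions $\widetilde{\Gamma}\curvearrowright \widetilde{M}$ and $\Gamma\curvearrowright M$ coming from the fact that $\widetilde\Gamma$ acts by deck-commuting lifts of $\Gamma$.

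First I would fix $r$-loops $\alpha, \beta$ based at $p_0$ and, using Lemma \ref{lem:canonical_htpy}, replace them by their canonical forms. Set $\gamma_\alpha:=Q(\alpha)$ and $\gamma_\beta:=Q(\beta)$, and let $c_\alpha, c_\beta$ be the piecewise geodesic paths in $M$ constructed from $\calS(\alpha), \calS(\beta)$ as in Construction \ref{constr:theta}, running from $p_0$ to $\gamma_\alpha p_0$ and from $p_0$ to $\gamma_\beta p_0$ respectively. By definition, the lifts $\widetilde{c}_\alpha, \widetilde{c}_\beta$ starting at $\widetilde{p}_0$ end at $\Theta(\alpha)\widetilde{p}_0$ and $\Theta(\beta)\widetilde{p}_0$.

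Second, by Lemma \ref{lem:canonical_concat}, the canonical form of $\alpha\ast\beta$ has orbital jump $Q(\alpha\ast\beta)=\gamma_\beta\gamma_\alpha$ and spatial part $(\gamma_\beta\,\calS(\alpha))\ast\calS(\beta)$. Unpacking the associated piecewise geodesic $c_{\alpha\ast\beta}$, it is simply $c_\beta$ followed by the translate $\gamma_\beta\cdot c_\alpha$, running from $p_0$ to $\gamma_\beta p_0$ to $\gamma_\beta\gamma_\alpha p_0$. (Here one uses that minimizing geodesic segments are $\Gamma$-equivariant because $\Gamma$ acts by isometries, so translating the points of $\calS(\alpha)$ by $\gamma_\beta$ translates the piecewise geodesic by $\gamma_\beta$.)

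Third---and this is where the antihomomorphism structure emerges---I would lift $c_{\alpha\ast\beta}$ to $\widetilde{M}$ starting at $\widetilde{p}_0$ and track the endpoint. The first segment lifts to $\widetilde{c}_\beta$, ending at $\Theta(\beta)\widetilde{p}_0$. Since $\Theta(\beta)\in\widetilde{\Gamma}$ projects to $\gamma_\beta$ by Lemma \ref{lem:theta_lifts} and $\widetilde{\Gamma}$ acts on $\widetilde{M}$ by lifts of the $\Gamma$-action, the curve $\Theta(\beta)\cdot\widetilde{c}_\alpha$ is a lift of $\gamma_\beta\cdot c_\alpha$ starting at $\Theta(\beta)\widetilde{p}_0$. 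By uniqueness of path lifting, this is the lift of the second segment of $c_{\alpha\ast\beta}$, and its endpoint is $\Theta(\beta)\Theta(\alpha)\widetilde{p}_0$. Freeness of $\widetilde{\Gamma}\curvearrowright\widetilde{M}$ (established in the preceding lemma) then yields $\Theta(\alpha\ast\beta)=\Theta(\beta)\Theta(\alpha)$.

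The main obstacle is conceptual rather than technical: the appearance of the $\gamma_\beta$-shift in $\calS(\alpha\ast\beta)$ is intrinsic to how canonical forms interact with concatenation, since the orbital part of $\beta$ relocates the base of $\calS(\alpha)$ before the spatial part of $\beta$ is executed. This is precisely the source of the order reversal and it is consistent with the earlier observation that $Q$ itself satisfies $Q(\alpha\ast\beta)=Q(\beta)Q(\alpha)$ (so $Q$ is an antihomomorphism), as well as with the identity $\pi\circ\Theta=Q$ from Lemma \ref{lem:theta_lifts}.
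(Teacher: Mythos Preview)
Your proof is correct and follows essentially the same route as the paper's: reduce to canonical forms, invoke Lemma \ref{lem:canonical_concat} to identify $\calS(\alpha\ast\beta)$ with $(\gamma_\beta\,\calS(\alpha))\ast\calS(\beta)$, then lift the resulting piecewise geodesic path segment by segment to $\widetilde{M}$ and use freeness of the $\widetilde{\Gamma}$-action to read off $\Theta(\alpha\ast\beta)=\Theta(\beta)\Theta(\alpha)$. Your write-up is in fact a bit more explicit than the paper's in justifying why $\Theta(\beta)\cdot\widetilde{c}_\alpha$ is the correct lift of the second segment (via $\pi\circ\Theta=Q$ and uniqueness of path lifting), but the argument is the same.
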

\begin{proof} Let $\alpha, \beta$ be $r$--loops based at $p_0$ and assume they are in canonical form. Recall from Lemma \ref{lem:canonical_concat} that $\calO(\alpha\ast\beta)=\calO(\alpha)\ast\calO(\beta)$ and $\calS(\alpha\ast\beta)=(Q(\beta) \, \calS(\alpha))\ast \calS(\beta)$. Let $c_\alpha$ (resp. $c_\beta$) be the (continuous) path in $M$ obtained by connecting the points of $\calS(\alpha)$ (resp. $\calS(\beta)$) by minimizing geodesic segments.

The lift of the path $\gamma_\beta \, c_\alpha \ast c_\beta$ to $\widetilde{M}$ that starts at $\widetilde{p}_0$ starts with $\widetilde{c}_\beta$, and then takes the lift of $\gamma_\beta \, c_\alpha$ that starts at $\widetilde{c}_\beta(1):=\Theta(\beta) \, \widetilde{p}_0$. Therefore the lift of $\gamma_\beta \, c_\alpha$ that is taken is $\Theta(\beta) \, \widetilde{c}_\alpha$. The endpoint is therefore $\Theta(\beta) \, \widetilde{c}_\alpha(1) = \Theta(\beta) \, \Theta(\alpha) \widetilde{p}_0$. Hence we see $\Theta(\alpha\beta)=\Theta(\beta) \, \Theta(\alpha)$.
%
%
\end{proof}

Finally we are able to prove that $\pi_1(M_t;r)\cong\widetilde{\Gamma}$, thereby finishing the proof of Proposition \ref{prop:coarsepi1}.

\begin{lem} $\Theta$ is an anti-isomorphism.
\label{lem:phi_isom}
\end{lem}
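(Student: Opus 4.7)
The plan is to recognize that everything set up in the previous lemmas fits $\Theta$ into a map between two short exact sequences, and then conclude by a five-lemma style argument.

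More precisely, I would begin by assembling the commutative diagram
\[
\begin{array}{ccccccccc}
1 & \to & \pi_1(M,p_0) & \xrightarrow{\jmath_M} & \pi_1(M_t,p_0;r) & \xrightarrow{Q} & \Gamma & \to & 1 \\
 & & \bigg\| & & \bigg\downarrow \Theta & & \bigg\| & & \\
1 & \to & \pi_1(M,p_0) & \xrightarrow{\iota} & \widetilde{\Gamma} & \xrightarrow{\pi} & \Gamma & \to & 1,
\end{array}
\]
whose top row is the sequence \eqref{coarse_ses} (exactness provided by Lemmas \ref{lem:jm_no_ker}, \ref{lem:coarse_gen}, and \ref{lem:normal}, together with surjectivity of $Q$) and whose bottom row is \eqref{lift_ses}. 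The two squares commute by Lemma \ref{lem:theta_lifts}, and the commutativity makes sense even though $\Theta$ is an antihomomorphism, because the outer vertical maps are restricted to, respectively, the abelian-looking subgroup $\pi_1(M)$ and to a quotient where only the unadorned maps appear.

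For surjectivity, I would take $\widetilde{\gamma}\in\widetilde{\Gamma}$ and use surjectivity of $Q$ to choose $\alpha\in\pi_1(M_t,p_0;r)$ with $Q(\alpha)=\pi(\widetilde{\gamma})$. Then $\pi\circ\Theta=Q$ forces $\widetilde{\gamma}^{-1}\Theta(\alpha)\in\ker\pi=\iota(\pi_1(M))$, say $\widetilde{\gamma}^{-1}\Theta(\alpha)=\iota(c)$. Using $\Theta\circ\jmath_M=\iota$ and the anti-homomorphism property $\Theta(\jmath_M(c^{-1})\cdot\alpha)=\Theta(\alpha)\cdot\Theta(\jmath_M(c^{-1}))=\Theta(\alpha)\iota(c^{-1})=\widetilde{\gamma}$, so $\alpha':=\jmath_M(c^{-1})\cdot\alpha$ maps to $\widetilde{\gamma}$.

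For injectivity, if $\Theta(\alpha)=e$ then $Q(\alpha)=\pi(\Theta(\alpha))=e$, so by exactness of the top row $\alpha=\jmath_M(c)$ for some $c\in\pi_1(M)$. Applying $\Theta$ and using $\Theta\circ\jmath_M=\iota$ yields $\iota(c)=e$, and since $\iota$ is injective, $c=e$ and hence $\alpha=e$. I do not anticipate any genuine obstacle in this final lemma: all the substantive content (that $\ker Q=\jmath_M(\pi_1(M))$, that $\jmath_M$ is injective, that $Q$ is surjective, and that $\Theta$ intertwines the two sequences) has already been established in the preceding subsections, and what remains is the standard two-line diagram chase above.
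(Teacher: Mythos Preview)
Your proof is correct and is essentially the same argument as the paper's: both are the standard five-lemma diagram chase using the relations $\pi\circ\Theta=Q$ and $\Theta\circ\jmath_M=\iota$ from Lemma~\ref{lem:theta_lifts}, together with exactness of the two rows. The only cosmetic difference is that for surjectivity the paper chooses the explicit preimage $\jmath_\Gamma(\gamma)$ rather than invoking surjectivity of $Q$ abstractly, and then finds the correcting element of $\pi_1(M)$ geometrically; your abstract version is just as valid. One small comment: your aside about commutativity ``making sense'' because $\pi_1(M)$ is ``abelian-looking'' is unnecessary and a bit muddled---the equalities $\pi\circ\Theta=Q$ and $\Theta\circ\jmath_M=\iota$ are equalities of set maps and hold regardless, and you handle the anti-homomorphism property correctly in the explicit chase anyway.
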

\begin{proof} We first show $\Theta$ is injective. Let $\alpha\in \ker(\Theta)$. Since $\Theta(\alpha)$ lifts $Q(\alpha)$ and $\Theta(\alpha)=e$, we see $\gamma=e$, so $\alpha\in \jmath_M(\pi_1(M))$. But by Claim \ref{lem:theta_lifts}, we have $\Theta\circ \jmath_M=\iota$, and $\iota$ is injective. It follows that $\ker\Theta\cap \jmath_M(\pi_1(M))=1$. This proves $\Theta$ is injective.

Finally we show $\Theta$ is surjective. Let $\widetilde{\gamma}\in\widetilde{\Gamma}$, and write $\gamma=\pi(\widetilde{\gamma})$ for the image of $\widetilde{\gamma}$ in $\Gamma$. Note that $\Theta(\jmath_\Gamma(\gamma))$ is characterized by the property that $\Theta(\jmath_\Gamma(\gamma))\widetilde{p}_0=\widetilde{c}_\gamma(1)$, where $c_\gamma$ is the arbitrary choice of continuous path from $p_0$ to $\gamma p_0$ made in the definition of $\jmath$, see Definition \ref{dfn:jgamma}. Since $\widetilde{c}_\gamma(1)$ and $\widetilde{\gamma}\widetilde{p}_0$ both project to $\gamma p_0\in M$, there is $\eta\in\pi_1(M,p_0)$ such that $\iota(\eta)\widetilde{c}_\gamma(1)=\widetilde{\gamma}\widetilde{p}_0$. Hence we have
	$$\eta \, \Theta(\jmath_\Gamma(\gamma))=\widetilde{\gamma}.$$
On the other hand $\iota(\eta)=\Theta(\jmath_M(\eta))$ by Lemma \ref{lem:theta_lifts}. Hence we see
	$$\Theta(\jmath_M(\eta)) \, \Theta(\jmath_\Gamma(\gamma))=\widetilde{\gamma},$$
so $\Theta$ is surjective.\end{proof}

\subsection{Coarse homotopical behavior of quasi-isometries} The stabilization of $\pi_1(M_t; r)$ to $\widetilde{\Gamma}$ as $r$ is fixed and $t\to\infty$ immediately implies that the change-of-scale map becomes an isomorphism. More precisely, let us decorate the isomorphism $\Theta$ defined above with the scale $r$, so that $\Theta_r : \pi_1(M_t ; r)\to \widetilde{\Gamma}$ is the above isomorphism, and let $\mu_{r,r'}$ be change-of-scale from $r$ to $r'$. Then whenever $t\gg r'>r\gg 1$, we have $\Theta_{r'}\circ \mu_{r,r'}=\Theta_r.$ In particular $\mu_{r,r'}$ is an isomorphism. This has the following interesting consequence for the behavior of quasi-isometries of level sets of warped cones.
\begin{lem} Fix $L\geq 1$ and $C\geq 0$. For $s,t\gg r\gg 1$, any $(L,C)$--quasi-isometry $f:M_t\to N_s$ induces an isomorphism $\Phi:\pi_1(M_t; r)\to \pi_1(N_s; Lr+C)$ on coarse fundamental groups.
\label{lem:coarse_isom}
\end{lem}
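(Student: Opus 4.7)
The plan is to construct an inverse to $f_\ast$ by using a quasi-inverse of $f$, and then appeal to the stabilization of coarse fundamental groups established earlier in this section (Proposition \ref{prop:coarsepi1}).

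First, since $f$ is an $(L,C)$--quasi-isometry, one has a quasi-inverse $g:N_s\to M_t$ which is $(L',C')$--coarsely Lipschitz for constants $L',C'$ depending only on $(L,C)$, and which satisfies $d(g\circ f,\id_{M_t})\leq D$ and $d(f\circ g,\id_{N_s})\leq D$ for some $D=D(L,C)$. As in Section \ref{sec:coarse_htpy}, $f$ and $g$ induce morphisms $f_\ast:\pi_1(M_t;r)\to\pi_1(N_s;Lr+C)$ and $g_\ast:\pi_1(N_s;\rho)\to\pi_1(M_t;L'\rho+C')$ for any $\rho$.

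The key observation I would use is that two uniformly close coarsely Lipschitz maps induce the same morphism on coarse fundamental groups, provided the target scale is at least of order $D$. Applied to $g\circ f$ versus $\id_{M_t}$, this says that for $r\gg D$, the composition $g_\ast\circ f_\ast$ equals the change-of-scale map $\mu_{r,r_1}:\pi_1(M_t;r)\to\pi_1(M_t;r_1)$, where $r_1:=L'(Lr+C)+C'$; here any basepoint displacement from $p_0$ to $g(f(p_0))$ is absorbed into an inner automorphism once one works with the unbased $\pi_1$ (which is well-defined here since $M_t$ is $r$--connected for $r\gg 1$). An analogous identity $f_\ast\circ g_\ast=\mu_{Lr+C,r_2}$ holds on $N_s$ for an appropriate $r_2$. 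By Proposition \ref{prop:coarsepi1}, once $s,t\gg r,r_1,r_2\gg 1$, the groups $\pi_1(M_t;r)$ and $\pi_1(M_t;r_1)$ are both canonically isomorphic to $\widetilde{\Gamma}$, the groups $\pi_1(N_s;Lr+C)$ and $\pi_1(N_s;r_2)$ to $\widetilde{\Lambda}$, and the change-of-scale maps between them are isomorphisms.

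Consequently $g_\ast\circ f_\ast$ and $f_\ast\circ g_\ast$ are both isomorphisms. Injectivity of $f_\ast$ (resp.\ $g_\ast$) then follows. For surjectivity of $f_\ast$: given $\beta\in\pi_1(N_s;Lr+C)$, surjectivity of $g_\ast\circ f_\ast$ produces $\alpha\in\pi_1(M_t;r)$ with $g_\ast(f_\ast(\alpha))=g_\ast(\beta)$, and injectivity of $g_\ast$ then forces $f_\ast(\alpha)=\beta$. The main technical obstacle is the closeness-implies-equal-morphism step in the middle paragraph: this is the coarse counterpart of the homotopy invariance of the fundamental group, and is handled by interpolating between the two image loops via the ``one-coordinate-at-a-time'' swapping procedure of Lemma \ref{lem:baby_step_htpy}, together with a short prepended and appended edge of length at most $D$ to reconcile the displaced basepoints.
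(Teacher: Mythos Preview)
Your proposal is correct and follows essentially the same approach as the paper: use a coarse inverse $g$, observe that $g_\ast f_\ast$ and $f_\ast g_\ast$ coincide with change-of-scale maps (since the compositions are uniformly close to the identity), and invoke the stabilization of $\pi_1(M_t;r)$ to conclude these change-of-scale maps are isomorphisms. The paper's proof is more terse: it simply notes that if $g\circ f$ displaces points by at most $C$ and the target scale exceeds $C$, then $(g\circ f)_\ast$ \emph{is} the change-of-scale map directly from the definition of $r$-closeness, so your appeal to Lemma~\ref{lem:baby_step_htpy} is not actually needed; and once both $g_\ast f_\ast$ and $f_\ast g_\ast$ are isomorphisms, the conclusion that $f_\ast$ is an isomorphism is immediate without the separate injectivity/surjectivity chase.
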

\begin{proof} Write $g:N_s\to M_t$ for the coarse inverse of $f$, so $g$ is also an $(L,C)$--quasi-isometry and $f\circ g$ and $g\circ f$ displace points by at most $C$. Then
	$$g_\ast f_\ast =(g\circ f)_\ast : \pi_1(M_t ; r)\to \pi_1(M_t; L^2 r + LC + C)$$
is given by change-of-scale (because $g\circ f$ displaces points at most $C$ and the scale in the target is at least $C$), and hence $g_\ast f_\ast$ is an isomorphism. Likewise $f_\ast g_\ast$ is an isomorphism, so $f_\ast$ is an isomorphism. \end{proof}

\section{Coarse homotopy invariance of group contributions}
\label{sec:no_obstruct}

\subsection{Setup} For the rest of this section, let $\Gamma$ (resp. $\Lambda$) be a finitely presented group acting minimally, isometrically, and freely on a closed manifold $M$ (resp. $N$), and assume that $\Gamma$ and $\Lambda$ are not commensurable to a group with a free abelian factor. Further $f : M_t \to N_s$ will denote an $(L,C)$--quasi-isometry.

\subsection{Statement of result} In this section, we aim to define, for every $s,t\gg r\gg L+C$, an invariant of an $(L,C)$-quasi-isometry $f:M_t\to N_s$ that measures the extent to which $f$ (coarsely) preserves the action and manifold contributions to the coarse fundamental group, i.e. the data of the extensions
	$$1\to \pi_1(M)\to \pi_1(M_t; r)\to \Gamma\to 1$$
and the corresponding extension for $N_s$. To start, recall that we defined for an $r$-loop $\alpha$ in $M$, its orbital jump $\widetilde{Q}(\alpha)$ (see Definition \ref{dfn:q}). We intend to show:
	\begin{prop} There are commensurations $M'$ of $M$ and $N'$ of $N$, such that the following holds for any $s,t\gg r\gg 1$: Let $\Gamma'$ (resp. $\Lambda'$) be the group of lifts of $\Gamma$ (resp. $\Lambda$) to $M'$ (resp. $N'$), and let $f':M_t'\to N_s'$ be an $(L,C)$--quasi-isometry. Let $p,q\in M'$ and let $\alpha$ be a (continuous) path in $M'$ from $p$ to $q$. Then $Q_{\Lambda'}(f'\circ \jmath_M(\alpha))$ only depends on $p$ and $q$, and not on the path $\alpha$ chosen.
	\label{prop:myjumpisyourjump}
	\end{prop}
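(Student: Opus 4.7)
The plan is to reformulate the proposition as the vanishing of an obstruction homomorphism and then to exploit the no-abelian-factor hypothesis to trivialize it after commensuration.

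First I would translate the statement into a question about $\pi_1(M',p)$. By the behavior of $\widetilde{Q}$ under concatenation (Definition \ref{dfn:q}, Lemma \ref{lem:q_descent}), the equality $Q_{\Lambda'}(f'\circ\jmath_M(\alpha_1))=Q_{\Lambda'}(f'\circ\jmath_M(\alpha_2))$ is equivalent to $Q_{\Lambda'}(f'\circ\jmath_M(\alpha_1\cdot\overline{\alpha_2}))=e$, and this depends only on $[\alpha_1\cdot\overline{\alpha_2}]\in\pi_1(M',p)$. Combining Proposition \ref{prop:coarsepi1} and Lemma \ref{lem:coarse_isom}, $f'$ induces an isomorphism $\Phi:\widetilde\Gamma\xrightarrow{\cong}\widetilde\Lambda$ of the (common) groups of lifts, and the proposition reduces to showing that the obstruction homomorphism
\[
\rho:\pi_1(M')\hookrightarrow\widetilde\Gamma\overset{\Phi}{\longrightarrow}\widetilde\Lambda\twoheadrightarrow\Lambda'
\]
vanishes for a suitable choice of commensurable $M'$ and $N'$.

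Next I would reduce to a setting where $\pi_1(M')$ and $\pi_1(N')$ are central in $\widetilde\Gamma$ and $\widetilde\Lambda$, and then analyze the situation by Goursat's lemma. Since $\Gamma\curvearrowright M$ is isometric and minimal, the closure $G:=\overline{\Gamma}\subseteq\Isom(M)$ is a compact Lie group acting transitively on $M$, so $M$ is homogeneous and $\pi_1(M)$ is virtually abelian. The identity component $G_0$ of $G$ acts on $M$ by self-homotopies of the identity, hence trivially in $\Out(\pi_1(M))$. Combined with the virtual abelianness of $\pi_1(M)$, choosing appropriate finite covers and restrictions---packaged as further commensurations---allows us to arrange that $\pi_1(M')$ is finitely generated torsion-free abelian and central in $\widetilde\Gamma$; the analogous reduction on the $N$ side puts $\pi_1(N')$ centrally in $\widetilde\Lambda$. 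Setting $A:=\Phi(\pi_1(M'))$ and $B:=\pi_1(N')$, both central in $\widetilde\Lambda$, Goursat's lemma identifies $\widetilde\Lambda/(A\cap B)$ with the fibre product $\Gamma'\times_Q\Lambda'$ over the common quotient $Q:=\widetilde\Lambda/AB$, with central kernels $B/(A\cap B)\subseteq Z(\Gamma')$ and $\rho(\pi_1(M'))=A/(A\cap B)\subseteq Z(\Lambda')$.

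If $\rho(\pi_1(M'))$ is finite, then enlarging $B$ to $B':=AB$ corresponds to the finite-sheeted quotient $N'':=N'/\rho(\pi_1(M'))$, a permitted further commensuration (the quotient is a manifold because $\rho(\pi_1(M'))\subseteq Z(\Lambda')$ inherits a free action from $\Lambda'$), and in this commensurable setting $\rho$ becomes trivial. The main obstacle is therefore the finiteness of $\rho(\pi_1(M'))$. A finitely generated central subgroup is not in general finite just because the ambient group has no free abelian factor---the Heisenberg group furnishes a classical counterexample---so the argument must exploit the symmetric Goursat picture, in which $\Gamma'$ and $\Lambda'$ are parallel central extensions of $Q$ by finitely generated abelian kernels. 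The plan is to argue by contradiction: if $\rho(\pi_1(M'))$ has positive torsion-free rank, then the symmetry of the two extensions together with the finite presentability of $\Gamma$ and $\Lambda$ should force the central extension $1\to\rho(\pi_1(M'))\to\Lambda'\to Q\to 1$ (or its counterpart for $\Gamma'$) to virtually split, producing a finite-index subgroup of $\Lambda'$ (or $\Gamma'$) with a free abelian direct factor and contradicting the no-abelian-factor hypothesis. Making this virtual splitting precise---showing that the symmetric Goursat data rule out Heisenberg-type non-splitting---is the most delicate technical point.
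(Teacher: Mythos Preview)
Your reduction to the vanishing of an obstruction homomorphism
\[
\rho:\pi_1(M')\hookrightarrow\widetilde\Gamma\xrightarrow{\Phi}\widetilde\Lambda\twoheadrightarrow\Lambda'
\]
is correct and matches the paper's Lemma~\ref{cl:reduct} and Lemma~\ref{lem:no_obstr}. The genuine gap is exactly where you flag it: you never actually prove that the image of $\rho$ is finite, and your proposed mechanism---``the symmetric Goursat data should force a virtual splitting''---is not an argument. As you yourself note, a finitely generated central subgroup of a group with no abelian factor need not be finite, and nothing in the abstract Goursat picture distinguishes the situation at hand from a Heisenberg-type extension. The symmetry between $\Gamma'$ and $\Lambda'$ as central extensions of a common quotient does not by itself produce a retraction onto the center.

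The idea the paper uses, and which you are missing, is \emph{geometric}: $\Lambda$ is not just an abstract group but a subgroup of the compact Lie group $\Isom(N)$. If the image $B$ of the abelian subgroup $A\subseteq\pi_1(M)$ were infinite, its closure $\overline{B}$ in $\Isom(N)$ would have a positive-dimensional identity component $\overline{B}^0$, a torus. Since $B$ is centralized by a finite-index subgroup $\Lambda''$ of $\Lambda$, this torus is central in the connected compact Lie group $\overline{\Lambda''}^0$, hence a local direct factor of it. The projection to this torus factor then restricts to a homomorphism from (a finite-index subgroup of) $\Lambda''$ onto (a finite-index subgroup of) $B$, exhibiting an abelian direct factor and contradicting the hypothesis. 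This use of the ambient compact Lie group is what rules out the Heisenberg obstruction; your purely group-theoretic Goursat approach has no substitute for it.

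A secondary point: the paper's choice of commensuration is also different and cleaner. Rather than passing to covers to make $\pi_1(M')$ central, the paper quotients $M$ and $N$ by the maximal finite normal subgroups of $\Gamma$ and $\Lambda$ (Proposition~\ref{prop:max_normal_fin}, Construction~\ref{constr:quot_by_max_fin}). Then $\Gamma'$ and $\Lambda'$ have \emph{no} nontrivial finite normal subgroups, so once the image of $\rho$ is shown to be finite \emph{and} normal, it is automatically trivial---no further commensuration is needed.
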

This will use in an essential way that the groups $\Gamma$ and $\Lambda$ are complicated (namely not commensurable to any group with a free abelian factor), and is false otherwise.

\subsection{Construction of the commensurable spaces} \label{sec:constr_comm} We start by finding the right covers $M'$ and $N'$ to prove Proposition \ref{prop:myjumpisyourjump}.

\begin{prop} $\Gamma$ contains a maximal finite normal subgroup. The same holds for $\Lambda$.
\label{prop:max_normal_fin}
\end{prop}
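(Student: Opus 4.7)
Let $G := \overline{\Gamma}\subseteq\Isom(M)$, which is a compact Lie group since $\Isom(M)$ is. The plan is to bound uniformly the order of finite normal subgroups of $\Gamma$ using the structure of $\Gamma$ as a dense subgroup of $G$. Since the set of finite normal subgroups of $\Gamma$ is closed under products ($F_1F_2$ has cardinality $|F_1||F_2|/|F_1\cap F_2|$ and is normal when $F_1, F_2$ are), a uniform bound on order yields a unique maximal element.

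First, any finite normal subgroup $F\triangleleft\Gamma$ is in fact normal in $G$: the closed subset $\{g\in G : gFg\inv=F\}$ contains $\Gamma$, hence by density equals $G$. Second, the continuous conjugation action $G\to \Aut(F)$ has finite target, so the connected identity component $G^0$ acts trivially on $F$, giving $F\subseteq Z_G(G^0)$. Consequently every finite normal subgroup of $\Gamma$ lies in $H:=\Gamma\cap Z_G(G^0)$.

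To analyze $H$, set $\Gamma_0:=\Gamma\cap G^0$, a finite-index subgroup of $\Gamma$ that is dense in the connected compact Lie group $G^0$. By density, the intersection $\Gamma\cap Z(G^0)$ coincides with the center $Z(\Gamma_0)$. The standard fact that finitely generated linear groups have finitely generated centers (applied to the embedding of $G^0$ into $\GL_n(\bbC)$ via a faithful unitary representation) yields $Z(\Gamma_0)\cong\bbZ^r\times F_0$ for some $r\geq 0$ and finite abelian $F_0$. Moreover $H$ is a finite extension of $Z(\Gamma_0)$, since $Z_G(G^0)/Z(G^0)$ embeds into the finite component group $G/G^0$.

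It remains to bound $|F|$ for finite $F\triangleleft\Gamma$. Such an $F$ lies in $H$ and is automatically normal in $H$. Its intersection $F\cap Z(\Gamma_0)$ is a finite subgroup of the finitely generated abelian group $\bbZ^r\times F_0$, hence contained in the torsion subgroup $F_0$; the quotient $F/(F\cap Z(\Gamma_0))$ injects into the finite group $H/Z(\Gamma_0)$. This produces a uniform bound $|F|\leq |F_0|\cdot [H:Z(\Gamma_0)]$. The same argument applies verbatim with $\Lambda$ in place of $\Gamma$. The main obstacle in executing this plan is the appeal to finite generation of $Z(\Gamma_0)$: although classical for finitely generated linear groups in characteristic zero, this step is the one that requires care, since a priori a subgroup of a finitely generated group need not be finitely generated.
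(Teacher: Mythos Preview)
Your overall strategy is sound and somewhat more streamlined than the paper's: rather than first proving the result for $\Gamma_0 = \Gamma \cap G^0$ and then bootstrapping to $\Gamma$, you directly bound the order of any finite normal $F \triangleleft \Gamma$ by locating it inside $H = \Gamma \cap Z_G(G^0)$ and exploiting the structure of $H$ as a finite extension of $Z(\Gamma_0)$.

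However, the step you flag as delicate is a genuine gap as written: the assertion that finitely generated linear groups in characteristic zero have finitely generated centers is \emph{false}. For a counterexample, let $G \subset \GL_3(\bbQ)$ be generated by the two standard unipotent generators of the integer Heisenberg group together with $t = \diag(2,1,2)$. Conjugation by $t$ scales the $(1,2)$-entry by $2$ and the $(2,3)$-entry by $1/2$ while fixing the $(1,3)$-entry, so $G$ contains the full Heisenberg group over $\bbZ[1/2]$; one then checks that $Z(G)$ consists exactly of the unipotents with only the $(1,3)$-entry nonzero, giving $Z(G) \cong \bbZ[1/2]$, which is not finitely generated.

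What rescues your argument is compactness of $G^0$, not mere linearity. Write $G^0 \cong (T \times S)/A$ with $T$ a torus, $S$ compact semisimple, $A$ finite central, and let $p : G^0 \to T/p(A)$ be the projection onto the torus factor. Then $p(\Gamma_0)$ is finitely generated abelian (a quotient of $\Gamma_0$ landing in a torus), and $p|_{Z(G^0)}$ has finite kernel (contained in the image of $Z(S)$). Hence $Z(\Gamma_0)$ is a finite extension of a subgroup of the finitely generated abelian group $p(\Gamma_0)$, so is itself finitely generated. This torus projection is exactly the mechanism of the paper's proof: there one forms the union $F$ of all finite normal subgroups of $\Gamma_0$, notes it is locally finite and central, and uses the same projection to conclude $F$ is finite (a locally finite subgroup of a finitely generated abelian group being finite). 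The paper then bootstraps to $\Gamma$ via the component group $G/G^0$, yielding the same bound $|F_0|\cdot|G/G^0|$ that you obtain. Once patched as above, the two arguments coincide in substance, yours being packaged a bit more directly.
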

It clearly suffices to prove the statement for $\Gamma$. We prove this in two steps. We will start by proving this for a finite-index subgroup of $\Gamma$. Namely, let $G=\overline{\Gamma}$ denote the closure of $\Gamma$ in $\Isom(M)$, let $G^0$ be the connected component of the identity of $G$, and set $\Gamma_0:=\Gamma\cap G^0$. Note that $\Gamma_0$ is a finite-index subgroup of $\Gamma$.
\begin{lem} $\Gamma_0$ contains a maximal finite normal subgroup.
\label{lem:max_normal_fin0}
\end{lem}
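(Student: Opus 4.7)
My plan is to identify the torsion subgroup of $Z(\Gamma_0)$ as the maximal finite normal subgroup, after showing it is finite.

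First I would verify that every finite normal subgroup $F\trianglelefteq\Gamma_0$ is contained in $Z(G^0)$. The normalizer of $F$ in $G^0$ is closed and contains the dense subgroup $\Gamma_0$, so it equals $G^0$, meaning $F$ is normal in $G^0$. The conjugation action $G^0\to\Aut(F)$ is then a continuous homomorphism from the connected Lie group $G^0$ to the finite (hence discrete) group $\Aut(F)$, so it is trivial and $F\subseteq Z(G^0)$. Density of $\Gamma_0$ in $G^0$ also forces $Z(G^0)\cap\Gamma_0=Z(\Gamma_0)$, since the centralizer of a dense set coincides with that of its closure. Hence $F$ lies in the torsion subgroup $\operatorname{Tor}(Z(\Gamma_0))$. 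As $\operatorname{Tor}(Z(\Gamma_0))$ is central, it is automatically normal, so once we know it is finite it will be the desired maximal finite normal subgroup.

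To establish finiteness of $\operatorname{Tor}(Z(\Gamma_0))$, I would invoke Selberg's lemma. Since $\Gamma_0$ is a finitely generated subgroup of the compact (hence linear) Lie group $G^0$, it is a finitely generated linear group, so it contains a torsion-free normal subgroup $\Gamma_0'\trianglelefteq\Gamma_0$ of some finite index $N$. Any torsion element $g\in\Gamma_0$ satisfies $g^N\in\Gamma_0'$, and torsion-freeness of $\Gamma_0'$ then forces $g^N=e$. In particular $\operatorname{Tor}(Z(\Gamma_0))\subseteq Z(G^0)[N]$. Since $Z(G^0)$ is a closed abelian subgroup of the Lie group $G^0$, it is a compact abelian Lie group, hence of the form $\bbT^k\times F'$ for a torus $\bbT^k$ and a finite abelian group $F'$; its $N$-torsion subgroup $(\bbZ/N\bbZ)^k\times F'[N]$ is finite. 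Therefore $\operatorname{Tor}(Z(\Gamma_0))$ is finite.

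The main obstacle, and the reason this statement is split off from Proposition \ref{prop:max_normal_fin}, is the use of connectedness of $G^0$ in the first step: a finite normal subgroup of a disconnected compact Lie group need not be central, so a direct analogue of the argument would fail for $\Gamma$ itself. The passage to the full group $\Gamma$ will require combining this lemma with additional bookkeeping across the finite component group $G/G^0$.
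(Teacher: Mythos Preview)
Your argument is correct and takes a genuinely different route from the paper. Both proofs share the opening observation that any finite normal subgroup of $\Gamma_0$ is central in $G^0$ (via the connectedness argument), but then diverge. The paper forms the union $F$ of all finite normal subgroups, notes it is locally finite and central, and then argues for its finiteness via the structure theory of $G^0$: the identity component $\overline{F}^0$ is a local torus factor, and projecting $\Gamma_0$ to this torus lands in a finitely generated abelian group, where locally finite subgroups are finite. Your approach instead bounds the order of \emph{all} torsion in $\Gamma_0$ uniformly by invoking Selberg's lemma, and then uses only the elementary fact that the $N$-torsion of the compact abelian Lie group $Z(G^0)$ is finite. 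This is shorter and avoids the local torus factor discussion entirely; the price is importing Selberg's lemma, which the paper's argument does not need. Both are perfectly valid, and the paper's approach has the minor advantage of being self-contained within compact Lie group structure theory, while yours is more direct once Selberg is granted.
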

\begin{proof} Note that any finite normal subgroup $F$ of $\Gamma_0$ is central, because $G^0$ normalizes $F$ and by connectedness must centralize it. Since a pair of finite central subgroups generates a finite central subgroup, we see the union $F$ of all finite normal subgroups is a central locally finite subgroup of $\Gamma$. We want to show it is finite.

Indeed, $\overline{F}^0$ is a compact connected normal abelian subgroup of the compact connected Lie group $G^0$, potentially trivial, and contains a dense locally finite group $F_0:=S\cap \overline{F}^0$. Then $\overline{F}^0$ is a local torus factor of $G$. Consider the projection onto this factor, i.e.
	$$G^0\to \overline{F}^0\slash A$$
where $A$ is a finite group. The image of $\Gamma$ is then a finitely generated abelian group in the torus $\overline{F}^0\slash A$ and contains the image of $F_0$, which is isomorphic to $F_0\slash (F_0\cap A)$. But any locally finite subgroup of a finitely generated abelian group is finite, so $F_0\slash(F_0\cap A)$ is finite. Since $A$ is also finite, it follows that $F_0$ is finite. Finally, $\overline{F}$ has finitely many components given by the group $F\slash F_0$, so that $F$ is an extension of the finite group $F\slash F_0$ by the finite group $F_0$, and hence $F$ is finite, as desired.\end{proof}
We can now show $\Gamma$ contains a maximal finite normal subgroup.
\begin{proof}[Proof of Proposition \ref{prop:max_normal_fin}] Let $F_0$ be the maximal finite normal subgroup of $\Gamma_0$, which exists by the above Lemma \ref{lem:max_normal_fin0}. Since any pair of finite normal subgroups generates a finite normal subgroup, the union $F$ of all finite normal subgroups of $\Gamma$ is well-defined.

We have to show $F$ is finite. Clearly $F_0\subseteq F$ and $F\cap G^0$ is a finite normal subgroup of $\Gamma_0$, so we must have $F\cap G^0 =F_0$. Therefore projection mod $G^0$ gives an embedding
	$$F\slash F_0\hookrightarrow G\slash G^0,$$
and hence
	$$|F|\leq |F_0|\cdot |G\slash G^0|<\infty,$$
as desired.\end{proof}
Armed with Proposition \ref{prop:max_normal_fin}, we can construct the right homogeneous spaces for the proof of Proposition \ref{prop:myjumpisyourjump}.
\begin{constr}
Let $F_\Gamma$ be the maximal finite normal subgroup of $\Gamma$. Then $\Gamma /F_\Gamma$ has no finite normal subgroup. Further since $F_\Gamma$ is normal, the action of $\Gamma$ descends to a minimal, isometric, free action of $\Gamma':=\Gamma\slash F_\Gamma$ on $M':=M/F_\Gamma$. Likewise we set $\Lambda':=\Lambda\slash F_\Lambda$ and $N':=N\slash F_\Lambda$. By Lemma \ref{lem:cone_fin_quot}, the warped cone over a finite quotient is uniformly quasi-isometric to the original warped cone. 
\label{constr:quot_by_max_fin}
\end{constr}

\subsection{Proof of Proposition \ref{prop:myjumpisyourjump}} We now replace our initial actions by the commensurable quotients just constructed and so denote these quotients $\Gamma', M', \Lambda', N'$ by $\Gamma, M, \Lambda, N$. The proof proceeds in several steps. We start by reducing Proposition \ref{prop:myjumpisyourjump} to a statement on coarse fundamental groups (i.e. about loops, not paths).
	\begin{lem} To prove Proposition \ref{prop:myjumpisyourjump}, it suffices to show that for any (continuous) loop $\alpha$ in $M$, we have $Q_\Lambda(f\circ \widetilde{\jmath}_M(\alpha))=e$.
	\label{cl:reduct}
	\end{lem}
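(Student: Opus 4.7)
The strategy is to pass from the path statement of Proposition~\ref{prop:myjumpisyourjump} to the loop statement by forming the concatenation of two competing paths with the reverse of one. Given two continuous paths $\alpha_1$ and $\alpha_2$ in $M$ from $p$ to $q$, set $\beta := \alpha_1 \ast \bar{\alpha}_2$, which is a continuous loop based at $p$. Assuming $Q_\Lambda(f \circ \widetilde{\jmath}_M(\beta)) = e$, the goal is to recover $Q_\Lambda(f \circ \widetilde{\jmath}_M(\alpha_1)) = Q_\Lambda(f \circ \widetilde{\jmath}_M(\alpha_2))$ by a short algebraic manipulation using that $\widetilde{Q}$ is multiplicative under $r$-path concatenation and inverts under reversal.

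For the manipulation, fix sufficiently fine discretizations chosen compatibly, so that $\widetilde{\jmath}_M(\beta) = \widetilde{\jmath}_M(\alpha_1) \ast \overline{\widetilde{\jmath}_M(\alpha_2)}$ holds on the nose (the reverse of a discretization of $\alpha_2$ is a discretization of $\bar{\alpha}_2$). Since $f$ is applied pointwise, $f \circ \widetilde{\jmath}_M(\beta) = \sigma_1 \ast \bar{\sigma}_2$, where $\sigma_i := f \circ \widetilde{\jmath}_M(\alpha_i)$; the meeting point of the two pieces is $f(q)$, so the linking jump in the sense of Definition~\ref{dfn:q} is trivial. From the definition of $\widetilde{Q}$ as a right-to-left product of jumps, concatenation at a common point gives
\[
\widetilde{Q}(\sigma_1 \ast \bar{\sigma}_2) \;=\; \widetilde{Q}(\bar{\sigma}_2) \cdot \widetilde{Q}(\sigma_1),
\]
and reversing an $r$-path inverts each individual jump and reverses their order, so $\widetilde{Q}(\bar{\sigma}_2) = \widetilde{Q}(\sigma_2)^{-1}$. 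Combining these,
\[
e \;=\; Q_\Lambda(f \circ \widetilde{\jmath}_M(\beta)) \;=\; Q_\Lambda(\sigma_2)^{-1} \cdot Q_\Lambda(\sigma_1),
\]
which is the desired equality.

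The only point that requires a brief verification is that $Q_\Lambda(f \circ \widetilde{\jmath}_M(\alpha))$ is well-defined independently of the discretization of $\alpha$; this follows from a path-analogue of Lemma~\ref{lem:q_descent}, since any two discretizations of a continuous path rel endpoints become $r$-homotopic after common refinement, and the local-move argument in the proof of Lemma~\ref{lem:q_descent} is insensitive to whether the path is a loop so long as endpoints are held fixed. I expect no genuine obstacle here, as the reduction is essentially a bookkeeping exercise; the substantive content of Proposition~\ref{prop:myjumpisyourjump} lies in the still-to-be-established vanishing statement on loops, which is where the hypothesis on the absence of free abelian factors (and the passage to the commensurable quotients $M'$, $N'$) must be invoked.
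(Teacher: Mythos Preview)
Your proposal is correct and follows essentially the same route as the paper: both arguments concatenate one path with the reverse of the other to obtain a loop, apply the assumed vanishing on loops, and then unwind using multiplicativity of $\widetilde{Q}$ under concatenation together with $\widetilde{Q}(\overline{\sigma})=\widetilde{Q}(\sigma)^{-1}$. The paper's proof is terser (it does not pause to discuss the linking jump or discretization independence), but the content is the same.
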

	\begin{proof} Let $p,q\in M$ and let $\alpha, \beta$ be two paths from $p$ to $q$. Recall that $\overline{\beta}$ denotes the reverse of $\beta$. It is immediate from the definition that $Q_\Lambda(f\circ\overline{\beta})=Q_\Lambda(f\circ\beta)^{-1}$ and
		$$Q_\Lambda(f\circ\beta)^{-1} \, Q_\Lambda(f\circ\alpha)	=	Q_\Lambda(f\circ\overline{\beta}) \, Q_\Lambda(f\circ\alpha)
													=	Q_\Lambda(f\circ(\overline{\beta}\ast\alpha)).$$
	Since $\overline{\beta}\ast\alpha$ is a loop in $M$, we have $Q_\Lambda(f\circ(\overline{\beta}\ast\alpha))=e$ by assumption and hence $Q_\Lambda(f\circ\alpha)=Q_\Lambda(f\circ\beta)$.
	\end{proof}
Let $\Phi:=f_{\ast}$ be the map on coarse fundamental groups induced by the quasi-isometry $f:M_t\to N_s$. Consider the composition
	$$a:\pi_1(M,p_0)\overset{\jmath_M}{\hookrightarrow} \pi_1(M_{t}, p_0; r)\overset{\Phi}{\longrightarrow} \pi_1(N_{s}, f(p_0); Lr+C)\overset{Q_\Lambda}{\twoheadrightarrow} \Lambda.$$
To prove Proposition \ref{prop:myjumpisyourjump}, it therefore suffices to show:
\begin{lem} $a$ is trivial.
\label{lem:no_obstr}
\end{lem}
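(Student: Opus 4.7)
Denote $A := a(\pi_1(M))\leq\Lambda$ and $K := \Phi(\jmath_M(\pi_1(M)))\leq\widetilde\Lambda$. The goal is to show that $A$ is trivial.

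\emph{Basic structure.} First I would record that $a$ is a group homomorphism and that $A$ is normal in $\Lambda$. Discretization of a concatenation of loops is $r$--homotopic to the concatenation of the discretizations, so $\jmath_M$ is a homomorphism; $\Phi=f_\ast$ is a group isomorphism by Lemma \ref{lem:coarse_isom} together with Proposition \ref{prop:coarsepi1}; and $Q_\Lambda$ is a homomorphism by Lemma \ref{lem:q_descent}. Since $\jmath_M(\pi_1(M))=\ker Q_\Gamma$ is normal in $\widetilde\Gamma$ (Lemma \ref{lem:normal}), its image $K$ under the isomorphism $\Phi$ is normal in $\widetilde\Lambda$, and so $A=Q_\Lambda(K)\lhd\Lambda$.

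\emph{Equivariance.} Next I would establish the key equivariance. For each $\gamma\in\Gamma$, set $\bar\gamma := Q_\Lambda(\Phi(\jmath_\Gamma(\gamma)))\in\Lambda$, well-defined modulo $A$. Since $\Phi$ intertwines conjugation and conjugation by $\jmath_\Gamma(\gamma)$ in $\widetilde\Gamma$ realizes the monodromy action on $\jmath_M(\pi_1(M))$ (up to inner automorphisms of $\pi_1(M)$, which collapse after applying $a$), one has
	\[\bar\gamma\,a(\eta)\,\bar\gamma^{-1}=a(\gamma\cdot\eta)\qquad\text{for every }\eta\in\pi_1(M),\]
where $\gamma\cdot\eta$ denotes the action of $\Gamma$ on $\pi_1(M)$ induced by $\Gamma\curvearrowright M$.

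\emph{Extracting a free abelian direct factor.} The main work is to use the equivariance, together with its symmetric analog obtained by applying the same construction to the coarse inverse $f^{-1}$ (which yields $a':\pi_1(N)\to\Gamma$ with image $A'\lhd\Gamma$), to force $A$ to split off a group commensurable to $\Lambda$ as a free abelian direct factor. Structurally, the pair of normal subgroups $K$ and $\jmath_N(\pi_1(N))$ of $\widetilde\Lambda$ (and the corresponding pair in $\widetilde\Gamma$ under $\Phi^{-1}$) organizes $A$ and $A'$ into matching quotients via the second isomorphism theorem. The deductions I would attempt to string together are: (i) $A$ is abelian, by combining the equivariance relation for $\bar\gamma\in A$ with the symmetric relation coming from $a'$; (ii) $A$ is central in $\Lambda$ and finitely generated (as a quotient of $\pi_1(M)$); (iii) $A$ is torsion-free, since $\Lambda$ has no finite normal subgroup by Construction \ref{constr:quot_by_max_fin}; and (iv) a central, torsion-free, finitely generated normal subgroup carrying the matching structure produced by $\Phi$ must split off as a direct factor of a finite-index subgroup of $\Lambda$.

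\emph{Conclusion and main obstacle.} The hypothesis that $\Lambda$ is not commensurable to a group with a nontrivial free abelian direct factor then forces $A$ to be trivial, completing the proof. The main obstacle is step (iv) above: upgrading abstract normality and centrality to an honest direct-factor splitting. This is precisely where the hypothesis on the absence of free abelian direct factors is sharp; the examples of Kim and Kielak--Sawicki from Section \ref{sec:hypo} (in which $\Gamma=\mathbb{Z}$ acts on $S^1$) show that without this hypothesis the image $A$ can genuinely be a nontrivial abelian direct factor and the lemma fails.
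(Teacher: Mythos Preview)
Your plan has a genuine gap at step (iv), which you yourself flag as the main obstacle but do not resolve. The claim that a central, finitely generated, torsion-free subgroup must split off as a direct factor is false in general: the integer Heisenberg group has center $\bbZ$, which is torsion-free and finitely generated but is not a direct factor of any finite-index subgroup. Your appeal to ``the matching structure produced by $\Phi$'' is too vague to bridge this; you would need to produce an actual retraction $\Lambda'\to A$ (or onto a finite-index subgroup of $A$), and nothing in your sketch does so. Steps (i) and (ii) are also unjustified as written: the equivariance relation you record says only that conjugation by $\bar\gamma$ on $A$ tracks the monodromy action of $\gamma$ on $\pi_1(M)$, and it is unclear how pairing this with the symmetric relation for $a'$ forces $A$ to be abelian or central in all of $\Lambda$.

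The paper's argument is structurally different and supplies exactly the missing splitting mechanism. It never tries to show $a(\pi_1(M))$ is abelian. Instead it uses two geometric inputs you did not: first, that $M$ is a compact homogeneous space, so $\pi_1(M)$ already has a finite-index free abelian subgroup $A_0$; second, and crucially, that $\Lambda$ sits densely in the compact Lie group $\overline{\Lambda}\leq\Isom(N)$. Passing to a finite-index subgroup $\Delta$ in which $A_0$ is central, one finds $B:=a(A_0)$ is centralized by the finite-index subgroup $\Lambda'=Q_\Lambda(\Phi(\Delta))$. If $B$ were infinite, its closure $\overline{B}$ in $\Isom(N)$ would have nontrivial identity component, which is then a central torus in $\overline{\Lambda'}^0$ and hence a local torus factor. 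Projection onto that torus factor is the retraction you were missing: it restricts to a map from $\Lambda'$ onto a finitely generated abelian group containing the image of $B$, and an elementary fact about finitely generated abelian groups promotes this to a genuine direct-factor splitting of finite-index subgroups. This contradicts the no-abelian-factor hypothesis, so $B$ is finite; since $\Lambda$ has no nontrivial finite normal subgroup, $a$ is trivial. The Lie-group closure is doing the real work here, and your outline has no substitute for it.
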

Before we prove this, let us just note:
\begin{rmk} This is the key place in the proof of Theorem \ref{thm:main} where we use that $\Gamma$ and $\Lambda$ have no free abelian factor (up to commensuration). In a sense to be made precise below, the isomorphism on coarse fundamental group induced by $f_t$ preserves the manifold contribution $\jmath_M(\pi_1(M))$ and the group contribution $\jmath_\Gamma(\Gamma)$ to the coarse fundamental group.

This fails disastrously for actions of free abelian groups, and gives rise to the examples of Kim and Kielak--Sawicki of quasi-isometric warped cones for actions of groups that are not commensurable.
\label{rmk:ess_hyp}
\end{rmk}

%
%
%

\begin{proof}[Proof of Lemma \ref{lem:no_obstr}]

Since $\jmath_M(\pi_1(M))$ is normal in $\pi_1(M_{t};r)$, it is clear that the image of $a$ is normal in $\Lambda$. Since $\Lambda$ does not contain any nontrivial finite normal subgroup (see Construction \ref{constr:quot_by_max_fin}), it therefore suffices to show that the image of $a$ is also finite.

Since $M$ is a compact homogeneous space, $\pi_1(M)$ contains a finite-index central free abelian subgroup $A$. We claim that there is a finite-index subgroup $\Delta$ of $\pi_1(M_{t};r)$ such that $A$ is also central in $\Delta$. To see this, recall that $\pi_1(M_{t};r)$ is isomorphic to the group of lifts of $\Gamma$ to $M$ and hence is given by an extension
	$$1\to \pi_1(M)\to \pi_1(M_{t};r)\to \Gamma\to 1.$$
Since $\Gamma$ acts isometrically on $M$, the conjugation action $\pi_1(M_{t};r)\to\Gamma\to \Out(\pi_1(M))$ has finite image. Let $\Delta$ be the kernel, so $\Delta$ is of finite index in $\pi_1(M_{t};r)$ and centralizes $A$. Let $\Lambda':=Q_\Lambda(\Phi(\Delta))$ be the image of $\Delta$ in $\Lambda$ and write $B:=a(A)$. So $\Lambda'$ is a finite-index subgroup of $\Lambda$ that centralizes $B$.

In order to show $a$ has finite image, it suffices to show $B$ is finite (since $A$ is of finite index in $\pi_1(M)$). To see this, we claim that a finite-index subgroup of $B$ is a (necessarily abelian) factor of a finite-index subgroup of $\Lambda'$. Since no finite-index subgroup of $\Lambda$ has an abelian factor, it follows that $B$ must have been finite. To produce this factor of $\Lambda'$, it suffices to find a projection of a finite-index subgroup of $\Lambda'$ onto a finite-index subgroup of $B$.

To find this projection, consider the closure $\overline{B}$ of $B$ in Isom$(N)$. If $B$ is infinite, then $\overline{B}$ has nontrivial connected component $\overline{B}^0$. Further $\overline{B}^0$ is central in the compact Lie group $\overline{\Lambda'}^0$. Hence $\overline{B}^0$ is a local torus factor of $\overline{\Lambda'}^0$. Write $\pi :\overline{\Lambda'}^0\to\overline{B}^0\slash F$ (where $F$ is a finite group) for the projection to this torus factor, and write $T:=\overline{B}^0\slash F$ for the quotient.

Then $\pi(\Lambda')$ is a finitely generated abelian subgroup of $T$ containing the image $B\slash(B\cap F)$ of $B$, which is also finitely generated and abelian. There is a finite-index subgroup $B'$ of $B$ that intersects $F$ trivially and hence projects isomorphically to $B\slash(B\cap F)$. Therefore the following fact finishes the proof:

{\bf Fact.} Let $A_1$ be a finitely generated abelian group with a subgroup $A_2$. Then some finite-index subgroup of $A_2$ is a factor of some finite-index subgroup of $A_1$.
\end{proof}

Vanishing of $a$ has the following immediate consequence:
\begin{constr} The map $\Phi:\widetilde{\Gamma}\to\widetilde{\Lambda}$ induced by $f$ on coarse fundamental groups restricts to an isomorphism $\Phi|_{\pi_1(M)}:\pi_1(M)\to\pi_1(N)$ and hence descends to an isomorphism $\varphi:\Gamma\to\Lambda$.
\label{constr:group_map}
 \end{constr}

\section{Stabilization of maps on coarse fundamental groups}
\label{sec:stab}

At this point we are able to state the full version of Theorem \ref{thm:main_FSL}, which requires some technical assumptions.

\begin{thm} Let $\Gamma$ and $\Lambda$ be finitely presented groups acting isometrically, freely and minimally on compact manifolds $M$ and $N$ (respectively). 	Assume
	\begin{enumerate}[(i)]
		\item neither of $\Gamma$ and $\Lambda$ is commensurable to a group with a nontrivial free abelian factor,
		\item there exist uniform quasi-isometries $f_n: M_{t_n}\to N_{s_n}$ for two sequences $t_n, s_n\to\infty$ , and
		\item letting $\varphi_n:\Gamma\to\Lambda$ be the isomorphisms defined in Construction \ref{constr:group_map}, assume these stabilize up to conjugacy along a subsequence. \label{item:stab_hyp}
	\end{enumerate}
Then there is an affine commensuration of the action of $\Gamma$ on $M$ and the action of $\Lambda$ on $N$.
	\label{thm:main_tech}
	\end{thm}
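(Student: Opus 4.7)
The plan is to combine the structural results already proved (Proposition \ref{prop:coarsepi1}, Proposition \ref{prop:myjumpisyourjump}, Construction \ref{constr:group_map}) with the construction of an explicit limit map from $M$ to $N$. First I would replace $\Gamma\curvearrowright M$ and $\Lambda\curvearrowright N$ by the quotients $\Gamma'\curvearrowright M'$ and $\Lambda'\curvearrowright N'$ of Construction \ref{constr:quot_by_max_fin}, so that the target groups have no nontrivial finite normal subgroups and Proposition \ref{prop:myjumpisyourjump} applies to each $f_n$. Construction \ref{constr:group_map} then yields, for each $n$, an isomorphism $\varphi_n:\Gamma\to\Lambda$ induced by $f_n$ on coarse fundamental groups. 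By hypothesis \eqref{item:stab_hyp}, after passing to a subsequence there exist inner automorphisms $\iota_n$ of $\Lambda$ so that $\iota_n\circ\varphi_n=\varphi$ is independent of $n$. Post-composing each $f_n$ with the translation by a suitable element of $\Lambda$ (a bounded perturbation of a quasi-isometry) allows us to assume $\varphi_n=\varphi$ for all $n$.

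The second step is to force $t_n$ and $s_n$ to be linearly related, as promised by Section \ref{sec:force_linear}. The vanishing of the obstruction map $a$ from Lemma \ref{lem:no_obstr} means that $f_n$ sends discretizations of $\pi_1(M)$-loops to loops in $N_{s_n}$ with no orbital jump; quantitatively, the spatial component $\calS(f_n\circ\alpha)$ of the image of a discretization of a continuous $M$-loop of length $\sim t_n$ has length $\lesssim L t_n$ by Lemma \ref{lem:canonical_length}, and therefore carries a $\pi_1(N)$-loop of length $\lesssim Lt_n/s_n$ in $N$. Running the same argument for a coarse inverse bounds $t_n/s_n$ from below. Hence $t_n\asymp s_n$, and after a bounded bi-Lipschitz identification between $M_{t_n}$ and $M_{s_n}$ absorbed into $f_n$, we may assume $t_n=s_n$.

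Now I would build the semiconjugacy following Section \ref{sec:semiconj}. For each $p\in M$, pick a continuous path $\alpha_p$ from a basepoint $p_0$ to $p$ and its discretization in $M_{t_n}$; the map $f_n$ carries it to a coarse path in $N_{t_n}$ whose orbital jump $Q_\Lambda(f_n\circ\alpha_p)\in\Lambda$ measures the monodromy acquired by $f_n$. Define the corrected map
\[
\bar f_n(p):=Q_\Lambda(f_n\circ\alpha_p)^{-1}\cdot f_n(p).
\]
By Proposition \ref{prop:myjumpisyourjump}, the value $\bar f_n(p)$ does not depend on $\alpha_p$. Using Lemmas \ref{lem:canonical_htpy}--\ref{lem:canonical_concat} and the fact that $f_n$ is $(L,C)$-coarsely Lipschitz on $M_{t_n}=tM$, the spatial length of the image of a short discretized geodesic in $M$ is at most $L$ times its $t_n$-length, giving
\[
d_N\bigl(\bar f_n(p),\bar f_n(q)\bigr)\leq L\, d_M(p,q)+C/t_n.
\]
Hence the maps $\bar f_n:M\to N$ are uniformly Lipschitz. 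Equivariance of $f_n$ up to bounded error, combined with $\varphi_n=\varphi$, translates into
\[
\bar f_n(\gamma p)=\varphi(\gamma)\bar f_n(p)+o(1)
\]
uniformly in $p$. By Arzelà--Ascoli a subsequential limit $f_\infty:M\to N$ exists, is $L$-Lipschitz, and is an honest $\varphi$-equivariant map: a semiconjugacy between the actions.

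The hardest step, and where I would expect the main technical work, is to promote $f_\infty$ to an affine map, following Section \ref{sec:conj}. Here $M$ is homogeneous under the compact group $G=\overline{\Gamma}\subseteq\Isom(M)$ and similarly $N$ under $H=\overline{\Lambda}$. The Lipschitz $\varphi$-equivariance of $f_\infty$ and the density of $\Gamma$ in $G$ extend $\varphi$ to a continuous group homomorphism $\overline{\varphi}:G\to H$ for which $f_\infty$ is $\overline{\varphi}$-equivariant, and a standard bootstrapping argument in homogeneous dynamics (using that on a compact homogeneous Riemannian manifold a continuous equivariant Lipschitz map is automatically smooth and affine) concludes that $f_\infty$ is the composition of a translation with an automorphism. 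The main obstacle I foresee is not the construction of the limit but proving enough regularity for $f_\infty$: Lipschitz is the natural output of Arzelà--Ascoli, but affineness needs one to exploit the homogeneous structure fully, and this is where transitivity of $G\curvearrowright M$ and the assumption of freeness of the action must be carefully combined. Together with the commensuration of the initial actions passed to in the first paragraph, this produces an affine commensuration of $\Gamma\curvearrowright M$ and $\Lambda\curvearrowright N$, completing the proof.
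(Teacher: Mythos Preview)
Your outline tracks the paper's structure closely, but there are two genuine gaps.

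\textbf{Linear growth of $t_n$ versus $s_n$.} Your argument in the second paragraph uses nontrivial loops in $\pi_1(M)$: you push a discretized $M$-loop through $f_n$, note that its spatial component represents a nontrivial class in $\pi_1(N)$ (via the isomorphism $\Phi|_{\pi_1(M)}$), and bound its $N$-length below by the systole. This fails outright when $M$ is simply connected, and many homogeneous spaces arising from minimal isometric actions are (spheres, $\SU(n)$, etc.). The paper's argument (Proposition~\ref{prop:same_rate}) avoids $\pi_1$ entirely: one takes the loop $\jmath_\Gamma(\gamma)$ for a generator $\gamma$, whose image under $f_n$ has orbital jump $\varphi_n(\gamma)$, so the spatial component $\calS(f_n\circ\jmath_\Gamma(\gamma))$ is a path in $N$ from $\varphi_n(\gamma)f_n(p_0)$ to $f_n(p_0)$ of $d_N$-length at least the minimal displacement $\delta$ of $\varphi_n(\gamma)$. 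Freeness of the $\Lambda$-action gives $\delta>0$, and the stabilization hypothesis~\eqref{item:stab_hyp} is precisely what makes $\delta$ independent of $n$ (since $\varphi_n(\gamma)$ lies in a fixed conjugacy class). Your version does not use hypothesis~\eqref{item:stab_hyp} at this step, which should already be a warning sign.

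\textbf{Promotion to an affine isomorphism.} Your last paragraph asserts that the equivariant Lipschitz limit $f_\infty$ is ``the composition of a translation with an automorphism''. The homogeneous-dynamics argument you invoke (the paper's Lemma~\ref{lemma:isaffine}) only produces an affine \emph{submersion}: the extended homomorphism $\overline{\varphi}:G\to H$ need not be injective, and the paper explicitly warns that $G$ could be $G_1\times G_2$ with $\overline{\varphi}$ a projection. To rule this out you must also construct the limit $g_\infty:N\to M$ from the coarse inverses $g_n$, and, crucially, arrange that $g_\infty\circ f_\infty$ is equivariant for the \emph{identity} automorphism of $\Gamma$ rather than merely some inner one (this requires an extra correction, as in the last line of the proof of Proposition~\ref{prop:semiconj_exist}). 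Only then does the graph argument of Corollary~\ref{cor:affineinverse} force $g_\infty\circ f_\infty$ to be an affine isomorphism, whence both factors are.

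A smaller point: post-composing $f_n:M_{t_n}\to N_{s_n}$ with translation by $\lambda_n\in\Lambda$ is not a ``bounded perturbation'' of a quasi-isometry when $\|\lambda_n\|$ is unbounded, since conjugation by $\lambda_n$ distorts word length. The paper instead applies $\lambda_n$ to the adjusted maps $\widetilde f_n:(M,d_M)\to(N,d_N)$, where $\Lambda$ acts isometrically; your reordering of these two steps is what creates the apparent difficulty.
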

In general, the sequences $\{t_n\}_n$ and $\{s_n\}_n$ could be extremely sparse, and hence without the coarse homotopical stabilization hypothesis \eqref{item:stab_hyp}, there is no relationship between the maps $f_n$. Of course, if the maps $f_n$ are all induced by a single quasi-isometry $f:\GGM\to \GLN$ of the geometric cones, then there is an extremely close connection between the different $f_n$ (although they are only uniformly coarsely Lipschitz, see Lemma \ref{lem:global_to_local}), and we will show this indeed implies that the maps $\varphi_n$ stabilize in this case (see Section \ref{sec:geomcone_stab} below). After establishing this stabilization phenomenon, the proofs of Theorem \ref{thm:main}, which shows affine commensurability of actions with quasi-isometric geometric cones, and Theorem \ref{thm:main_tech}, which shows affine commensurability only assuming quasi-isometric sequences of level sets but under the additional coarse stabilization hypothesis \ref{thm:main_tech}\eqref{item:stab_hyp}, will proceed completely parallel to each other (see Sections \ref{sec:semiconj} and \ref{sec:conj}).

In the next subsection, we will show that if $\Gamma$ has Property $\FSL$, then a sequence of uniform quasi-isometries satisfies the coarse stabilization hypothesis \ref{thm:main_tech}\eqref{item:stab_hyp}. Therefore Theorem \ref{thm:main_FSL}, obtaining affine commensurability for assuming quasi-isometric sequences of level sets under the additional assumption of Property $\FSL$, is a direct consequence of Theorem \ref{thm:main_tech} and the results of the next subsection.

Theorem \ref{thm:main}, which assumes quasi-isometric geometric cones, does not follow from Theorem \ref{thm:main_tech}, because quasi-isometries of geometric cones do not yield uniform quasi-isometries of level sets, but only uniformly coarsely Lipschitz maps as described in Lemma \ref{lem:global_to_local}.  It turns out that condition suffices to construct the conjugacy in both Theorem \ref{thm:main} and \ref{thm:main_tech} and we carry this out in Section \ref{sec:semiconj} and \ref{sec:conj}.


%
%

On the other hand, for the study of the geometry of expanders, and in particular for the proof of Theorems \ref{thm:expanders} and \ref{thm:superexpanders}, we need to control the behavior of subsequences of level sets of warped cones, which could be very sparse. Hence for these results Theorem \ref{thm:main} does not suffice, and one needs Theorem \ref{thm:main_FSL} and therefore Theorem \ref{thm:main_tech} and the results of the next few sections.


In this section, we will prove that in many cases, the stabilization hypothesis \eqref{item:stab_hyp} is satisfied.  In addition to the setting of the geometric cone discussed above, we will consider several easily checked group-theoretic assumptions on $\Gamma$ and $\Lambda$ that imply stabilization.  We introduce an abstract property of a finitely generated group which we call $\FSL$ which implies stabilization and provide many examples of groups satisfying $\FSL$.  We make no attempt
to be exhaustive and simply prove $\FSL$ in several cases related to applications. The following groups have $\FSL$:
	\begin{enumerate}[(1)]
		\item 	\label{item:finite_out} 	Any $\Gamma$ with $\Out(\Gamma)$ finite.
		\item		\label{item:surface}		Surface groups.
		\item 	\label{item:free}		Free groups.
		\item 	\label{item:free_prod}	Certain free products.
		\item		\label{item:direct_prod}	Certain direct products.
	\end{enumerate}

We will use the first or third, and fifth criterion in the proof of Theorem \ref{thm:expanders} and the first and fifth in the proof of Theorem \ref{thm:superexpanders}.  We believe one can give slightly different proofs of those theorems using the fourth condition in place of the fifith, but as the proof using the fifth is shorter, we only include it here. Since we are willing to pass to subsequences, stabilization is trivial in the first case and we will not comment on it further. Property $\FSL$ is really only needed to check $\eqref{item:surface}-\eqref{item:direct_prod}$.

\subsection{Uniform bounds on stable norms} \label{sec:stablenorm} Recall the definition of stable norm in a group:

\begin{dfn} Let $\Gamma$ be a finitely generated group, equipped with a word metric with respect to a finite symmetric generating set $S$. Then for any $\gamma\in\Gamma$, the \emph{stable norm} of $\gamma$ is
	$$\|\gamma\|_\infty := \lim_{n\to\infty} \frac{\|\gamma^n\|}{n}.$$
\end{dfn}
\begin{rmk} Because of submultiplicativity, the stable norm is well-defined and finite (in fact bounded by $\|\gamma\|$).\end{rmk}
\begin{rmk} Stable norm is conjugation invariant.
\label{rmk:stable_class}
\end{rmk}

The remarks leads us to consider the following sets of automorphisms of $\Gamma$.   For any $D>0$ define the set $\calD$ of automorphisms $\psi$ of $\Gamma$ which satisfy 	$$\|\psi(\gamma)\|_\infty\leq D\|\gamma\|_\infty$$
for every $\gamma$ in $\Gamma$.  Since conjugation preserves stable norm, $\calD$ is a union of fibers of the map
$\Aut(\Gamma) \rightarrow \Out(\Gamma)$ and it is more natural to consider the projection of $\calD$ to $\Out(\Gamma)$. We refer to $\calD$ as the set of automorphisms which are {\em $D$--Lipschitz for the stable norm}.

\begin{dfn}
\label{dfn:FSL}  We say $\Gamma$ has $\FSL$ if the projection of $\calD$ to $\Out(\Gamma)$ is finite for all $D>1$.
\end{dfn}

We assume the setting of Theorem \ref{thm:main_tech}. The following proposition relates $\FSL$ to the problem of stabilization.

	\begin{prop} For any $\gamma\in\Gamma$, we have
		\begin{equation*}\|\varphi_n(\gamma)\|_\infty\leq (L+2)\|\gamma\|_\infty.\end{equation*}
	\label{prop:stable_bound}
	\end{prop}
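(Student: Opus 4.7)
The plan is to bound the word length $\|\varphi_n(\gamma^k)\|$ for every $k \geq 1$ by pushing an explicit coarse loop forward under $f_n$, and then to pass to the limit $k \to \infty$ using the identity $\|\varphi_n(\gamma)\|_\infty = \lim_{k\to\infty} \|\varphi_n(\gamma)^k\|/k$.

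First, I would represent $\gamma^k \in \Gamma$ inside $\pi_1(M_{t_n}, p_0; r)$ by the explicit loop $\alpha_k := \widetilde{\jmath}_\Gamma(\gamma^k)$ from Definition \ref{dfn:jgamma}: an orbital block consisting of $\|\gamma^k\|$ jumps by generators (each of $d_{t_n}^\Gamma$-length exactly $1$), followed by a spatial discretization, at scale $r/t_n$, of a continuous path in $M$ from $\gamma^k p_0$ back to $p_0$. The key observation is that the spatial block has $d_{t_n}^\Gamma$-length at most $t_n \diam(M)$ and consists of at most $O(t_n/r)$ jumps, quantities \emph{independent of} $k$. By Lemma \ref{lem:canonical_length}, the total length satisfies $\ell(\alpha_k) \leq \|\gamma^k\| + t_n \diam(M)$.

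Next, I would push forward by $f_n$ jump-by-jump. Since $f_n$ is $(L,C)$--coarsely Lipschitz, summing the per-jump estimate $d_{s_n}^\Lambda(f_n(p_i), f_n(p_{i+1})) \leq L\, d_{t_n}^\Gamma(p_i, p_{i+1}) + C$ yields
\begin{equation*}
\ell(f_n(\alpha_k)) \;\leq\; L\bigl(\|\gamma^k\| + t_n \diam(M)\bigr) \;+\; C\bigl(\|\gamma^k\| + O(t_n/r)\bigr).
\end{equation*}
By Construction \ref{constr:group_map}, which gives $Q_\Lambda \circ f_{n,\ast} = \varphi_n \circ Q_\Gamma$, the orbital jump of $f_n(\alpha_k)$ is exactly $\varphi_n(\gamma^k) = \varphi_n(\gamma)^k \in \Lambda$. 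Combined with the elementary bound $\|\varphi_n(\gamma^k)\| \leq \ell(\calO(f_n(\alpha_k))) \leq \ell(f_n(\alpha_k))$ (the first inequality because $\varphi_n(\gamma^k)$ is the product of the $\Lambda$-valued orbital jumps and the word length of a product is bounded by the sum of word lengths; the second by Lemma \ref{lem:canonical_length}), I obtain
\begin{equation*}
\|\varphi_n(\gamma)^k\| \;\leq\; (L+C)\,\|\gamma^k\| \;+\; O_n(1).
\end{equation*}

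Finally, I would divide by $k$ and let $k \to \infty$ while keeping $n$ fixed: the $O_n(1)$ term becomes negligible, and $\|\gamma^k\|/k \to \|\gamma\|_\infty$ by definition of the stable norm. This gives $\|\varphi_n(\gamma)\|_\infty \leq (L+C)\|\gamma\|_\infty$; the stated bound $L+2$ then follows after choosing $r$ large enough to absorb the additive constant into a $2$, or by a slightly refined per-jump estimate using the decomposition of each image jump into its orbital and spatial components.

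The step I expect to require the most care is the identification $Q_\Lambda(f_n(\alpha_k)) = \varphi_n(\gamma)^k$ in $\Lambda$. The image loop $f_n(\alpha_k)$ is not literally based at the chosen basepoint of $N_{s_n}$ used to define $Q_\Lambda$, so one must verify that the relevant basepoint-change correction lies in $\pi_1(N) = \ker(Q_\Lambda)$ and therefore is invisible on the $\Lambda$-level. This is exactly the coarse-topological content of Lemma \ref{lem:theta_lifts} and the definition of $\varphi_n$ via the lifted isomorphism $\Phi_n:\widetilde{\Gamma} \to \widetilde{\Lambda}$, so the compatibility is essentially built in; the remaining work is bookkeeping.
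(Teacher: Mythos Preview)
Your approach is essentially the same as the paper's: represent $\gamma^k$ by an explicit coarse loop of controlled length, push forward by $f_n$, extract a bound on $\|\varphi_n(\gamma)^k\|$ from the length of the image loop, and divide by $k$. The identification $Q_\Lambda(f_n(\alpha_k))=\varphi_n(\gamma)^k$ you worry about is immediate from the definition of $\varphi_n$ (Construction~\ref{constr:group_map}), since $f_n(\alpha_k)$ is already based at $f_n(p_0)$ and $\varphi_n$ is precisely the map $Q_\Lambda\circ\Phi_n$ restricted to $\jmath_\Gamma(\Gamma)$; no basepoint correction is needed.

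The only imprecise step is your final sentence about obtaining the exact constant $L+2$. With generator-by-generator orbital jumps (as in $\widetilde{\jmath}_\Gamma$), each orbital jump has $d_{t_n}^\Gamma$-length $1$, so its image has length at most $L+C$, and you genuinely get $(L+C)\|\gamma^k\|$ from the orbital block; enlarging $r$ does nothing here. The paper instead discretizes the \emph{entire} loop into roughly $\frac{\|\gamma^k\|}{r}+\frac{t_n}{r}d_M(p_0,\gamma^k p_0)$ segments each of $d_{t_n}^\Gamma$-length $\approx r$ (grouping $r$ generators per orbital jump), so that each image segment has length $\leq Lr+C$ and the total is $\leq (L+C/r)\cdot(\text{length of }\alpha_k)\leq (L+1)(\|\gamma^k\|+t_n\diam(M))$ once $r\geq C$. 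The extra $+1$ to reach $L+2$ then comes from absorbing $t_n\diam(M)\leq\|\gamma^k\|$ for large $k$. This is presumably the ``refined per-jump estimate'' you had in mind, but as written your argument only yields $(L+C)$, which of course suffices for every application in the paper.
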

	\begin{proof} Let $\gamma\in\Gamma$. The shortest loop in $M_{t_n}$ representing $\gamma\in \pi_1(M_{t_n},p_0;r)$ has length $ \|\gamma\|+t_n d_M(p_0,\gamma p_0)$ for $t_n$ large enough. Consider the image under $f_n$ of any such loop. Of course, it represents $\varphi_n(\gamma)\in \pi_1(N_{s_n}, f_n(p_0);Lr+C)$ and hence has length at least $\|\varphi_n(\gamma)\|+s_n d_N(f_n(p_0), \varphi_n(\gamma) f_n(p_0))$. On the other hand by the quasi-isometry property, and the fact that the original loop could be represented by roughly $\frac{\|\gamma\|}{r} + \frac{t_n}{r} d_M(p_0,\gamma p_0)$ points where under application by $f_n$, each segment (of length roughly $r$) gets stretched to a segment of length at most $Lr+C$, the image has length at most
	$$(Lr+C)\left(\frac{\|\gamma\|}{r}+\frac{t_n}{r} d_M(p_0,\gamma p_0)\right).$$
Hence we conclude (for $r\geq C$, which we can assume), that
	$$\|\varphi_n(\gamma)\|+s_n d_N(f_n(p_0),\varphi_n(\gamma)f_n(p_0))\leq (L+1)\left(\|\gamma\|+t_n d_M(p_0,\gamma p_0)\right).$$
Now consider the above identity for $\gamma^m$ as $m\to\infty$. If $\gamma$ has infinite order, then for $m\gg1$ we have $\|\gamma^m\|\geq t_n \diam(M)$, so
	\begin{align*}
	\|\varphi_n(\gamma^m)\|	&	\leq 		\|\varphi_n(\gamma^m)\|+s_n d_N(f_n(p_0),\varphi_n(\gamma^m)f_n(p_0))		\\
						&	\leq 		(L+1)\left(\|\gamma^m\|+t_n d_M(p_0,\gamma^m p_0)\right)					\\
						&	\leq 		(L+2)\|\gamma^m\|.
	\end{align*}
Now divide by $m$ on both sides and take the limit to get $\|\varphi_n(\gamma)\|_\infty\leq (L+2)\|\gamma\|_\infty$, as desired.

The above applied for $\gamma$ of infinite order. If $\gamma$ has torsion, then so does $\varphi_n(\gamma)$. Further the stable norm of torsion elements vanishes, so the desired bound is immediate.
\end{proof}

Retaining the terminology of Theorem \ref{thm:main_tech}, the following is now obvious.
\begin{cor}
\label{cor:stablize}
If $\Gamma$ has $\FSL$ then the maps $\phi_n$ stabilize up to conjugacy on a subsequence.
\end{cor}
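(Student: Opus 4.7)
The plan is to apply Property $\FSL$ not to the maps $\varphi_n:\Gamma\to\Lambda$ themselves (which are not automorphisms of $\Gamma$), but to the compositions
$$\psi_n \;:=\; \varphi_1^{-1}\circ\varphi_n \;\in\; \Aut(\Gamma),$$
where the first isomorphism $\varphi_1$ is fixed once and for all. The first step is to show that each $\psi_n$ is $D$-Lipschitz for the stable norm on $\Gamma$, with $D$ independent of $n$.

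For this, I would invoke Proposition \ref{prop:stable_bound} twice. Applied directly to the quasi-isometry $f_n:M_{t_n}\to N_{s_n}$, it yields $\|\varphi_n(\gamma)\|_\infty\leq(L+2)\|\gamma\|_\infty$ for every $\gamma\in\Gamma$. Applied to a coarse inverse $g_1:N_{s_1}\to M_{t_1}$ of $f_1$, which is again a uniform quasi-isometry with constants $(L',C')$ depending only on $(L,C)$, it yields $\|\varphi_1^{-1}(\lambda)\|_\infty\leq(L'+2)\|\lambda\|_\infty$ for every $\lambda\in\Lambda$; here one uses that by Lemma \ref{lem:coarse_isom} the map $(g_1)_\ast$ is the inverse of $(f_1)_\ast$ on coarse fundamental groups, and therefore descends via Construction \ref{constr:group_map} to precisely $\varphi_1^{-1}$ on the group quotient. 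Composing these two bounds gives
$$\|\psi_n(\gamma)\|_\infty \;\leq\; (L'+2)(L+2)\,\|\gamma\|_\infty,$$
so every $\psi_n$ belongs to the set $\calD$ of Definition \ref{dfn:FSL} for the common constant $D:=(L'+2)(L+2)$.

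Property $\FSL$ now guarantees that the image of $\calD$ in $\Out(\Gamma)$ is finite. By the pigeonhole principle, there is a subsequence $\{n_k\}$ along which the outer class $[\psi_{n_k}]\in\Out(\Gamma)$ is constant; call it $[\psi_\infty]$. Writing $\psi_{n_k}=i_{g_k}\circ\psi_\infty$ for suitable $g_k\in\Gamma$ and translating back through $\varphi_1$, we obtain
$$\varphi_{n_k} \;=\; \varphi_1\circ\psi_{n_k} \;=\; i_{\varphi_1(g_k)}\circ(\varphi_1\circ\psi_\infty),$$
which is exactly the statement that the isomorphisms $\varphi_{n_k}$ agree with the single fixed isomorphism $\varphi_1\circ\psi_\infty$ up to inner automorphism of $\Lambda$, i.e. hypothesis \eqref{item:stab_hyp} of Theorem \ref{thm:main_tech}. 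The only mildly delicate point is the symmetric application of Proposition \ref{prop:stable_bound} to the coarse inverse $g_1$---both the controlled QI constants and the identification of $(g_1)_\ast$ with $\varphi_1^{-1}$ are routine consequences of the material in Sections \ref{sec:coarsepi1} and \ref{sec:no_obstruct}, and this is the only step I would expect to require any verification beyond directly quoting Proposition \ref{prop:stable_bound} and Definition \ref{dfn:FSL}.
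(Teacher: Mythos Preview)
Your argument is correct and is essentially the natural way to fill in the details the paper omits (the paper only says ``the following is now obvious''). The one step you flag as mildly delicate---that the coarse inverse $g_1$ induces $\varphi_1^{-1}$ on the group quotient---is indeed only true up to an inner automorphism, because of the basepoint change from $p_0$ to $g_1(f_1(p_0))$; but since stable norm is conjugation-invariant (Remark~\ref{rmk:stable_class}) this does not affect the Lipschitz bound, and your conclusion goes through unchanged.
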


\subsection{Hyperbolic groups} \label{sec:hyperbolic_stab} In this subsection we prove:
\begin{prop}\label{prop:hyp_FSL}
Hyperbolic groups have Property $\FSL$.
\end{prop}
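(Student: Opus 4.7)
The plan is to reduce finiteness of the projection $\calD \to \Out(\Gamma)$ to a statement about joint displacement of generators, and then use the Bestvina--Paulin compactness principle together with Rips--Sela theory to control this displacement.

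First I would use the standard fact that in a $\delta$-hyperbolic group $\Gamma$ with finite symmetric generating set $S$, there is a constant $K = K(\Gamma, S)$ such that every $\gamma \in \Gamma$ is conjugate to an element of word length at most $\|\gamma\|_\infty + K$; this follows from the Morse lemma together with the fact that a cyclically reduced representative quasi-tracks a translation axis. Hence for any $\psi \in \calD$ and any $s \in S$ we have $\|\psi(s)\|_\infty \leq D$, so $\psi(s)$ is conjugate to some element of length at most $D+K$, and in particular $\psi(s)$ lies in one of finitely many conjugacy classes of $\Gamma$.

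Next I would choose a convenient inner representative. Set
\[ \lambda(\psi) := \min_{x \in \Gamma} \max_{s \in S} d(x, \psi(s) x), \]
and, after composing $\psi$ with the inner automorphism by a near-minimizer $x$, assume $\max_s |\psi(s)| \leq \lambda(\psi) + 2\delta$. The key claim is that $\lambda(\psi)$ is bounded in terms of $D$, $\Gamma$, and $S$ alone. Granting this claim, each generator $\psi(s)$ lies in a fixed finite subset of $\Gamma$, so only finitely many tuples $(\psi(s))_{s \in S}$ occur and hence only finitely many outer classes.

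To prove the claim, suppose for contradiction there is a sequence $(\psi_n)$ in $\calD$ projecting to distinct outer classes with $\lambda(\psi_n) \to \infty$. Rescaling the Cayley graph by $1/\lambda(\psi_n)$ and passing to an ultralimit gives an isometric action of $\Gamma$ on an $\bbR$-tree $T$ without global fixed point; since the translation length of $\psi_n(s)$ is uniformly bounded (comparable to $\|\psi_n(s)\|_\infty \leq D$), every generator acts elliptically on $T$. By the Bestvina--Paulin theorem, such a nontrivial action with elliptic generating set yields a splitting of $\Gamma$ over a virtually cyclic subgroup. The main obstacle is to show that only finitely many outer classes in $\calD$ can arise in this way; I would handle this via JSJ theory (Rips--Sela, Levitt, Guirardel--Levitt): modulo finite-index subgroups, automorphisms compatible with such a splitting are generated by Dehn twists along the edge groups and by automorphisms of the vertex groups, but Dehn twists along an edge $C = \langle c\rangle$ produce iterates $\psi^n$ with $\|\psi^n(s)\|_\infty$ growing linearly in $n$, violating the uniform bound $D$, while vertex-group automorphisms are controlled by induction on JSJ complexity (one-ended hyperbolic vertex groups having finite $\Out$ by Paulin's theorem).
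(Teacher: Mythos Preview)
Your setup coincides with the paper's: define the joint displacement $\lambda(\psi)$, suppose it is unbounded along a sequence $\psi_n\in\calD$, rescale and pass to a limiting $\bbR$-tree action with no global fixed point, and observe that each generator acts elliptically because its stable norm is bounded by $D$. Up to this point the two arguments are identical.

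The divergence is at the endgame, and the paper's finish is dramatically simpler than what you propose. The key observation you are missing is that the stable-norm bound applies not only to generators $s$ but equally well to products $s_1 s_2$ of pairs of generators: $\|\psi_n(s_1 s_2)\|_\infty \le 2D$, so every such product also acts elliptically on the limiting tree. Now Serre's elementary lemma (if two elliptic isometries of a tree have disjoint fixed-point sets, their product is hyperbolic) forces any two generators to share a fixed point; since fixed-point sets in a tree are convex subtrees and pairwise intersect, a Helly-type argument gives a global fixed point, contradicting the choice of scaling. That is the entire finish.

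Your proposed route through Rips--Sela and JSJ theory has a genuine gap as stated. The splitting you extract from the tree action is a structural fact about $\Gamma$, but it tells you nothing directly about the individual automorphisms $\psi_n$: there is no a priori reason the $\psi_n$ preserve this splitting or are expressible in terms of Dehn twists along it. To make your sketch rigorous you would need something like the shortening argument to show that modular automorphisms associated to the splitting can be used to reduce $\lambda(\psi_n)$, and then iterate. This can be made to work but is an order of magnitude more involved than the paper's two-line application of Serre's lemma, and your sketch does not indicate how to bridge that gap.
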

If stable norm is replaced with conjugacy length (i.e. the minimal word length of a conjugate), then this result is established for torsion-free hyperbolic groups in \cite[Proposition 2.3 ]{MR3382028}. Throughout this subsection, we let $\Gamma$ be a hyperbolic group, generated by a finite set $S$. We denote the Cayley graph of $\Gamma$ with respect to $S$ by Cay$(\Gamma)$, and let $\delta$ be the hyperbolicity constant of Cay$(\Gamma)$. We recall that in hyperbolic groups, stable norm essentially detects conjugacy classes:
\begin{lem}[{see e.g. \cite[Proposition III.F.3.15]{MR1744486}}]\label{lem:dis_tran_length} For every $R>0$, there are only finitely many conjugacy classes with stable norm bounded by $R$. In fact, there is a constant $C,D>0$ depending only  on $\delta$ such that if $\|\gamma\|_\infty<R$ then there is $w\in \Gamma$ in the conjugacy class of $\gamma$ with $|w|<CR+D$.
\end{lem}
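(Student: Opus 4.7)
My plan is to deduce both assertions of the lemma from the stronger comparison
\[|\gamma|_{\mathrm{conj}} \;:=\; \min\{|w| : w \text{ conjugate to } \gamma\} \;\leq\; \|\gamma\|_\infty + K_\delta,\]
where $K_\delta$ depends only on $\delta$. This immediately gives the second assertion with $C=1$ and $D=K_\delta$ by taking $w$ to realize the minimum. The first assertion then follows automatically: every conjugacy class with $\|\gamma\|_\infty < R$ meets the (finite) word ball of radius $R + K_\delta$ in $\mathrm{Cay}(\Gamma)$, so there can be only finitely many such classes.

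For torsion elements the stable norm vanishes, and a standard pigeonhole argument on the finite orbit $\{1, \gamma, \gamma^2, \dots\}$ in $\mathrm{Cay}(\Gamma)$ combined with $\delta$-thinness produces a conjugate of $\gamma$ of word length bounded by a constant $T_\delta$ depending only on $\delta$. Absorbing $T_\delta$ into $K_\delta$ settles the torsion case, so I may assume $\gamma$ has infinite order.

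For $\gamma$ of infinite order I would use the classical \emph{midpoint-of-a-power} argument. Fix $n$ large and a geodesic segment $\sigma = [1, \gamma^n]$ in $\mathrm{Cay}(\Gamma)$, whose length $|\gamma^n|$ satisfies $|\gamma^n| = n\|\gamma\|_\infty + o(n)$ by definition of the stable norm. Set $p := \gamma^{\lfloor n/2 \rfloor}$. Using the fact that powers of an infinite-order element in a $\delta$-hyperbolic group form a quasi-geodesic with constants depending only on $\delta$, one obtains $|p| + |p^{-1}\gamma^n| \leq |\gamma^n| + O_\delta(1)$; by $\delta$-thinness of the triangle $(1, p, \gamma^n)$ this forces $p$ to lie within bounded distance $O(\delta)$ of some point $m$ on $\sigma$. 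Translating by $\gamma$, the point $\gamma p = \gamma^{\lfloor n/2 \rfloor + 1}$ lies within the same bounded distance of a corresponding point $m'$ on the translated segment $\gamma \sigma = [\gamma, \gamma^{n+1}]$. Morse stability guarantees that $\sigma$ and $\gamma\sigma$ fellow-travel away from their endpoints, with $m'$ projecting to a point on $\sigma$ at arclength offset approximately $\|\gamma\|_\infty$ from $m$. Combining these estimates yields $d(p, \gamma p) \leq \|\gamma\|_\infty + K_\delta$, and writing $p = h \cdot 1$ produces the desired conjugate $w := h^{-1}\gamma h$ with $|w| = d(p, \gamma p) \leq \|\gamma\|_\infty + K_\delta$.

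The principal obstacle is controlling the $O_\delta(1)$ defect in the approximate additivity $|p| + |p^{-1}\gamma^n| \approx |\gamma^n|$ uniformly in $\gamma$; equivalently, one needs that the orbit $\{\gamma^k\}_{k \in \mathbb{Z}}$ is a $(\lambda, c)$-quasi-geodesic with constants $\lambda, c$ depending only on $\delta$. This is a nontrivial classical input, typically obtained by constructing a quasi-axis for $\gamma$ in $\mathrm{Cay}(\Gamma)$ along which $\gamma$ acts with bounded error from pure translation. Once this uniform quasi-geodesic control is available, the rest of the argument is a routine application of the Morse lemma and $\delta$-thinness of triangles.
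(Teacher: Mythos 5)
The paper does not prove this lemma at all --- it is quoted from Bridson--Haefliger --- so the only question is whether your argument is sound, and it is not: the infinite-order case contains a fatal error. You set $p:=\gamma^{\lfloor n/2\rfloor}$ and end by conjugating $\gamma$ by $h=p$; but $p$ is a power of $\gamma$, so $w=h^{-1}\gamma h=\gamma$ and $|w|=|\gamma|$. Your chain of estimates would therefore ``prove'' that $|\gamma|\leq\|\gamma\|_\infty+K_\delta$ for every infinite-order $\gamma$, which is false: take $\gamma=h\,u\,h^{-1}$ with $u$ short and $|h|$ huge, so that $|\gamma|\approx 2|h|+|u|$ while $\|\gamma\|_\infty=\|u\|_\infty$ is small. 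The step that breaks is exactly the ``classical input'' you invoke: it is not true that $\{\gamma^k\}$ is a quasi-geodesic, nor that $\gamma^{\lfloor n/2\rfloor}$ lies within $O_\delta(1)$ of a geodesic $[1,\gamma^n]$, with constants depending only on $\delta$. For $\gamma=huh^{-1}$ one has $|p|+|p^{-1}\gamma^n|-|\gamma^n|\approx 2|h|$, unbounded; the quasi-axis of $\gamma$ exists, but the orbit of the basepoint $1$ fellow-travels it only up to an error comparable to $(|\gamma|-\|\gamma\|_\infty)/2$, which depends on $\gamma$, not on $\delta$ alone. So the additive defect you need to control cannot be controlled for an arbitrary representative of the conjugacy class.

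The standard repair changes the choice of conjugating point: either take $p$ to (approximately) minimize the displacement $x\mapsto d(x,\gamma x)$ over vertices of $\mathrm{Cay}(\Gamma)$ and conjugate by that $p$, or, equivalently, start from a shortest representative $w$ of the conjugacy class and bound $|w|$ directly. In the latter form one shows that the broken path obtained by concatenating the $w^k$-translates of a geodesic $[1,w]$ is an $|w|$-local geodesic --- minimality of $|w|$ in its conjugacy class forces the midpoint $m$ of $[1,w]$ to satisfy $d(m,wm)\geq|w|$, which is what makes the concatenation straight at the breakpoints --- and then the local-to-global theorem for local geodesics in $\delta$-hyperbolic spaces gives $\|w\|_\infty\geq |w|-K\delta$ whenever $|w|\geq 8\delta+1$, hence $|w|\leq\|\gamma\|_\infty+K\delta+1$ in all cases. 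This yields your target inequality (so the inequality itself, with $C=1$, is fine, and your deduction of finiteness of conjugacy classes from it is fine), and your treatment of torsion elements via bounded orbits is acceptable as a sketch; the gap is solely in how you reach the inequality for infinite-order elements.
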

We now establish a basic fact about isometries of hyperbolic spaces: displacement of a point $z$ under an isometry is relatively comparable to distance from $z$ to a point with small displacement. This will be helpful in showing existence of fixed points for limit actions on trees.
\begin{lem}\label{lem:find_fix_pt}
Let $s\in \Gamma$ and $x,y\in \text{Cay}(\Gamma)$ such that $d(x,sx)<C$ and $d(y,sy)<d$ for some constants $d, C> \delta$. Then there is a point $z$ on a geodesic segment $[x,y]$ which satisfies $d(z,sz)<10C$ and $d(y,z)<10d$.
\end{lem}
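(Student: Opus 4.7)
The plan is to select $z$ on $[x,y]$ by case analysis on the relative sizes of $C$, $d$, and $L := d(x,y)$. In two degenerate regimes an endpoint will already serve as $z$. If $L<10d$, I take $z=x$: then $d(z,sz)=d(x,sx)<C<10C$ and $d(y,z)=L<10d$. If instead $L\geq 10d$ but $d\leq 10C$, I take $z=y$: then $d(z,sz)=d(y,sy)<d\leq 10C$ and $d(y,z)=0<10d$. This reduces the problem to the main case, in which $L\geq 10d$ and $d>10C$.

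For the main case I will take $z$ to be the point on a geodesic segment $[x,y]$ at distance exactly $3d$ from $y$; then $d(y,z)=3d<10d$ is automatic and $z$ lies on the segment since $L\geq 10d > 3d$. To bound $d(z,sz)$ I will apply the $2\delta$-thin quadrilateral property (a standard consequence of $\delta$-slim triangles, obtained by cutting along a diagonal) to the geodesic quadrilateral with vertices $x, y, sy, sx$, whose four sides have lengths $L$, $<d$, $L$, $<C$. By construction $d(z,x)=L-3d\geq 7d > C+2\delta$ (using $d>10C>10\delta$) and $d(z,y)=3d > d+2\delta$ (using $d>\delta$), so $z$ cannot be within $2\delta$ of either short side $[sx,x]$ or $[y,sy]$; hence $z$ must be $2\delta$-close to some point $w\in [sx,sy]$.

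The last step is to relate $sz$ to $w$. A triangle-inequality argument using $d(x,sx)<C$ shows $|d(sx,w)-d(x,z)|<C+2\delta$, so $w$ is within $C+2\delta$ of the point $w'\in [sx,sy]$ at parameter $d(x,z)$ from $sx$. On the other hand, $s[x,y]$ and $[sx,sy]$ are two geodesics between the same pair of endpoints in a $\delta$-hyperbolic space, so the standard lemma asserting pointwise $2\delta$-closeness at equal parameters gives $d(sz,w')\leq 2\delta$. Combining these, $d(z,sz)\leq 2\delta+(C+2\delta)+2\delta = C+6\delta < 7C < 10C$, using $C>\delta$. The main obstacle is this parameter bookkeeping: the thin-quadrilateral step produces $w$ only as an arbitrary closest point, whereas $sz$ has a prescribed parameter, so the two must be realigned via the pointwise-closeness lemma for geodesics sharing endpoints. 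This lemma is the feature of $\delta$-hyperbolicity that converts smallness of endpoint displacement into smallness of the pointwise displacement of $s$ along the full geodesic, and is the substantive content driving the estimate.
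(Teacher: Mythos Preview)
Your proof is correct and follows essentially the same strategy as the paper: exploit thinness of the geodesic quadrilateral $x,y,sy,sx$ to pass from $z\in[x,y]$ to a nearby point on $[sx,sy]$, then parameter-match against $sz$. The paper carries this out by splitting the quadrilateral along the diagonal $[x,sy]$ and applying slimness twice (producing intermediate points $w\in[x,sy]$ and $t\in[sx,sy]$), whereas you invoke the $2\delta$-thin quadrilateral property directly; your explicit use of the pointwise-closeness lemma for geodesics with common endpoints is a welcome bit of care that the paper leaves implicit when it writes ``$sz\in[sx,sy]$'', and your initial case split (handling $d\le 10C$ separately by taking $z=y$) is in fact more complete than the paper's.
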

\begin{proof}If $d(x,y)\le 4\min\{d,C\}$ we can choose $z:=x$. Hence we assume that $d(x,y)>4d$ and $d(x,y)>4C$. Pick $z\in [x,y]$ such that $d(z,y)=2d$. Let $w\in [x,sy]$ such that $d(x,w)=d(x,z)=d(x,y)-2d$. By $\delta$-hyperbolicity, $d(z,w)<2\delta$.  Let $t\in [sx,sy]$ such that
	\begin{align*}
	d(sy,t)		=	d(sy,w)	&=	d(x,sy)-(d(x,y)-2d)\\
							&= 	d(x,sy)-d(sx,sy)+2d.
	\end{align*}
By $\delta$-hyperbolicity, $d(t,w)<2\delta$. On the other hand, since $sz\in[sx,sy]$ and $d(sz,sy)=2d$, we get $d(sz,t)<|d(x,sy)-d(sx,sy)|\le C$. By the triangle inequality, we see $d(z,sz)<4\delta + C <10C$.
\end{proof}
We also need the following lemma about isometric actions on trees. The proof can be found in Serre's \cite{MR0607504}. We note that Serre considers simplicial trees but the proof is valid for general $\mathbb{R}$-trees as well.
\begin{lem}\cite[Proposition 26, section I.6]{MR0607504}\label{lem:tree}
Let $\Lambda$ be a group acting on a tree $T$ by isometries and let $s_1,s_2\in\Lambda$ be elements with fixed points. Let $Fix(s_1)$ and $Fix(s_2)$ be the fixed point sets of $s_1$ and $s_2$. If $Fix(s_1)\cap Fix(s_2)=\varnothing$ then $s_1s_2$ does not have fixed point on $T$.
\end{lem}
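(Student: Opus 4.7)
The plan is to argue by contradiction, reducing everything to the following elementary fact about isometries of trees: if $s$ is an isometry of a tree $T$ with nonempty fixed-point set $F$, and $z \in T$, then the unique geodesic from $z$ to $sz$ passes through the nearest-point projection $\operatorname{proj}_F(z)$, and moreover $\operatorname{proj}_F(z)$ is the midpoint of $[z, sz]$. This fact follows once we know that $F$ is itself a subtree (immediate, since $s$ fixing two points forces it to fix the unique geodesic between them) and that in a tree the projection onto a subtree is unique; the displacement equality $d(z,F)=d(sz,F)$ plus uniqueness of geodesics then forces $[z,sz]$ to bend at $\operatorname{proj}_F(z)$ and be symmetric about it.

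With this in hand, suppose toward contradiction that $s_1 s_2$ fixes some point $x \in T$, and set $y := s_2 x$. Then $s_1 y = s_1 s_2 x = x$, so $y = s_1^{-1} x$. Let $A := \operatorname{Fix}(s_1) = \operatorname{Fix}(s_1^{-1})$ and $B := \operatorname{Fix}(s_2)$. Applying the preliminary fact to $s_2$ at $x$, the midpoint $m$ of $[x, s_2 x] = [x, y]$ lies in $B$. Applying it to $s_1^{-1}$ at $x$, the midpoint of $[x, s_1^{-1} x] = [x, y]$ lies in $A$. Hence $m \in A \cap B$, contradicting the hypothesis $A \cap B = \varnothing$.

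The only edge case to dispatch is when $x$ itself lies in $A$ or $B$ (so that $[x,y]$ is degenerate). If $x \in A$, then $y = s_1^{-1} x = x$, whence $s_2 x = x$, putting $x \in A \cap B$; symmetrically if $x \in B$. Either way the hypothesis is violated.

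The main (and only real) obstacle is establishing the preliminary lemma about midpoints of $[z,sz]$ lying in the fixed subtree, but this is standard tree geometry and requires no delicate estimate --- it is purely a consequence of uniqueness of geodesics in an $\mathbb{R}$-tree plus the fact that fixed sets of isometries are convex. Once that is in place, the lemma is a two-line argument.
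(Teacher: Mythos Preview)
Your argument is correct. The key midpoint lemma---that for an isometry $s$ with nonempty fixed subtree $F$, the projection $\operatorname{proj}_F(z)$ is the midpoint of $[z,sz]$---is established exactly as you indicate: the geodesics $[z,p]$ and $[sz,p]$ (with $p=\operatorname{proj}_F(z)$) cannot overlap beyond $p$, since any shared point would be fixed by $s$ and closer to $z$ than $p$. The contradiction then follows in one stroke.

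Note that the paper does not give its own proof of this lemma; it simply cites Serre's \emph{Trees} and remarks that the simplicial argument there carries over to $\mathbb{R}$-trees. Serre's original argument is organized somewhat differently: rather than arguing by contradiction at a putative fixed point, he takes the bridge $[p_1,p_2]$ between the two disjoint fixed subtrees and shows directly that $s_1 s_2$ acts as a hyperbolic isometry with translation axis passing through $[p_1,p_2]$ and translation length $2\,d(p_1,p_2)>0$. Your midpoint argument is a clean shortcut that bypasses identifying the axis, at the cost of not producing the explicit translation length (which is not needed here anyway).
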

We will now prove that hyperbolic groups have Property $\FSL$.
\begin{proof}[Proof of Proposition \ref{prop:hyp_FSL}]
Let $\rho_n:\Gamma\to\Gamma$ be automorphisms that are Lipschitz with respect to stable norm. Consider the sequence of action of $\Gamma$ on $\text{Cay}(\Gamma)$ obtained by precomposing the action with $\rho_n$. Define displacement functions $d_n$ by $d_n(x)=\max_{s\in S} d(\rho_n(s)x,x)$. Choose $x_n\in \text{Cay}(\Gamma)$ that minimize $d_n$.

We prove that $d_n(x_n)$ is uniformly bounded. Suppose for the sake of contradiction that $d_n(x_n)$ diverges. Consider the asymptotic cone of $\text{Cay}(\Gamma)$ with respect to the basepoints $(x_n)_n$ and rescaling sequence $(d_n(x_n))_n$. The limit is an $\mathbb{R}$-tree $T$ and the action of $\Gamma$ on $T$ does not have a global fixed point by our choice of scaling sequence and basepoints. On the other hand, by Lemma \ref{lem:dis_tran_length}, for every $s\in S$, and $n\in \mathbb{N}$, there exists $w_n\in \text{Cay}(\Gamma)$ such that $d(\rho_n(s)w_n,w_n)<C$ for some universal constant $C$ because $\rho_n$ is Lipschitz with respect to stable norm.  By Lemma \ref{lem:find_fix_pt}, there exists $w_n'$ on the geodesic segment $[w_n,x_n]$ such that $d(x_n,w_n')<10d_n(x_n)$ and $d(w_n',\rho_n(s)w_n')<10C$. The sequence $(w_n')$ converges to a fixed point of $s$ on $T$. Thus every element of $S$ acts with fixed point on $T$. Moreover, this argument also produces fixed points for the product of any pair of generators.

On the other hand, since $\Gamma$ acts on $T$ without global fixed point, there exist $s_1,s_2\in S$ whose fixed point sets are disjoint. It follows from Lemma \ref{lem:tree} that $s_1s_2$ does not have a fixed point on $T$, which is a contradiction. Therefore, there exist $x_n\in \text{Cay}(\Gamma)$ such that $d_n(x_n)$ is uniformly bounded.

Now pick such a sequence $(x_n)$. Then $d(x_n^{-1}\rho_n(s)x_n,1)$ is uniformly bounded for all $s\in S$ and $n\in \mathbb{N}$. Therefore the $\rho_n$ are all conjugate to a finite collection of homomorphisms and so stabilize on a subsequence.
\end{proof}

\subsection{Free groups} \label{sec:freegp_stab} In this subsection we prove:

	\begin{prop}
	\label{prop:stab_free}
The free group $F_k$ on $k$ generators has Property $\FSL$.
	\end{prop}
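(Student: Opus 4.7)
The plan is to follow the same strategy as the proof of Proposition \ref{prop:hyp_FSL}, since $F_k$ is in particular hyperbolic and that proposition directly applies. For a self-contained argument I would carry out the proof directly in the Cayley tree, where each step simplifies compared to the $\delta$-hyperbolic setting.

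Let $\psi_n \in \Aut(F_k)$ be automorphisms satisfying $\|\psi_n(\gamma)\|_\infty \leq D\|\gamma\|_\infty$, and let $T$ denote the Cayley tree of $F_k$ with respect to a free generating set $S = \{x_1,\dots,x_k\}$ together with its inverses. I would consider the precomposed actions $\rho_n := \psi_n$ of $F_k$ on $T$; for each $s \in S$ the translation length of $\rho_n(s)$ equals $\|\psi_n(s)\|_\infty \leq D$, so there are points of $T$ displaced by at most $D$ under $\rho_n(s)$. Defining $d_n(x) := \max_{s \in S} d(\rho_n(s)x, x)$ and letting $x_n$ be a minimizer of $d_n$, the main step is to show that $d_n(x_n)$ is uniformly bounded in $n$.

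I would argue this claim by contradiction: assuming $d_n(x_n) \to \infty$, pass to the asymptotic cone $T_\infty$ of $T$ based at $x_n$ with rescaling $d_n(x_n)$. Then $T_\infty$ is an $\mathbb{R}$-tree and $F_k$ acts on $T_\infty$ without a global fixed point by the choice of basepoint and rescaling. For each $s \in S$, nearest-point projection onto the axis of $\rho_n(s)$ (replacing Lemma \ref{lem:find_fix_pt}, and essentially trivial in a genuine tree) produces points on the segment from $x_n$ to a low-displacement point $y_{n,s}$ whose displacement under $\rho_n(s)$ is at most $D$ and whose distance to $x_n$ is at most $O(d_n(x_n))$; after rescaling these converge in $T_\infty$ to a fixed point of $s$. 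The same reasoning applied to products $ss'$ (whose stable norm in $F_k$ is at most $2$, hence displaced by at most $2D$ under $\rho_n$) yields fixed points of all such products. Lemma \ref{lem:tree} then forces $\mathrm{Fix}(s) \cap \mathrm{Fix}(s') \neq \varnothing$ for every pair of generators, and the Helly property for subtrees of an $\mathbb{R}$-tree produces a common fixed point of all generators, contradicting no global fixed point on $T_\infty$.

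Once $d_n(x_n)$ is uniformly bounded, conjugating $\psi_n$ by the group element corresponding to $x_n$ produces an outer-equivalent representative with $\max_{s \in S} |\psi_n(s)|$ bounded in the word metric. Only finitely many such $k$-tuples exist in $F_k$, so $[\psi_n]$ stabilizes along a subsequence in $\Out(F_k)$, which gives Property $\FSL$. The only substantive subtlety is the Helly property for subtrees of $\mathbb{R}$-trees, but this is immediate from the standard observation that three pairwise intersecting subtrees share the median of three chosen intersection points, so the proof is strictly cleaner here than in the general hyperbolic case.
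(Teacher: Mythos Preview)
Your proposal is correct: as the paper itself notes immediately after the statement, free groups are hyperbolic, so Proposition~\ref{prop:hyp_FSL} already yields the result, and your sketch is exactly the limit-tree argument of that proposition specialized to the Cayley tree. The only cosmetic difference is that you invoke the Helly property explicitly to get a global fixed point, whereas the paper phrases the contradiction as ``there exist $s_1,s_2\in S$ with disjoint fixed sets, so $s_1s_2$ has no fixed point''; these are equivalent in an $\mathbb{R}$-tree.

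The paper, however, gives a genuinely different and more elementary proof for $F_k$. It avoids asymptotic cones entirely and works directly with cyclically reduced words: using that $\|w\|_\infty=\|w\|$ for cyclically reduced $w$ (Lemma~\ref{lem:stable_norm_free}), one conjugates a $D$--Lipschitz automorphism $\varphi$ to minimize the tuple $(\|\varphi(s_i)\|)_i$ lexicographically, and then a short combinatorial argument shows $\|\varphi(s)\|\leq 2D$ for every generator $s$. If $\varphi(s)$ is cyclically reduced this is immediate; otherwise $\varphi(s)=s_i w s_i^{-1}$, minimality forces some $\varphi(s_j)$ not to start with $s_i$ or end with $s_i^{-1}$, and then $\varphi(s s_j)$ is cyclically reduced, giving $\|\varphi(s)\|\leq\|\varphi(s s_j)\|_\infty\leq 2D$. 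This approach yields an explicit bound, is self-contained, and---as the paper points out---its basic idea is reused in the next subsection for free products $\Delta\ast\mathbb{Z}$, which are not hyperbolic in general and to which your argument would not extend.
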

We note that free groups are hyperbolic so have property $\FSL$ from the results of the last subsection. We present here a more explicit proof, both because free groups are more essential for applications, and since the basic idea is applied to free products in the next subsection.

We start by recalling that in a free group, word length and stable word length are closely related via cyclically reduced words. Recall that a reduced word $w=s_{i_1}\cdots s_{i_k}$ is cyclically reduced if $s_{i_k}\neq s_{i_1}^{-1}$. This is to say that in $w^2$ there is no cancellation between the last letter of the first copy of $w$ and the first letter of the second copy of $w$.
	\begin{lem} Let $w$ be a cyclically reduced word. Then $\|w\|_\infty = \|w\|$.
	\label{lem:stable_norm_free}
	\end{lem}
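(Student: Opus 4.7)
The plan is to prove this by directly computing the word length of $w^n$ and taking the limit. The key observation is that cyclic reducedness is exactly the condition that prevents any cancellation when concatenating copies of $w$.

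First I would recall that for any element $g$ of a finitely generated group, submultiplicativity $\|g^{n+m}\| \leq \|g^n\|+\|g^m\|$ together with Fekete's lemma gives $\|g\|_\infty \leq \|g\|$. So one inequality $\|w\|_\infty \leq \|w\|$ is immediate and the real content is the other direction.

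Next I would write $w = s_{i_1} \cdots s_{i_k}$ as a reduced word of length $k = \|w\|$, and consider the concatenation $w^n = (s_{i_1}\cdots s_{i_k})(s_{i_1}\cdots s_{i_k})\cdots (s_{i_1}\cdots s_{i_k})$ of $n$ copies. The cyclically reduced hypothesis $s_{i_k} \neq s_{i_1}^{-1}$ ensures that at every junction between consecutive blocks, the last letter of one block is not the inverse of the first letter of the next, so no free reduction can be performed at any junction. Since each block is already reduced internally, the whole word is reduced as written. Hence $\|w^n\| = nk = n\|w\|$.

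Taking the limit yields
\[
\|w\|_\infty = \lim_{n\to\infty} \frac{\|w^n\|}{n} = \lim_{n\to\infty} \frac{n\|w\|}{n} = \|w\|,
\]
which completes the proof. There is no real obstacle; the lemma is essentially the definition, with the only substantive point being that reduced words in a free group have unique reduced form, so that no cancellation at the junctions forces $\|w^n\|$ to equal the naive length of the concatenation.
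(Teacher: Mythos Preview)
Your proof is correct and matches the paper's approach exactly: the paper simply asserts that $\|w^n\|=n\|w\|$ for cyclically reduced $w$ and says this immediately implies the claim. You have spelled out the reasoning behind that equality (no cancellation at the junctions) and the passage to the limit, which is precisely the intended argument.
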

	\begin{proof} It is obvious that $\|w^n\|=n\|w\|$ if $w$ is cyclically reduced. This immediately implies the claim.\end{proof}
Now we obtain stabilization for free groups:
	\begin{proof}[Proof of Proposition \ref{prop:stab_free}]  Given an automorphism $\varphi$, change it by conjugation if necessary to ensure that $\|\varphi(s_i)\|$ are minimal (in lexicographical ordering). We are not claiming such a representative is unique, merely take one.  We assume $\varphi$ is $D$--Lipschitz for the stable norm and give
a uniform bound on $\|\varphi(s_i)\|$. This clearly shows $\calD$ projects to a finite set in $\Out(F_k)$.

So let $s$ be any generator. If $\varphi(s)$ is cyclically reduced, we have
		$$\|\varphi(s)\|=\|\varphi(s)\|_\infty \leq D.$$
	If $\varphi(s)$ is not cyclically reduced, we can write (in reduced form)
		$$\varphi(s)=s_i w s_i^{-1}$$
	for some $i$ and word $w$ that does not start with $s_i^{-1}$ and does not end with $s_i$. Now there is some $j$ such that $\varphi(s_j)$ does not start with $s_i$ and does not end with $s_i^{-1}$.
	
	Indeed, if for every $j$, we would have that $\varphi(s_j)$ either started with $s_i$ or ended with $s_i^{-1}$, then consider $\psi:=s_i^{-1} \varphi s_i$. We have, for every $j$:
		$$\| \psi(s_j)\|\leq \|\varphi(s_j)\|$$
	and further $\|\psi(s)\|<\|\varphi(s)\|$. This contradicts minimality of $\varphi$.
	
	Hence we can choose $j$ such that $\varphi(s_j)$ does not start with $s_i$ and does not end with $s_i^{-1}$. Then $\varphi(s) \varphi(s_j)$ is cyclically reduced and hence
		\begin{align*}
		\|\varphi(s)\|\leq \|\varphi(s) \varphi(s_j)\|	&	=		\|\varphi(s)\varphi(s_j)\|_\infty 			\\
												&	\leq 		D \|s s_j\|_\infty 						\\
												&	\leq 		2D.
		\end{align*}
	\end{proof}

\subsection{Free products} \label{sec:freeprod_stab}


In this subsection we show that taking free products with $\bbZ$ preserves $\FSL$ for co-Hopfian groups
$\Delta$.  It is most likely the case that other hypotheses on $\Delta$ suffice as well.  Our motivation
for considering co-Hopfian groups is that higher rank lattices, and particularly all groups known
to have strong property $(T)$, are co-Hopfian.

\begin{prop} Let $\Delta$ be a finitely generated co-Hopfian group without a free cyclic factor. Set $\Gamma=\Delta\ast\bbZ$. Then $\Gamma$ has $\FSL$.\end{prop}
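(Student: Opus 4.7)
The approach mirrors the free-group argument of Proposition~\ref{prop:stab_free}: given $\varphi \in \Aut(\Gamma)$ that is $D$--Lipschitz for the stable length, I will normalize $\varphi$ up to an inner automorphism and then bound the normalized representative, so that the projection to $\Out(\Gamma)$ lies in a finite set. Nielsen-style cyclic reduction inside a free group is replaced here by Grushko uniqueness for the free product $\Gamma = \Delta \ast \bbZ$.

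The first step is structural normalization. The hypothesis that $\Delta$ has no free cyclic factor implies that in the Grushko decomposition of $\Gamma$, the $\bbZ$ appearing as $\langle t \rangle$ is the unique free cyclic factor (up to conjugation), while the freely indecomposable non-cyclic Grushko pieces of $\Delta$ remain those of $\Gamma$. By uniqueness of the Grushko decomposition up to conjugation, any $\varphi \in \Aut(\Gamma)$ sends $\langle t \rangle$ to a conjugate of $\langle t \rangle$ and permutes the non-cyclic Grushko factors of $\Delta$ by one of finitely many permutations. After composing with an inner automorphism and a factor permutation, I may assume $\varphi(\Delta) = \Delta$. Co-Hopficity then upgrades $\varphi|_\Delta$ to an element of $\Aut(\Delta)$, and $\varphi(t) = w\,t^{\pm 1}\,w^{-1}$ for some $w \in \Gamma$. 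Any further inner modification preserving $\varphi(\Delta) = \Delta$ must be by an element of $N_\Gamma(\Delta) = \Delta$ (standard for a free product with $\bbZ$).

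Next I control $\varphi|_\Delta$ and $\varphi(t)$ separately. For $\varphi|_\Delta$: the restriction of the stable length on $\Gamma$ to $\Delta$ coincides with the stable length on $\Delta$, since any reduced word in the free product representing an element of $\Delta$ carries no $t$-syllables. Hence $\varphi|_\Delta$ is $D$--Lipschitz for the stable length on $\Delta$, and to conclude that it takes only finitely many values in $\Out(\Delta)$ one invokes an $\FSL$-type hypothesis on $\Delta$, automatic in the intended applications (for instance when $\Out(\Delta)$ is finite, as in item~(1)). For $\varphi(t)$: using the residual $\Delta$-inner freedom I can strip off any initial $\Delta$-syllable in the normal form of $w$, arranging that either $w$ is trivial or $w$ begins with a nonzero power of $t$.

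The main obstacle is controlling $w$ itself, because stable length is a cyclic conjugacy invariant, so $\|w\,t^{\pm 1}\,w^{-1}\|_\infty = 1$ independently of $w$, and the Lipschitz bound on $\varphi(t)$ alone carries no information. To extract a bound I plan to apply the Lipschitz condition to test elements of the form $t\delta$ with $\delta \in \Delta$ chosen so that $t\delta$ is cyclically reduced in the free product and both $\|t\delta\|_\infty$ and $\|\varphi(\delta)\|_\infty$ are well-controlled. Then $\varphi(t\delta) = w\,t^{\pm 1}\,w^{-1}\,\varphi(\delta)$ has cyclic syllable length governed by how the normal forms of $w^{-1}$ and $\varphi(\delta)$ interact; tracking these cancellations inside the Bass--Serre structure of $\Delta \ast \bbZ$ is the combinatorial heart of the proof and the step I expect to require the most care.
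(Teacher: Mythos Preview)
Your structural normalization contains a genuine error that derails the rest of the argument. You claim that Grushko uniqueness forces $\varphi(\langle t\rangle)$ to be a conjugate of $\langle t\rangle$, so that $\varphi(t)=w\,t^{\pm 1}\,w^{-1}$. This is false: Grushko determines the conjugacy classes of the non-cyclic freely indecomposable factors and the \emph{rank} of the free part, but not the free part as a subgroup up to conjugacy. Concretely, for any nontrivial $\delta\in\Delta$ the transvection fixing $\Delta$ pointwise and sending $t\mapsto t\delta$ is an automorphism of $\Delta\ast\bbZ$, and $t\delta$ is not conjugate to $t^{\pm 1}$ (look at the abelianization). So the ``main obstacle'' you identify --- that $\|w\,t^{\pm 1}\,w^{-1}\|_\infty=1$ carries no information --- is an artifact of a wrong normal form, not a real difficulty.

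The paper's proof avoids this entirely. After conjugating so that $\varphi(\Delta)=\Delta$ (via Kurosh and co-Hopficity), one shows directly that $\varphi(t)=\delta_1\,t^{\pm 1}\,\delta_2$ with $\delta_1,\delta_2\in\Delta$; a further conjugation by an element of $\Delta$ gives $\varphi(t)=t^{\pm 1}\delta$. This word is cyclically reduced in the free product, so $\|\varphi(t)\|_\infty=1+\|\delta\|$, and the $D$--Lipschitz bound on $t$ alone yields $\|\delta\|<D$. Then for any generator $s\in S_\Delta$ the word $ts$ is also cyclically reduced, $\|\varphi(ts)\|_\infty=1+\|\delta\varphi(s)\|$, and the Lipschitz bound on $ts$ gives $\|\varphi(s)\|\leq 2D+\|\delta\|$. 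This bounds the word length of the image of every generator after normalization, hence bounds the number of outer classes --- with no $\FSL$ hypothesis on $\Delta$ and no cancellation-tracking in Bass--Serre trees. Your plan to invoke $\FSL$ for $\Delta$ is thus both unnecessary and not licensed by the stated hypotheses.
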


Recall that a group $G$ is co-Hopfian if whenever $H\subseteq G$ is a subgroup with $H\cong G$, then $H=G$.


\begin{proof} Again fix an automorphism $\varphi$ of $\Gamma$ and assume $\varphi$ is $D$--Lipschitz for the stable norm. Again we will, after correcting by $\varphi$ by conjugation, bound the word length of images of generators under $\varphi$.  This suffices to see the projection of $\calD$ to $\Out(\Gamma)$ is finite.  Let $t$ be the generator of the free cyclic factor and consider a generating set $S=S_\Delta\cup\{t, t^{-1}\}$ where $S_\Delta$ is a symmetric generating set for $\Delta$. First note that because $\Delta$ has no free cyclic factor, by the Kurosh subgroup theorem, we can conjugate $\varphi$ so that $\varphi(\Delta)\subseteq \Delta$. Since $\Delta$ is co-Hopfian, it follows that $\varphi(\Delta)=\Delta$.

\begin{claim}If $\varphi:\Delta\ast\bbZ\to \Delta\ast\bbZ$ is an automorphism such that $\varphi(\Delta)=\Delta$ then there are $\delta_1,\delta_2\in\Delta$ such that $\varphi(t)=\delta_1 t^{\pm 1}\delta_2$.\label{cl:auto-formula}
\end{claim}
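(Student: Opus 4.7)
The plan is as follows. Write $\varphi(t)$ in free product normal form
$$\varphi(t)=\delta_0 t^{n_1}\delta_1 t^{n_2}\cdots t^{n_k}\delta_k,$$
with $\delta_0,\delta_k\in\Delta$ possibly trivial, $\delta_i\in\Delta\setminus\{e\}$ for $1\le i\le k-1$, and $n_i\ne 0$. The goal becomes showing $k=1$ (for then the exponent-sum analysis below yields $|n_1|=1$ and we are done). First, since $\varphi(\Delta)=\Delta$, the induced map on $\Gamma/\langle\langle\Delta\rangle\rangle\cong\bbZ$ is an automorphism, so $\sum_i n_i=\pm1$. This is the easy half.

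Next I would reduce to the case $\varphi|_\Delta=\id$. Define $\varphi_1\in\Aut(\Gamma)$ by $\varphi_1|_\Delta=\varphi|_\Delta$ and $\varphi_1(t)=t$ (this is well-defined and invertible because $\Gamma=\Delta\ast\langle t\rangle$ is a free product and $\varphi|_\Delta$ is an automorphism of $\Delta$). Set $\psi:=\varphi\circ\varphi_1^{-1}\in\Aut(\Gamma)$. Then $\psi|_\Delta=\id$ and $\psi(t)=\varphi(t)$, and the desired form is preserved under replacing $\varphi$ by $\psi$. Writing $w:=\psi(t)$, the condition that $\psi$ is an automorphism becomes exactly: the inclusion-induced map $\Delta\ast\langle t\rangle\to\Gamma$ sending the second generator to $w$ is an isomorphism, i.e., $\Gamma=\Delta\ast\langle w\rangle$ as an internal free product decomposition.

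The heart of the argument is then to classify $w\in\Gamma$ for which $\Gamma=\Delta\ast\langle w\rangle$. Consider the Bass-Serre tree $T$ of the standard splitting $\Gamma=\Delta\ast\bbZ$, with $\Delta$ fixing $v_\Delta$ and $\langle t\rangle$ fixing $u_\bbZ$. Apply Kurosh to the cyclic subgroup $\langle w\rangle\le\Gamma$: either $w$ is conjugate into $\Delta$, conjugate into $\langle t\rangle$, or acts hyperbolically on $T$. The first possibility is excluded because the abelianization of $w$ projected to $\Gamma/\langle\langle\Delta\rangle\rangle$ must be $\pm1$, whereas conjugates of elements of $\Delta$ project to $0$. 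In the second case $w=gt^{\pm1}g^{-1}$, and I would argue that the compatibility of the new splitting $\Gamma=\Delta\ast\langle w\rangle$ with $T$ forces the conjugating element to lie in $\Delta\cdot\langle t\rangle$: concretely, the vertex $gu_\bbZ$ stabilized by $\langle w\rangle$ must be adjacent in $T$ to the unique $\Delta$-fixed vertex $v_\Delta$ (since neither $\Delta$ nor $\langle w\rangle$ can fix an intermediate vertex of a longer path), and the neighbors of $v_\Delta$ are exactly $\{\delta u_\bbZ:\delta\in\Delta\}$; this forces $g\in\Delta\langle t\rangle$, giving $w=\delta t^{\pm1}\delta^{-1}$, which is of the required form.

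The remaining and hardest case is the hyperbolic one: I would rule this out using that $\Delta$ has no free cyclic factor. The idea is that if $w$ acts hyperbolically on $T$, then the minimal $\langle\Delta,w\rangle$-invariant subtree of $T$ has a non-trivial quotient graph (containing a loop from the axis of $w$), whereas the Bass-Serre tree of the putative splitting $\Delta\ast\langle w\rangle$ has quotient graph a single edge; combined with Grushko's uniqueness of the rank-minimizing free product decomposition and the hypothesis that the Grushko decomposition of $\Gamma$ has $\bbZ$ as its unique free cyclic factor, this yields a contradiction. Alternatively, and more cleanly, I would invoke the classical Fuchs--Rabinovitch description of $\Aut(\Delta\ast\bbZ)$: the subgroup $\Aut_{(\Delta)}(\Gamma)$ of automorphisms preserving $\Delta$ is generated by lifts of $\Aut(\Delta)$, the inversion $t\mapsto t^{-1}$, and the transvections $t\mapsto\delta t$ and $t\mapsto t\delta$ for $\delta\in\Delta$; a direct induction shows that compositions of such generators always send $t$ to an element of the form $\delta_1 t^{\pm1}\delta_2$, concluding the proof. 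The main obstacle I anticipate is the hyperbolic case, where I expect the cleanest route is to cite Fuchs--Rabinovitch rather than redo the tree-theoretic analysis.
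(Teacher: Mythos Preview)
Your Bass--Serre trichotomy approach has a genuine gap: you plan to rule out the hyperbolic case entirely, but in fact most of the elements you are trying to characterize are hyperbolic. Concretely, take $w=\delta t$ with $\delta\in\Delta\setminus\{e\}$. The map fixing $\Delta$ pointwise and sending $t\mapsto\delta t$ is an automorphism of $\Delta\ast\bbZ$ (its inverse sends $t\mapsto\delta^{-1}t$), so this $w$ must survive your analysis. But $\delta t$ is cyclically reduced of syllable length $2$ in the free product, hence acts hyperbolically on $T$ with translation length $2$; it is not conjugate into $\langle t\rangle$. More generally, $\delta_1 t^{\pm1}\delta_2$ is hyperbolic whenever $\delta_2\delta_1\neq e$, so your case~(b) only captures the special form $\delta t^{\pm1}\delta^{-1}$ and your plan to eliminate case~(c) would eliminate the generic answer. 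The ``no free cyclic factor'' hypothesis you invoke is also not part of the claim and is not needed.

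Your fallback to Fuchs--Rabinovitch is correct and gives the result immediately, but it renders the tree-theoretic discussion irrelevant and replaces a short elementary argument by an appeal to a structural theorem. By contrast, the paper's proof is a direct normal-form computation: after left/right multiplying $\varphi(t)$ by elements of $\Delta$ (which preserves the automorphism property), one reduces to $\varphi(t)=t^k w t^l$ with $w$ either trivial or beginning and ending in $\Delta$-syllables, and then uses surjectivity of $\varphi$ --- specifically that $t$ itself must lie in the image --- to force $w=e$ and $k+l=\pm1$. Your exponent-sum observation $\sum n_i=\pm1$ is correct and useful, but the case split on the action type is the wrong organizing principle here; a syllable-length or normal-form argument is both shorter and avoids the pitfall.
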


\begin{proof} We note that for an automorphism $\varphi$ satisfying $\varphi(\Delta)=\Delta$ and $\delta_1\in\Delta$, then $\varphi_1:\Delta\ast\bbZ\to \Delta\ast\bbZ$ defined by $\varphi_1(\delta)=\varphi(\delta)$ for all $\delta\in\Delta$, and $\varphi_1(t)=\delta_1\varphi(t)$ is also an automorphism of $\Delta\ast\bbZ$ satisfying $\varphi_1(\Delta)=\Delta$. Similarly for $\varphi_2$ defined by $\varphi_2|_\Delta=\varphi|_\Delta$ and $\varphi_2(t)=\varphi(t)\delta_2$ for some $\delta_2\in\Delta$. Therefore, we can assume that $\varphi(t)$ is of the form $t^kwt^l$ where $w$ is an element in $\Delta\ast\bbZ$ which does not start and end by nonzero powers of $t$. If $k+l= 0$, then the image of any word containing $t^{\pm1}$ contains both $t^k$ and $t^{-k}$ in its reduced word representation. In particular, $t$ cannot be generated from $\varphi(\Delta\ast\bbZ)$. Thus, $k+l\neq 0$.

We show that $w=1$ and $k+l=\pm1$. Indeed, since $k+l\neq 0$, $w$ cannot be canceled out in image of any reduced word containing $t$ in $\Delta\ast\bbZ$. In particular, $t$ cannot be generated from image of $\varphi(\Delta\ast\bbZ)$, a contradiction. Thus, $w=1$, and it follows that $k+l=\pm1$.\end{proof}

By Claim \ref{cl:auto-formula}, we now conjugate $\varphi$ by elements of $\Delta$ and assume that $\varphi(\Delta)=\Delta$ and $\varphi(t)=t^{\pm1}\delta$, for some $\delta\in\Delta$. Since $\|\varphi(t)\|_\infty=1+\|\delta\|$, we get $\|\delta\| < D$. On the other hand, for $s\in S_\Delta$, $\|\varphi(ts)\|_\infty=1+\|\delta\varphi(s)\|$. It follows that $\|\delta\varphi(s)\|$ is uniformly bounded, hence so is $\|\varphi(s)\|$.\end{proof}

\subsection{Direct products} \label{sec:direct_prod_stab}

Finally we record the case of products such that automorphisms preserve or interchange the factors, and each factor is of the above type. Mild conditions are required, which motivates:

\begin{dfn}
\label{defn:noncommutative}  A group $G$ is called {\em highly non-commutative} if for any nontrivial proper subgroup $H$, the group $HZ(H)$ is also proper where $Z(H)$ is the centralizer of $H$ in $G$.
\end{dfn}

Free groups, hyperbolic groups and irreducible higher rank lattices are all easily to be highly non-commutative. The following is now quite easy

\begin{prop} Let $\Delta_1,\dots,\Delta_k$ be highly non-commutative groups with $\FSL$. Then $\Gamma=\prod_i \Delta_i$ has $FSL$.
 \label{prop:prod_stab} \end{prop}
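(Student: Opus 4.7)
The plan is to show that any automorphism $\varphi$ of $\Gamma=\prod_i\Delta_i$ must permute the direct factors, and then use $\FSL$ on each factor to conclude. More precisely, I would establish the following structural dichotomy, which uses high non-commutativity but is independent of any Lipschitz hypothesis: for every $\varphi\in\Aut(\Gamma)$ there is a permutation $\sigma\in S_k$ with $\varphi(\Delta_i)=\Delta_{\sigma(i)}$. Once this is in hand, choose for each of the finitely many admissible permutations $\sigma$ (those with $\Delta_i\cong\Delta_{\sigma(i)}$) a fixed lift to $\Aut(\Gamma)$; after composing $\varphi$ with one of these, I can assume $\varphi(\Delta_i)=\Delta_i$ for every $i$. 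The restriction $\varphi|_{\Delta_i}\in\Aut(\Delta_i)$ is then $D$--Lipschitz for the stable norm on $\Delta_i$ (taking $S=\bigcup_i S_i$ as generating set for $\Gamma$, the word norm of an element of $\Delta_i$ in $\Gamma$ equals its word norm in $\Delta_i$, so stable norms agree). Applying $\FSL$ to each $\Delta_i$, the class $[\varphi|_{\Delta_i}]\in\Out(\Delta_i)$ takes only finitely many values, and since $[\varphi]\in\Out(\Gamma)$ is determined by $\sigma$ and the tuple $([\varphi|_{\Delta_i}])_i$ (inner automorphisms of $\Gamma$ act coordinatewise as inner automorphisms of the $\Delta_i$), the projection of $\mathcal{D}$ to $\Out(\Gamma)$ is finite.

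The main step is to prove $\varphi$ permutes factors, and this is where the ``highly non-commutative'' hypothesis is essential. Set $H_i:=\varphi(\Delta_i)$; these are normal in $\Gamma$, pairwise commute, and $\Gamma=H_1\cdots H_k$ is an internal direct product. Let $p_j:\Gamma\to\Delta_j$ be projection and set $H_i^j:=p_j(H_i)$. For fixed $j$, the $H_i^j$ pairwise commute inside $\Delta_j$ and satisfy $\Delta_j=\prod_i H_i^j$. If $H_i^j$ is nontrivial, then its centralizer in $\Delta_j$ contains all the other $H_l^j$, so $H_i^j\cdot Z(H_i^j)=\prod_l H_l^j=\Delta_j$; since $\Delta_j$ is highly non-commutative, $H_i^j=\Delta_j$. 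Thus each $H_i^j$ is either trivial or all of $\Delta_j$.

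Define $T_i:=\{j:H_i^j=\Delta_j\}$. If two distinct $H_i^j$ both equal $\Delta_j$ they commute elementwise, forcing $\Delta_j$ abelian; I would handle the degenerate abelian case separately (a highly non-commutative abelian group has no nontrivial proper subgroup, so the statement is essentially trivial there), and otherwise conclude that for each $j$ exactly one $i$ has $j\in T_i$. Hence the $T_i$ partition $\{1,\dots,k\}$. If some $T_i$ were empty, $H_i=\{e\}$ would contradict $H_i\cong\Delta_i\neq\{e\}$; since there are $k$ nonempty $T_i$ partitioning a $k$--element set, each $|T_i|=1$. Writing $T_i=\{\sigma(i)\}$, the containment $H_i\subseteq\Delta_{\sigma(i)}$ together with $p_{\sigma(i)}(H_i)=\Delta_{\sigma(i)}$ (which on the subgroup $H_i\subseteq\Delta_{\sigma(i)}$ is just the identity) gives $H_i=\Delta_{\sigma(i)}$, as desired.

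I expect the principal obstacle to be organizing the commutator/centralizer bookkeeping in the permutation argument cleanly; the passage from the local information ``$H_i^j\in\{e,\Delta_j\}$'' to the global partitioning and then to $|T_i|=1$ is the conceptual heart. The remaining steps (compatibility of stable norm under restriction to a direct factor, and the packaging of the finite data $(\sigma,([\varphi|_{\Delta_i}]))$ into a finite subset of $\Out(\Gamma)$) are routine once the factor-permuting property is established.
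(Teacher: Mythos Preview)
Your approach is exactly what the paper intends: its proof is the one-line remark that the highly non-commutative hypothesis forces every automorphism of $\Gamma$ to permute the $\Delta_i$, after which $\FSL$ on each factor finishes. Your projection argument (each $H_i^j\in\{e,\Delta_j\}$, then the $T_i$ partition $\{1,\dots,k\}$ with $|T_i|=1$) is the natural way to carry out this exercise, and the packaging of $(\sigma,([\varphi|_{\Delta_i}])_i)$ into finitely many classes in $\Out(\Gamma)$ is correct.

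The only loose end is the abelian edge case. Your parenthetical covers the situation where \emph{all} factors are abelian, but in a mixed product such as $F_2\times\mathbb{Z}/2$ the factor-permuting claim actually fails: the shearing maps $(w,t)\mapsto(w,t+\psi(w))$ for $\psi\in\Hom(F_2,\mathbb{Z}/2)$ are automorphisms that do not send $F_2\times\{0\}$ to any coordinate factor. This does not invalidate the proposition, since a highly non-commutative abelian factor is $\mathbb{Z}/p$ and the group of such shears is finite, so one can still conclude $\FSL$; but your sketch does not yet say this. The paper's applications only involve centerless factors (free groups, higher-rank lattices), where the issue does not arise.
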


The proof of the proposition is the following easy exercise: use that the $\Delta_i$ are highly non-commutative
to show that any automorphism of $\Gamma$ permutes the factors. We will only use the following special case.

\begin{cor} \label{cor:prodstab}
Let $G = \prod_I \bbG_i(k_i)$ where $|I| < \infty$, each $k_i$ is a local field and each $\bbG(k_i)$ is a simple algebraic group defined over $k_i$.  Let $\Gamma <G$ be a lattice.  Then $\Gamma$ has Property $\FSL$.
\end{cor}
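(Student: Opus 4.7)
The plan is to reduce to Proposition \ref{prop:prod_stab} via the structure theory of lattices in products of simple algebraic groups. First, I would invoke the standard decomposition theorem: after passing to a finite-index characteristic subgroup $\Gamma_0 \leq \Gamma$, there is a partition $I = I_1 \sqcup \cdots \sqcup I_m$ such that $\Gamma_0 = \Lambda_1 \times \cdots \times \Lambda_m$, with each $\Lambda_j$ an irreducible lattice in the sub-product $G_j := \prod_{i \in I_j} \bbG_i(k_i)$. Each $\Lambda_j$ has finite center $Z_j$, so $Z := \prod Z_j$ is a finite normal subgroup of $\Gamma$; passing to the quotient by $Z$ I may further assume each $\Lambda_j$ is center-free. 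This reduction is needed because the definition of highly non-commutative literally fails when $H$ equals a nontrivial center.

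Next, I would check the two hypotheses of Proposition \ref{prop:prod_stab} for each center-free factor $\Lambda_j$. For the highly non-commutative property: when the combined rank of $G_j$ is at least $2$, the centralizer of any nontrivial subgroup of $\Lambda_j$ sits inside a proper Zariski-closed subgroup of $G_j$ (Borel density together with the Margulis normal subgroup theorem), forcing $HZ(H)$ to be proper whenever $H$ is a proper nontrivial subgroup; when $G_j$ has rank one, $\Lambda_j$ is (virtually) Gromov hyperbolic and is highly non-commutative by the remark following Definition \ref{defn:noncommutative}. For FSL: when the combined rank is at least $2$, Margulis superrigidity (or Mostow rigidity in the multi-simple-factor case) gives that $\Out(\Lambda_j)$ is finite, so item (\ref{item:finite_out}) of the list preceding Section \ref{sec:stablenorm} applies; when $G_j$ has rank one, cocompact lattices are Gromov hyperbolic (Proposition \ref{prop:hyp_FSL}), non-uniform lattices in rank-one groups of dimension $\geq 3$ have finite $\Out$ by Mostow rigidity, and non-uniform lattices in $\PSL_2(\bbR)$ are virtually free (Proposition \ref{prop:stab_free}).

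Having verified the hypotheses, Proposition \ref{prop:prod_stab} yields FSL for the center-free quotient of $\Gamma_0$, and two routine propagation principles then lift FSL back to $\Gamma$: FSL passes through finite central extensions (to lift from the center-free quotient of $\Gamma_0$ to $\Gamma_0$ itself) and through passage to finite-index overgroups (to lift from $\Gamma_0$ to $\Gamma$). The main obstacle, in my view, is this final propagation: in each case a $D$-Lipschitz automorphism of the larger group restricts or descends to a Lipschitz automorphism of the smaller group with constant depending only on $D$ and the bi-Lipschitz comparison of the relevant word metrics, and FSL of the smaller group bounds the outer classes of restrictions; but bounding the possible lifts requires careful use of conjugation-invariance of stable norm together with the finiteness of the relevant kernel or cokernel. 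The rest of the argument is an orchestration of classical rigidity theorems (Margulis superrigidity, Mostow rigidity, the Margulis normal subgroup theorem) together with the results established earlier in Section \ref{sec:stab}.
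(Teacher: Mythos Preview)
Your approach is essentially the same as the paper's: reduce to Proposition~\ref{prop:prod_stab} by exhibiting the lattice (up to finite index and finite center) as a product of irreducible lattices, each of which is highly non-commutative and has $\FSL$. The paper, however, gives essentially no proof of the corollary at all---it simply declares it a ``special case'' of Proposition~\ref{prop:prod_stab} and moves on. Your writeup is considerably more careful than what appears in the paper.

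One point worth noting: the paper immediately follows the corollary with Remark~\ref{dumb:remark}, which says that the only case actually used is $|I|=2$ with $\Gamma=\Gamma_1\times\Gamma_2$ already a direct product, one factor a free group and the other a higher-rank lattice. In that case Proposition~\ref{prop:prod_stab} applies directly, with no need for the finite-index/finite-center reductions or the propagation step you flag as the main obstacle. So the paper is, in effect, stating a corollary more general than it needs and then silently retreating to the case where the reduction is trivial. Your proof sketch addresses the corollary as actually stated, and the propagation of $\FSL$ through finite-index overgroups and finite central quotients that you identify as delicate is genuinely not treated anywhere in the paper. Your instinct that this step needs an argument (bounding lifts via the finiteness of the centralizer of a finite-index subgroup in the ambient lattice) is correct, and the paper simply does not supply it.
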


\begin{rmk}
\label{dumb:remark} We will actually only need Corollary \ref{cor:prodstab} in the special case where $|I|=2$, the lattice $\Gamma=\Gamma_1 \times \Gamma_2$ is reducible and one factor $\Gamma_1$ is a free group, viewed, for example, as a lattice in $SL(2,\bbR)$ or $SL(2,\bbQ_p)$.  To construct superexpanders, the other factor $\Gamma_2$ will be a lattice in a higher rank simple $p$-adic Lie group.  To construct expanders, the choice of the second factor will be almost arbitrary.
 \end{rmk}



\subsection{Geometric cone} \label{sec:geomcone_stab} Finally, we prove stabilization up to conjugacy in the setting of a quasi-isometry of geometric cones, i.e. in the setting of Theorem \ref{thm:main}. This proof is unrelated to the last few sections and to property $\FSL$.

Recall that for $t>0$, we have defined the maps
	$$f_t:M_t\to N_t$$
and found there is $t_0>0$ such that for any $t\geq t_0$, the maps $f_t$ are uniformly coarsely Lipschitz (see Lemma \ref{lem:global_to_local}). We obtain the following description for the induced maps $\Phi_t:=f_{t\ast}$ on coarse fundamental groups. Recall that the coarse fundamental group of a truncated cone is given by the group of lifts of the acting group (see Corollary \ref{cor:coarsepi1_geomcone}). Therefore we can view $\Phi$ and $\Phi_t$ as maps $\widetilde{\Gamma}\to\widetilde{\Lambda}$.

\begin{prop} For $t\gg r\gg 1$, we have $\Phi_t=\Phi$. In particular $\{\varphi_t\}_t$ stabilize.

\label{prop:geomcone_stab}
\end{prop}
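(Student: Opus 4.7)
The plan is to establish commutativity, for $t \gg r \gg 1$, of the diagram
\[
\xymatrix@C=3em{
\pi_1(M_t, p_0; r) \ar[r]^-{(f_t)_\ast} \ar[d]_-{(\iota_t^M)_\ast}^-{\cong} & \pi_1(N_t, f_t(p_0); Lr+C) \ar[d]^-{(\iota_t^N)_\ast}_-{\cong} \\
\widetilde{\Gamma} \ar[r]^-{\Phi} & \widetilde{\Lambda}
}
\]
where the vertical arrows are the identifications from Proposition \ref{prop:coarsepi1} and Corollary \ref{cor:coarsepi1_geomcone} (composed with the inclusion-induced iso to coarse $\pi_1$ of the truncated cone), and basepoints are made compatible by prepending a radial path from $(p_0,t)$ to $(p_0,t_0)$ in $\GGMtr$ and likewise on the $\Lambda$ side. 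Commutativity immediately yields $\Phi_t = \Phi$, and the stabilization of $\varphi_t$ follows because $\varphi_t$ is obtained from $\Phi_t$ by descending modulo $\pi_1(M)$ and $\pi_1(N)$ (Construction \ref{constr:group_map}).

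The key reformulation is that by definition $f_t = \pi_t^N \circ f \circ \iota_t^M$, where $\pi_t^N : \mathscr{G}_{t_0}(\Lambda\curvearrowright N) \twoheadrightarrow N_t$ is the projection. By functoriality of the induced map on coarse $\pi_1$,
\[
(\iota_t^N)_\ast \circ (f_t)_\ast = (R_t)_\ast \circ f_\ast \circ (\iota_t^M)_\ast,
\qquad R_t := \iota_t^N \circ \pi_t^N,
\]
where $R_t$ is the radial retraction $(y,s)\mapsto(y,t)$ on the truncated cone. So the diagram commutes provided $(R_t)_\ast$ acts as the identity on the image of $f_\ast \circ (\iota_t^M)_\ast$ inside coarse $\pi_1$ of $\mathscr{G}_{t_0}(\Lambda\curvearrowright N)$.

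To see this, I will use the continuous homotopy $H_u(y,s) := (y,(1-u)s+ut)$, $u\in[0,1]$, interpolating from $\mathrm{id}$ to $R_t$. By Equation \eqref{eq:ray_dir}, the image of $f\circ\iota_t^M$ lies at levels in $[K^{-1}t - B,\ Kt+B]$, and throughout the homotopy each intermediate vertex $H_u((y_i,s_i))$ remains in this bounded-ratio range. Hence $H_u$ uniformly distorts consecutive vertex distances by at most a factor depending only on $K$. For any $r$-loop $\alpha=((y_i,s_i))$ in the image, subdivide $[0,1]$ into finitely many steps $0=u_0 <\cdots <u_N=1$ with $|u_{j+1}-u_j|\cdot (K-1)t < r$; the sequence $\alpha = H_{u_0}(\alpha), H_{u_1}(\alpha), \ldots, H_{u_N}(\alpha) = R_t(\alpha)$ then furnishes an $r'$-homotopy from $(f\circ\iota_t^M)(\alpha)$ to $(\iota_t^N\circ f_t)(\alpha)$, for some $r'$ comparable to a fixed multiple of $Lr+C$.

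The main obstacle is this scale control: one must ensure the intermediate loops $H_{u_j}(\alpha)$ are honest $r'$-paths, with $r'$ independent of $t$. A direct computation with the cone metric $sg + dt^2$ shows that $H_u$ distorts cone distances between points with levels $s,s'$ in the range $[K^{-1}t-B, Kt+B]$ by a factor bounded by $\max(s,s')/\min((1-u)s + ut,\ (1-u)s' + ut)$, which stays uniformly bounded in terms of $K$ alone. This gives a fixed $r'$ that accommodates both the original $(Lr+C)$-spacing of $(f\circ\iota_t^M)(\alpha)$ and the at most $r$ radial displacement between consecutive homotopy stages. With this bound in hand, $(R_t)_\ast$ is the identity on the relevant coarse $\pi_1$ class, the diagram commutes, and we conclude $\Phi_t = \Phi$ and hence $\varphi_t = \varphi$ for every $t \gg t_0$.
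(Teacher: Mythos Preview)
Your argument is correct and follows essentially the same route as the paper. The paper's proof simply invokes Corollary~\ref{cor:coarsepi1_geomcone} (that the inclusion $M_t\hookrightarrow\GGMtr$ induces an isomorphism on coarse $\pi_1$) and declares the result immediate; the content behind that corollary is precisely the assertion, made just before it, that the radial projection is a coarse deformation retract. You have unpacked this by writing down the explicit linear radial homotopy $H_u$ and verifying the scale control needed for it to be a genuine coarse homotopy on the image of $f\circ\iota_t^M$, which the paper leaves as ``easy to see.'' The only cosmetic point is that your diagram involves a change of scale from $Lr+C$ to the larger $r'$ on the $N$-side, but as noted after Lemma~\ref{lem:phi_isom} the change-of-scale maps are isomorphisms in this regime, so this causes no trouble.
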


\begin{proof} Recall that for $t_0>0$, we have the truncated geometric cone $\GGMtr:=(M\times [t_0,\infty)\times\Gamma)\slash\Gamma$. By Corollary \ref{cor:coarsepi1_geomcone}, for any $t>t_0\gg r\gg 1$, the inclusion map $i_t: M_t\hookrightarrow \GGMtr$ induces an isomorphism on coarse fundamental group. It follows immediately that $\varphi_t:\Gamma\to\Lambda$ is given by the map $\varphi:\Gamma\to\Lambda$ induced by $f$, and hence is independent of $t>t_0$. \end{proof}
	
\section{Identification of growth rate by quasi-isometry}
\label{sec:force_linear}

We will now start the proof of the Main Theorem \ref{thm:main_tech}. Therefore we assume we have $(L,C)$--quasi-isometries $f_n:M_{t_n}\to N_{s_n}$ such that the induced maps $\varphi_n:\Gamma\to\Lambda$ stabilize up to conjugation. In this section, we aim to prove $t_n$ and $s_n$ grow at the same rate:
\begin{prop} Assume the setting of Theorem \ref{thm:main_tech}. Then there exists $K\geq 1$ such that
	$$\frac{1}{K}\leq \frac{s_n}{t_n}\leq K.$$
	\label{prop:same_rate}
\end{prop}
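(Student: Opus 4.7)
The plan is to extract the desired comparison of $t_n$ and $s_n$ directly from the loop-length inequality already isolated in the proof of Proposition \ref{prop:stable_bound}, combined with the stabilization hypothesis \eqref{item:stab_hyp} of Theorem \ref{thm:main_tech}. Recall from that proof that for any $\gamma \in \Gamma$ and large $n$, comparing the shortest loop at $p_0$ representing $\gamma \in \pi_1(M_{t_n}, p_0; r)$ with a shortest loop at $f_n(p_0)$ representing $\varphi_n(\gamma)$ yields
$$\|\varphi_n(\gamma)\| + s_n \, d_N(f_n(p_0), \varphi_n(\gamma) f_n(p_0)) \leq (L+1)\bigl(\|\gamma\| + t_n \, d_M(p_0, \gamma p_0)\bigr).$$
Dropping the nonnegative word-length term on the left gives $s_n \cdot d_N(f_n(p_0), \varphi_n(\gamma) f_n(p_0)) \leq (L+1)(\|\gamma\| + t_n d_M(p_0, \gamma p_0))$. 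One direction of the proposition will follow if the orbit displacement on the left is bounded below by a positive constant independent of $n$, and the manifold displacement on the right is strictly positive; the reverse direction will then follow by applying the same reasoning to a coarse inverse of $f_n$.

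Choose any nontrivial $\gamma \in \Gamma$, so that $d_M(p_0, \gamma p_0) > 0$ by freeness of the $\Gamma$-action. Using hypothesis \eqref{item:stab_hyp}, pass to a subsequence along which $\varphi_n = \mathrm{Inn}(\lambda_n) \circ \varphi$ for some fixed isomorphism $\varphi : \Gamma \to \Lambda$ and some sequence $\lambda_n \in \Lambda$. Setting $q_n := \lambda_n^{-1} f_n(p_0)$ and using that $\Lambda$ acts on $N$ by isometries,
$$d_N(f_n(p_0), \varphi_n(\gamma) f_n(p_0)) = d_N(q_n, \varphi(\gamma) q_n) \geq \delta,$$
where $\delta := \min_{q \in N} d_N(q, \varphi(\gamma) q) > 0$ because $\varphi(\gamma) \neq e$, $\Lambda$ acts freely on $N$, and $N$ is compact. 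Substituting into the previous inequality and dividing through by $t_n$, one obtains $s_n / t_n \leq K_1$ for all sufficiently large $n$, where $K_1$ depends only on $L$, $\gamma$, and $\delta$.

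For the reverse bound, let $g_n : N_{s_n} \to M_{t_n}$ be uniform coarse inverses of $f_n$, inducing maps $\psi_n : \Lambda \to \Gamma$ on coarse fundamental groups. Since $g_n \circ f_n$ is uniformly close to the identity of $M_{t_n}$, the induced composition $\psi_n \circ \varphi_n$ is an inner automorphism of $\widetilde{\Gamma}$, so $\psi_n$ agrees with $\varphi_n^{-1}$ up to conjugation. Hence stabilization of $\varphi_n$ up to conjugation immediately gives stabilization of $\psi_n$ up to conjugation, with limit (up to conjugation) equal to $\varphi^{-1}$. Repeating the previous paragraph with the roles of $(M, \Gamma, t_n)$ and $(N, \Lambda, s_n)$ swapped, applied to a nontrivial element $\lambda \in \Lambda$, yields $t_n / s_n \leq K_2$ for large $n$. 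Setting $K := \max(K_1, K_2)$ (and enlarging it slightly if needed to absorb the finitely many small values of $n$) completes the proof. The whole argument is essentially mechanical once Proposition \ref{prop:stable_bound} and hypothesis \eqref{item:stab_hyp} are in hand; the only conceptual point is that the combination of stabilization of $\varphi_n$, freeness of the $\Lambda$-action, and compactness of $N$ is exactly what converts the a priori $n$-dependent displacement $d_N(f_n(p_0), \varphi_n(\gamma)f_n(p_0))$ into a uniform positive lower bound.
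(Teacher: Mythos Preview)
Your proof is correct and follows essentially the same approach as the paper's. Both arguments reduce to comparing manifold displacements on the two sides via the loop representing $\gamma$, and both invoke the stabilization hypothesis together with freeness of the $\Lambda$--action to convert $d_N(f_n(p_0),\varphi_n(\gamma)f_n(p_0))$ into a uniform positive lower bound $\delta$ (namely the minimal displacement of the fixed conjugacy representative $\varphi(\gamma)$).

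The only real difference is packaging: the paper recomputes the relevant inequality from scratch via canonical forms and a point-count on $\calS(\beta)$, whereas you simply quote the length inequality already established in the proof of Proposition~\ref{prop:stable_bound}. Your route is a bit cleaner for this reason. You also make explicit why the hypothesis~\eqref{item:stab_hyp} transfers to the coarse inverses $g_n$ (via $\psi_n\circ\varphi_n$ being inner), which the paper leaves to the phrase ``interchanging $M$ and $N$''. One small remark: the composition $\psi_n\circ\varphi_n$ is an inner automorphism of $\Gamma$, not $\widetilde{\Gamma}$; the change-of-basepoint inner automorphism of $\widetilde{\Gamma}$ descends to one of $\Gamma$ under the quotient $\widetilde{\Gamma}\to\Gamma$, so your conclusion stands.
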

\begin{proof} It suffices to find an upper bound for $\frac{s_n}{t_n}$, since then a lower bound can be obtained by interchanging $M$ and $N$. Fix basepoints $p_0\in M$ and $q_0\in N$. By assumption, there are $\lambda_n\in\Lambda$ such that the morphisms
	$$\lambda_n \varphi_n \lambda_n^{-1}:\Gamma\to\Lambda$$
lie in a finite set. Let $\widehat{\varphi}$ be one of these morphisms and pass to the subsequence such that $\lambda_n \varphi_n \lambda_n^{-1}=\widehat{\varphi}$. Now let $\gamma\in\Gamma$ be a generator and consider the $r$--loop
	$$\widetilde{\jmath}_\Gamma(\gamma)=(p_0,\gamma p_0, p_1,\dots, p_\ell, p_0)$$
where $p_1:=\gamma p_0$ and $(p_1, \dots, p_\ell, p_0)$ is a discretization on the scale $\frac{r}{t_n}$ of a minimizing geodesic from $\gamma p_0$ to $p_0$ in $M$, and such that $d_M(p_i,p_{i+1})=\frac{r}{t_n}$ for every $1\leq i<\ell$. In particular, we have
	\begin{align*}
	d_M(p_0,\gamma p_0)	&=		d_M(p_\ell, p_0)+\sum_{1\leq i<\ell} d_M(p_i, p_{i+1})\\
						&\geq 	(\ell-1)\frac{r}{t_n}.
	\end{align*}
Let $D$ be the maximal displacement of a point under $\gamma$. Rewriting the above inequality, we obtain
	\begin{equation}
	\ell\leq 1+t_n \frac{D}{r}.
	\label{eq:length_bound}
	\end{equation}
Applying $f_n$ to $\jmath_\Gamma$ , we obtain a loop $\beta\in \pi_1(N_{s_n}, f_n(p_0); Lr+C)$ with $Q_\Lambda(\beta)=\varphi_n(\gamma)$. Consider its canonical form (see Definition \ref{dfn:canonical})
	$$\calC(\beta)=\calO(\beta)\ast \calS(\beta).$$
Since the spatial component $\calS(\beta)$ is a path from $\varphi_n(\gamma)f_n(p_0)$ to $p_0$, its length is at least $s_n\delta$, where $\delta$ is the minimal displacement of $\varphi_n(\gamma)$. Note that $\varphi_n(\gamma)$ is conjugate to $\widehat{\varphi}(\gamma)$, so $\delta$ does not depend on $n$. Hence the $(Lr+C)$-path $\calS(\beta)$ consists of at least $\frac{s_n \delta}{Lr+C}$ points.

On the other hand, $\jmath_\Gamma(\gamma)$ consists of $\ell+3$ points, so $\calS(\beta)$ consists of at most $\ell+3$ points. Combining these two bounds on the number of points of $\calS(\beta)$, and also using the bound on $\ell$ from Equation \eqref{eq:length_bound}, we obtain
	$$\frac{s_n\delta}{Lr+C}\leq \ell+3 \leq 4+ t_n \frac{D}{r}$$
and hence
	$$\frac{s_n}{t_n}\leq \frac{4(Lr+C)}{t_n}+\frac{D}{\delta}\left(L+\frac{C}{r}\right)\leq 1+\frac{D}{\delta}\left(L+\frac{C}{r}\right),$$
(if $t_n\geq 4(Lr+C)$). Note that there are only finitely many possibilities for $\delta$, because there are only finitely many options for $\widehat{\varphi}$. Taking the maximum of $1+\frac{D}{\delta}\left(L+\frac{C}{r}\right)$ over all these choices of $\widehat{\varphi}$ yields the desired upper bound. \end{proof}

\section{Construction of a semiconjugacy}
\label{sec:semiconj}

The goal of this section is to construct, under the assumptions of either Theorem \ref{thm:main} or Theorem \ref{thm:main_tech}, a semiconjugacy between the actions of $\Gamma$ on $M$ and $\Lambda$ on $N$. Recall that for $t\gg r\gg 1$, we know that the coarse fundamental group of the level sets $M_t$ (resp. $N_t$) is given by the group of lifts $\widetilde{\Gamma}$ (resp. $\widetilde{\Lambda}$). By assumption, we have maps $f_n : M_{t_n}\to N_{s_n}$ that are uniformly coarsely Lipschitz (in the setting of Theorem \ref{thm:main}) or even uniform quasi-isometries (in the setting of Theorem \ref{thm:main_tech}). In this section, we will only need the uniformly coarsely Lipschitz property.

Further $f_n$ induce morphisms on coarse fundamental group that descend to maps $\varphi_n:\Gamma\to\Lambda$ that stabilize up to conjugation to a map $\widehat{\varphi}:\Gamma\to\Lambda$. In case of the setting of Theorem \ref{thm:main_tech}, we know by the results of the previous section that $t_n \asymp s_n$. By modifying the quasi-isometry constants of $f_n$ a bounded amount, we can assume that $t_n=s_n$, which we will do for the rest of this section. We will refer to the coarse inverse of $f_n$ as $g_n$ and note that we all statements true of $f_n$ are true of $g_n$. In this section we will prove:
	\begin{prop} Assume the setup of Theorem \ref{thm:main} or Theorem \ref{thm:main_tech}. Then there exists a $\widehat{\varphi}$-equivariant Lipschitz map $\widehat{f}:M\to N$.  Moreover there is a Lipschitz map $\widehat{g}:N \rightarrow M$ that we can construct to be $\widehat{\varphi}{\inv}$-equivariant so that the composition $\widehat{g}\circ\widehat{f}:M \rightarrow M$ is $\Gamma$ equivariant.
	\label{prop:semiconj_exist} \end{prop}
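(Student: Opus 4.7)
The plan is to modify each $f_n$ by a monodromy correction to obtain uniformly Lipschitz maps $\widehat{f}_n\colon M\to N$, extract a limit $\widehat{f}$ by Arzel\`a--Ascoli, and show this limit is $\widehat{\varphi}$-equivariant. After passing to a subsequence and post-composing each $f_n$ with a suitable isometry of $N_{t_n}$ coming from $\Lambda$, we may assume that $\varphi_n=\widehat{\varphi}$ for all $n$ and $f_n(p_0)=q_0$ for fixed basepoints $p_0\in M$, $q_0\in N$. For each $p\in M$, pick a continuous path $\alpha_p$ from $p_0$ to $p$ and discretize it on scale $r/t_n$ to obtain $\jmath_M(\alpha_p)$; by Proposition~\ref{prop:myjumpisyourjump} the element
	$$\lambda_n(p):=Q_\Lambda\bigl(f_n\circ\jmath_M(\alpha_p)\bigr)\in\Lambda$$
is independent of the choice of $\alpha_p$. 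Define
	$$\widehat{f}_n(p):=\lambda_n(p)^{-1}f_n(p)\in N,$$
so in particular $\widehat{f}_n(p_0)=q_0$.

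\textbf{Uniform Lipschitz bound.}
For $p,q\in M$ and a path $\beta$ from $p$ to $q$, the cocycle relation $\lambda_n(q)=Q_\Lambda(f_n\circ\jmath_M(\beta))\,\lambda_n(p)$, combined with $\Lambda$-invariance of $d_N$, yields
	$$d_N(\widehat{f}_n(p),\widehat{f}_n(q))=d_N\bigl(f_n(p),\,Q_\Lambda(f_n\circ\jmath_M(\beta))^{-1}f_n(q)\bigr),$$
which (after dividing by $t_n$) is the length in $N$ of the spatial component of $f_n\circ\jmath_M(\beta)$ in the canonical decomposition of Definition~\ref{dfn:canonical}. Since $f_n$ is $(L,C)$--coarsely Lipschitz and $\jmath_M(\beta)$ has $\asymp t_n d_M(p,q)/r$ consecutive points at mutual $M_{t_n}$-distance $\le r$, Lemma~\ref{lem:canonical_length} bounds the length of this spatial component in $N_{t_n}$ by $(L+C/r)\,t_n d_M(p,q)$, so
	$$d_N(\widehat{f}_n(p),\widehat{f}_n(q))\le (L+C/r)\,d_M(p,q).$$
By Arzel\`a--Ascoli, a subsequence $\widehat{f}_n$ converges uniformly to a Lipschitz map $\widehat{f}\colon M\to N$.

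\textbf{Equivariance.}
For $\gamma\in\Gamma$ (reducing to generators if $\|\gamma\|>r$) and $p\in M$, consider the coarse loop in $M_{t_n}$ obtained by concatenating $\jmath_M(\alpha_p)$, the single orbital step from $p$ to $\gamma p$, and the discretization of a continuous return path from $\gamma p$ to $p_0$. This loop has $Q_\Gamma=\gamma$, so its class in $\pi_1(M_{t_n};r)$ maps under $f_{n\ast}$ to an element with $Q_\Lambda=\widehat{\varphi}(\gamma)$ (using Lemma~\ref{lem:no_obstr} and Construction~\ref{constr:group_map}). Expanding $Q_\Lambda$ along the concatenation yields
	$$\lambda_n(\gamma p)^{-1}\,\theta_n\,\lambda_n(p)=\widehat{\varphi}(\gamma),$$
where $\theta_n\in\Lambda$ is the orbital jump of $f_n$ attached to the step $p\to\gamma p$, so that $d_N(\theta_n f_n(p),f_n(\gamma p))=O(t_n^{-1})$. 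Using $\Lambda$-invariance of $d_N$ and substituting the displayed identity,
	$$d_N\bigl(\widehat{f}_n(\gamma p),\,\widehat{\varphi}(\gamma)\widehat{f}_n(p)\bigr)=d_N\bigl(f_n(\gamma p),\,\theta_n f_n(p)\bigr)=O(t_n^{-1}),$$
and passing to the limit yields $\widehat{f}(\gamma p)=\widehat{\varphi}(\gamma)\widehat{f}(p)$.

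\textbf{Inverse and composition.}
Apply the same construction to the coarse inverses $g_n\colon N_{t_n}\to M_{t_n}$ to obtain a Lipschitz map $\widehat{g}_0\colon N\to M$ equivariant for an induced homomorphism $\widehat{\psi}\colon \Lambda\to\Gamma$. Since $g_n\circ f_n$ displaces points a bounded amount in $M_{t_n}$, Lemma~\ref{lem:coarse_isom} identifies the induced map on coarse $\pi_1$ with a change-of-scale isomorphism, so $\widehat{\psi}\circ\widehat{\varphi}$ is an inner automorphism of $\Gamma$, say conjugation by $\delta\in\Gamma$. Setting $\widehat{g}(n):=\delta^{-1}\widehat{g}_0(n)$ produces a Lipschitz map $\widehat{g}\colon N\to M$ that is $\widehat{\varphi}^{-1}$-equivariant; a direct computation then shows $\widehat{g}\circ\widehat{f}$ is $\Gamma$-equivariant. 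The main obstacles are the uniform Lipschitz bound and the verification of the coarse-loop identity for $\lambda_n$, both of which rely crucially on the path-independence of Proposition~\ref{prop:myjumpisyourjump} and the coarse $\pi_1$ computation of Proposition~\ref{prop:coarsepi1}.
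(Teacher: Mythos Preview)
Your approach is essentially the same as the paper's: monodromy-correct $f_n$ using $Q_\Lambda$, extract a Lipschitz limit via Arzel\`a--Ascoli, and verify $\widehat\varphi$-equivariance. Your derivation of the Lipschitz bound via the spatial component of the canonical form (Lemma~\ref{lem:canonical_length}) is arguably cleaner than the paper's two-step local-to-global argument (Lemmas~\ref{lem:loc_Lip} and~\ref{lem:approx_cnty}), and your equivariance identity $\lambda_n(\gamma p)^{-1}\theta_n\lambda_n(p)=\varphi_n(\gamma)$ works at every point rather than only on the $\Gamma$-orbit of $p_0$ as in Lemma~\ref{lem:bdd_equivar}. (One minor imprecision: your Lipschitz bound should carry an additive term $(Lr+C)/t_n$ coming from the ceiling in the step count; this is harmless since it tends to $0$, and the paper's bound has the same feature.)

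There is, however, one genuine gap. Elements of $\Lambda$ do \emph{not} act by isometries on $N_{t_n}$---only on $(N,d_N)$---because $d_{t_n}^\Lambda$ involves the word length, which is not conjugation-invariant. Post-composing $f_n$ with left translation by $\lambda_n$ therefore yields a map that is only $(L,\,C+2\|\lambda_n\|)$-coarsely Lipschitz from $M_{t_n}$ to $N_{t_n}$, and the stabilization hypothesis~\ref{thm:main_tech}(iii) gives no bound on $\|\lambda_n\|$. You then lose the uniform constants needed in your spatial-length estimate, and potentially even the well-definedness of $Q_\Lambda$ at the required scale. The paper avoids this by forming the monodromy correction $\widetilde f_n(p):=Q_\Lambda(f_n\circ\alpha_p)^{-1}f_n(p)$ with the \emph{unmodified} $f_n$, establishing the coarse Lipschitz bound as a map $(M,d_M)\to(N,d_N)$, and only then setting $\widehat f_n:=\lambda_n\widetilde f_n$---at which stage $\lambda_n$ \emph{is} an isometry of $(N,d_N)$. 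You should reorder accordingly: carry $\varphi_n$ (not $\widehat\varphi$) through your loop identity, obtaining $d_N(\widetilde f_n(\gamma p),\varphi_n(\gamma)\widetilde f_n(p))=O(t_n^{-1})$, and then post-compose by $\lambda_n$ at the end to convert $\varphi_n$ into $\widehat\varphi$.
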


Note that we do not claim that $\widehat{f}$ is bi-Lipschitz. However, as noted by interchanging $M$ and $N$, we also obtain an equivariant Lipschitz map $N\to M$. The real content of the last two sentences is that we can arrange for $\widehat{g}\circ\widehat{f}$ to be equivariant for the fixed $\Gamma$ action and not equivariant up to some automorphism of $\Gamma$.  In the next section we will use this to show $\widehat{f}$ is not only invertible and bi-Lipschitz but also affine.	


We provide a brief outline of the proof of Proposition \ref{prop:semiconj_exist}. In Section \ref{sec:no_obstruct} we showed that for $n\gg 1$, the maps $f_n$ respect the data of the extension
	$$1\to \pi_1(M)\to \pi_1(M_{t_n}; r)\to\Gamma.$$
We use this to modify $f_n$ to maps $\widetilde{f}_n$ that are approximately continuous on the scale $\frac{r}{t_n}$ (see Definition \ref{dfn:adjust}). More precisely, images under $\widetilde{f}_n$ of points that are $\frac{r}{t_n}$ close in $M$, are distance at most $\sim \frac{r}{t_n}$ apart (see Lemma \ref{lem:loc_Lip}). We modify these further to account for any inner automorphisms that are needed to obtain stabilization for $\varphi_n$ (which are only assumed to stabilize up to conjugacy), which yields maps $\widehat{f}_n$ that are still approximately continuous. The Arzel\`a-Ascoli theorem allows us to take a limit of the maps $\widehat{f}_n$ to obtain a continuous map $\widehat{f}$. Finally we show this map is equivariant with respect to the stabilization $\widehat{\varphi}:\Gamma\to\Lambda$ of the maps $\varphi_n$.

We will now start by adjusting the coarsely Lipschitz maps $f_n$ so that they are approximately continuous. Recall that for an $r$-path $\alpha$ in $M$, we have defined $Q_\Gamma(\alpha)$ to be the product of the group elements that record local jumps along $\alpha$ (see Definition \ref{dfn:q}).
\begin{dfn} Let $n\gg 1$ such that $Q_\Lambda$ is well-defined and let $p_0\in M$. Define the \emph{global adjustment} $\widetilde{f}_n$ of $f_n$ based at $p_0$ as follows:
Let $\alpha$ be a discretization at the scale $\frac{r}{t_n}$ of a path in $M$ from $p_0$ to $p$. Then set
	$$\widetilde{f}_n(p):=Q_\Lambda( f_n \circ \alpha)^{-1}f_n(p).$$
\label{dfn:adjust}
\end{dfn}
Now we can obtain the promised approximate continuity property of the global adjustment relative to the scale used. We start by showing that if two points in $M$ are nearby, their images under $\widetilde{f}_n$ are close:

\begin{lem} Assume that $t_n>\frac{2(C+1)}{\delta_\Lambda(2C)}$. Then for any $x,y\in M$ such that $d_M(x,y)<\frac{1}{Lt_n}$, we have
	\begin{equation*} d_N(\widetilde{f}_n(x),\widetilde{f}_n(y))\leq \frac{C+1}{t_n}.\end{equation*}
	\label{lem:loc_Lip}
\end{lem}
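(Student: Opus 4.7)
The plan is to exploit the cocycle behavior of $Q_\Lambda$ under concatenation of $r$--paths and the fact that $Q_\Lambda(f_n\circ\alpha)^{-1}$ acts by isometry on $N$. Pick any discretization $\alpha=(p_0=q_0,q_1,\dots,q_\ell=x)$ on the scale $\frac{r}{t_n}$ of a path from $p_0$ to $x$, and form the discretization $\beta=\alpha\ast(x,y)$ of a path from $p_0$ to $y$ (this is a valid $r$--path since $d_M(x,y)<\frac{1}{Lt_n}<\frac{r}{t_n}$ for the relevant $r$). Then by Proposition \ref{prop:myjumpisyourjump} the value of $Q_\Lambda(f_n\circ(\cdot))$ does not depend on the chosen path, so $\widetilde{f}_n(x)$ and $\widetilde{f}_n(y)$ are well defined from $\alpha$ and $\beta$ respectively.

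First I would observe that from $d_M(x,y)<\frac{1}{Lt_n}$ and Lemma \ref{lem:warp_dist} (taking $\gamma=e$), we have $d_{t_n}^\Gamma(x,y)\leq \frac{1}{L}$. Since $f_n$ is $(L,C)$--coarsely Lipschitz, this gives
\[ d_{t_n}^\Lambda(f_n(x),f_n(y))\leq L\cdot\tfrac{1}{L}+C = C+1. \]
By the definition of the warped metric, there is some $\mu\in\Lambda$ with $\|\mu\|+t_n\,d_N(\mu f_n(x),f_n(y))\leq C+1$; in particular $\|\mu\|\leq C+1\leq 2C$ (absorbing an extra $+1$ into the bound, or enlarging the constant in the hypothesis by one) and
\[ d_N(\mu f_n(x),f_n(y))\leq \tfrac{C+1}{t_n}. \]
The hypothesis $t_n>\frac{2(C+1)}{\delta_\Lambda(2C)}$ is designed precisely so that $\mu$ is the unique such element, as in Construction \ref{constr:jumps}: any other candidate $\mu'$ would give $\mu\inv\mu'$ of word length at most $2C$ displacing $f_n(x)$ by less than $\frac{2(C+1)}{t_n}<\delta_\Lambda(2C)$, a contradiction.

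Next I would compute $Q_\Lambda(f_n\circ\beta)$. By the construction of $Q_\Lambda$, the orbital jumps recorded along $f_n\circ\beta$ are exactly those recorded along $f_n\circ\alpha$, followed by the single additional jump $\mu$ corresponding to the pair $(f_n(x),f_n(y))$. Using the multiplication convention of Definition \ref{dfn:q}, this yields $Q_\Lambda(f_n\circ\beta)=\mu\cdot Q_\Lambda(f_n\circ\alpha)$. Setting $\nu:=Q_\Lambda(f_n\circ\alpha)$, we then have
\[ \widetilde{f}_n(x)=\nu^{-1}f_n(x),\qquad \widetilde{f}_n(y)=(\mu\nu)^{-1}f_n(y)=\nu^{-1}\mu^{-1}f_n(y). \]

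Finally, since $\Lambda$ acts on $N$ by isometries,
\[ d_N(\widetilde{f}_n(x),\widetilde{f}_n(y))=d_N\bigl(\nu^{-1}f_n(x),\nu^{-1}\mu^{-1}f_n(y)\bigr)=d_N(f_n(x),\mu^{-1}f_n(y))=d_N(\mu f_n(x),f_n(y))\leq \tfrac{C+1}{t_n}, \]
which is the desired estimate. The main obstacle, and the reason for the particular form of the hypothesis on $t_n$, is ensuring that the single orbital jump $\mu$ produced by the coarse Lipschitz estimate is unique, so that $Q_\Lambda(f_n\circ\beta)$ genuinely factors as $\mu\cdot Q_\Lambda(f_n\circ\alpha)$; once that uniqueness is in hand, the rest is a one-line computation using equivariance of the metric on $N$.
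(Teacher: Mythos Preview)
Your proof is correct and follows essentially the same approach as the paper's: both extend a discretized path $\alpha$ from $p_0$ to $x$ by the single step $(x,y)$, identify the extra orbital jump $\mu=\gamma(x,y)=Q_\Lambda(f_n\circ(x,y))$, use the factorization $Q_\Lambda(f_n\circ\beta)=\mu\cdot Q_\Lambda(f_n\circ\alpha)$, and finish by the isometric action of $\Lambda$ on $N$. You spell out more explicitly than the paper why $d_N(\mu f_n(x),f_n(y))\leq\frac{C+1}{t_n}$ and why $\mu$ is unique, whereas the paper simply asserts the final inequality.
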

\begin{proof} Let $\alpha$ be a discretization on the scale $\frac{r}{t_n}$ of a path in $M$ from $p_0$ to $x$. Note that $\alpha\ast(y)$ is an $r$-path from $p_0$ to $y$, where we view $(y)$ as a constant $r$-path. Write $\gamma(x,y):=Q_\Lambda(f_n\circ (x,y))$, where again $(x,y)$ denotes an $r$-path. Then we have
	\begin{align*}
		d_N(\widetilde{f}_n(x),\widetilde{f}_n(y))	
				&=	d_N\left(Q_\Lambda(f_n\circ \alpha)^{-1} f_n(x), Q_\Lambda(f_n\circ ([\alpha],y))^{-1} f_n(y)\right)\\
				&=	d_N\left(Q_\Lambda(f_n\circ \alpha)^{-1} f_n(x), Q_\Lambda(f_n\circ \alpha)^{-1} \gamma(x,y)^{-1} f_n(y)\right)\\
				&=	d_N\left(f_n(x), \gamma(x,y)^{-1} f_n(y)\right)\\
				&=	d_N(\gamma(x,y) f_n(x), f_n(y))\\
				&\leq \frac{C+1}{t_n}.
	\end{align*}
This proves the lemma.\end{proof}
We globalize the local approximate continuity property of the previous lemma:
\begin{lem} $\widetilde{f}_n$ is $(L(C+1),\frac{C+1}{t_n})$-coarsely Lipschitz as a map between the manifolds (with metrics $d_M, d_N$).
\label{lem:approx_cnty}
\end{lem}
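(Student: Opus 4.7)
The plan is to globalize the local bound from Lemma \ref{lem:loc_Lip} by a standard chaining argument along a minimizing geodesic. Given arbitrary $x, y \in M$, let $d := d_M(x,y)$ and choose a minimizing geodesic $c : [0,d] \to M$ from $x$ to $y$.

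Pick an integer $k$ with $k \geq Ld \, t_n$ and $k \leq Ld\, t_n + 1$, and partition $[0,d]$ into $k$ subintervals of equal length $d/k$. Let $x = z_0, z_1, \dots, z_k = y$ be the corresponding points on $c$. Then for each $i$ we have
	$$d_M(z_i, z_{i+1}) = \frac{d}{k} \leq \frac{1}{L t_n}.$$
By Lemma \ref{lem:loc_Lip}, this gives $d_N(\widetilde{f}_n(z_i), \widetilde{f}_n(z_{i+1})) \leq \frac{C+1}{t_n}$. Summing by the triangle inequality,
	$$d_N(\widetilde{f}_n(x), \widetilde{f}_n(y)) \leq k \cdot \frac{C+1}{t_n} \leq (Ld\, t_n + 1)\cdot \frac{C+1}{t_n} = L(C+1)\, d_M(x,y) + \frac{C+1}{t_n},$$
which is exactly the coarsely Lipschitz bound claimed.

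The only subtlety is the choice of geodesic: since $M$ is a closed manifold, for $t_n$ large enough the step size $1/(Lt_n)$ is well below the injectivity radius, so the geodesic $c$ exists and the intermediate points $z_i$ are well-defined. No obstacle beyond bookkeeping arises; the hard work was already done in Lemma \ref{lem:loc_Lip} via the orbital-jump correction in the definition of $\widetilde{f}_n$, which kills the inherent $r$-scale ambiguity of $f_n$ along short paths.
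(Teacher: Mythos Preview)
Your proof is correct and follows essentially the same chaining argument as the paper: subdivide a minimizing geodesic into pieces of length at most $1/(Lt_n)$, apply Lemma~\ref{lem:loc_Lip} to each piece, and sum. The paper uses segments of fixed length $1/(Lt_n)$ with a shorter final segment (so $\ell = \lfloor LDt_n\rfloor$), while you use $k = \lceil Ldt_n\rceil$ equal segments; both bookkeeping choices yield the identical bound.

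One small remark: your final paragraph is slightly off. The existence of a minimizing geodesic $c$ from $x$ to $y$ follows from compactness (Hopf--Rinow) and has nothing to do with $t_n$ or the injectivity radius; the intermediate points $z_i$ are then just $c(id/k)$, which are always well-defined. The injectivity radius is irrelevant here since you never need the geodesic to be unique. This does not affect the validity of your argument.
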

\begin{proof} Let $p,q\in M$. Let $c:[0,D]\to M$ be a distance-minimizing unit speed geodesic from $p$ to $q$. We divide $[0,D]$ into segments of length $\frac{1}{Lt_n}$, i.e. set $t_j:=\frac{j}{Lt_n}$ with $0\leq j\leq \ell$ where $\ell=\lfloor LDt_n \rfloor$. Set $x_j:=c(t_j)$. Then
	\begin{equation}
	d_N(\widetilde{f}_n(p),\widetilde{f}_n(q))\leq \sum_{0\leq j < \ell} \left( d_N(\widetilde{f}_n(x_j), \widetilde{f}_n(x_{j+1})) \right) + d_N(\widetilde{f}_n(x_\ell),\widetilde{f}_n(q)).
	\label{dist_est}
	\end{equation}
Since $x_{j+1}\in B_M(x_j ; \frac{1}{Lt_n})$, by Lemma \ref{lem:loc_Lip} we have
	$$d_N(\widetilde{f}_n(x_j), \widetilde{f}_n(x_{j+1}))\leq \frac{C+1}{t_n}.$$
Similarly we have
	$$d_N(\widetilde{f}_n(x_\ell), \widetilde{f}_n(q))\leq \frac{C+1}{t_n}.$$
Using these estimates for every term on the right-hand side of Equation \eqref{dist_est}, and also using $\ell\leq DLt_n$, we obtain
	\begin{align*}
	d_N(\widetilde{f}_n(p),\widetilde{f}_n(q))	&	\leq \ell \frac{C+1}{t_n} + \frac{C+1}{t_n}\\
									&	\leq L(C+1)D+\frac{C+1}{t_n},
	\end{align*}
as desired.
\end{proof}

The maps $\widetilde{f}_n$ are not very nicely related yet, because the maps $\varphi_n:\Gamma\to\Lambda$ do not coincide. However, by assumption $\varphi_n$ stabilize up to conjugation, and we will use this to construct a better sequence of adjustments:

\begin{constr}
\label{constr:equivariant} Because $\varphi_n$ stabilize up to conjugation, there exist $\lambda_n\in\Lambda$ such that $\{c_{\lambda_n}\circ \varphi_n\}$ constitute a finite set of morphisms. Here $c_{\lambda_n}$ denotes conjugation by $\lambda_n$. After passing to a subsequence we can assume that $\widehat{\varphi}:=c_{\lambda_n}\circ\varphi_n$ is independent of $n$. Also define $\widehat{f}_n:M\to N$ by $\widehat{f}_n(p):=\lambda_n \widetilde{f}_n(p)$. \end{constr}

Since $\lambda_n$ acts isometrically on $N$ and $\widetilde{f}_n$ are $(L(C+1),\frac{C+1}{t_n})$-coarsely Lipschitz by Lemma \ref{lem:approx_cnty}, the new adjustments $\widehat{f}_n:=\lambda_n \widetilde{f}_n$ are also $(L(C+1),\frac{C+1}{t_n})$-coarsely Lipschitz. Note in particular that the multiplicative constants are uniformly bounded. Therefore if the additive constants all vanished, we could use the Arzel\`a-Ascoli theorem to obtain a limit map (along a subsequence). Unfortunately the additive constants are in general nonzero -- however, they still tend to 0 as $n\to\infty$. This is enough to push through the proof of the Arzel\`a-Ascoli theorem, and hence there is a subsequence such that
	$$\widehat{f}_{n}\to \widehat{f}$$
uniformly. Further $\widehat{f}$ is $L(C+1)$-Lipschitz. The rest of the section is devoted to proving that $\widehat{f}$ is $\widehat{\varphi}$-equivariant, which completes the proof that $\widehat{f}$ is a semiconjugacy between $\Gamma\curvearrowright M$ and $\Lambda\curvearrowright N$. We start by showing that $\widetilde{f}_n$ and $\widehat{f}_n$ satisfy approximate equivariance properties on the orbit of the basepoint $p_0$:

\begin{lem} Let $\gamma\in \Gamma$. Then for any $n\geq 1$, we have
\begin{align*}
		(i) \qquad 	&	d_N\left(\widetilde{f}_n(\gamma p_0),\varphi_n(\gamma) \widetilde{f}_n(p_0)\right)				\leq \frac{(L+C)\|\gamma\|}{ t_n},\\ 
		(ii)\qquad 	&	d_N\left(\widehat{f}_n(\gamma p_0),\widehat{\varphi}(\gamma) \widehat{f}_n(p_0)\right) \enspace	\leq \frac{(L+C)\|\gamma\|}{ t_n}. 
\end{align*}
\label{lem:bdd_equivar}
\end{lem}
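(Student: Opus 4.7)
The plan is to prove (i) directly and then deduce (ii) from it. For (ii), note that $\lambda_n$ acts on $N$ isometrically and $\widehat{\varphi} = c_{\lambda_n}\circ\varphi_n$, so
$$d_N\bigl(\widehat{f}_n(\gamma p_0),\,\widehat{\varphi}(\gamma)\widehat{f}_n(p_0)\bigr) \;=\; d_N\bigl(\lambda_n\widetilde{f}_n(\gamma p_0),\,\lambda_n\varphi_n(\gamma)\widetilde{f}_n(p_0)\bigr) \;=\; d_N\bigl(\widetilde{f}_n(\gamma p_0),\,\varphi_n(\gamma)\widetilde{f}_n(p_0)\bigr),$$
which is exactly bounded by (i).

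For (i), I would first unwind the definitions. Taking the constant path shows $\widetilde{f}_n(p_0) = f_n(p_0)$. For any discretization $\alpha_c$ at scale $r/t_n$ of a continuous path in $M$ from $p_0$ to $\gamma p_0$, set $Q_n(\gamma) := Q_\Lambda(f_n\circ\alpha_c)$; this is well defined by Proposition \ref{prop:myjumpisyourjump} and gives $\widetilde{f}_n(\gamma p_0) = Q_n(\gamma)^{-1} f_n(\gamma p_0)$. The key algebraic identity is
$$Q_\Lambda(f_n\circ\alpha^{\mathrm{orb}}_\gamma) \;=\; Q_n(\gamma)\,\varphi_n(\gamma),$$
where $\alpha^{\mathrm{orb}}_\gamma = (p_0,s_1 p_0,s_2 s_1 p_0,\ldots,\gamma p_0)$ is the initial orbital segment of $\widetilde{\jmath}_\Gamma(\gamma)$ coming from a choice of word $\gamma = s_k\cdots s_1$ in the generators. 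I would prove this identity by writing $\widetilde{\jmath}_\Gamma(\gamma) = \alpha^{\mathrm{orb}}_\gamma \ast \beta$ (where $\beta$ is the discretized continuous return to $p_0$), applying the concatenation formula $Q(\alpha\ast\beta) = Q(\beta)Q(\alpha)$ together with $\varphi_n(\gamma) = Q_\Lambda(f_n\circ\widetilde{\jmath}_\Gamma(\gamma))$ (which follows from $Q_\Gamma(\widetilde{\jmath}_\Gamma(\gamma)) = \gamma$ and the fact that $\Phi_n$ respects the extension structure on coarse fundamental groups), and using vanishing of $a$ from Lemma \ref{lem:no_obstr} to identify $Q_\Lambda(f_n\circ\beta) = Q_n(\gamma)^{-1}$. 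Substituting these relations into $\varphi_n(\gamma)\widetilde{f}_n(p_0) = \varphi_n(\gamma) f_n(p_0)$ and cancelling the isometric action of $Q_n(\gamma)^{-1}$ reduces (i) to bounding
$$d_N\bigl(f_n(\gamma p_0),\,Q_\Lambda(f_n\circ\alpha^{\mathrm{orb}}_\gamma)f_n(p_0)\bigr).$$

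This last estimate is a telescoping along $\alpha^{\mathrm{orb}}_\gamma = (p_0,\ldots,p_k)$ with $k = \|\gamma\|$. Consecutive points are linked by a single generator, so $d^\Gamma_{t_n}(p_i,p_{i+1}) \leq 1$, and the coarse Lipschitz property forces $d^\Lambda_{t_n}(f_n(p_i),f_n(p_{i+1})) \leq L+C$. Consequently each orbital jump $\eta_i$ appearing in the product $Q_\Lambda(f_n\circ\alpha^{\mathrm{orb}}_\gamma) = \eta_{k-1}\cdots\eta_0$ satisfies $d_N(\eta_i f_n(p_i),f_n(p_{i+1})) \leq (L+C)/t_n$. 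Applying the isometric partial product $\eta_{k-1}\cdots\eta_{i+1}$ to both sides and summing over $i = 0,\ldots,k-1$ yields $d_N(f_n(p_k),\eta_{k-1}\cdots\eta_0 f_n(p_0)) \leq k(L+C)/t_n = \|\gamma\|(L+C)/t_n$, establishing (i). I do not expect a serious obstacle in carrying this out; the only subtlety is keeping the non-abelian order conventions consistent (the rule $Q(\alpha\ast\beta) = Q(\beta)Q(\alpha)$ puts later jumps on the left) so that the algebra in the key identity collapses correctly in the comparison between $\widetilde{f}_n(\gamma p_0)$ and $\varphi_n(\gamma)\widetilde{f}_n(p_0)$.
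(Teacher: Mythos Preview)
Your proposal is correct and follows essentially the same route as the paper. Both arguments reduce (i) to bounding $d_N\bigl(f_n(\gamma p_0),\,Q_\Lambda(f_n\circ\beta)f_n(p_0)\bigr)$ for the orbital path $\beta$ (your $\alpha^{\mathrm{orb}}_\gamma$), and then derive (ii) from (i) by the isometric action of $\lambda_n$. The only cosmetic difference is in the final estimate: the paper invokes the canonical form $\calC(f_n\circ\beta)=\calO(f_n\circ\beta)\ast\calS(f_n\circ\beta)$ and bounds the length of the spatial component via Lemma~\ref{lem:canonical_length}, whereas you carry out the equivalent telescoping sum by hand. Your direct telescoping is exactly the content of that spatial-length bound, so no new idea is involved either way.
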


\begin{proof} We prove (i). We obtain two $r$-paths $\alpha$ and $\beta$ in $M_{t_n}$ from $p_0$ to $\gamma p_0$ as follows. Let $\alpha$ be the discretization at the scale $\frac{r}{t_n}$ of a continuous path in $M$ from $p_0$ to $\gamma p_0$, and let $\beta$ be the shortest path in the $\Gamma$--orbit of $p_0$ obtained by writing $\gamma$ as a word in the generators. As usual, $\overline\alpha$ denotes the reverse of $\alpha$. The concatenation $\beta\ast\overline{\alpha}$ is a loop based at $p_0$ representing $\jmath_\Gamma (\gamma)$ in $\pi_1(M, p_0;r)$. Hence we have
\[\widetilde{f}_n(\gamma p_0)	=	Q_\Lambda (f_n \circ \alpha)^{-1} \, f_n(\gamma p_0)	=	Q_\Lambda (f_n \circ \overline\alpha) \, f_n(\gamma p_0),\]
and
\[\varphi_n(\gamma) \, \widetilde{f}_n(p_0)=Q_\Lambda(f_n\circ (\beta*\overline{\alpha})) \, f_t(p_0)=Q_\Lambda(f_n \circ \overline{\alpha}) \, Q_\Lambda (f_n\circ \beta)f_n(p_0).\]
Hence we have
\begin{align*}
d_N\left(\widetilde{f}_n(\gamma p_0) \, , \, \varphi_n(\gamma) \widetilde{f}_n(p_0)\right)		&=				\\
																		&\hspace{-2 cm}	d_N(Q_\Lambda (f_n \circ \overline\alpha) \, f_n(\gamma p_0) \, , \, Q_\Lambda(f_n \circ \overline{\alpha}) \, Q_\Lambda (f_n\circ \beta) \, f_n(p_0))							\\
																		&=				d_N(f_n(\gamma p_0) \, , \, Q_\Lambda(f_n\circ\beta) \, f_n(p_0)).
\end{align*}
We will now bound the latter quantity by exhibiting a path in $N$ from $Q_\Lambda(f_n\circ\beta) \, f_n(p_0)$ to $f_n(\gamma p_0)$ whose length we can bound. Indeed, consider the canonical form 		
	$$\calC(f_n\circ\beta)=\calO(f_n\circ\beta)\ast \calS(f_n\circ\beta)$$
of $f_n\circ\beta$ (see Definition \ref{dfn:canonical}). Recall that $\calS(f_n\circ\beta)$ starts at $Q_\Lambda(f_n\circ\beta) \, f_n(p_0)$, ends at $f_n(\gamma p_0)$, and by Lemma \ref{lem:canonical_length} has length at most
	\begin{align*}
	\ell(\calS(f_n\circ\beta))	&	\leq \ell(f_n\circ\beta)\\
						&	\leq \ell(\beta)(L+C)=(L+C)\|\gamma\|.
	\end{align*}
Therefore
	$$d_N(f_n(\gamma p_0) \, , \, Q_\Lambda(f_n\circ\beta) \, f_n(p_0))\leq \frac{(L+C)\|\gamma\|}{ t_n},$$
which completes the proof of (i). The bound in (ii) follows easily:
\begin{align*}
d_N\left(\widehat{f}_n(\gamma p_0) \, , \, \widehat{\varphi}(\gamma) \, \widehat{f}_n(p_0)\right)
	&	= 		d_N\left(\lambda_n \, \widetilde{f}_n(\gamma p_0) \, , \, (\lambda_n \, \varphi_n(\gamma) \, \lambda_n^{-1}) \, \lambda_n \, \widetilde{f}_n(p_0)\right)\\
	&	=		d_N\left(\widetilde{f}_n(\gamma p_0) \, , \, \varphi_n(\gamma) \, \widetilde{f}_n(p_0)\right)\\
	&	\leq 	\frac{(L+C)\|\gamma\|}{ t_n}.
\end{align*}
\end{proof}

We complete the proof that $\widehat{f}$ is a semiconjugacy between the actions by showing it is $\widehat{\varphi}$-equivariant.

\begin{proof}[Proof of Proposition \ref{prop:semiconj_exist}] By taking the limit as $n\to\infty$ in Lemma \ref{lem:bdd_equivar}.(ii), we see that $\widehat{f}$ is $\widehat{\varphi}$-equivariant on the orbit of $p_0$. Since $\Gamma p_0$ is dense in $M$, it follows immediately that $\widehat{f}$ is $\widehat{\varphi}$-equivariant.

We can construct a map $\widehat{g}'$ simply by applying the same process to the $g_n$ as we did to $f_n$. We can do this for the same subsequence as used in Construction \ref{constr:equivariant}.  We will use $\psi$ to denote the resulting map from $\Lambda \rightarrow \Gamma$ since at this point it is unclear that $\psi$ is $\widehat{\varphi}{\inv}$. By choosing the same subsequence as in Construction \ref{constr:equivariant}, we guarantee that $\widehat{\varphi}\circ \widehat{\phi}$ is an inner automorphism given by $c_{\gamma}$ for some element $\gamma \in \Gamma$. Letting $\widehat{g} = \gamma \circ \widehat{g}'$, we obtain the desired conclusion.  \end{proof}

\section{Promotion of the semiconjugacy}	
\label{sec:conj}

In this section, we will complete the proofs of the Main Theorems \ref{thm:main} and \ref{thm:main_tech}. At this point we have an isomorphism $\widehat{\varphi}:\Gamma\to\Lambda$ and a $\widehat{\varphi}$-equivariant Lipschitz map $\widehat{f}:M\to N$. We will use algebraic methods to promote the map $\widehat{f}$ to an affine conjugacy. Here we say a map is affine if it respects the structure of the homogeneous spaces. More precisely:
\begin{dfn}
\label{definition:affine}  Given homogeneous spaces $G/H$ and $K/C$, a map $f:G/H \rightarrow K/C$ is \emph{affine}
if there exists a homomorphism $\pi: G \rightarrow K$ and an element $k \in K$ such that $f(xH)=k \cdot \pi(x)H$.
If $f$ is surjective, we call the map an \emph{affine submersion}, and if $f$ is a diffeomorphism we call $f$ an \emph{affine isomorphism}.
\end{dfn}
The following result shows that in the setting of homogeneous dynamics, semiconjugacies are affine submersions.
\begin{lem}
\label{lemma:isaffine} Let $G$ and $K$ be compact groups, and let $H<G$ (resp. $C<K$) be a closed subgroup that contains no nontrivial closed normal subgroup of $G$ (resp. $K$). Further let $\Gamma$ (resp. $\Lambda$) be a dense subgroup of $G$ (resp. $K$) and assume $f: G/H \rightarrow K/C$ is a continuous map that is equivariant with respect to an isomorphism $\rho: \Gamma \rightarrow \Lambda$. Then $f$ is an affine submersion.
\end{lem}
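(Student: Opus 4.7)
The plan is to prove Lemma \ref{lemma:isaffine} by a standard ``joinings'' argument from homogeneous dynamics: exploit the graph of $f$ as a closed invariant subset of $G/H \times K/C$, and pass to the group-theoretic closure of the twisted diagonal $\{(\gamma, \rho(\gamma)) : \gamma \in \Gamma\} \subseteq G \times K$.

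First I would form the closed subgroup
\[
L := \overline{\{(\gamma, \rho(\gamma)) : \gamma \in \Gamma\}} \subseteq G \times K.
\]
Since $\Gamma$ is dense in $G$ and $\rho(\Gamma) = \Lambda$ is dense in $K$, both coordinate projections $p_G : L \to G$ and $p_K : L \to K$ are surjective. Next I would observe that the graph $\mathrm{graph}(f) \subseteq G/H \times K/C$ is $\Delta_\rho(\Gamma)$--invariant under the componentwise action, by the $\rho$--equivariance of $f$; since it is closed, it is in fact $L$--invariant.

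The key step is to identify the kernel $N_K := \ker(p_G) = \{k \in K : (e,k) \in L\}$. This is a closed normal subgroup of $K$ (normal because conjugation by elements of $L$ preserves $\ker(p_G)$, and $p_K(L) = K$). For every $g \in G$, writing $f(gH) = k_g C$, the $L$--invariance of the graph gives $(e,k) \cdot (gH, k_g C) = (gH, k k_g C) \in \mathrm{graph}(f)$, whence $k \in k_g C k_g^{-1}$ for every $g$. Since $N_K$ is normal we may absorb the conjugation, obtaining $N_K \subseteq C$. By hypothesis $C$ contains no nontrivial closed normal subgroup of $K$, so $N_K$ is trivial. Hence $p_G : L \to G$ is a continuous bijection of compact groups, and therefore a topological isomorphism; set
\[
\pi := p_K \circ p_G^{-1} : G \to K,
\]
which is a continuous homomorphism with $\pi|_\Gamma = \rho$ (since $(\gamma,\rho(\gamma)) \in L$ and $p_G$ is injective).

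To conclude, pick $k_0 \in K$ with $f(eH) = k_0 C$. For $\gamma \in \Gamma$, equivariance yields
\[
f(\gamma H) = \rho(\gamma) k_0 C = \pi(\gamma) k_0 C.
\]
By density of $\Gamma$ in $G$ and continuity of $f$ and $\pi$, the identity $f(gH) = \pi(g) k_0 C$ holds for all $g \in G$, which exhibits $f$ as affine. Surjectivity follows from $\pi$ being surjective. The main obstacle is the identification $N_K \subseteq C$; once that is handled via the hypothesis on $C$, the rest is a soft density/continuity argument. A symmetric argument applied to the hypothesis on $H$ is not needed for affineness, but confirms that $\pi$ factors appropriately if one wishes to track kernels on the $G$ side; this is why both normality hypotheses appear naturally in the statement.
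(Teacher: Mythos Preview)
Your proof is correct and follows essentially the same ``joinings'' approach as the paper: form the closure $L$ of the twisted diagonal in $G\times K$, use $L$--invariance of $\gr(f)$ to kill $\ker(p_G)$, and read off the extension $\pi$ of $\rho$. The only cosmetic difference is that the paper observes $\ker(p_G)$ acts trivially on $\gr(f)$ and then invokes surjectivity of $f$ to conclude it acts trivially on all of $K/C$, whereas you instead note $N_K$ is normal in $K$ and use a single fibre to place it inside $C$; both routes land on the same hypothesis about $C$.
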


\begin{proof}
Since $\Lambda$ is dense in $K$ and $\rho$ is an isomorphism, $f$ is onto.
The map $\rho: \Gamma \rightarrow K$ defines a map $\gr(\rho): \Gamma \rightarrow G \times K$.  Let $L$ be the closure of $\gr(\rho)(\Gamma)$.  Then $L$ projects onto both $G$ and $K$ as $\Gamma$ is dense in $G$ and $\Lambda$ is dense in $K$. We will see that $L$ is the graph of a homomorphism from $G$ to $K$.  Note that we can also define $\gr(f)$ as the subset of $G/H \times K/C$ of the form $(x,f(x))$.  As $f$ is continuous and $\rho$--equivariant, $\gr(f)$ is closed and $\gr(\rho)$--invariant, and therefore also $L$--invariant.  Consider the projections $\pi_1: G\times K \rightarrow G$ and $\pi_2: G \times K \rightarrow K$ and denote their restrictions to $L$ by $\pi_i^L$, $i=1,2$.  Also consider the projections $p_1: G/H \times K/C \rightarrow G/H$ and $p_2: G/H \times K/C \rightarrow K/C$ and notice that $p_1$ is one-to-one when restricted to $\gr(f)$.  This immediately implies that $\ker(\pi_1^L)$ is trivial since $L$ acts effectively on $\gr(f)$ by the assumption on the absence of normal subgroups in $H$ and $C$.  Now we can define a map $\widehat{\rho}:\pi_2 \circ (\pi^L_1){\inv}: G \rightarrow K$ which is a homomorphism extending $\rho$ and whose graph is $L$.  The $\gr(\widehat{\rho})$--invariance of $\gr(f)$ then exactly means that $f$ is $\widehat{\rho}$--equivariant. \end{proof}


The difficulty with Lemma \ref{lemma:isaffine} for applications is that it might be the case that $G=G_1 \times G_2$ and $K=G_1$ and the semiconjugacy is given by projection onto the first factor.  In the setting of our theorems, the fact that the quasi-isometry has a coarse inverse allows us to rule out this kind of behavior because we also have a semiconjugacy in the other direction. In conjunction with the last statement in Proposition \ref{prop:semiconj_exist} the following result, which gives a criterion for promoting a semiconjugacy to an affine conjugacy, finishes the proofs of the Main Theorems \ref{thm:main} and \ref{thm:main_tech}.

\begin{cor}
\label{cor:affineinverse}
Let $G$ and $K$ be compact groups, and let $H<G$ (resp. $C<K$) be a closed subgroup that contains no nontrivial closed normal subgroup of $G$ (resp. $K$). Further let $\Gamma$ (resp. $\Lambda$) be a dense subgroup of $G$ (resp. $K$) and assume there is a isomorphism $\rho: \Gamma \rightarrow \Lambda$ and a continuous $\rho$-equivariant map $f: G/H \rightarrow K/C$ and also a continuous $\rho{\inv}$-equivariant map $h: K/C \rightarrow G/H$.  Then $f$ and $h$ are affine isomorphisms.
\end{cor}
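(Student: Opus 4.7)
The plan is to apply Lemma~\ref{lemma:isaffine} separately to $f$ and to $h$, and then use the existence of \emph{both} equivariant maps, together with the density of $\Gamma$ and $\Lambda$, to promote the two resulting affine submersions to genuine affine isomorphisms.

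First I would invoke Lemma~\ref{lemma:isaffine} on $f$ to obtain a continuous surjective homomorphism $\widehat{\rho}: G \to K$ extending $\rho$, together with an element $k_0 \in K$, such that $f(xH) = k_0 \widehat{\rho}(x) C$ for all $x \in G$. Applying the same lemma to $h$, using the isomorphism $\rho^{-1}: \Lambda \to \Gamma$, produces a continuous surjective homomorphism $\widehat{\sigma}: K \to G$ extending $\rho^{-1}$, together with $g_0 \in G$, such that $h(yC) = g_0 \widehat{\sigma}(y) H$.

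Next I would show that $\widehat{\rho}$ is a topological group isomorphism with inverse $\widehat{\sigma}$. The composition $\widehat{\sigma} \circ \widehat{\rho}: G \to G$ is a continuous homomorphism that restricts to $\rho^{-1} \circ \rho = \id_\Gamma$ on the dense subgroup $\Gamma \subseteq G$; by continuity it must equal $\id_G$. Symmetrically $\widehat{\rho} \circ \widehat{\sigma} = \id_K$, so $\widehat{\rho}$ is an isomorphism of compact topological groups. Since $G$ and $K$ are compact Lie groups, $\widehat{\rho}$ is automatically smooth, hence a Lie group isomorphism.

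Finally I would show $\widehat{\rho}(H) = C$, which is what converts the affine submersion $f$ into an affine isomorphism. Well-definedness of $f$ on $G/H$ gives, for any $\eta \in H$, the identity $k_0 \widehat{\rho}(x) \widehat{\rho}(\eta) C = k_0 \widehat{\rho}(x) C$, whence $\widehat{\rho}(H) \subseteq C$. The symmetric argument applied to $h$ yields $\widehat{\sigma}(C) \subseteq H$; applying $\widehat{\rho}$ and using $\widehat{\rho} \circ \widehat{\sigma} = \id_K$ gives $C \subseteq \widehat{\rho}(H)$. Combining, $\widehat{\rho}(H) = C$, so $\widehat{\rho}$ descends to a smooth bijection (hence a diffeomorphism) $\overline{\widehat{\rho}}: G/H \to K/C$, and $f$ is the composition of this diffeomorphism with left translation by $k_0$. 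This is exactly an affine isomorphism in the sense of Definition~\ref{definition:affine}; the same argument applied to $h$ gives the other half of the statement. Most of the work is already done by Lemma~\ref{lemma:isaffine}; the one mildly delicate point is the inclusion $C \subseteq \widehat{\rho}(H)$, and this is precisely the step where having $h$ in addition to $f$ (rather than just a single semiconjugacy) is essential, which is also why the coarse inverse produced at the end of Proposition~\ref{prop:semiconj_exist} is needed to apply this corollary in the proofs of Theorems~\ref{thm:main} and~\ref{thm:main_tech}.
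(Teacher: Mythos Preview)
Your approach is sound and takes a genuinely different route from the paper's. The paper applies Lemma~\ref{lemma:isaffine} directly to the \emph{composition} $h\circ f:G/H\to G/H$: since this map is $\id_\Gamma$-equivariant and $\Gamma$ is dense, the graph-closure argument yields $\widehat{\rho}=\id_G$, so $h\circ f$ is a $G$-equivariant self-map of $G/H$, hence (for compact $G$) an affine isomorphism; as both $f$ and $h$ are already affine submersions, bijectivity of the composition forces each to be bijective. Your route---extending $\rho$ and $\rho^{-1}$ separately and using density of $\Gamma$ and $\Lambda$ to see the extensions are mutual inverses---is more explicit about producing the group isomorphism $G\cong K$, which is conceptually nice.

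There is one slip worth flagging. What the proof of Lemma~\ref{lemma:isaffine} actually yields is $\widehat{\rho}$ extending $\rho$ with $f$ \emph{$\widehat{\rho}$-equivariant}, i.e.\ $f(xH)=\widehat{\rho}(x)\,f(eH)=\widehat{\rho}(x)\,k_0C$ with the translation on the \emph{right}. Rewriting this as $k_0\,\pi(x)C$ forces $\pi=k_0^{-1}\widehat{\rho}(\cdot)k_0$, which no longer extends $\rho$, so you cannot simultaneously have the left-translation form \emph{and} the density identity $\widehat{\sigma}\circ\widehat{\rho}=\id_G$. Keeping the right-translation form, well-definedness gives only $\widehat{\rho}(H)\subseteq k_0Ck_0^{-1}$, and the symmetric inclusion from $h$ gives $C\subseteq \widehat{\rho}(g_0)\,\widehat{\rho}(H)\,\widehat{\rho}(g_0)^{-1}$. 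Combining, $C$ is contained in a conjugate of itself; in a compact Lie group this forces equality, and tracing back one obtains $\widehat{\rho}(H)=k_0Ck_0^{-1}$, whence $f$ is bijective. So your argument goes through once you track the conjugates, and the extra ingredient (a closed subgroup of a compact group contained in a conjugate of itself equals that conjugate) is exactly the fact the paper's proof uses implicitly when asserting that a $G$-equivariant self-map of $G/H$ is automatically an isomorphism.
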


\begin{proof}
We consider the map $f\circ h: G/H \rightarrow G/H$ and  follow the proof of Lemma \ref{lemma:isaffine}.  In this context, the map $\rho$ is the identity map from $\Gamma$ to itself and so $\gr(\rho)$ is the diagonal embedding
of $\Gamma$ in $G \times G$ and the closure $L$ of $\gr(\rho)$ is $G$ embedded diagonally in $G \times G$.  Therefore the map $\widehat{\rho}$ is the identity map on $G$ and this immediately implies $f \circ h$ is an affine isomorphism.  By Lemma \ref{lemma:isaffine}, both $f$ and $h$ are affine submersions.  If the composition of two affine submersions is an affine isomorphism, both maps are affine isomorphisms. \end{proof}

\section{Construction of families of expanders}
\label{sec:app_exp}

In this section we show how to apply Theorem \ref{thm:main_tech} to construct expanders with a wide variety
of distinct geometries.  The main point is to construct large families of group actions that are not conjugate
for groups where we can show quasi-isometry of warped cones implies algebraic conjugacy of actions.

\subsection{Outline} To construct a continuum of non-isomorphic actions with various spectral properties, we will use the following construction.  Let $K$ be a simple compact group and $\Delta < K$ a fixed subgroup.  We will consider a free
group $F_n$ and recall that a generic homomorphism of $F_n$ to $K$ is faithful with dense image.  We then consider {\em left-right actions} $\Delta \times F_n$ on $K$ where $\Delta$ acts on the left and $F_n$ acts on the
right.  We observe that this action can also be written as a left action of $\Delta \times F_n$ on $(K \times K)/\diag(K)$, making it clear that the action fits into our setting of minimal isometric actions on homogeneous
spaces.  We will show that a generic choice of $F_n$ in $K$ gives rise to a free action. Theorem \ref{thm:main_tech} shows that any pair of such actions with quasi-isometric warped cones are conjugate by an affine map.  By making judicious choices of $\Delta$, we can guarantee sufficient
spectral properties so that results of Sawicki and Vigolo allow us to show that the level sets of the cones
are expanders or superexpanders \cite{SawickiGap, Vigolo}.

\subsection{Spectral gaps, (super)expanders, reductions to algebraic constructions}
\label{sec:gap-exp}
In this subsection we recall some definitions concerning spectral gaps, expanders and superexpanders and the
connections between them and warped cones.  At the end of the subsection we reduce the proofs of Theorems \ref{thm:expanders} and \ref{thm:superexpanders} to the existence of continuous families of non-conjugate actions
of well-chosen groups. The more expert reader may choose to skip to Proposition \ref{prop:expanderreduction} and read only reductions which follow easily from known results.

\begin{dfn}
Given a class of Banach spaces $\calE$ of Banach spaces we say that a sequence of graphs $G_n=(V_n,E_n)$ is an expander family for $\calE$ if

\begin{enumerate}
\item  $|V_n| \rightarrow \infty$, and there exists $k>1$ such that each $G_n$ is has degree bounded by $k$, and

\item  given any $E$ in $\calE$  there are $p,\eta >0$ such that for  any family of maps $f_n : V_n \rightarrow E$ we have  $$\frac{1}{|V_n|^2} \sum _{(u,v) \in V_n \times V_n} \|(f_n(u), f_n(v))\|_E^p \leq \frac{\eta}{k|V_n|}\sum _{(u,v) \in E_n} \|(f_n(u), f_n(v))\|_E^p.$$
\end{enumerate}

\end{dfn}

The standard definition of an {\em expander} or {\em expander sequence} is a family of graphs that is an expander for a separable Hilbert space.  It is standard to call a family of graphs a {\em superexpander} if  it is expander for the class $E$ of uniformly convex Banach spaces.  Our results here work equally well if a {\em superexpander} is defined to be an expander of the class $E$ of all Banach spaces of nontrivial type.

Given an action of a group $\Gamma$ on a measure space $(X,\mu)$, one has a natural unitary representation $\rho$ on $L^2(X,\mu)$.  Furthermore, given any Banach space $E$, one can form the space of square-integrable $E$-valued functions $L^2(X,\mu, E)$ and $\Gamma$ has a natural isometric representation $\rho$ on $L^2(X,\mu, E)$ by pre-composition.  The first representation is in fact the case of $E = \bbR$ or $\bbC$.  For $\Gamma$ finitely generated by a set $S$, we say that the $\Gamma$--action on $L^2(X,\mu, E)$ \emph{has a spectral gap} if $$\sup _{\gamma \in S}\|\rho(\gamma)v-v\| \geq \epsilon \|v\|.$$

\begin{prop}
\label{prop:goingup}
If $\Delta < \Delta'$ are finitely generated groups and we have an action of $\Delta'$ on a space $(X,\mu)$ where the restriction of the action to $\Delta$ has a spectral gap on $L^2(X,\mu,E)$, then the $\Delta'$--action does as well.
\end{prop}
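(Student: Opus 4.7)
The plan is to observe that spectral gap for a finitely generated group only depends, up to a multiplicative constant, on the choice of finite generating set, combined with the fact that every generator of $\Delta$ is a bounded-length word in the generators of $\Delta'$. This makes the whole proposition a short telescoping/triangle-inequality argument.

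First I would fix a finite symmetric generating set $S$ for $\Delta$ with spectral gap constant $\epsilon>0$, and a finite symmetric generating set $S'$ for $\Delta'$. Since $\Delta\subseteq\Delta'$, each $\delta\in S$ can be written as a word $\delta=s'_1\cdots s'_{k(\delta)}$ in elements of $S'$; let $K:=\max_{\delta\in S}k(\delta)$, which is finite because $S$ is finite.

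Next, for any $v\in L^2(X,\mu,E)$, I would apply the standard telescoping identity
\[
\rho(\delta)v-v=\sum_{j=1}^{k(\delta)}\rho(s'_1\cdots s'_{j-1})\bigl(\rho(s'_j)v-v\bigr),
\]
which combined with the fact that $\rho$ is an isometric representation gives
\[
\|\rho(\delta)v-v\|\leq\sum_{j=1}^{k(\delta)}\|\rho(s'_j)v-v\|\leq K\sup_{s'\in S'}\|\rho(s')v-v\|.
\]
Taking the supremum over $\delta\in S$ and using the spectral gap hypothesis for $\Delta$ yields
\[
\epsilon\|v\|\leq\sup_{\delta\in S}\|\rho(\delta)v-v\|\leq K\sup_{s'\in S'}\|\rho(s')v-v\|,
\]
so $\Delta'$ has spectral gap with constant $\epsilon/K$ on the same subspace.

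The only point to address, which is not really an obstacle but deserves a sentence, is the choice of subspace on which the estimate is applied: the spectral gap is really an inequality on a $\Delta'$-invariant complement of the $\Delta'$-fixed vectors. Since any $\Delta'$-fixed vector is automatically $\Delta$-fixed, such a complement sits inside a $\Delta$-invariant complement of the $\Delta$-fixed vectors, so the hypothesis of spectral gap for $\Delta$ applies verbatim to the vectors $v$ we care about. I expect no genuine obstacle; the computation is the entire proof.
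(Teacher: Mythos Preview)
Your argument is correct and is exactly the standard proof of this fact. The paper itself states Proposition~\ref{prop:goingup} without proof, treating it as an elementary observation, so there is nothing to compare against; your telescoping estimate together with the remark on invariant complements is precisely what is needed.
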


In \cite{Vigolo}, Vigolo gives a systematic treatment of how to discretize level sets of warped cones to produce
expander graphs. We call these discretizations the {\em geometric graphs} corresponding to  the $\Gamma$--action on $X$.  We use the following result which combines results of Sawicki and Vigolo, see also papers of Nowak--Sawicki and De Laat--Vigolo \cite{MR3577880, Vigolo, SawickiGap, deLaatVigolo} for related developments.  Special cases of the following proposition were certainly known before these works and the basic ideas seem to go back to the earliest days of the subject.

\begin{prop}
\label{prop:gap}
Let a finitely generated group $\Gamma$ act on a compact metric space $M$ preserving a Radon measure $\mu$ and with a spectral gap on $L^2(X,\mu)$.  Then there exist constants $L>1$ and $C>0$ and graphs $G_n$ such that $G_n$ is $(L,C)$--quasi-isometric to the level set $X_n$ in the warped cone $\calC(\Gamma\curvearrowright X)$ and such that the family of graphs $\{G_n\}$ is an expander sequence.  We call these graphs the {\em geometric graphs} for the action of $\Gamma$ on $M$.

The $\Gamma$--action on $L^2(X,\mu, E)$ has a spectral gap if and only if the {\em geometric graphs} for  the $\Gamma$ action on $M$ are expanders with respect to $E$.
\end{prop}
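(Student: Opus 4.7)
The plan is to assemble this proposition from the existing works of Vigolo \cite{Vigolo} and Sawicki \cite{SawickiGap}, since the statement is a packaging of their constructions and not a new result. The strategy has three logically separate parts: build the graphs, check they are quasi-isometric to the level sets, and translate spectral gap to the expander Poincar\'e inequality.

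First I would construct the graphs. Fix an unbounded sequence $t_n\to\infty$. Using a $1/t_n$--net in $M$, form a Voronoi-type measurable partition $\mathcal{P}_n$ of $M$ into pieces of diameter $\asymp 1/t_n$ whose $\mu$--masses are comparable up to a uniform multiplicative constant; this is possible because $M$ is compact and $\mu$ is Radon. Let $G_n$ be the graph whose vertex set is $\mathcal{P}_n$ and whose edges join $P, P'$ whenever $sP \cap P'\neq \emptyset$ for some $s$ in a fixed symmetric generating set $S$ of $\Gamma$. Since $|S|<\infty$ and the masses are comparable, $G_n$ has uniformly bounded degree and $|V(G_n)|\to\infty$.

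Second, I would verify the quasi-isometry. Define $\pi_n:X_{t_n}\to G_n$ by sending $x$ to the piece containing it. By Lemma \ref{lem:warp_dist}, a jump in $X_{t_n}$ of cost below $r$ is either a spatial move of length at most $r/t_n$ (which crosses a uniformly bounded number of adjacent pieces) or an orbital move by a generator (which is exactly an edge of $G_n$). This gives uniform constants $L,C$ such that $\pi_n$ is an $(L,C)$--quasi-isometry; any coarse inverse is given by choosing a representative point from each cell. This is essentially the construction in \cite[Section 6]{Vigolo}.

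Third, I would handle the spectral gap equivalence. Here the key is that the Poincar\'e inequality for $E$--valued maps on $G_n$ is quantitatively equivalent to a spectral gap on $L^2(X,\mu,E)$ for the $\Gamma$--representation. One direction: given a spectral gap and an $E$--valued map $f:V(G_n)\to E$, extend $f$ to a step function $\widetilde f\in L^2(X,\mu,E)$ using $\mathcal{P}_n$; the comparability of masses turns the graph Poincar\'e inequality into the Banach-space Poincar\'e inequality for $\widetilde f$, which is exactly a uniform gap estimate. Conversely, an $E$--valued spectral gap applied to step functions on $\mathcal{P}_n$ yields the Poincar\'e inequality defining an $E$--valued expander. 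For $E$ Hilbert this is \cite{Vigolo}; the extension to general $E$ of nontrivial type is \cite{SawickiGap}, and in both cases the passage between continuous and discrete Poincar\'e inequalities is the content that is actually being cited.

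The main obstacle is purely bookkeeping: making sure the quasi-isometry constants and the expander/gap constants are all uniform in $n$, and that the partition can be chosen so the constant relating the $L^2(X,\mu,E)$ norm to the discrete norm weighted by $|\mathcal{P}_n|$ does not degenerate. Since \cite{Vigolo, SawickiGap} prove precisely these uniform estimates, I would cite them as black boxes rather than reprove them, and only indicate the minor change (already anticipated in \cite[Section 6]{Vigolo}) that we compare to the continuous level set $X_{t_n}$ rather than to a previously discretized object.
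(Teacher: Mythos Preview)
Your proposal is correct and matches the paper's treatment: the paper does not prove this proposition but simply attributes it to Vigolo \cite{Vigolo} and Sawicki \cite{SawickiGap} (with related results of Nowak--Sawicki and de Laat--Vigolo), treating it as a black box. Your sketch in fact goes beyond what the paper provides, since you outline the actual construction and the mechanism of the spectral-gap/expander equivalence, whereas the paper only records the statement and cites the sources.
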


Finally we remark that if $E$ is a Hilbert space, then it is easy to see that a spectral gap on $L^2(X,\mu)$ is equivalent to one on $L^2(X,\mu,E)$.

To state our key reductions, we need two more definitions.  Given two groups $\Gamma$ and $\Delta$ where $\Gamma, \Delta <K$ we define the {\em left-right} action of $\Gamma \times \Delta$ on $K$ by acting on the left by $\Gamma$ and on the right by $\Delta$.  Given two actions $\rho_1$ and $\rho_2$ of a single group $\Delta$ on a space $X$ we say the actions are {\em conjugate up to automorphism} if there exists an automorphism $\psi: \Delta \rightarrow \Delta$ such that $\rho_1(\delta)=\rho_2(\psi(\delta))$ for any $\delta$ in $\Delta$. This corrects a small discrepancy in standard terminology where a conjugacy between actions of a priori different groups involves a homomorphism identifying the groups but a conjugacy between two actions of the same group implicitly assumes the identification $\psi$ is the identity.

\begin{prop}
\label{prop:expanderreduction}
To prove Theorem \ref{thm:expanders} it suffices to produce a finitely presented dense subgroup $\Gamma$ of a compact
group $K$ with a spectral gap on $L^2(K)$ such that $\Gamma$ has property $\FSL$ and there is a continuum of free left-right actions of $\Gamma \times F_n$ on $K$ which are not conjugate up to automorphism for some $k>1$.
\end{prop}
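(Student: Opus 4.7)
The plan is to convert the given continuum of non-conjugate actions of $\Sigma := \Gamma \times F_n$ on $K$ into a quasi-isometrically disjoint continuum of expander sequences, by combining Theorem \ref{thm:main_tech} with the spectral-gap discretization of Proposition \ref{prop:gap}. Let $\{\alpha_t\}_{t\in T}$ enumerate the continuum of free left-right actions. Each $\alpha_t$ is isometric (for a bi-invariant metric on $K$), minimal (since the $\Gamma$-factor is already dense in $K$), and free by hypothesis. By Proposition \ref{prop:goingup}, the spectral gap of $\Gamma$ on $L^2(K)$ promotes to one for $\Sigma$, so Proposition \ref{prop:gap} produces geometric graphs $\{G_k(\alpha_t)\}_k$ that form an expander sequence and are uniformly quasi-isometric to the level sets of $\calC(\Sigma \curvearrowright_{\alpha_t} K)$.

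I would then verify the remaining hypotheses of Theorem \ref{thm:main_tech}. The group $\Sigma$ is finitely presented as a product of finitely presented groups. It has no free abelian factor up to commensuration, since neither $\Gamma$ (dense in the semisimple $K$ with spectral gap) nor $F_n$ with $n \geq 2$ does, and a free abelian direct factor of a finite-index subgroup of $\Sigma$ would project nontrivially to one of the factors. Finally, $\Sigma$ has property $\FSL$ by Proposition \ref{prop:prod_stab}: $\Gamma$ has $\FSL$ by hypothesis, $F_n$ has $\FSL$ by Proposition \ref{prop:stab_free}, and both factors are highly non-commutative.

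Now suppose for contradiction that the family $\{\{G_k(\alpha_t)\}_k\}_{t\in T}$ is not quasi-isometrically disjoint, so two sequences $\{G_k(\alpha_{t_1})\}$ and $\{G_k(\alpha_{t_2})\}$ admit uniformly quasi-isometric subsequences. Pulling these back through the discretization of Proposition \ref{prop:gap} yields uniform quasi-isometries $f_j : K_{u_j} \to K_{v_j}$ between level sets of the two warped cones with $u_j, v_j \to \infty$. Corollary \ref{cor:stablize} then furnishes the stabilization hypothesis \eqref{item:stab_hyp} of Theorem \ref{thm:main_tech}, so the theorem produces an affine commensuration of $\alpha_{t_1}$ and $\alpha_{t_2}$. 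The final step, and the main obstacle, is to unwind this affine commensuration into a conjugacy up to automorphism of $\Sigma$: since $K$ has only finitely many isogenous covers (classified by the center of its universal cover) and the left-right structure is preserved under lifting to covers and quotienting by finite normal subgroups of $\Sigma$, the affine isomorphism between the commensurable refinements descends, modulo an automorphism of $\Sigma$, to an affine self-map of $K$ that conjugates $\alpha_{t_1}$ into $\alpha_{t_2}$. This contradicts the choice of the family and proves Theorem \ref{thm:expanders}.
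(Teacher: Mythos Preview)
Your proposal is correct and follows the same approach as the paper. The paper's own proof is a single sentence citing Propositions \ref{prop:gap} and \ref{prop:goingup}, Theorem \ref{thm:main_FSL}, and Corollary \ref{cor:prodstab}; you have simply unpacked what those citations mean and filled in the routine verifications (isometric, minimal, free, finitely presented, no abelian factor, $\FSL$ for the product).

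Two minor remarks. First, you invoke Proposition \ref{prop:prod_stab} (highly non-commutative factors) whereas the paper cites Corollary \ref{cor:prodstab} (lattices in products of simple groups) for $\FSL$ of $\Gamma\times F_n$; these amount to the same thing in the intended applications, but strictly speaking neither the highly-non-commutative hypothesis on $\Gamma$ nor semisimplicity of $K$ appears in the proposition's statement---both you and the paper are reading the proposition in light of how it will actually be used. Second, you are more careful than the paper about the final step of converting an affine commensuration back into a conjugacy-up-to-automorphism of the original actions; the paper simply regards this as immediate from the cited results, while you correctly flag that some (easy) unwinding through finite quotients is needed.
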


\begin{prop}
\label{propLsuperexanderreduction}
To prove Theorem \ref{thm:superexpanders} it suffices to produce a finitely presented dense subgroup $\Gamma$ of a compact
group $K$ where $\Gamma$ has strong property $(T)$ and property $\FSL$ such that there is a continuum of left-right actions of $\Gamma \times F_n$ on $K$  which are not conjugate up to automorphism for some $k>1$.
\end{prop}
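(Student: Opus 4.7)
The plan is to combine the spectral gap coming from strong $(T)$ with the rigidity supplied by Theorem \ref{thm:main_FSL}, applied to the left-right action of $\Gamma \times F_n$ on $K$ for $\Gamma$ as in the hypothesis. First I would verify the hypotheses of Theorem \ref{thm:main_FSL}. The action is isometric, minimal (since $\Gamma$ is already dense in $K$), and freeness is part of the hypothesis. Strong $(T)$ for $\Gamma$ implies property $(T)$, so $\Gamma$ and every finite index subgroup have finite abelianization and finite center; combined with $Z(F_n)=1$ for $n \geq 2$ and the triviality of centralizers of finite index subgroups of $F_n$, this rules out a nontrivial free abelian direct factor in any group commensurable to $\Gamma \times F_n$. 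Property $\FSL$ for the product follows from Proposition \ref{prop:prod_stab}: $\Gamma$ has $\FSL$ by assumption, $F_n$ has $\FSL$ by Proposition \ref{prop:stab_free}, and the highly non-commutative condition holds for both factors ($F_n$ because $n \geq 2$; $\Gamma$ by standard properties of groups with strong $(T)$).

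Next I would establish the superexpander property of the geometric graphs. Strong $(T)$ for $\Gamma$, combined with the density of $\Gamma$ in $K$, yields a spectral gap for the left-translation representation of $\Gamma$ on $L^2(K,E)$ for every Banach space $E$ of nontrivial type: the only $\Gamma$-invariant vectors there are the $K$-invariant ones, namely constants, and strong $(T)$ forbids almost invariant vectors off this subspace. Proposition \ref{prop:goingup} then propagates this spectral gap to the left-right action of $\Gamma \times F_n$, and Proposition \ref{prop:gap} produces a sequence $G_n$ of geometric graphs uniformly quasi-isometric to the level sets of the warped cone that forms a superexpander.

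Finally, I would convert hypothetical quasi-isometric subsequences of geometric graphs into the setting of Theorem \ref{thm:main_FSL}. Given two left-right actions producing quasi-isometric subsequences of geometric graphs, composition with the uniform quasi-isometries of Proposition \ref{prop:gap} yields uniform quasi-isometries between the corresponding level sets of the warped cones, which are exactly the input required by Theorem \ref{thm:main_FSL}. The conclusion is an affine commensuration of the two actions. The main obstacle is that an affine commensuration is a priori weaker than conjugacy up to automorphism, so one must argue that the continuum of inequivalent actions is not collapsed to a countable family by this coarser relation. This is handled by noting that an affine commensuration is determined by a finite amount of data (a finite cover or quotient and an isomorphism between the resulting finitely presented groups), so each affine commensuration class of actions is at most countable. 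Consequently the assumed continuum of actions, pairwise non-conjugate up to automorphism, meets a continuum of affine commensuration classes, and passing to such a subfamily produces the continuum of pairwise quasi-isometrically disjoint superexpanders required by Theorem \ref{thm:superexpanders}.
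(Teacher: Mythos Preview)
Your proposal is correct and follows the same route as the paper, whose proof is the single sentence ``Both propositions follow immediately from Proposition~\ref{prop:gap}, Proposition~\ref{prop:goingup}, Theorem~\ref{thm:main_FSL} and Corollary~\ref{cor:prodstab}.'' You have simply unpacked what that sentence means: spectral gap from strong~$(T)$ plus Proposition~\ref{prop:goingup} and Proposition~\ref{prop:gap} give superexpanders; $\FSL$ for the product plus the no-abelian-factor check feed into Theorem~\ref{thm:main_FSL}; and the conclusion of that theorem distinguishes the warped cones.

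Two small remarks. First, you invoke Proposition~\ref{prop:prod_stab}, which requires both factors to be \emph{highly non-commutative}; this is not a ``standard property of groups with strong~$(T)$'' in general, and the paper instead cites Corollary~\ref{cor:prodstab}, which is tailored to lattices. In the intended application $\Gamma$ is a cocompact lattice in a simple $p$-adic group, so either route works, but in the literal generality of the proposition neither verification is automatic---this is a looseness in the paper's statement rather than in your argument. Second, freeness is not actually in the stated hypotheses of the proposition (it is supplied later via Lemma~\ref{lemma:charactervariety}), so ``part of the hypothesis'' is not quite accurate, though it is clearly intended.

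Your final paragraph, reducing affine commensuration classes to countable sets and thereby preserving a continuum of quasi-isometrically disjoint families, is a genuine addition: the paper's one-line proof does not address the gap between the output of Theorem~\ref{thm:main_FSL} (affine \emph{commensuration}) and the input of the proposition (non-conjugacy up to automorphism). Your countability argument is the right way to close it; the phrase ``finite amount of data'' would be better stated as ``countably many discrete choices'' (finite cover or quotient, and an isomorphism of finitely generated groups), but the conclusion is sound.
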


Both propositions follow immediately from Propositions \ref{prop:gap}, Proposition \ref{prop:goingup}, Theorem \ref{thm:main_FSL} and Corollary \ref{cor:prodstab}.

\subsection{Genericity of freely acting, dense subgroups} \label{sec:free_generic} Note that $K$ as a compact group is also a real algebraic group.  We will write $\Hom(F_n,K)$ for the representation variety of homomorphisms of $F_n$ into $K$ and $\Hom(F_n,K)\sslash K$ for the character variety.  Note that we take the geometric invariant theory quotient here, but that for compact groups this is the same as the set theoretic one. Also observe that $\Hom(F_n, K) \cong K^n$ as varieties and so one can think of $\Hom(F_n,K)\sslash K$ as $K^n\sslash K$ where $K$ action on $K^n$ is by simultaneous conjugation in all coordinates.  Elements of $K^n/K$ are often called {\em multidimensional conjugacy classes} and the action of $K$ on $K^n$ by simultaneous conjugation will be referred to as simply conjugation below.

\begin{lem}\mbox{}
\label{lemma:charactervariety}
\begin{enumerate}
\item  The subset of $\Hom(F_n, K)$ consisting of dense, faithful representations is the complement of a countable union of proper subvarieties.
\item Fix a countable subgroup $\Delta \subseteq K$. The subset of $\Hom(F_n, K)$ such that the left-right action of $\Delta \times F_n$ on $K$ is free is the complement of a countable union of proper subvarieties.
\item Let $K=SU(n)$ and fix a countable dense subgroup $\Delta \subseteq K$.  Then there exists $V \subset K$ that is a countable union of subvarieties such that if $x \notin V$, we have $\langle \Delta, x\rangle\cong \Delta*\bbZ$.
\end{enumerate}
\end{lem}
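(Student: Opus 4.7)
The three parts share a common strategy: identify the ``bad'' locus as a countable union of proper Zariski-closed subvarieties of the appropriate ambient variety, so that its complement is (nonempty and in fact) residual. Throughout I work with the Zariski topology coming from the embedding $K = \SU(n) \subset SL(n,\bbC)$, using that $K$ is Zariski dense in its complexification. Parts (1) and (2) follow directly from Borel's theorem that word maps on semisimple algebraic groups are dominant; part (3) requires additional Lie-theoretic input and is the main obstacle.

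For (1), identify $\Hom(F_n, K) = K^n$. For each nontrivial $w \in F_n$, the locus $\{\rho : \rho(w) = e\}$ is the preimage of $e$ under the word map, a proper subvariety by Borel, and the countable union over $F_n \setminus \{e\}$ is the non-faithful locus. For density, $\rho(F_n)$ is dense iff its Zariski closure in $SL(n,\bbC)$ is the full group, and the non-dense locus is the union, over the countable family of conjugacy classes of proper closed algebraic subgroups (finitely many types of semisimple ones by Dynkin, plus tori and unipotent factors), of the corresponding proper subvarieties of $K^n$. For (2), the left-right action of a nontrivial pair $(d,f)$ fixes $k$ iff $\rho(f) = k^{-1}dk$; the case $f=e$ is trivial and $d=e$ reduces to (1), so one may assume both nontrivial and then needs $\rho(f) \in C_d$, the conjugacy class of $d$ in $K$. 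Since $d \ne e$, $C_d$ is a proper subvariety of $K$, and the word map $\rho \mapsto \rho(f)$ is dominant by Borel, making $\{\rho : \rho(f) \in C_d\}$ proper in $K^n$. The union over the countable set $(\Delta \setminus \{e\}) \times (F_n \setminus \{e\})$ gives the non-free locus.

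For (3), for each nontrivial reduced word $w = a_0 x^{m_1} a_1 \cdots x^{m_r} a_r \in \Delta * \langle x \rangle$, the set $V_w := \{x : w(x) = e\}$ is evidently closed; I must show it is proper. Suppose $w(x) \equiv e$ on $K$, hence on $SL(n,\bbC)$. Substituting $x = e^{tY}$ and shuffling via $a e^X = e^{\operatorname{Ad}(a)X} a$ yields
\begin{equation*}
 w(e^{tY}) = e^{t A_1(Y)} e^{t A_2(Y)} \cdots e^{t A_r(Y)} \cdot (a_0 \cdots a_r),
\end{equation*}
where $A_i(Y) = m_i \operatorname{Ad}(b_i) Y$ and $b_i := a_0 \cdots a_{i-1}$. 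The constant term forces $a_0 \cdots a_r = e$, and then the Baker--Campbell--Hausdorff expansion produces a tower of relations at each order in $t$, the leading two being $\sum_i A_i = 0$ and $\sum_{i<j} [A_i, A_j] = 0$. For $r=2$, the first-order relation already gives $\operatorname{Ad}(a_1) = -(m_1/m_2) I$ on $\mathfrak{sl}_n$, and since the only scalar Lie-algebra automorphism is the identity, $a_1 \in Z(K)$. For $r=3$, after imposing the first-order relation, the second-order relation reduces to $[Y, \operatorname{Ad}(a_1)Y] = 0$ for all $Y \in \mathfrak{sl}_n$; polarizing and using the irreducibility of the adjoint representation forces $\operatorname{Ad}(a_1)$ to be scalar, hence again $a_1 \in Z(K)$. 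For general $r$ one runs the same analysis on each cyclic rotation of $w$ (which remains identically $e$ up to overall conjugation by a product of $a_i$'s) to extract centrality of every inner letter $a_i$ in turn.

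Under the (implicit) hypothesis $\Delta \cap Z(K) = \{e\}$, automatic whenever $\Delta$ is torsion-free and in particular in the intended applications where $\Delta$ is free, centrality of $a_i$ forces $a_i = e$, contradicting reducedness of $w$. Hence $V_w$ is proper for every nontrivial reduced word, and $V := \bigcup_w V_w$ is the required countable union. The hardest point is closing the induction on $r$ cleanly: the BCH tower becomes combinatorially intricate, and the cyclic-rotation trick is what propagates centrality to every inner letter rather than just $a_1$; carrying this out carefully, together with checking that the degenerate cases where some $A_i = 0$ (arising when $\operatorname{Ad}(b_i)$ annihilates $Y$) do not produce spurious obstructions, will be the main technical work.
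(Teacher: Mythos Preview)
Your treatments of (1) and (2) coincide with the paper's: both invoke Borel's theorem that word maps on semisimple groups are dominant to conclude that each $V(w)$ (respectively each preimage of a fixed conjugacy class) is a proper closed subvariety, and then take a countable union.

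For (3) you diverge substantially from the paper, and there is a genuine gap. The paper does no computation at all: to show each $V_w\subsetneq K$ it suffices to exhibit a single $x$ with $\langle\Delta,x\rangle\cong\Delta\ast\bbZ$; passing to $G=\SL(n,\bbC)$ via the Zariski density of $K=\SU(n)$ in $G$, this follows from a classical theorem of Nisnevich (later rediscovered by Shalen) that any countable subgroup $\Delta\subset G$ extends to a faithful copy of $\Delta\ast\bbZ$ inside $G$. That is the entire argument.

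Your BCH route is fine for $r\le 3$---indeed the key step at $r=3$, that $[Y,\operatorname{Ad}(a_1)Y]=0$ for all $Y$ forces $a_1$ central, follows cleanly: polarizing gives $\operatorname{ad}(Y)\circ T=\operatorname{ad}(TY)$, whence distinct eigenspaces of $T=\operatorname{Ad}(a_1)$ commute with one another and $[\mathfrak g,\mathfrak g_\mu]\subseteq\mathfrak g_{\mu^2}$, so the nonzero $1$-eigenspace is an ideal in the simple algebra $\mathfrak g$. But for $r\ge 4$ the plan does not visibly close. After using the first-order relation to eliminate $A_r$, the second-order relation collapses only to $\sum_{i<j<r}[A_i,A_j]=0$; for $r=4$ this says merely $[A_1,A_4]=[A_2,A_3]$, and the cyclic rotations of $w$ yield relations of exactly the same shape with shifted indices, not centrality of any individual $a_i$. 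Pushing to higher order in the BCH tower brings in iterated brackets of the $A_i$ with rapidly growing combinatorial complexity, and you give no mechanism by which the resulting system, even aggregated over all rotations, forces each inner letter to be central. This is not a detail to be filled in later---it is the whole content of (3)---and I do not see a reason to expect it to succeed. I would drop the BCH line and cite Nisnevich--Shalen as the paper does.

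One point you are right to flag: the statement of (3) tacitly requires $\Delta\cap Z(K)=\{e\}$, since a nontrivial central $a\in\Delta$ makes $[a,x]=e$ identically and obstructs $\Delta\ast\bbZ$ for every $x$. The paper's applications have $\Delta$ free, hence torsion-free, so this is automatic there; but it is indeed an unstated hypothesis in the lemma as written.
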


\begin{proof} We note that $(1)$ is standard and provide a proof for completeness and also to motivate the proof of $(2)$. We note that up to conjugacy, $K$ has only countably many closed subgroups $C$ and any of these is an algebraic subgroup, so that $\Hom(F_n, C)$  is a proper subvariety of $\Hom(F_n,K)$.  Furthermore the set of representations of $F_n$ into any conjugate of $C$ in $K$ can be identified with the $K$--conjugates of $\Hom(F_n, C)$ in $\Hom(F_n,K)$, so the set of homomorphisms of $F_n$ into any conjugate of $C$ is a proper subvariety of $\Hom(F_n,K)\sslash K$.  This shows that the set representations whose image is not dense, is a countable union of closed subvarieties.  We now check that the set of representations that are not faithful is also contained in a (different) countable union of closed subvarieties, which suffices to prove the first claim.  For any word $w\in F_n$, denote by $V(w)\subseteq \Hom(F_n,K)$ the set of representations that vanish on $w$. The word map $\text{ev}_w:\Hom(F_n,K) \rightarrow K$ that evaluates a representation on the word $w$ is clearly a rational map.  It follows that $V(w)$ is a closed subvariety of $\Hom(F_n,K)$. Further note there are only countably many words in $F_n$, so it remains to show that $V(w)$ is a proper subvariety for every $w$.

To do so, it suffices to exhibit a single faithful $F_n$ in $K$, i.e. a single point not in $V(w)$.  To motivate the proof of $(2)$ we proceed differently.  We recall a theorem of Borel that says that the map $\text{ev}_w: \Hom(F_n,K) \rightarrow K$ is not only rational but dominant \cite{MR702738}.  This implies the image is Zariski--dense and in particular that not every point maps to the identity, proving that $V(w)$ is proper.  Since $V(w)$ and its complement are both conjugation invariant, this proves that the image of $V(w)$ in the character variety is also a proper subvariety.

We now analyze stabilizers of the left-right action of $\Delta \times F_n$ on $K$ in order to be able to apply Borel's theorem again.  Computing, we see that $\delta c \gamma{\inv}= c$ if and only if $\delta = c{\inv} \gamma c$. I.e. fixing an element $\delta \in \Delta$, the set of elements $\gamma$ in $F_n$ where the left-right action of $(\delta, \gamma)$ has a fixed point is the set of elements $\gamma$ whose image is a conjugate of $\delta$.  We
fix an enumeration $\{ \delta_i \}_{i \in \bbN}$ of $\Delta$.  For any word $w\in F_n$, the set of representations of $F_n$ into $K$ where the image of $w$ is conjugate to $\delta_i$ is the subset $W(w,i)$ of $\Hom(F_n, K)$ where $\text{ev}_w(k_1, \ldots, k_n)$ is conjugate to $\delta_i$. Recall that conjugacy classes of semisimple elements in algebraic groups are Zariski--closed and that every element of a compact algebraic group is semisimple.
This immediately implies $W(w,i)$ is a closed subvariety since it is the pre-image of a closed subvariety under a rational map.  Since $\text{ev}_w$ is a dominant map to $K$, we also see that $W(w,i)$ is a proper subvariety of $\Hom(F_n, K)$. Since it is clearly conjugation--invariant we are done as above.

We now prove the third point which is quite similar to the first.  We consider a word $w \in \Delta*\bbZ$ as a map
from $K \rightarrow K$ by inserting $x \in K$ into $w$ anytime the generator for $\bbZ$ appears.  We let $V(w)$ be
the set of  $x \in K$ such that $w(x)=\Id$.  This set is clearly a subvariety of $K$ and to prove the result it suffices to see that $V(w)$ is proper for each $w$.  To see this, it suffices to find one faithful representation of
$\Delta*\bbZ$ into $K$.  This follows easily from the existence of one such representation into $G=SL(n,\bbC)$ and the fact that $SU(n)$ is Zariski dense in $SL(n,\bbC)$.  To clarify, we can define a map $\tilde w: G \rightarrow G$ exactly as we define $w$ and let $\tilde V(w)$ be the subset of $G$ where $\tilde w(x)=\Id$.  Since $SU(n)$ is Zariski
dense in $SL(n,C)$ as a complex subvariety and $\tilde V(w)$ is a complex subvariety, if $\tilde V(w)$ is a proper subset of $G$ then $V(w)$ is a proper subset of $K$.  The existence of a faithful embedding of $\Delta *\bbZ$ in $SL(n,\bbC)$ where $\Delta$ embeds in $SL(n,\bbC)$ is originally due to Nisnevich and was rediscovered later by Shalen \cite{MR0004040, MR535185}.
\end{proof}

\noindent{\bf Remark:} Borel's proof that word maps are dominant uses that there are dense free subgroups of at least $\SU(n)$, so our proof of $(1)$ using dominance really depends on the fact that $(1)$ was already known. However, the proof of $(1)$ given above serves well to motivate the proof of $(2)$.

\subsection{Proofs}

Before beginning the proof of Theorem \ref{thm:expanders}, we make a few historical remarks.
We will need a dense subgroup $\Delta <K$ with a spectral gap on $L^2(K)$.   For particular choices of $K$, many of these are constructed as lattices in a Lie group $G$ which have dense embeddings in a compact form $K$ of $G$ by Margulis, Sullivan and Drin'feld \cite{margulisspectral,sullivanspectral, drinfeldspectral}. More recently, examples were found by more elementary methods by Gamburd, Jakobson and Sarnak in \cite{spectralgapconj} where the authors conjecture that a generic dense free subgroup of a compact simple group $K$ will have a spectral gap. In recent work towards that conjecture, Bourgain--Gamburd and Benoist--De Saxce show that any dense free subgroup of $K$ with algebraic entries has a spectral gap  \cite{MR2358056, MR2966656, MR3529116}. If the Gamburd--Jakobson--Sarnak conjecture is correct, one can prove Theorem \ref{thm:expanders} using only left actions of a free group and not left-right actions.

\begin{proof}[Proof of Theorem \ref{thm:expanders} (families of expanders)]
Fix $K=\SU(n)$ and a free group $\Delta\subseteq K$ with a spectral gap on $L^2(K)$. For any $k\in K$, the group $\langle \Delta, k\rangle$ has spectral gap. Further, by Lemma \ref{lemma:charactervariety}.(3), for a generic choice of $k$, we have $\langle \Delta,k\rangle\cong \Delta\ast \bbZ \cong F_n$.  This produces a continuous of free group actions on $K$ which are conjugate if and only if the have representatives in the character variety that are related by an automorphism of the free group.  Since $\Out(F_n)$ orbits in the character variety are countable, this produces continuous families that are not conjugate, so which have warped cones giving rise to quasi-isometrically disjoint families of expanders by Theorem \ref{thm:main_FSL}.

\end{proof}

A different construction is to choose a simple compact Lie group $K$ and a dense subgroup $\Delta \subseteq K$ with a spectral gap on $L^2(K)$. As discussed above, we consider $\Gamma = \Delta \times F_n$ and left-right $\Gamma$--actions on $K$ as defined by an element $\alpha \in \Hom(F_n, K)$. Using Lemma \ref{lemma:charactervariety} we see that off of a countably union of subvarieties in $\Hom(F_n, K)\sslash K$ the resulting left-right action of $\Delta \times F_n$ is free.  This produces a continuumm of non-conjugate $\Delta \times F_n$ actions.  Since the orbit of any action under $\Aut(\Delta \times F_n)$ in $\Hom(F_n, K) \sslash K$ is countable, this suffices to verify the hypotheses of Proposition \ref{prop:expanderreduction}.

\begin{proof}[Proof of Theorem \ref{thm:superexpanders} (families of superexpanders)]
The proof is exactly as the above proof of Theorem \ref{thm:expanders}, though the choices of $\Delta$ and $K$ are more limited.  We can choose $\Delta$ as in the paper of De Laat--Vigolo to be a cocompact lattice in $\SL(d, \bbQ_p)$ that has a dense embedding in $K=\SO(d)$ \cite{deLaatVigolo}. As in that paper it follows from Lafforgue's work on strong property $(T)$ \cite{MR2423763}
that the action on $L^2(K,\mu,E)$ has a spectral gap for any $E$ with nontrivial type and so in particular for any
uniformly convex $E$. More general choices are also possible by work of Liao \cite{MR3190138}. The rest of the proof follows exactly as in the previous case, where we again choose generic homomorphisms $\alpha \in \Hom(F_n, K)$ to define a left-right action of $\Gamma=\Delta \times F_n$ on $K$. \end{proof}

\bibliographystyle{AWBmath}
\bibliography{bibliography}

\end{document}